\newtheorem{theorem}{Theorem}
\newtheorem{theoremannounce}{Theorem}
\numberwithin{theorem}{section}
\theoremstyle{plain}
\newtheorem*{acknowledgement}{Acknowledgement}
\newtheorem{corollary}[theorem]{Corollary}
\newtheorem{definition}[theorem]{Definition}
\newtheorem{lemma}[theorem]{Lemma}
\newtheorem{convention}{Convention}
\newtheorem{proposition}[theorem]{Proposition}
\theoremstyle{remark}
\newtheorem{conjecture}{Conjecture}
\newtheorem{example}[theorem]{Example}
\newtheorem{elaboration}{Elaboration}
\newtheorem{remark}[theorem]{Remark}
\numberwithin{equation}{section}
\begin{document}
\title[Equivariant Tamagawa numbers]{An alternative construction of equivariant\linebreak Tamagawa numbers}
\author{Oliver Braunling}
\address{Mathematical Institute, University of Freiburg, Ernst-Zermelo-Strasse 1, 79104
Freiburg im Breisgau, Germany}
\thanks{The author was supported by DFG GK1821 \textquotedblleft Cohomological Methods
in Geometry\textquotedblright\ and a Junior Fellowship at the Freiburg
Institute for Advanced Studies (FRIAS)}
\subjclass[2000]{Primary 11R23 11G40; Secondary 11R65 28C10}
\keywords{Equivariant Tamagawa number conjecture, ETNC, locally compact modules}

\begin{abstract}
We propose a new formulation of the equivariant Tamagawa number conjecture
(ETNC) for non-commutative coefficients. We remove Picard groupoids,
determinant functors, virtual objects and relative $K$-groups. Our Tamagawa
numbers lie in an id\`{e}le group instead of any kind of $K$-group. Our
formulation is proven equivalent to the one of Burns--Flach.

\end{abstract}
\maketitle

In this paper we give a new approach to constructing the equivariant Tamagawa
numbers of Burns--Flach \cite{MR1884523}. We do \textit{not} wish to claim
that this method is in any way better or worse than their original one. We
merely have a different kind of perspective, focussing on ad\`{e}les and local
compactness, and we just want to set up Tamagawa numbers in the way which
appears most natural from this slightly different angle.\medskip

As in our previous paper \cite{etnclca} we restrict to regular orders
$\mathfrak{A}\subset A$ in a semisimple algebra $A$. Given the current state
of our foundations, this presently cannot be avoided. However, we will remove
this assumption in a future paper and our formulation will remain intact
almost verbatim.\medskip

Let us explain our approach: Suppose $A$ is a finite-dimensional semisimple
$\mathbb{Q}$-algebra and $\mathfrak{A}\subset A$ a regular order, e.g. a
hereditary or maximal one. Usually (following \cite{MR1884523}) the
equivariant Tamagawa number is an element $T\Omega$ in $K_{0}(\mathfrak{A}%
,\mathbb{R})$, a relative $K$-group, whose elements have the shape%
\[
\lbrack P,\varphi,Q]
\]
in the so-called Swan presentation. Already here, we shall proceed a little
differently. The relative $K$-group sits in an exact sequence%
\begin{equation}
\cdots\longrightarrow K_{1}(\mathfrak{A})\longrightarrow K_{1}(A_{\mathbb{R}%
})\longrightarrow K_{0}(\mathfrak{A},\mathbb{R})\overset{\operatorname*{cl}%
}{\longrightarrow}\operatorname*{Cl}(\mathfrak{A})\longrightarrow0\text{,}
\label{l_Rado_1}%
\end{equation}
where $\operatorname*{Cl}(\mathfrak{A})$ is the locally free class group. In
\cite{MR0376619}\ Fr\"{o}hlich has proven a formula for the latter, namely%
\[
\operatorname*{Cl}(\mathfrak{A})\cong\frac{J(A)}{J^{1}(A)\cdot A^{\times}\cdot
U\mathfrak{A}\cdot A_{\mathbb{R}}^{\times}}\text{,}%
\]
where $J(A)$ denotes the non-commutative id\`{e}les of $A$, $J^{1}(A)$ the
reduced norm one id\`{e}les, and $U\mathfrak{A}$ the unit finite id\`{e}les of
the order. Our first idea is to extend this formula of Fr\"{o}hlich to
$K_{0}(\mathfrak{A},\mathbb{R})$:

\begin{theoremannounce}
There is a canonical isomorphism%
\begin{equation}
K_{0}(\mathfrak{A},\mathbb{R})\cong\frac{J(A)}{J^{1}(A)\cdot A^{\times}\cdot
U\mathfrak{A}}\text{,} \label{l_Rado_3}%
\end{equation}
and under this identification the map `$\operatorname*{cl}$' in Equation
\ref{l_Rado_1} amounts to quotienting out $A_{\mathbb{R}}^{\times}$ in the
infinite places of $J(A)$.\footnote{This is not really a new result. Agboola
and Burns have given a Hom-description formulation of a much more general
result in \cite{MR2192383}.}
\end{theoremannounce}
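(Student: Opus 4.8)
The plan is to write down an explicit homomorphism $\Theta\colon J(A)\to K_{0}(\mathfrak{A},\mathbb{R})$, compute its kernel and cokernel, and then read off the compatibility with $\operatorname{cl}$ from the construction. Given a non-commutative id\`{e}le $j=(j_{v})_{v}\in J(A)$, let $P_{j}\subset A$ be the $\mathfrak{A}$-lattice (of right ideals, say) whose completion at each finite place $v$ is $j_{v}\mathfrak{A}_{v}$; since $j_{v}\in\mathfrak{A}_{v}^{\times}$ for almost all $v$ this is well defined, and $P_{j}$ is locally free of rank one, so $A_{\mathbb{R}}\otimes_{\mathfrak{A}}P_{j}$ is free of rank one and carries a canonical identification with $A_{\mathbb{R}}$ coming from $P_{j}\subset A\subset A_{\mathbb{R}}$. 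Composing this with multiplication by $j_{\infty}$ on $A_{\mathbb{R}}$ gives an isomorphism $\varphi_{j}\colon A_{\mathbb{R}}\otimes_{\mathfrak{A}}\mathfrak{A}\xrightarrow{\ \sim\ }A_{\mathbb{R}}\otimes_{\mathfrak{A}}P_{j}$, and I set $\Theta(j):=[\mathfrak{A},\varphi_{j},P_{j}]$ in the Swan presentation. A routine check --- viewing $P_{jj'}$ as a convolution of $P_{j}$ and $P_{j'}$ and using additivity of Swan classes --- shows $\Theta$ is a group homomorphism.

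Next I would record the cheap facts about $\ker\Theta$ and $\operatorname{coker}\Theta$. If $j\in U\mathfrak{A}$ then $P_{j}=\mathfrak{A}$ and $\varphi_{j}=\mathrm{id}$, so $\Theta(j)=0$; if $j=a\in A^{\times}$ sits diagonally then $P_{a}=a\mathfrak{A}$ and left multiplication by $a$ is an $\mathfrak{A}$-module isomorphism $\mathfrak{A}\xrightarrow{\sim}a\mathfrak{A}$ under which $\varphi_{a}$ becomes the identity, so $\Theta(a)=0$ as well. For surjectivity: given any $[P,\varphi,Q]$, surjectivity of $\operatorname{cl}$ together with Fr\"{o}hlich's formula lets me multiply by a suitable $\Theta(\text{finite id\`{e}le})$ to reduce to $Q\cong P$, then (adding a free summand) to $P=Q=\mathfrak{A}$, i.e.\ to an element of the image of the boundary $\partial\colon K_{1}(A_{\mathbb{R}})\to K_{0}(\mathfrak{A},\mathbb{R})$; since the Dieudonn\'{e} determinant makes $A_{\mathbb{R}}^{\times}\twoheadrightarrow K_{1}(A_{\mathbb{R}})$ surjective and $\Theta|_{A_{\mathbb{R}}^{\times}}=\partial\circ(\text{projection})$, it follows both that $\Theta$ is onto and that $\Theta(A_{\mathbb{R}}^{\times})=\ker(\operatorname{cl})$.

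It remains to pin down $\ker\Theta$, and this is the heart of the matter. One inclusion is $J^{1}(A)\subseteq\ker\Theta$: here I would use $SK_{1}(A_{v})=1$ at every finite place (Wang's theorem), so a reduced-norm-one local id\`{e}le $j_{v}$ is a product of commutators in $A_{v}^{\times}$; this lets one choose generators of $(P_{j})_{v}=j_{v}\mathfrak{A}_{v}$ with trivial Dieudonn\'{e} determinant, and --- invoking the regularity of $\mathfrak{A}$, which is what makes Fr\"{o}hlich's reduced-norm description of $K_{1}$ and $\operatorname{Cl}$ available --- one concludes that $P_{j}$ is free and $\varphi_{j}$ differs from the resulting trivialization only by an element in the image of $K_{1}(\mathfrak{A})$, whence $\Theta(j)=0$. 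For the reverse inclusion, suppose $\Theta(j)=0$; then $\operatorname{cl}(\Theta(j))=0$, so Fr\"{o}hlich's isomorphism gives $j\in J^{1}(A)\cdot A^{\times}\cdot U\mathfrak{A}\cdot A_{\mathbb{R}}^{\times}$, and modulo the subgroups already shown to lie in $\ker\Theta$ we may take $j=u_{\infty}\in A_{\mathbb{R}}^{\times}$; then $\partial([u_{\infty}])=\Theta(u_{\infty})=0$ forces $[u_{\infty}]\in\operatorname{im}(K_{1}(\mathfrak{A})\to K_{1}(A_{\mathbb{R}}))$. Finally one identifies this image, under $A_{\mathbb{R}}^{\times}\twoheadrightarrow K_{1}(A_{\mathbb{R}})$, with $A_{\mathbb{R}}^{\times}\cap(J^{1}(A)\cdot A^{\times}\cdot U\mathfrak{A})$ --- most cleanly by applying the reduced norm and comparing $\operatorname{Nrd}(K_{1}(\mathfrak{A}))$ with the $S_{\infty}$-units of $Z(\mathfrak{A})$ positive at the real places ramified in $A$ (Hasse--Schilling--Maass) --- so $j\in J^{1}(A)\cdot A^{\times}\cdot U\mathfrak{A}$.

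Combining the two inclusions yields $K_{0}(\mathfrak{A},\mathbb{R})\cong J(A)/(J^{1}(A)\cdot A^{\times}\cdot U\mathfrak{A})$. The statement about $\operatorname{cl}$ is then automatic: by construction $\operatorname{cl}\circ\Theta$ sends a finite id\`{e}le $j$ to $[P_{j}]-[\mathfrak{A}]$, which is exactly Fr\"{o}hlich's map, and since $\Theta(A_{\mathbb{R}}^{\times})=\ker(\operatorname{cl})$ the map $\operatorname{cl}$ is precisely the quotient by the image of the infinite places, as asserted. I expect the genuinely delicate point to be the containment $J^{1}(A)\subseteq\ker\Theta$ together with the explicit description of $\operatorname{im}(K_{1}(\mathfrak{A}))$ inside $A_{\mathbb{R}}^{\times}$; alternatively, both can be sidestepped by deducing the isomorphism directly from the Agboola--Burns Hom-description cited in the footnote and translating id\`{e}le class groups into Hom-groups.
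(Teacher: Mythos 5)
Your $\Theta$ is exactly the map $\theta$ of Definition \ref{def_MapTheta}, and the easy parts of your outline are sound and parallel to the paper: the homomorphism property (Lemma \ref{lemma_ThetaWelldef}), the vanishing on $U(\mathfrak{A})$ and on diagonal $A^{\times}$ (Lemmas \ref{lemma_mixi1}, \ref{lemma_mixi2}), the surjectivity argument via $\operatorname{cl}$, the relative exact sequence and $A_{\mathbb{R}}^{\times}\twoheadrightarrow K_{1}(A_{\mathbb{R}})$, and the resulting description of $\operatorname{cl}$ as quotienting out the infinite components.

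The gap is in the kernel computation, precisely at the two points you flag. Your argument for $J^{1}(A)\subseteq\ker\Theta$ asserts that $P_{j}=j\mathfrak{A}$ is free for $j\in J^{1}(A)$. That is false in general: Fr\"ohlich's classification of \emph{rank-one} ideals is by the double coset space $A^{\times}\backslash J(A)/(U(\mathfrak{A})\cdot A_{\mathbb{R}}^{\times})$, not by the abelianized quotient involving $J^{1}(A)$ (see Theorem \ref{thm_FrohlichTheory} and \cite{MR0376619}); a norm-one id\`ele only yields a \emph{stably} free ideal, which can fail to be free (orders in totally definite quaternion algebras). One can stabilize, trivializing $j\mathfrak{A}\oplus\mathfrak{A}^{k}$, but then the remaining claim --- that the trivialization can be chosen so that the archimedean discrepancy lies in $\operatorname{im}\bigl(K_{1}(\mathfrak{A})\rightarrow K_{1}(A_{\mathbb{R}})\bigr)$ --- is essentially the whole content of the theorem and you give no argument for it; similarly, pinning down $\operatorname{nr}(K_{1}(\mathfrak{A}))$ as the $S_{\infty}$-units positive at ramified real places is not a citable fact (the image of the reduced norm on $K_{1}$ of an order is delicate), although the single containment you actually need --- a class from $K_{1}(\mathfrak{A})$ forces the id\`ele into $J^{1}(A)\cdot A^{\times}\cdot U(\mathfrak{A})$ --- can be proved by choosing a global representative in $A^{\times}$ of the image in $K_{1}(A)$, local units at the finite places, and norm-one corrections. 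The paper avoids both issues by a different route: it passes to the $K_{1}$-id\`eles $JK_{1}(A)$, so that $J^{1}(A)$ dies for the cheap local reason that the reduced norm is injective on each $K_{1}(A_{\mathfrak{p}})$ (local $SK_{1}$-vanishing), then embeds $\theta$ into the ladder of Diagram \ref{lbixi1}, whose right column is the relative $K$-theory sequence, whose left column is made exact using Wall's sequence (Lemma \ref{Lemma_ExactColumns}), and whose bottom row is Curtis--Reiner's id\`elic description of $\operatorname{Cl}(\mathfrak{A})$ \cite[(49.16)]{MR892316}; the Five Lemma then gives the isomorphism, and Step 3 of the proof of Theorem \ref{thm_GlobalLocal_Swan} translates $JK_{1}(A)$ back into $J(A)/J^{1}(A)$. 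To close your argument you would need either that ladder/Five Lemma mechanism or a genuine Wall-sequence/strong-approximation argument at the two flagged steps; as written they are asserted, and one intermediate assertion is wrong. (Incidentally, regularity of $\mathfrak{A}$ is not needed for this statement, and your fallback of deducing everything from the Agboola--Burns Hom-description would be a legitimately different proof rather than a repair of this one.)
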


So, in our picture, we will most naturally view an equivariant Tamagawa number
as an element of the id\`{e}le-style group on the right. Next, from a previous
article, we already know that $K_{0}(\mathfrak{A},\mathbb{R})\cong
K_{1}(\mathsf{LCA}_{\mathfrak{A}})$, where $\mathsf{LCA}_{\mathfrak{A}}$
denotes the category of locally compact topological $\mathfrak{A}$-modules. We
refer to \cite{etnclca} for both the philosophy why this should hold (keyword:
equivariant Haar measures), as well as an actual proof.

Now suppose we are given a pure motive $M$ with an action by the semisimple
algebra $A$. As in Burns--Flach \cite{MR1884523} some further assumptions and
data, like lattices $T_{v}$, are needed\footnote{a projective $\mathfrak{A}%
$-structure plus the Coherence Hypothesis, \cite[\S 3.3]{MR1884523}}. It would
unreasonably inflate this introduction to carefully go through the rather
involved setup, so we shall assume that the reader is familiar with
\cite[\S 2-3]{MR1884523}. We use exactly the same notation. As in
Burns--Flach, we begin with the objects%
\[
R\Gamma_{c}(\mathcal{O}_{F,S_{p}},T_{p})\qquad\text{and}\qquad\Xi(M)
\]
of a certain nature, defined over $\mathfrak{A}_{p}$ and $A$. In
\cite{MR1884523} the former object is regarded as a bounded complex and the
latter, the \textit{fundamental line}, as an object in a certain Picard groupoid.

We drop this and shall look at them differently: We work in a special model of
algebraic $K$-theory due to \cite{MR1167575}. It provides us with a space such
that (a)\ bounded complexes define points in it, (b) generalized lines like
$\Xi(M)$ also define points in it, and (c), quasi-isomorphisms between bounded
complexes define paths between points.

Our Tamagawa numbers are now defined as follows: We prove that there is a
fibration of pointed spaces%
\begin{equation}
K(\widehat{\mathfrak{A}})\times K(A)\longrightarrow K(\widehat{A})\times
K(A_{\mathbb{R}})\longrightarrow K(\mathsf{LCA}_{\mathfrak{A}})\text{.}
\label{l_Rado_2}%
\end{equation}
The reader should really think of this in the sense of topology. A base space
on the right, the total space in the middle, and on the left the fiber over
the base point. Because we use the aforementioned versatile model of
$K$-theory, $R\Gamma_{c}(\mathcal{O}_{F,S_{p}},T_{p})$ and $\Xi(M)$ simply
define points in the fiber, i.e. the leftmost space in Equation \ref{l_Rado_2}%
.
\begin{equation}%
{\includegraphics[
height=1.2142in,
width=4.7818in
]%
{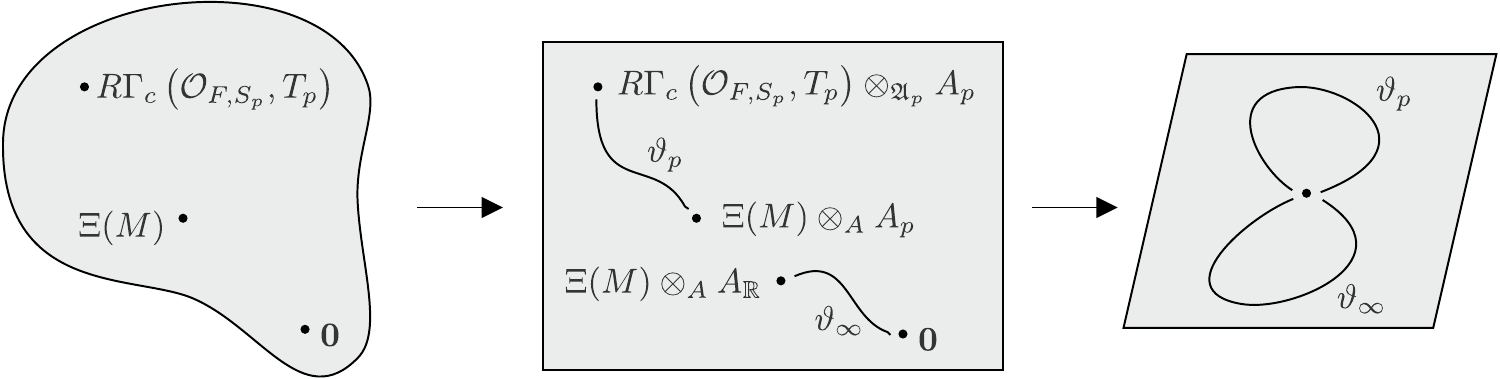}%
}
\label{l_Rado_4}%
\end{equation}
After base change from $\mathfrak{A}_{p}$ to $A_{p}$, and from $A$ to
$A_{\mathbb{R}}$, there exist comparison isomorphisms $\vartheta_{p}$ resp.
$\vartheta_{\infty}$ (this is exactly as in \cite[\S 3.4]{MR1884523}).
However, the underlying quasi-isomorphisms then just define paths between the
points in the middle term, i.e. the total space $K(\widehat{A})\times
K(A_{\mathbb{R}})$. But wait: The start point and end point objects of these
paths all came from the fiber, i.e. once we go all to the right to the base
space of the fibration, all these points collapse to the base point. However,
this means that the paths all get mapped to closed loops. So they define an
element in the fundamental group $\pi_{1}K(\mathsf{LCA}_{\mathfrak{A}}%
)=:K_{1}(\mathsf{LCA}_{\mathfrak{A}})$.

This is it. As in Burns--Flach, we call this element $R\Omega$, and after
adding the term coming from the $L$-function, $L\Omega$, \textbf{we have now
constructed our Tamagawa number}. Returning to Equation \ref{l_Rado_3}, we
may, if we want, get rid of $K$-groups and regard this as an element of%
\begin{equation}
\frac{J(A)}{J^{1}(A)\cdot A^{\times}\cdot U\mathfrak{A}}\text{,} \label{lsol1}%
\end{equation}
leading to a formulation in which not a single $K$-group is present anymore.
Or, at least not literally.\medskip

In the end, we have just obtained the same concept as Burns--Flach. Using
$K_{0}(\mathfrak{A},\mathbb{R})\cong K_{1}(\mathsf{LCA}_{\mathfrak{A}})$ we
prove the following.

\begin{theoremannounce}
\label{thm2_Intro}Our construction of the equivariant Tamagawa number
$T\Omega$ is equivalent to the one of Burns--Flach in \cite{MR1884523}.
\end{theoremannounce}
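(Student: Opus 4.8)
The plan is to prove that both constructions yield the \emph{same} element of $K_{0}(\mathfrak{A},\mathbb{R})$ once ours is transported along the isomorphism $\Psi\colon K_{1}(\mathsf{LCA}_{\mathfrak{A}})\xrightarrow{\sim}K_{0}(\mathfrak{A},\mathbb{R})$ of \cite{etnclca}. The point of departure is that both constructions are fed literally the same data: the complexes $R\Gamma_{c}(\mathcal{O}_{F,S_{p}},T_{p})$ over $\mathfrak{A}_{p}$, the fundamental line $\Xi(M)$ with its projective $\mathfrak{A}$-structure, and the comparison isomorphisms $\vartheta_{p}$ (over $A_{p}$, finite places) and $\vartheta_{\infty}$ (over $A_{\mathbb{R}}$), all exactly as in \cite[\S 2--3]{MR1884523}. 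Burns--Flach run this through determinant functors and the Picard groupoid of virtual objects and read off a relative $K$-group class in the Swan presentation $[P,\varphi,Q]$; we run it through the model of \cite{MR1167575}, where the complexes and $\Xi(M)$ become points of the fibre $K(\widehat{\mathfrak{A}})\times K(A)$ of the fibration \eqref{l_Rado_2}, the maps $\vartheta_{p},\vartheta_{\infty}$ become a path in the total space $K(\widehat{A})\times K(A_{\mathbb{R}})$ whose two endpoints lie in (the image of) the fibre, hence a loop in the base $K(\mathsf{LCA}_{\mathfrak{A}})$, i.e. an element of $\pi_{1}=K_{1}(\mathsf{LCA}_{\mathfrak{A}})$. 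So the theorem is a comparison of two recipes applied to one input.

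The key step is to show the two recipes agree. First I would record, using \cite{etnclca}, that the fibration \eqref{l_Rado_2} realises the localisation sequence \eqref{l_Rado_1} on homotopy: $\Psi$ extends to a morphism from the long exact homotopy sequence of \eqref{l_Rado_2} to \eqref{l_Rado_1}, carrying the fibration connecting map $\pi_{1}\bigl(K(\widehat{A})\times K(A_{\mathbb{R}})\bigr)\to\pi_{1}K(\mathsf{LCA}_{\mathfrak{A}})$ onto the boundary map $K_{1}(\widehat{A})\times K_{1}(A_{\mathbb{R}})\to K_{0}(\mathfrak{A},\mathbb{R})$ built from \eqref{l_Rado_1} together with the finite contribution $\widehat{\mathfrak{A}}\hookrightarrow\widehat{A}$. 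Granting this, our element is by construction the image under this connecting map of the ``difference'' of the trivialisations $\vartheta_{p},\vartheta_{\infty}$ measured against the lattice points coming from $R\Gamma_{c}$ and from the projective $\mathfrak{A}$-structure of $\Xi(M)$; and that is precisely the description of the Burns--Flach class, once one unwinds $[P,\varphi,Q]$ as the boundary homomorphism applied to $\varphi\in K_{1}(A_{\mathbb{R}})$ corrected by the defect of $P$ and $Q$ being lattices rather than identified. Hence the two classes coincide after $\Psi$. Adjoining the $L$-factor $L\Omega$ then changes nothing on either side, since $L\Omega$ is in both pictures the image of the leading-coefficient element of $K_{1}(A_{\mathbb{R}})$ under the very same connecting map, so $T\Omega=R\Omega+L\Omega$ matches as well.

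Two points require honest work. The first is model-independence: one must check that the class the Picard-groupoid/determinant-functor formalism of \cite{MR1884523} attaches to a complex-plus-trivialisation agrees, under $\Psi$, with the $\pi_{1}$-class the model of \cite{MR1167575} attaches to the corresponding points and path. This is the (standard in spirit, but not formally automatic) statement that the universal determinant functor is computed by the \cite{MR1167575} model and that quasi-isomorphisms induce exactly the expected paths. The second, which I expect to be the \textbf{main obstacle}, is the local--global bookkeeping. Burns--Flach assemble the $p$-local objects $R\Gamma_{c}(\mathcal{O}_{F,S_{p}},T_{p})$ and the maps $\vartheta_{p}$ into a single relative-$K$-group class by the conventions of \cite[\S 3.4]{MR1884523}, whereas our construction spreads the finite data over $\widehat{\mathfrak{A}}=\prod_{p}\mathfrak{A}_{p}$ inside $K(\widehat{\mathfrak{A}})$ and the archimedean data into $K(A_{\mathbb{R}})$, glued by the fibration. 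One has to verify that these two ways of handling all places at once --- the determinant-functor gluing versus the id\`ele-style gluing built into $\mathsf{LCA}_{\mathfrak{A}}$ --- agree on the nose, signs and orientations of graded determinant lines included. With that reconciliation in hand, the theorem follows by combining it with the recipe comparison above.
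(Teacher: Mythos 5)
Your proposal correctly observes that both recipes consume literally the same input ($R\Gamma_{c}(\mathcal{O}_{F,S_{p}},T_{p})$, $\Xi(M)$, $\vartheta_{p}$, $\vartheta_{\infty}$), and this observation is indeed part of the paper's argument. But the two items you set aside as ``honest work'' are not finishing touches --- they \emph{are} the proof, and your plan does not supply them. For the first (model-independence), the paper's mechanism is the equivalence between Picard groupoids and stable $(0,1)$-types (Theorem \ref{thm_PicardGrpdsAnd01Types}), the factorization of Deligne's universal determinant functor through the truncation $\tau_{\leq1}K(\mathsf{C})$, and, crucially for the fundamental line, the Segal-nerve/$\Gamma$-space coherence argument (Lemma \ref{lemma_InfLoopSpaceStructures}, Corollary \ref{cor_SegalNerveSum}, Theorem \ref{thm_LinesAgree}): the ad hoc $H$-space sum used to form the point $\Xi(M)$ must be shown compatible, homotopy-coherently, with the symmetric monoidal structure on virtual objects before one may identify it with $\Xi(M)^{\operatorname*{BF}}$. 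None of this is ``standard in spirit'' enough to be waved through. For the second (local--global gluing), the paper does not reconcile a determinant-line bookkeeping with an id\`ele bookkeeping at all; it observes that Burns--Flach's two successive fiber products of Picard groupoids, $\mathbb{V}(\mathfrak{A})$ and then $\mathbb{V}(\mathfrak{A},\mathbb{R})$, compute the single fiber of $V(\widehat{\mathfrak{A}})\times V(A)\rightarrow V(\widehat{A})\times V(A_{\mathbb{R}})$, which after $\Psi$ and truncation is exactly $\tau_{\leq1}\Omega K(\mathsf{LCA}_{\mathfrak{A}})$ by the principal id\`ele fibration; the delicate sign questions are settled inside the proof of Theorem \ref{thm_PrincipalIdeleFibration} (Elaboration \ref{elab_SignsHtpyCartesianSquares}), not in the comparison section.

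There is also a specific step in your route that would fail as stated. You propose to transport our class along the isomorphism $K_{1}(\mathsf{LCA}_{\mathfrak{A}})\cong K_{0}(\mathfrak{A},\mathbb{R})$ of \cite{etnclca} and to assert that this isomorphism carries the connecting map of the fibration of Equation \ref{l_Rado_2} onto the boundary map of the sequence \ref{l_Rado_1}, so that the Swan generator $[P,\varphi,Q]$ can be unwound as a boundary of $\varphi\in K_{1}(A_{\mathbb{R}})$ ``corrected by lattices.'' This compatibility is nowhere available: the paper deliberately constructs its own identification of $\pi_{1}K(\mathsf{LCA}_{\mathfrak{A}})$ with $\pi_{0}\mathbb{V}(\mathfrak{A},\mathbb{R})\cong K_{0}(\mathfrak{A},\mathbb{R})$ out of the fiber comparison, and it states explicitly that it does not know whether the several isomorphisms $K_{0}(\mathfrak{A},\mathbb{R})\cong K_{1}(\mathsf{LCA}_{\mathfrak{A}})$ in circulation agree. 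Proving the homotopy-sequence compatibility you assume would be work of the same magnitude as the theorem itself. Moreover, the Burns--Flach class is not a single boundary of an archimedean $K_{1}$-element: it is assembled place-by-place inside fiber-product Picard groupoids and glued over all $p$ (\cite[Lemma 6]{MR1884523}), so the unwinding you sketch would additionally need that gluing to be matched against the ad\`elic structure of $\mathsf{LCA}_{\mathfrak{A}}$ --- which is precisely what the fiber-versus-iterated-fiber argument of \S\ref{sect_CompatProofSection} accomplishes and what your proposal leaves open.
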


See \S \ref{sect_CompatProofSection}. The reader might worry that the
identification of our closed loops with id\`{e}les in Equation \ref{lsol1} is
something elusive. Not at all. The non-commutative id\`{e}les $J(A)$ act as
automorphisms on the non-commutative ad\`{e}les%
\[
A_{\mathbb{A}}:=\left\{  \left.  (x_{\mathfrak{p}})_{\mathfrak{p}}\in
\prod_{\mathfrak{p}}A_{\mathfrak{p}}\right\vert a_{\mathfrak{p}}%
\in\mathfrak{A}_{\mathfrak{p}}\text{ for all but finitely many places
}\mathfrak{p}\right\}  \text{,}%
\]
where $\mathfrak{p}$ runs through the finite and infinite places. As we had
already explained above, in our model of $K$-theory any isomorphism determines
a path. Hence, any $\alpha\in J(A)$ defines a loop%
\[%
{\includegraphics[
height=0.4987in,
width=0.894in
]%
{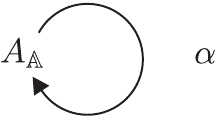}%
}
\]
and thus we get a map $\varphi:J(A)\rightarrow K_{1}(\mathsf{LCA}%
_{\mathfrak{A}})$. When phrasing Equation \ref{l_Rado_3} in terms of
$K_{1}(\mathsf{LCA}_{\mathfrak{A}})$ instead of $K_{0}(\mathfrak{A}%
,\mathbb{R})$, this map $\varphi$ is the one inducing the isomorphism. This
will be Theorem \ref{thm_GlobalLocal_Nenashev}.\medskip

In Figure \ref{l_Rado_4} we had equipped $R\Gamma_{c}(\mathcal{O}_{F,S_{p}%
},T_{p})\otimes A_{p}$ and $\Xi(M)\otimes A_{\mathbb{R}}$ with their natural
topologies, giving objects which carry $p$-adic and real topologies, just like
the ad\`{e}les $A_{\mathbb{A}}$. Our theorem says that the closed loops made
from $\vartheta_{p}$, $\vartheta_{\infty}$ on the right in Figure
\ref{l_Rado_4} are \textquotedblleft equivalent\textquotedblright\footnote{in
a complicated sense: firstly derived, i.e. up to quasi-isomorphisms, and
secondly $K$-theoretically, i.e. after transforming exact sequences into
alternating sums.} to a closed loop coming from an automorphism of the
non-commutative ad\`{e}les, and in fact one coming from plainly multiplying
with an id\`{e}le. This id\`{e}le\ (class) \textit{is} the Tamagawa
number.\medskip

\begin{center}
$-$ \textit{Some further results\medskip\ }$-\medskip$
\end{center}

We prove Equation \ref{l_Rado_3} in terms of the Swan presentation of
$K_{0}(\mathfrak{A},\mathbb{R})$, but we can also spell out an explicit map
from the id\`{e}le quotient to the Nenashev presentation of the $K$-group
$K_{1}(\mathsf{LCA}_{\mathfrak{A}})$. Running those isomorphisms back to back,
we obtain a new isomorphism%
\[
K_{0}(\mathfrak{A},\mathbb{R})\overset{\sim}{\longrightarrow}K_{1}%
(\mathsf{LCA}_{\mathfrak{A}})\text{.}%
\]
Hence, we now have \textit{three} such isomorphisms: the inexplicit one of the
first paper \cite{etnclca}, a rather enigmatic one with the special property
to be `universal in Swan generators' in \cite{etnclca2}, as well as the one of
this paper. \textit{We do not know} whether any two of them agree. Given how
reluctant the Nenashev presentation is towards Swan generators, we wish to
propose the following analogy:%
\begin{align*}
\text{ideal class group}  &  \leftrightarrow\text{id\`{e}le class group}\\
\text{Swan presentation}  &  \leftrightarrow\text{Nenashev presentation.}%
\end{align*}
Why the ideal class group allusion makes sense is surely clear from the map
`$\operatorname*{cl}$' in Equation \ref{l_Rado_1}: It sends $[P,\varphi,Q]$ to
$[P]-[Q]$. To justify the right side, consider the explicit formulas for the
maps in Theorem \ref{thm_GlobalLocal_Swan} and Theorem
\ref{thm_GlobalLocal_Nenashev}. Both essentially rely on Fr\"{o}hlich's
id\`{e}le classification of projective $\mathfrak{A}$-modules,
\cite{MR0376619}.

\begin{acknowledgement}
We heartily thank B. Chow, B. Drew, A. Huber, M. Wendt for discussions and
help. The delicate role of signs in the proof of Theorem
\ref{thm_PrincipalIdeleFibration} only became clear after some very valuable
remarks by Brad Drew. Most of this project was carried out at FRIAS and I
heartily thank them for providing perfect working conditions. I still cannot
imagine a better place for inspiration and creativity.
\end{acknowledgement}

\section{\label{sect_Overview}Detailed Overview}

\label{marker_OverviewTNCBegin}We first recall some basics of $K$-theory, but
rather differently from the material of many surveys. We allow ourselves some
imprecisions, for pedagogical reasons, and provide rigorous details only later
in \S \ref{sect_GettingPrecise}.\medskip

Suppose $\mathsf{C}$ is an exact category, e.g., finitely generated projective
modules over a ring, for which we write $\operatorname*{PMod}(R)$, or any
abelian category. In many situations people are only interested in the
$K$-groups $K_{i}(\mathsf{C})$ themselves. However, being all honest,
$K$-theory is a pointed space $K(\mathsf{C})$ and then the $K$-groups arise as
its homotopy groups $K_{i}(\mathsf{C}):=\pi_{i}K(\mathsf{C})$. The space
$K(\mathsf{C})$ is practically never the kind of space one could draw on a
sheet of paper. And really, what kind of space it is, depends on our concrete
approach to $K$-theory, e.g. Quillen's $Q$-construction, Waldhausen's
$S$-construction, etc. These spaces are all distinct, but have the same
homotopy type. What we describe in this section is the Gillet--Grayson model
in a version due to \cite{MR1167575}. A textbook explanation of the basic
method can be found in Weibel's book \cite[Chapter IV, \S 9]{MR3076731}. This
space has a number of properties making it a lot more convenient than other
spaces giving $K$-theory.

Before giving a precise description, let us just summarize the most important
principles:\newline$\left.  \qquad\text{\textbf{(a)}}\right.  $ Every object
in $\mathsf{C}$ determines a point in the space $K(\mathsf{C})$.\newline%
$\left.  \qquad\text{\textbf{(b)}}\right.  $ Every isomorphism $X\overset
{\sim}{\rightarrow}Y$ determines a path from the point of $X$ to $Y$ in
$K(\mathsf{C})$.\newline Let us quickly connect this with $K$-groups: The
zero-th $K$-group $K_{0}(\mathsf{C})=\pi_{0}K(\mathsf{C})$ corresponds to the
connected components of the space. Usually one writes $[X]$ for the $K_{0}%
$-class determined by an object $X\in\mathsf{C}$. And indeed, if two objects
$X,Y$ are isomorphic, say $\varphi:X\overset{\sim}{\rightarrow}Y$, then there
is a path between them by principle (b), so they lie in the same connected
component and correspondingly $[X]=[Y]$ in $K_{0}(\mathsf{C})$.%
\[%
{\includegraphics[
height=0.8294in,
width=3.1548in
]%
{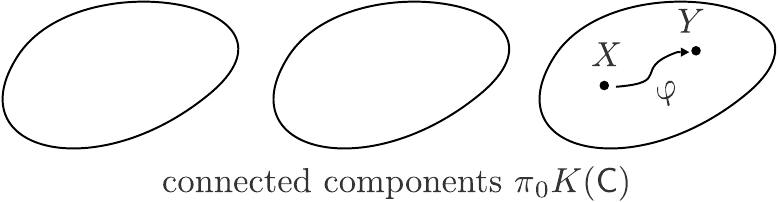}%
}
\]
Further, if $\mathsf{C}=\operatorname*{PMod}(R)$, then it is well-known that%
\begin{equation}
K_{1}(R)=\operatorname*{GL}(R)/[\operatorname*{GL}(R),\operatorname*{GL}%
(R)]\text{,} \label{lww1}%
\end{equation}
the abelianization of $\operatorname*{GL}(R)$. And indeed, given any element
$\alpha\in\operatorname*{GL}(R)$, it determines an automorphism $R^{n}%
\overset{\sim}{\rightarrow}R^{n}$ for some sufficiently large $n$, and by
principle (b) this determines a path from the point of $R^{n}$ to itself, i.e.
a closed loop, and thus an element in $\pi_{1}$. This leads us back to the
fact that $K_{1}$, beyond the description in Equation \ref{lww1}, is also the
fundamental group $\pi_{1}K(\mathsf{C})$.

At this point, the reader might wish for a more precise formulation of (a) and
(b) and a concrete rigorous justification of these principles. We will do
this, but only later, see \S \ref{sect_GettingPrecise}.

There is also an addition operation on $K(\mathsf{C})$ and a negation map:%
\begin{equation}
\left.  +\right.  :K(\mathsf{C})\times K(\mathsf{C})\longrightarrow
K(\mathsf{C})\qquad\text{and}\qquad\left.  -\right.  :K(\mathsf{C}%
)\longrightarrow K(\mathsf{C})\text{.} \label{lww_AddSubtract}%
\end{equation}
If $X,Y$ are objects, the point of the direct sum $X\oplus Y$ is the sum of
the points of $X$ and $Y$ under this map \textquotedblleft$+$%
\textquotedblright. We write $-X$ for the negation of the point of $X$. While
this exists as a point in $K(\mathsf{C})$, there is usually no object
producing this point under principle (a).

Care is needed here: The above maps do \textit{not} give the space
$K(\mathsf{C})$ a group structure. The problem is that while for example
$X\oplus Y$ and $Y\oplus X$ are canonically isomorphic, according to principle
(b) this canonical isomorphism merely defines a path from the point of
$X\oplus Y$ to the point of $Y\oplus X$, but they will usually be different
points. Similarly, the associativity isomorphism $(X\oplus Y)\oplus
Z\overset{\sim}{\rightarrow}X\oplus(Y\oplus Z)$ only yields a path between the
corresponding points. Once going to the homotopy groups $\pi_{i}K(\mathsf{C})$
these problems all disappear and the above two maps induce honest group
structures. For example, $\pi_{0}K(\mathsf{C})$ only sees connected
components, and since the above remarks mean that paths exist between these
points, they lie in the same component and thus $\pi_{0}K(\mathsf{C})$ gets an
honest group structure.

In general the weaker type of structure given by the maps in Equation
\ref{lww_AddSubtract} is sometimes called a `homotopy commutative and homotopy
associative $H$-space'\footnote{The book \cite{MR0270372} might be a bit out
of date, but it carefully develops and explores such structures along a lot of
examples.}. All these phenomena are well-understood and a big and active field
of investigation. However, for our purposes, \textit{they do not matter} and
we will be fine not digging deeper into this. We merely wanted to point out
that this is an issue where some caution is appropriate.

A second elaboration: For $K$-theory it does not matter whether we work with
the genuine category $\mathsf{C}$ or with bounded complexes in $\mathsf{C}$.
Thus, simultaneously to the above principles (a) and (b), the following two
are also true:\newline$\left.  \qquad\text{\textbf{(a')}}\right.  $ Every
bounded complex $X_{\bullet}$ of objects in $\mathsf{C}$ determines a point
$X_{\bullet}$ in the space $K(\mathsf{C})$, and also determines a point
$-X_{\bullet}$ using negation.\newline$\left.  \qquad\text{\textbf{(b')}%
}\right.  $ Every quasi-isomorphism $X_{\bullet}\overset{\sim}{\rightarrow
}Y_{\bullet}$ determines a path from the point of $X_{\bullet}$ to
$Y_{\bullet}$ in $K(\mathsf{C})$.\newline Finally, if $F:\mathsf{C}%
\longrightarrow\mathsf{C}^{\prime}$ is an exact functor of exact categories,
then there is an induced map of spaces $K(\mathsf{C})\rightarrow
K(\mathsf{C}^{\prime})$.

In \S \ref{sect_GettingPrecise} we will give a fully rigorous and precise
justification for principles (a) and (b), as well as (a') and (b').

\begin{remark}
One may think about the principles (a), (b), (a'), (b') as follows, in
analogy: Both a single point as well as the real line $\mathbb{R}$ have the
same homotopy type. However, $\mathbb{R}$ has a lot more points and a lot more
paths. Similarly, when we choose whether we use Quillen's plus construction,
the $Q$-construction, Waldhausen's $S$-construction, etc. to model the
$K$-theory space, we always get the same homotopy type having the same
homotopy groups and therefore same $K$-groups. However, the wealth of points
or paths in these spaces varies a lot. Thus, our choice to use the
Gillet--Grayson type model from \cite{MR1167575} is special. One may think of
it as a sufficiently \textquotedblleft fattened up\textquotedblright%
\ incarnation of the homotopy type of $K$-theory such that all our principles
(a), (b), (a'), (b') hold.\footnote{and to be fully honest we do not just need
that the model has the correct homotopy type; it also needs to have the
correct infinite loop space structure, which is true for all of the well-known
models of $K$-theory (except for the plus construction approach).}
\end{remark}

We are ready to state our construction of the Tamagawa number. Let $F$ be a
number field, $S_{\infty}$ its set of infinite places, and fix a separable
closure $F^{\operatorname*{sep}}$ (i.e. an algebraic closure). Let
$M\in\operatorname*{CHM}(F,\mathbb{Q})$ be a Chow motive over $F$ in the
category of Chow motives with $\mathbb{Q}$-coefficients, \cite{MR2115000},
\cite{MR1265529}. If the reader does not like motives, we may take $M$ to be a
smooth proper $F$-variety, or even something as concrete as an elliptic curve.
Let $A$ be a finite-dimensional semisimple $\mathbb{Q}$-algebra and suppose
$M$ carries a right action by $A$ as a Chow motive. This just means that we
provide a $\mathbb{Q}$-algebra homomorphism%
\begin{equation}
A\longrightarrow\operatorname*{End}\nolimits_{\operatorname*{CHM}%
(F,\mathbb{Q})}(X)\text{.}\label{lww_Action}%
\end{equation}

If $\mathfrak{A}\subset A$ is an order in the algebra, we use the standard
notation%
\[
\widehat{\mathfrak{A}}:=\mathfrak{A}\otimes_{\mathbb{Z}}\widehat{\mathbb{Z}%
}\text{,}\qquad\widehat{A}:=A\otimes_{\mathbb{Q}}\mathbb{A}_{fin}%
\text{,}\qquad A_{\mathbb{R}}:=A\otimes_{\mathbb{Q}}\mathbb{R}\text{,}%
\]%
\[
\mathfrak{A}_{p}:=\mathfrak{A}\otimes_{\mathbb{Z}}\mathbb{Z}_{p}%
\qquad\text{and}\qquad A_{p}:=A\otimes_{\mathbb{Q}}\mathbb{Q}_{p}%
\text{,}\qquad\qquad\text{(for any prime number }p\text{)}%
\]
where $\mathbb{A}_{fin}$ denotes the ring of finite ad\`{e}les of the
rationals (Example: $\widehat{F}$ are the finite ad\`{e}les of $F$). We follow
Burns and\ Flach and shall use the same notation, so in particular

\begin{enumerate}
\item for any infinite place $v$ we write $H_{v}(M):=H^{\ast}(M_{v}%
(\mathbb{C}),(2\pi i)^{\ast}\mathbb{Q})$ for the Betti realization $M_{v}$
along $v:F\hookrightarrow\mathbb{C}$, and for $(\ast,\ast)$ picked appropriately,

\item and for each prime $p$, we write $H_{p}(M):=H_{\acute{e}t}^{\ast
}(M\times_{F}F^{\operatorname*{sep}},\mathbb{Q}_{p}(\ast))$ for the \'{e}tale
realization, for $(\ast,\ast)$ picked appropriately,

\item $H_{dR}$ for the de Rham realization with suitably shifted Hodge filtration.
\end{enumerate}

We will not go into details. This setting is roughly the same in all papers
about the ETNC or its historical ancestors.

\begin{elaboration}
\label{elab_Motives1}How to attach realizations to the motive is explained in
many places. Omitting a lot of details, the story is as follows: If $X$ is a
smooth proper $F$-variety, a typical choice of $M$ would be%
\[
M:=h^{i}(X)(r)\text{.}%
\]
In general, the splitting of $X$ into a direct sum of cohomology pieces
$h^{i}$ in the category of Chow motives is conjectural. But let us assume this
direct summand exists. Then%
\[
M=(X,q,r)\text{,}%
\]
where $X$ is the variety as before, $q$ an idempotent self-correspondence
which has the property to cut out the direct summand $h^{i}$, $r$ a formal
Tate twist parameter.\ One may write $M=q_{\ast}(X,\Gamma_{\operatorname*{id}%
:X\rightarrow X},0)(r)$, where $\Gamma_{\operatorname*{id}:X\rightarrow X}$ is
the graph of the identity map. Then, just as $M$ is cut out from an idempotent
inside the motive of all of $X$, for an infinite place $v$ we would define the
Betti realization as the corresponding image of the idempotent in the Betti
cohomology of $X$,%
\[
H_{v}(M)=q_{\ast}H^{\ast}(X_{v}(\mathbb{C}),(2\pi i)^{r}\mathbb{Q})\text{.}%
\]
This story then is analogous for the other realizations. The realizations all
come with extra structures (e.g. a pure $\mathbb{Q}$-Hodge structure on the
Betti cohomology groups), which we also tacitly keep as data, and which need
to be shifted according to $r$.
\end{elaboration}

The key point is that correspondences act on all Weil cohomology theories and
thus one can define these cohomology groups for all Chow motives,
\cite{MR2115000}. The same is possible for mixed motives, albeit technically
much harder \cite{MR1775312, MR2008720}.

\begin{definition}
[{\cite[\S 3.3, Definition 1]{MR1884523}}]For every infinite place $v\in
S_{\infty}$ pick a choice of $T_{v}$ of a projective $\mathfrak{A}$-lattice in
the Betti realization $H_{v}(M)$ (which is a right $A$-module by Equation
\ref{lww_Action}).

\begin{enumerate}
\item Let $p$ be any prime. Suppose for all $v\in S_{\infty}$ the
Betti-to-\'{e}tale comparison isomorphism%
\[
H_{v}(M)\otimes_{\mathbb{Q}}\mathbb{Q}_{p}\overset{\sim}{\longrightarrow}%
H_{p}(M)
\]
sends the $\mathfrak{A}_{p}$-submodule $T_{v}\otimes_{\mathbb{Z}}%
\mathbb{Z}_{p}$ to the same image $T_{p}$ on the right-hand side.

\item Suppose further that the $T_{p}$ of (1) is stable under the Galois
action $G_{F}$ on the right side.
\end{enumerate}

Any such choice $(T_{v})_{v\in S_{\infty}}$ is called a\emph{ projective}
$\mathfrak{A}$\emph{-structure} on the motive $M$.
\end{definition}

As explained in loc. cit., a projective $\mathfrak{A}$-structure need not
exist in general. We write $\mathsf{LCA}_{\mathfrak{A}}$ for the exact
category of locally compact topological right $\mathfrak{A}$-modules, as
introduced in \cite{etnclca}.

In this paper, we shall establish that the following is a fibration:

\begin{theoremannounce}
Suppose $\mathfrak{A}\subset A$ is a regular order. Then there is a canonical
fibration of pointed spaces%
\begin{equation}
K(\widehat{\mathfrak{A}})\times K(A)\longrightarrow K(\widehat{A})\times
K(A_{\mathbb{R}})\longrightarrow K(\mathsf{LCA}_{\mathfrak{A}})\text{,}
\label{lcixi1}%
\end{equation}
which we call the \emph{principal id\`{e}le fibration}.
\end{theoremannounce}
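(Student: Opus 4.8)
The plan is to exhibit the sequence \eqref{lcixi1} as coming from a localization sequence in $K$-theory, applied to the exact category $\mathsf{LCA}_{\mathfrak{A}}$ of locally compact $\mathfrak{A}$-modules. The heuristic from \cite{etnclca} is that a locally compact $\mathfrak{A}$-module is built out of a ``finite adelic'' part and a ``real'' part, glued along a discrete/compact rational lattice; concretely, the classical decomposition of a locally compact abelian group as $\mathbb{R}^n \times (\text{compact}) \times (\text{discrete})$ up to extension, refined $\mathfrak{A}$-equivariantly, should give a filtration of $\mathsf{LCA}_{\mathfrak{A}}$ by Serre subcategories (or a semiorthogonal-type decomposition at the level of exact categories) whose associated graded pieces are $\operatorname{PMod}(\widehat{\mathfrak{A}})$, $\operatorname{PMod}(\widehat A)$, $\operatorname{PMod}(A)$, $\operatorname{PMod}(A_{\mathbb{R}})$, arranged so that the Quillen localization (fiber) sequence reads exactly as \eqref{lcixi1}. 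The regularity hypothesis on $\mathfrak{A}$ is what lets us pass freely between $G$-theory and $K$-theory of $\mathfrak{A}$ (so that $K(\widehat{\mathfrak{A}})$ and not merely $G(\widehat{\mathfrak{A}})$ appears on the fiber), and it is also what makes the relevant quotient category identifications clean.

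First I would set up the two adjacent inclusions of exact categories. Let $\mathsf{LCA}_{\mathfrak{A},\mathrm{cd}} \subset \mathsf{LCA}_{\mathfrak{A}}$ be the subcategory of objects that are extensions of a discrete $\mathfrak{A}$-module by a compact one (no vector part), and inside the whole category consider the ``adelic blocks'' $\widehat{\mathfrak{A}}$ (compact open) and the vector part $A_{\mathbb{R}}$. The key structural input, to be proven, is a pair of results of the form: (i) the quotient exact category of $\mathsf{LCA}_{\mathfrak{A}}$ by the idempotent-complete subcategory generated by compact-and-vector objects is equivalent to $\operatorname{PMod}(A)$ (the ``rational lattice'' part), and (ii) the compact part and vector part contribute $K(\widehat{\mathfrak{A}})$, $K(\widehat A)$, $K(A_{\mathbb{R}})$ through the standard dévissage/localization for $\widehat{\mathbb{Z}} = \prod \mathbb{Z}_p$ acting together with Pontryagin duality identifying compact with $\widehat{\mathfrak{A}}$-lattices. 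Assembling: Quillen's localization theorem for exact categories (in the form applicable here, e.g. via the fibration theorem of \cite{MR1167575} whose model we are already committed to) produces a homotopy fiber sequence, and one then checks by inspection of the identifications that the total space is $K(\widehat A)\times K(A_{\mathbb{R}})$, the base is $K(\mathsf{LCA}_{\mathfrak{A}})$, and the fiber is $K(\widehat{\mathfrak{A}})\times K(A)$. Canonicity of the maps is forced: the map to the base is induced by the inclusion functors, and the map from the fiber is the boundary/connecting map, so nothing non-natural is ever chosen.

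The main obstacle I anticipate is twofold. The first difficulty is purely categorical: $\mathsf{LCA}_{\mathfrak{A}}$ is an exact but not abelian category, so one cannot invoke the abelian-category localization theorem directly; one has to verify the hypotheses of the appropriate exact-category localization/fibration theorem (admissibility of the subcategory, the relevant class of admissible morphisms being closed under the operations needed, idempotent completeness), and this is where the regularity of $\mathfrak{A}$ and the precise definition of $\mathsf{LCA}_{\mathfrak{A}}$ from \cite{etnclca} must be used carefully. The second is bookkeeping of the two ``halves'' of the adeles: the finite part $\widehat{\mathfrak{A}} \hookrightarrow \widehat A$ and the archimedean part $A \hookrightarrow A_{\mathbb{R}}$ enter the fiber sequence with a definite relative sign/orientation (this is exactly the subtlety flagged in the acknowledgement about signs in Theorem~\ref{thm_PrincipalIdeleFibration}), so that the fiber is genuinely $K(\widehat{\mathfrak{A}})\times K(A)$ sitting diagonally rather than some twisted variant; pinning this down amounts to tracking how Pontryagin duality interacts with the exact structure and with the connecting maps. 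Once those two points are handled, the remaining verifications — that the composite of the two maps in \eqref{lcixi1} is nullhomotopic and that the sequence is exact on homotopy groups, recovering \eqref{l_Rado_1} on $\pi_1$ via $K_1(\mathsf{LCA}_{\mathfrak{A}}) \cong K_0(\mathfrak{A},\mathbb{R})$ — are formal consequences of the localization theorem together with the computations already cited from \cite{etnclca}.
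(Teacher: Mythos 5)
There is a genuine gap: the central structural claim of your outline --- that \eqref{lcixi1} arises from a single Quillen localization sequence attached to a filtration of $\mathsf{LCA}_{\mathfrak{A}}$ by Serre-type subcategories with graded pieces $\operatorname*{PMod}(\widehat{\mathfrak{A}})$, $\operatorname*{PMod}(\widehat{A})$, $\operatorname*{PMod}(A)$, $\operatorname*{PMod}(A_{\mathbb{R}})$ --- is not available, and your item (i) is false as stated. The quotient of $\mathsf{LCA}_{\mathfrak{A}}$ by the compactly generated (in particular compact-plus-vector) objects is equivalent to $\mathsf{Mod}_{\mathfrak{A}}/\mathsf{Mod}_{\mathfrak{A},fg}$ (this is exactly the equivalence $\Phi$ of \cite[Proposition 11.1]{etnclca} used in the paper), not to $\operatorname*{PMod}(A)$; and the compact objects cannot ``contribute $K(\widehat{\mathfrak{A}})$'' as in your item (ii), because $K(\mathsf{LCA}_{\mathfrak{A},C})=0$ by the Eilenberg swindle (Lemma \ref{lemma_EilenbergSwindle}). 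Note also that in \eqref{lcixi1} the base is $K(\mathsf{LCA}_{\mathfrak{A}})$ itself, so a localization sequence internal to $\mathsf{LCA}_{\mathfrak{A}}$ cannot literally produce it: you would need $\mathsf{LCA}_{\mathfrak{A}}$ to be a quotient of a category with $K$-theory $K(\widehat{A})\times K(A_{\mathbb{R}})$ by one with $K$-theory $K(\widehat{\mathfrak{A}})\times K(A)$, and no such exact-category-level statement is constructed (nor is one known); the functor $\operatorname*{PMod}(\widehat{A})\times\operatorname*{PMod}(A_{\mathbb{R}})\rightarrow\mathsf{LCA}_{\mathfrak{A}}$ is far from a quotient functor.

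What the actual argument requires, and what is missing from your proposal, is a comparison of \emph{two different} localization fiber sequences sharing the fiber $G(\mathfrak{A})$: the Wall-type arithmetic sequence $G(\mathfrak{A})\rightarrow G(\widehat{\mathfrak{A}})\oplus G(A)\rightarrow G(\widehat{A})$, obtained from Quillen localization at the Serre subcategories of $\mathbb{Z}$-torsion modules in $\mathsf{Mod}_{\mathfrak{A},fg}$ and $\mathsf{Mod}_{\widehat{\mathfrak{A}},fg}$ (Lemma \ref{lemma_WallSeqForGTheory}), and the sequence $G(\mathfrak{A})\rightarrow K(\mathsf{LCA}_{\mathfrak{A},cg})\rightarrow K(\mathsf{LCA}_{\mathfrak{A}})$ coming from the compactly generated localization of \cite{etnclca} together with $K(\mathsf{Mod}_{\mathfrak{A}})=0$. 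One then has to produce the vertical comparison maps and prove a three-square commutative ladder; this is where the real content lies and it is not formal: the square involving the middle terms needs Local Triviality (Theorem \ref{thm_LocalTriviality}) and the Reciprocity Law (Theorem \ref{thm_reciprocity_law}), both proved via the Additivity Theorem applied to sequences such as $X\hookrightarrow X_{\mathbb{R}}\twoheadrightarrow X_{\mathbb{R}}/X$ and $\widehat{\mathfrak{A}}\hookrightarrow\widehat{A}\twoheadrightarrow\widehat{A}/\widehat{\mathfrak{A}}$, and the square comparing the two boundary maps requires an explicit computation showing $\partial_{\ddag}=-\partial_{\Diamond}$, which is precisely the sign subtlety you flag but do not resolve (it has nothing to do with Pontryagin duality). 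Your correct observations --- regularity to pass between $G$- and $K$-theory, and the need for a sign bookkeeping --- are peripheral; without the Wall square, the comparison ladder, and the reciprocity/local-triviality inputs, the proposed route does not reach the statement.
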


This will be Theorem \ref{thm_PrincipalIdeleFibration}. We will shortly see
how the spaces in this sequence relate to the $p$-adic, de Rham and Betti
realization. The word \textquotedblleft fibration\textquotedblright\ is meant
in the sense of topology (but we shall not need to know anything technical
about it right now): The zero object $0$ of any of the involved categories
defines, by principle (a), a point in the $K$-theory spaces. We use this
canonical point as the base point for each of the involved spaces. The
statement means that the space $K(\widehat{A})\times K(A_{\mathbb{R}})$ is
fibered over the base space $K(\mathsf{LCA}_{\mathfrak{A}})$, and each fiber
looks like (that is: has the homotopy type of) $K(\widehat{\mathfrak{A}%
})\times K(A)$. For example, the M\"{o}bius band is fibered over the circle,%
\begin{equation}%
{\includegraphics[
height=0.8441in,
width=2.1958in
]%
{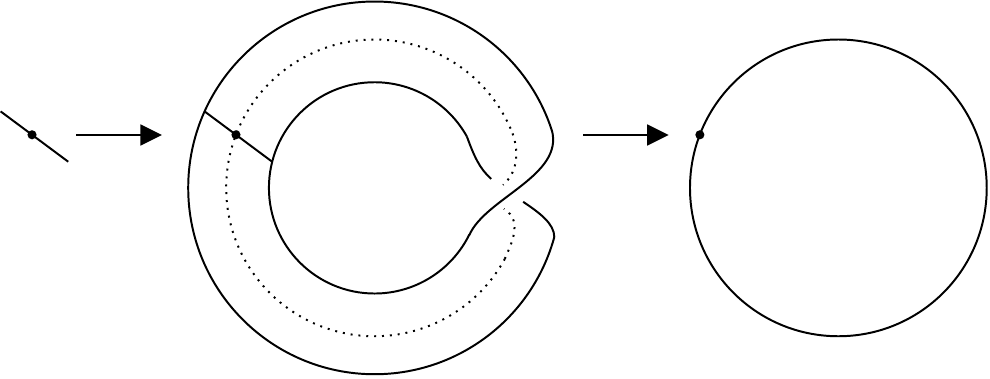}%
}
\label{l_Mobius}%
\end{equation}
and the above three constituents, fiber (left), total space (middle), and base
space (right) correspond to the three terms in Equation \ref{lcixi1}. Of
course, the spaces in Equation \ref{lcixi1} are a lot more complicated and it
would be impossible to draw a picture.

Actually, our construction of the Tamagawa number does not even need the full
strength of the above theorem. We shall only use that the composition of both
maps is zero. More geometrically: Once the fiber is mapped all to the right in
Equation \ref{lcixi1}, there exists a homotopy contracting this image to the
base point.

Using exactly the same notation as in Burns--Flach \cite{MR1386106},
\cite{MR1884523} for the individual groups, we define%

\begin{align}
\Xi(M)  &  :=H_{f}^{0}(F,M)-H_{f}^{1}(F,M)+H_{f}^{1}(F,M^{\ast}(1))^{\ast
}-H_{f}^{0}(F,M^{\ast}(1))^{\ast}\label{ltiops1}\\
&  -\sum_{v\in S_{\infty}}H_{v}(M)^{G_{v}}+\sum_{v\in S_{\infty}}\left(
H_{dR}(M)/F^{0}\right) \nonumber
\end{align}
as a point in $K(A)$. To clarify: (a) The meaning of \textquotedblleft%
$+$\textquotedblright\ and \textquotedblleft$-$\textquotedblright\ is as in
Equation \ref{lww_AddSubtract} and unravelled from left to right.

Exactly as in Burns--Flach, we write $H^{i}(F,M)$ for (what is usually
shortened to be called) the motivic cohomology of $M$ with $\mathbb{Q}%
$-coefficients. See \cite[\S 3.1]{MR1884523}. This notation is chosen to be
suggestive for%
\begin{equation}
H^{i}(F,M)=\operatorname*{Ext}\nolimits_{\mathcal{MM}_{F}}^{i}(\mathbb{Q}%
,M)\text{,}\label{l_lemo2}%
\end{equation}
which is how these motivic cohomology groups can be defined in terms of the
category of mixed motives over $F$ (e.g. using Voevodsky's $DM$) and using the
natural contravariant functor sending a Chow motive into mixed motives.
Analogously, we write $H_{f}^{i}(F,M)$ for what is called the `finite part'
(and also known as the `unramified part' sometimes); $H_{v}(M)$ denotes Betti
cohomology under the complex realization of the base change $M\times
_{F}\mathbb{C}$ along $\sigma:F\hookrightarrow\mathbb{C}$, $G_{v}$ the
decomposition group, so that $H_{v}(M)^{G_{v}}$ is the piece fixed under
complex conjugation for real places, $H_{dR}(M)/F^{0}$ is de Rham cohomology
(over $F$) modulo $F^{0}$, where $F^{\bullet}$ is the standard decreasing filtration.

\begin{elaboration}
Let us continue Elaboration \ref{elab_Motives1}. The picture is as follows: In
a lot of literature by the \textquotedblleft motivic cohomology of a motive
$M$\textquotedblright\ one would mean $H^{i}(M,\mathbb{Q}%
(j))=\operatorname*{Ext}\nolimits_{\mathcal{MM}_{F}}^{i}(M,\mathbb{Q}(j))$,
where $\mathbb{Q}(j)$ are the motivic coefficient sheaves as for example
introduced in the book \cite[Lecture 3]{MR2242284}. As we restrict to rational
coefficients, the motivic cohomology groups $\operatorname*{Ext}%
\nolimits_{\mathcal{MM}_{F}}^{i}(X,\mathbb{Q}(r))$ can also be expressed as
eigenspaces of the Adams operations on $K$-theory, which is historicially the
pioneering approach to define them at all. This is also called
\textquotedblleft\textit{absolute} motivic cohomology\textquotedblright\ and
is simply called motivic cohomology in \cite{MR2242284}. The connection
between this usage of the term and the one here is as follows: if we again
consider a motive of the particular form%
\[
M:=h^{i}(X)(r)
\]
as in Elaboration \ref{elab_Motives1}, then in a 6-functor formalism of mixed
motivic sheaves\footnote{as exists by work of Ayoub} there is a (Leray-type)
spectral sequence%
\[
\operatorname*{Ext}\nolimits_{\mathcal{MM}_{F}}^{j}(\mathbb{Q},h^{i}%
(X)(r))\Longrightarrow\operatorname*{Ext}\nolimits_{\mathcal{MM}_{F}}%
^{j+i}(X,\mathbb{Q}(r))\text{,}%
\]
where one has $\operatorname*{Ext}\nolimits_{\mathcal{MM}_{F}}^{j}%
(\mathbb{Q},-)=0$ for $j\neq0,1$ thanks to $F$ being a number field
(\cite[Corollary 1.1.13]{MR1775312}). Having only two possibly non-zero
columns, one gets a supply of short exact sequences%
\begin{equation}
0\rightarrow\operatorname*{Ext}\nolimits_{\mathcal{MM}_{F}}^{1}(\mathbb{Q}%
,h^{i}(X)(r))\rightarrow\operatorname*{Ext}\nolimits_{\mathcal{MM}_{F}}%
^{i+1}(X,\mathbb{Q}(r))\rightarrow\operatorname*{Ext}\nolimits_{\mathcal{MM}%
_{F}}^{0}(\mathbb{Q},h^{i+1}(X)(r))\rightarrow0\text{.}\label{l_lemo3}%
\end{equation}
Instead of following this picture coming from the Beilinson conjectures, a lot
of literature (like \cite{MR1386106}, \cite{MR1884523}, \cite{MR2882695}%
,\ldots) takes this as an implicit axiom, and presents the $A$-modules
$H^{i}(F,M)$ of Equation \ref{l_lemo2} as being plainly defined as the output
of what the sequence in Equation \ref{l_lemo3} would give. The $H^{i}(F,M)$
would be called \textquotedblleft\textit{geometric} motivic
cohomology\textquotedblright; see for example the introduction of
\cite{MR1439046} or \cite{MR1265544}.
\end{elaboration}

All these objects%
\[
H^{i}(F,M)\qquad H_{f}^{i}(F,M)\qquad H_{v}(M)\qquad H_{dR}(M)
\]
as well as their counterparts for the Tate twist $M(1)$, carry a canonical
right $A$-module structure coming from the right action of $A$ on the motive,
Equation \ref{lww_Action}. Hence, by principle (a) they each define a point in
$K(A)$ and via the operations of Equation \ref{lww_AddSubtract} we can form
$\Xi(M)$. Yes, it is true, this object \textit{depends} on how we bracket it
to evaluate the sums and negations, but we just once and for all choose to
unravel it from left to right. In fact, a posteriori the choice turns out not
to matter, so this aspect is not very important. Any choice is good enough.

Now we have a point $\Xi(M)$ in the space $K(A)$. Next, for any prime number
$p$ there is a comparison quasi-isomorphism leading via principle (b') to a
path%
\begin{equation}
\vartheta_{p}:\Xi(M)\otimes_{A}A_{p}\overset{\sim}{\longrightarrow}R\Gamma
_{c}\left(  \mathcal{O}_{F,S_{p}},T_{p}\right)  \otimes_{\mathfrak{A}_{p}%
}A_{p} \label{lww_Z3}%
\end{equation}
i.e. a path from $\Xi(M)\otimes_{A}A_{p}$ to $R\Gamma_{c}\left(
\mathcal{O}_{F,S_{p}},T_{p}\right)  \otimes_{\mathfrak{A}_{p}}A_{p}$.

\begin{elaboration}
\label{elab_ThetaPConstructedAsInBF}We explain where this comes from: The
bounded complex $R\Gamma_{c}\left(  \mathcal{O}_{F,S_{p}},T_{p}\right)
\otimes_{\mathfrak{A}_{p}}A_{p}$ defines a point in $K(\widehat{A})$ by
principle (a'). Next, $\Xi(M)$ is a point in $K(A)$ by construction, and under
the exact functor $(-)\mapsto(-)\otimes_{A}A_{p}$ inducing a map
$K(A)\rightarrow K(\widehat{A})$ it gets sent to a point which we may
reasonably call $\Xi(M)\otimes_{A}A_{p}$ (it can be spelled out explicitly as
the result of tensoring each summand in Equation \ref{ltiops1} with $A_{p}$).
Now use the quasi-isomorphisms of (and here we quote Burns--Flach
\cite[\S 3.4, middle of page 526]{MR1884523} directly) \textquotedblleft(27),
(28), (23), the isomorphisms (24), (19) or the triangle (22) for all $v\in
S_{p,f}$ and finally [to] the triangle (26)$_{\text{vert}}$\textquotedblright%
\ (exactly the input used in the framework of virtual objects loc. cit.) as
input for principle (b') to turn a quasi-isomorphism into a path. By directly
quoting this from Burns--Flach, we do not only save ourselves from repeating
the setup of \cite[\S 3.2]{MR1884523}, it will also help us in
\S \ref{sect_CompatProofSection} to prove the comparison to the Burns--Flach
approach, because our path $\vartheta_{p}$ is literally made from the same
quasi-isomorphisms as the analogous isomorphism of virtual objects in their
paper (and also called $\vartheta_{p}$ there).
\end{elaboration}

Thus, we learn: Once we send $\Xi(M)$ along the first arrow to $K(\widehat
{A})$, then in this space we have a canonical path from $\Xi(M)\otimes
_{A}A_{p}$ to $R\Gamma_{c}\left(  \mathcal{O}_{F,S_{p}},T_{p}\right)
\otimes_{\mathfrak{A}_{p}}A_{p}$. On the other hand, it is conjectured that:

\begin{conjecture}
[{e.g., \cite{MR1206069}, \cite[Conjecture 1]{MR1884523}}]There is the basic
exact sequence%
\begin{align*}
&  0\longrightarrow H^{0}(F,M)_{\mathbb{R}}\overset{\epsilon}{\longrightarrow
}\ker\left(  \alpha_{M}\right)  \overset{r_{B}^{\ast}}{\longrightarrow}\left(
H_{f}^{1}(F,M^{\ast}(1))_{\mathbb{R}}\right)  ^{\ast}\overset{\delta
}{\longrightarrow}\\
&  \qquad\qquad\qquad H_{f}^{1}(F,M)_{\mathbb{R}}\overset{r_{B}}%
{\longrightarrow}\operatorname*{coker}\left(  \alpha_{M}\right)
\overset{\epsilon^{\ast}}{\longrightarrow}\left(  H^{0}(F,M^{\ast
}(1))_{\mathbb{R}}\right)  ^{\ast}\longrightarrow0\text{.}%
\end{align*}
in the category $\operatorname*{PMod}(A_{\mathbb{R}})$.
\end{conjecture}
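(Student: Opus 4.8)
This is the Fontaine--Perrin-Riou fundamental exact sequence, recorded here from \cite{MR1206069} and \cite[Conjecture~1]{MR1884523}; it is \emph{not} a statement we can prove unconditionally, since it subsumes the Beilinson conjectures on regulators. The plan is therefore to isolate the genuinely conjectural input and then to check that the passage to the equivariant category $\operatorname{PMod}(A_{\mathbb{R}})$ carries no additional content.

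First I would recall that, by its very construction in \cite[\S 3.1]{MR1884523}, the map $\alpha_M$ is assembled from the real Beilinson regulator on $H_{f}^{1}(F,M)_{\mathbb{R}}$, the dual Beilinson regulator accounting for the terms $\bigl(H_{f}^{i}(F,M^{\ast}(1))_{\mathbb{R}}\bigr)^{\ast}$, and the period isomorphism relating the Betti pieces $\bigoplus_{v\in S_{\infty}}H_{v}(M)^{G_{v}}$ with the de Rham piece $H_{dR}(M)/F^{0}$. The displayed six-term sequence is then exactly the tautological kernel--cokernel exact sequence of $\alpha_M$, once one identifies its source and target with the relevant direct sums, the two outermost terms $H^{0}(F,M)_{\mathbb{R}}$ and $\bigl(H^{0}(F,M^{\ast}(1))_{\mathbb{R}}\bigr)^{\ast}$ being forced into place by the vanishing $\operatorname{Ext}^{j}_{\mathcal{MM}_{F}}(\mathbb{Q},-)=0$ for $j\neq 0,1$ recalled above. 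Written this way, the sequence holds \emph{if and only if} $\alpha_M$ is bijective up to those boundary terms, i.e.\ precisely when the Beilinson regulators (for $M$ and for $M^{\ast}(1)$) are isomorphisms after $\otimes_{\mathbb{Q}}\mathbb{R}$. This is the hard part, and it is open: it is exactly the Beilinson conjecture. The implicit finiteness of the motivic cohomology groups, needed for them to lie in $\operatorname{PMod}$ at all, is a further conjectural input.

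Finally I would observe that, granting the classical sequence of $\mathbb{R}$-vector spaces, its upgrade to an exact sequence in $\operatorname{PMod}(A_{\mathbb{R}})$ is formal. Every group in sight is finitely generated over $A_{\mathbb{R}}=A\otimes_{\mathbb{Q}}\mathbb{R}$, which is semisimple, hence automatically projective, so $\operatorname{PMod}(A_{\mathbb{R}})$ is a semisimple abelian category; and every arrow --- period map, regulators, duals --- is $A_{\mathbb{R}}$-linear because the $A$-action on $M$ is induced by correspondences, which act naturally on all realizations and on motivic cohomology. Since a complex of finitely generated $A_{\mathbb{R}}$-modules is exact precisely when the underlying complex of $\mathbb{R}$-vector spaces is (the forgetful functor being exact and conservative over a semisimple ring), exactness in $\operatorname{PMod}(A_{\mathbb{R}})$ is equivalent to exactness of the underlying $\mathbb{R}$-linear sequence. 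Hence the equivariant statement is exactly as hard as the classical Fontaine--Perrin-Riou conjecture for $M$ with its $A$-action forgotten, and from here on we take it --- as does \cite{MR1884523} --- as a standing hypothesis.
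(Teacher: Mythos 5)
You have correctly recognized that this statement is a conjecture (the Fontaine--Perrin-Riou six-term sequence, cited from \cite{MR1206069} and \cite[Conjecture 1]{MR1884523}), which the paper does not prove but simply adopts as a standing hypothesis in order to extract the trivialization $\vartheta_{\infty}$ of Equation \ref{lww_x5}; your analysis of its conjectural content and of the formal reduction of the $\operatorname{PMod}(A_{\mathbb{R}})$-statement to the classical one is consistent with how the paper treats it.
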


However, a sequence being exact is of course the same as saying that it is
quasi-isomorphic to the zero complex. The map to a zero object is canonical,
so we obtain a canonical quasi-isomorphism%
\begin{equation}
\vartheta_{\infty}:\Xi(M)\otimes_{A}A_{\mathbb{R}}\overset{\sim}%
{\longrightarrow}\mathbf{0}\text{.} \label{lww_x5}%
\end{equation}
Hence, by principle (b') we learn:\ Once we send $\Xi(M)$ along the first
arrow in Equation \ref{lcixi1} to $K(A_{\mathbb{R}})$, then in this space we
have a canonical path from $\Xi(M)\otimes_{A}A_{\mathbb{R}}$ to zero.

Let us summarize this: Under the first arrow in the principal id\`{e}le
fibration, the point $\Xi(M)$ gets send to a point in $K(\widehat{A})\times
K(A_{\mathbb{R}})$ and here for all primes and at infinity we get a canonical
collection of paths%
\[
\vartheta_{(-)}:\Xi(M)\otimes_{A}(-)\qquad\longrightarrow\qquad\left(  \left.
\text{zero/some object from }K(\mathfrak{A}_{p})\right.  \right)  \text{.}%
\]
Now, let us use the second arrow in Equation \ref{lcixi1}. Since the
composition of both arrows in a fibration is zero (or more precisely: can be
contracted to the constant zero map), we obtain the following: The object
$\Xi(M)$ goes to the zero base point in $K(\mathsf{LCA}_{\mathfrak{A}})$ since
it comes all from the left, namely $K(A)$. Moreover, all the objects
$R\Gamma_{c}\left(  \mathcal{O}_{F,S_{p}},T_{p}\right)  \otimes_{\mathfrak{A}%
_{p}}A_{p}$ on the right in Equation \ref{lww_Z3} go to zero in
$K(\mathsf{LCA}_{\mathfrak{A}})$ because they come all from the left, namely
$K(\widehat{\mathfrak{A}})$ (the $\mathfrak{A}_{p}$-modules are $\widehat
{\mathfrak{A}}$-modules as well). But this just means that the endpoints of
the paths $\vartheta_{p}$ resp. $\vartheta_{\infty}$ all go to zero in
$K(\mathsf{LCA}_{\mathfrak{A}})$. That is: \textit{they all define closed
loops} around the zero object. Hence, we get elements%
\[
\vartheta_{p},\vartheta_{\infty}\in\pi_{1}K(\mathsf{LCA}_{\mathfrak{A}%
})\text{,}%
\]
and this in turn is just the $K$-group $K_{1}(\mathsf{LCA}_{\mathfrak{A}})$.

\begin{example}
We illustrate this construction by drawing the principal id\`{e}le fibration
in a similar style as our example of the M\"{o}bius band in Equation
\ref{l_Mobius}. For simplicity, we only include a single finite prime $p$ and
$\infty$.%
\[%
{\includegraphics[
height=1.2635in,
width=4.9744in
]%
{fibration_2a-eps-converted-to.pdf}%
}
\]
On the left, we have the fiber. Besides the zero object $\mathbf{0}$, we have
$\Xi(M)$ as a point in $K(A)$ and $R\Gamma_{c}\left(  \mathcal{O}_{F,S_{p}%
},T_{p}\right)  $ as a point in $K(\widehat{\mathfrak{A}})$, by principle (a)
and its variations. These are the various points depicted on the left. Once we
map them to the total space, depicted in the middle, we can construct paths
between these points. These come from principle (b) and its variations:
Quasi-isomorphisms like $\vartheta_{p}$ and $\vartheta_{\infty}$ give rise to
paths. These paths do not need to exist in the fiber on the left, because in
general they are not $\widehat{\mathfrak{A}}$- or $A$-module isomorphisms. On
the right, we have the base space. The three points in the fiber now all get
mapped to the base point. Thus, the paths we have drawn in the total space now
become closed loops. Thus, they define an element in the fundamental group
$\pi_{1}K(\mathsf{LCA}_{\mathfrak{A}})$.
\end{example}

We also construct a class $L\Omega(M,\mathfrak{A}):=\hat{\delta}%
_{\mathfrak{A},\mathbb{R}}^{1}(L^{\ast}(\left.  M_{A}\right.  ,s))$ attached
to the equivariant special $L$-value. To this end, we construct an extended
boundary map $\hat{\delta}_{\mathfrak{A},\mathbb{R}}^{1}$ as in
\cite{MR1884523}. The construction of $L\Omega$ is essentially the same as
Burns and Flach give, so nothing new happens here. It turns out that all but
finitely many of the loops $\vartheta_{(-)}$ are trivial, so:

\begin{definition}
We define%
\begin{equation}
R\Omega(M,\mathfrak{A}):=\prod_{v}\vartheta_{v}\qquad\in\qquad K_{1}%
(\mathsf{LCA}_{\mathfrak{A}})\text{,} \label{lww_Z4}%
\end{equation}
where $v$ runs through all places of $\mathbb{Q}$. We call%
\[
T\Omega(M,\mathfrak{A}):=L\Omega(M,\mathfrak{A})+R\Omega(M,\mathfrak{A})
\]
the\emph{ equivariant Tamagawa number} of the motive $M$ with respect to the
order $\mathfrak{A}$.
\end{definition}

\begin{elaboration}
We have swept the dependency on $S$ and the projective $\mathfrak{A}%
$-structure under the rug. However, an argument analogous to \cite[Lemma
5]{MR1884523} removes this apparent dependency in a way completely analogous
to how Burns--Flach establish this.
\end{elaboration}

\section{\label{sect_GettingPrecise}Getting precise}

In the previous section we have explained the construction of our equivariant
Tamagawa number $T\Omega$ along what we have called principles (a) and (b). We
had focussed on explaining the geometry of our construction, but had neglected
justifying these principles rigorously. We do this in this section.

\subsection{Spaces\label{subsect_Spaces}}

In \S \ref{sect_Overview} we were talking about $K$-theory as a \textit{space}%
.\ What do we mean? Basically, there are two fundamentally equivalent ways to
do homotopy theory. Close to intuition is the following one: By
\textquotedblleft space\textquotedblright\ we refer to a topological space
$X$. A point $x$ is really an element $x\in X$ of this space, a path is a
continuous map $p:[0,1]\rightarrow X$, $p(0)$ is the starting point and $p(1)$
the endpoint, and so on.

The reader has surely seen this. The category $\mathsf{Top}_{\bullet}$ has all
topological spaces as objects along with a chosen point, called the base
point. Morphisms are the continuous maps preserving the pointing. The term
\textquotedblleft fibration\textquotedblright\ is defined as a Serre
fibration. Homotopy groups are defined as the based homotopy classes of
pointed maps%
\[
S^{n}\longrightarrow X\text{,}%
\]
where $S^{n}$ denotes the $n$-sphere, pointed at $(1,0,\ldots,0)$.

This setup is very intuitive since it connects well with how we usually do
geometry. However, one can also do homotopy theory entirely combinatorially
without ever touching a topology: Then, by \textquotedblleft
space\textquotedblright\ we refer to a simplicial set $X_{\bullet}$. A point
$x$ is a $0$-simplex, i.e. an element $x\in X_{0}$. An elementary path is a
$1$-simplex, i.e. an element $p\in X_{1}$, $\partial_{1}p\in X_{0}$ is the
starting point and $\partial_{0}p\in X_{0}$ the endpoint, paths are finite
concatenations of elementary paths\footnote{Some people prefer only working
with elementary paths as the notion of \textquotedblleft
path\textquotedblright. This is also reasonable (and simpler), but then one
only gets a well-behaved concept of paths for fibrant simplicial sets.}, and
so on.

The category $\mathsf{sSet}_{\bullet}$ has simplicial sets as objects along
with a chosen point, called the base point. Morphisms are maps of simplicial
sets preserving the pointing. The term \textquotedblleft
fibration\textquotedblright\ is defined as a Kan fibration. Homotopy groups
are defined as the simplicial homotopy classes of pointed maps%
\begin{equation}
\Delta(n)\longrightarrow\operatorname*{Ex}\nolimits^{\infty}X\text{,}
\label{l_Kan}%
\end{equation}
where $\Delta(n)$ denotes the standard simplicial $n$-simplex and
$\operatorname*{Ex}\nolimits^{\infty}$ is Kan's functorial fibrant replacement
functor (a technical device which is of no importance to what we do in this
paper). General references for simplicial homotopy theory are \cite{MR1206474}%
, \cite{MR1650134}, or \cite{MR2840650}.

Both approaches are very parallel. And indeed Quillen proved both
$\mathsf{Top}_{\bullet}$ and $\mathsf{sSet}_{\bullet}$ are so-called `model
categories', which one can think of as saying that they both possess all the
structure to do homotopy theory. A few references: $\mathsf{Top}_{\bullet}$
and its model category structure is very carefully set up and discussed in
\cite[\S 2.4]{MR1650134}; $\mathsf{sSet}_{\bullet}$ and its model category
structure is set up in \cite[\S 3.2]{MR1650134}.

Indeed, there is an adjunction%
\begin{equation}
\mathsf{sSet}_{\bullet}\rightleftarrows\mathsf{Top}_{\bullet}\text{,}
\label{lwaa1}%
\end{equation}
the left adjoint sending a simplicial set $X_{\bullet}$ to its geometric
realization $\left\vert X_{\bullet}\right\vert $ (which basically glues
topological $i$-cells according to the glueing rules prescribed by the
simplicial set structure) and reversely the right adjoint sending a space to
its simplicial set of maps $\operatorname*{Sing}_{\bullet}(X):=\{f:S^{n}%
\rightarrow X$, $f$ continuous$\}$, \cite[p. 77]{MR1650134}. Sweeping some
technicalities under the rug, this adjunction can be promoted to a so-called
Quillen equivalence, which roughly speaking means that the concepts of
fibration, homotopy groups, etc. of both model categories are
compatible\footnote{Really $\mathsf{Top}_{\bullet}$ should first be replaced
by $k$-spaces $\mathsf{K}_{\bullet}$. Doing this, the said adjunction with the
same functors gives a Quillen equivalence, \cite[Theorem 3.6.7]{MR1650134}.
Then, there is a further Quillen equivalence between $\mathsf{K}_{\bullet}$
and $\mathsf{Top}_{\bullet}$, \cite[Corollary 2.4.24]{MR1650134}. It follows
that both approaches are connected by a zig-zag of Quillen equivalences.}. We
do not need to understand any of that for this paper, only the following
consequence: There is no difference between whether we do homotopy theory in
$\mathsf{sSet}_{\bullet}$ or $\mathsf{Top}_{\bullet}$.\medskip

As a convention: From now on, we work in the setting of $\mathsf{sSet}%
_{\bullet}$, i.e. the word `space' means a simplicial set. Keeping the
equivalence of $\mathsf{sSet}_{\bullet}$ and $\mathsf{Top}_{\bullet}$ in mind,
we may however always use $\mathsf{Top}_{\bullet}$ whenever we feel in need to
get some geometric intuition.

\subsection{Algebraic $K$-theory\label{sect_AlgKThy}}

\subsubsection{Definition as a space}

As a motivation, recall the definition of $K_{0}$ (we ask the reader for
forgiveness if this appears too elementary, but there is a good reason to go
through this): If $R$ is a ring, let $Pr(R)$ denote the set of isomorphism
classes of finitely generated projective right $R$-modules. This is an abelian
monoid under the direct sum $[X]+[Y]:=[X\oplus Y]$. However, there is no
reason why additive inverses, like some \textquotedblleft$-[X]$%
\textquotedblright\ would have to exist. Then define%
\[
K_{0}(R):=GC(Pr(R))\text{,}%
\]
where $GC(-)$ denotes the group completion: This is a general operation
turning abelian monoids into abelian groups. It can be defined as follows: If
$M$ is an abelian monoid, consider the quotient set%
\[
GC(M):=\left\{  \frac{\text{pairs }(P,Q)\in M\times M}{(P,Q)\sim(P\underset
{M}{+}S,Q\underset{M}{+}S)\text{ for all }S\in M}\right\}  \text{.}%
\]
It is easy to show that defining%
\[
(P,Q)+(P^{\prime},Q^{\prime}):=(P\underset{M}{+}P^{\prime},Q\underset{M}%
{+}Q^{\prime})\qquad\text{and}\qquad-(P,Q):=(Q,P)
\]
renders $GC(M)$ into an abelian group. Define%
\[
M\longrightarrow GC(M)\qquad\text{by}\qquad P\mapsto(0,P)\text{.}%
\]
One can show that any monoid morphism from $M$ to an abelian group $A$ factors
uniquely over $GC(M)$, so $GC(-)$ is the universal construction transforming
abelian monoids into abelian groups.\footnote{more precise: it is the left
adjoint of the forgetful functor from abelian groups to abelian monoids.} This
construction of $K_{0}$ extends to split exact categories $\mathsf{C}$, define
$K_{0}(\mathsf{C}):=GC(Iso(\mathsf{C}))$, where $Iso(\mathsf{C})$ is the set
of isomorphism classes of objects, turned into a monoid using the direct sum.

\begin{remark}
\label{rmk_MinusOneMapGC}Every element $P\in M$ maps to $(0,P)$ in $GC(M)$.
Correspondingly, we observe $-P=(P,0)$, and in particular the automorphism of
$GC(M)$ exchanging $(P,Q)$ with $(Q,P)$ corresponds to multiplication by $-1$.
\end{remark}

There is a way to define also all higher $K$-groups and in particular the
$K$-theory space $K(\mathsf{C})$ in a rather similar way: If $\mathsf{C}$ is a
category, we write $s\mathsf{C}$ for the category of simplicial objects in
$\mathsf{C}$. A \emph{Waldhausen category} is a pointed\footnote{a pointed
category is a category along with a choice of a fixed zero object} category
with a choice of cofibrations and a choice of weak equivalences, satisfying
the usual axioms. A detailed definition is given in Weibel \cite[Ch. II,
Definition 9.1.1]{MR3076731}. We write $X^{\prime}\hookrightarrow X$ to denote
cofibrations. In a category with cofibrations every cofibration admits a
cokernel (as follows from the axioms), and any sequence%
\[
X^{\prime}\hookrightarrow X\longrightarrow C\text{,}%
\]
where $C$ is a cokernel object, is called a \emph{cofibration sequence}. We
will write any cofibration sequence as%
\[
X^{\prime}\hookrightarrow X\twoheadrightarrow C\text{.}%
\]

\begin{remark}
[Coproducts exist]\label{rmk_Coproducts}Every Waldhausen category has finite
coproducts. By the axioms for any object $X\in\mathsf{C}$ there is a canonical
cofibration $0\hookrightarrow C$ and taking the pushout of this arrow along a
second copy of itself, which exists by the axioms, we get an object which we
denote by $C\vee C$ (following the notation of \cite{MR1167575}). This object
is unique up to unique isomorphism.
\end{remark}

As is customary, we usually write $w\mathsf{C}$ for the category whose objects
are the same as those of the Waldhausen category $\mathsf{C}$, but we only
keep the weak equivalences as morphisms. Similarly, we write $i\mathsf{C}$ if
we use the same objects, but only keep the isomorphisms. Let $\mathsf{Wald}$
denote the category\footnote{treat this as a $2$-category if you prefer, but
it is not really necessary for our purposes} whose objects are Waldhausen
categories and morphisms are exact functors. In \cite{MR1167575} this category
is called \textquotedblleft$w\mathcal{C}of$\textquotedblright. Waldhausen's
$S$-construction is a functor%
\[
S_{\bullet}:\mathsf{Wald}\longrightarrow s\mathsf{Wald}%
\]
from Waldhausen categories to simplicial Waldhausen categories. Note that the
latter is a simplicial object in categories (and not, as one could think, a
category enriched in simplicial sets. Unfortunately, both are frequently
called a `simplicial category'). We write $PX_{\bullet}$ for the simplicial
path space (where customarily, we use the right path space. There is also a
left path space, see \cite[\S 2.2.3]{MR3782417} for a comparison. In the end,
this choice does not matter). On simplices, $PX_{n}:=X_{n+1}$, in both cases.

Following \cite{MR1167575}, define a functor%
\[
G_{\bullet}:\mathsf{Wald}\longrightarrow s\mathsf{Wald}%
\]
by forming the Cartesian square%
\begin{equation}%
\xymatrix{
G_{\bullet}\mathsf{C} \ar[r] \ar[d] & PS_{\bullet}\mathsf{C} \ar[d] \\
PS_{\bullet}\mathsf{C} \ar[r] & S_{\bullet}\mathsf{C}.
}
\label{lkiops1}%
\end{equation}
While this a priori only defines an object $G\in s\mathsf{Cat}$, define the
cofibrations (resp. weak equivalences) to be the Cartesian product, too. This
means that a morphism in $G_{n}$ is a cofibration (resp. weak equivalence) if
it is given by a pair $(f_{1},f_{2})$ of cofibrations (resp. weak
equivalences) of $PS_{\bullet}\mathsf{C}\times PS_{\bullet}\mathsf{C}$.

\begin{definition}
[\cite{MR909784}, \cite{MR1167575}]If $\mathsf{C}$ is a Waldhausen category,
the construction $G_{\bullet}\mathsf{C}$ is called the\ \emph{Gillet--Grayson
model} of $\mathsf{C}$.
\end{definition}

\begin{remark}
[Exact categories as Waldhausen categories]\label{rmk_ExactToWald}Gillet and
Grayson \cite{MR909784, MR2007234} originally introduced $G_{\bullet
}\mathsf{C}$, but they only considered it for exact categories. This amounts
to taking a pointed exact category, taking its admissible monics as the class
of cofibrations and its isomorphisms as the class of weak equivalences.
\end{remark}

Only in \cite{MR1167575} the construction was extended to (fairly) arbitrary
Waldhausen categories. We will have crucial need for this broader variant, so
\cite{MR1167575} will be our principal foundation for the Gillet--Grayson model.

\begin{example}
\label{example_GG}We can unravel the definition of $G_{\bullet}(\mathsf{C})$
in concrete terms. Its $q$-simplices $G_{q}(\mathsf{C})$ is given by the
following Waldhausen category. Its objects are pairs of diagrams
\begin{equation}%
\xymatrix@!=0.157in{
&                         &                         &                        & X_{n/(n-1)}
\\
&                         &                         & \cdots\ar@{^{(}%
.>}[r] & \vdots\ar@{.>>}[u] \\
&                         & X_{2/1} \ar@{^{(}.>}[r] & \cdots\ar@{^{(}%
.>}[r] & X_{n/1} \ar@{.>>}[u] \\
& X_{1/0} \ar@{^{(}.>}[r] & X_{2/0} \ar@{^{(}.>}[r] \ar@{.>>}[u] & \cdots
\ar@{^{(}.>}[r] & X_{n/0} \ar@{.>>}[u] \\
X_0 \ar@{^{(}.>}[r] & X_1 \ar@{^{(}.>}[r] \ar@{.>>}[u] & X_2 \ar@{^{(}%
.>}[r] \ar@{.>>}[u] & \cdots\ar@{^{(}.>}[r] & X_n \ar@{.>>}[u]
}%
\qquad%
\xymatrix@!=0.157in{
&                         &                         &                        & X_{n/(n-1)}
\\
&                         &                         & \cdots\ar@{^{(}%
->}[r] & \vdots\ar@{->>}[u] \\
&                         & X_{2/1} \ar@{^{(}->}[r] & \cdots\ar@{^{(}%
->}[r] & X_{n/1} \ar@{->>}[u] \\
& X_{1/0} \ar@{^{(}->}[r] & X_{2/0} \ar@{^{(}->}[r] \ar@{->>}[u] & \cdots
\ar@{^{(}->}[r] & X_{n/0} \ar@{->>}[u] \\
X^{\prime}_0 \ar@{^{(}->}[r] & X^{\prime}_1 \ar@{^{(}->}[r] \ar@
{->>}[u] & X^{\prime}_2 \ar@{^{(}->}[r] \ar@{->>}[u] & \cdots\ar@{^{(}%
->}[r] & X^{\prime}_n \ar@{->>}[u]
}%
\text{,} \label{lkiops4}%
\end{equation}
such that (1) the diagrams commute and agree above the bottom row, (2) every
sequence $X_{i}\hookrightarrow X_{j}\twoheadrightarrow X_{j/i}$ is a
cofibration sequence, (2') every sequence $X_{i}^{\prime}\hookrightarrow
X_{j}^{\prime}\twoheadrightarrow X_{j/i}^{\prime}$ is a cofibration sequence,
(3) every sequence $X_{i/j}\hookrightarrow X_{m/j}\twoheadrightarrow X_{m/i}$
is a cofibration sequence. The face and degeneracy maps amount to duplicating
the $i$-th row and column or deleting them.\medskip\newline We only use the
solid vs. dotted arrows to distinguish the two pieces of the otherwise fully
symmetric pairs. We call the side with solid arrows the \emph{Yin side}, and
the one with dotted arrows the \emph{Yang side}. The morphisms in
$G_{q}(\mathsf{C})$ are all morphisms between such diagrams (i.e. such that
all arrows commute). The cofibrations (resp. weak equivalences) in
$G_{q}(\mathsf{C})$ are those morphisms such that for all objects in the
Diagram \ref{lkiops4} entry-wise it is a cofibration (resp. weak equivalence)
in $\mathsf{C}$.
\end{example}

The main theorem of Gillet and Grayson is that the simplicial set $G_{\bullet
}(\mathsf{C})$ has the same homotopy type as the $K$-theory space
$K(\mathsf{C})$ as defined by Quillen. In particular, we may simply use
the\ Gillet--Grayson model as \textit{the} definition of the $K$-theory space
in this paper.

Working in terms of simplicial homotopy theory, for us, the term `geometric
realization $\left\vert X_{\bullet}\right\vert $' of a simplicial set
$X_{\bullet}$ denotes a functorial fibrant replacement functor. To fix
matters, let us use Kan's $\operatorname*{Ex}^{\infty}$-functor (as we had
already done in Equation \ref{l_Kan}), although the precise nature of the
fibrant replacement will be fully irrelevant for what is to come.

We may now define the $K$-theory space of a (pseudo-additive\footnote{in the
sense of \cite[Definition 2.3]{MR1167575}}) Waldhausen category $\mathsf{C}$
with weak equivalences $w$ by%
\begin{equation}
K(\mathsf{C})=\left\vert wG_{\bullet}\mathsf{C}\right\vert \text{.}
\label{latix5}%
\end{equation}
While writing this in this way is standard accepted practice, this notation
sweeps a bunch of things under the rug, so let us instead give a precise definition:

\begin{definition}
If $\mathsf{C}$ denotes any category, we write $N_{\bullet}\mathsf{C}$ to
denote the nerve of the category.
\end{definition}

If we write a bisimplicial set $X_{\bullet,\bullet}\in ss\mathsf{Set}$ as a
functor%
\[
X_{\bullet,\bullet}:\triangle^{op}\times\triangle^{op}\longrightarrow
\mathsf{Set}\text{,}%
\]
where $\triangle$ is the ordinal number category, then the diagonal simplicial
set $(\operatorname*{diag}X)_{\bullet}$ is defined as the composite functor%
\[
\triangle^{op}\overset{d}{\longrightarrow}\triangle^{op}\times\triangle
^{op}\longrightarrow\mathsf{Set}\text{,}%
\]
where $d$ is the diagonal functor $q\mapsto(q,q)$ for ordinal numbers $q$. We
can make this more concrete: For the simplices we have%
\[
(\operatorname*{diag}X_{\bullet,\bullet})_{q}:=X_{q,q}%
\]
and if we write $\partial_{\ast}^{h}$, $\partial_{\ast}^{v}$ to denote the
horizontal (resp. vertical) face maps of $X_{\bullet,\bullet}$, then%
\begin{equation}
\partial_{i}^{\operatorname*{diag}X}:=\partial_{i}^{h}\circ\partial_{i}%
^{v}\text{.} \label{lkiops3}%
\end{equation}

\begin{definition}
\label{def_KThyViaWaldGG}For every pseudo-additive Waldhausen category
$\mathsf{C}$ with weak equivalences $w$, we call%
\[
K(\mathsf{C}):=\left\vert \operatorname*{diag}N_{\bullet}wG_{\bullet
}(\mathsf{C})\right\vert
\]
the $K$\emph{-theory space} of $\mathsf{C}$. Take $(0,0)$ as the base point.
Equation \ref{latix5} is just a shorthand for the same thing.
\end{definition}

This includes the case where $\mathsf{C}$ is an exact category by using the
Waldhausen category structure of Remark \ref{rmk_ExactToWald}. The above is
the simplicial geometric realization of the diagonal of a bisimplicial
set\ (one simplicial direction comes from the Gillet--Grayson construction,
the other from taking the nerve of the categories $wG_{q}\mathsf{C}$ for any
$q$).

\begin{remark}
If in \S \ref{subsect_Spaces} you prefer doing homotopy theory in
$\mathsf{Top}_{\bullet}$, you need to take the \textquotedblleft
true\textquotedblright\ geometric realization (i.e. in the original meaning of
this term) of this simplicial set to obtain an object in $\mathsf{Top}%
_{\bullet}$ as in Equation \ref{lwaa1}. On the other hand, if you prefer
simplicial sets, Definition \ref{def_KThyViaWaldGG} \textit{is} the space on
the nose.
\end{remark}

\subsubsection{Explicit structure for $K_{0}$\label{subsect_ExplicitK0}}

We extract from Example \ref{example_GG} that a $0$-simplex in $G_{\bullet
}(\mathsf{C})$ corresponds to a pair of objects $(P,Q)$ with $P,Q\in
\mathsf{C}$. Indeed, the concrete isomorphism%
\begin{equation}
\pi_{0}\left\vert G_{\bullet}(\mathsf{C})\right\vert \longrightarrow
K_{0}(\mathsf{C}) \label{lmexi1}%
\end{equation}
is given as follows: Given a connected component on the left, let $(P,Q)$ be
any point in this component and then send it to $[Q]-[P]$ in $K_{0}%
(\mathsf{C})$, i.e. the difference of the isomorphism classes of these
objects. See \cite[Ch. IV, Lemma 9.2]{MR3076731} for a proof.

\subsubsection{Justification of principles (a) and
(b)\label{subsect_JusitfyPrinciplesAB}}

In \S \ref{sect_Overview} our construction of the Tamagawa number rested on
the following basic principles: Suppose $\mathsf{C}$ is an exact
category.\newline$\left.  \qquad\text{\textbf{(a)}}\right.  $ Every object in
$\mathsf{C}$ determines a point in the space $K(\mathsf{C})$.\newline$\left.
\qquad\text{\textbf{(b)}}\right.  $ Every isomorphism $X\overset{\sim
}{\rightarrow}Y$ determines a path from the point of $X$ to $Y$ in
$K(\mathsf{C})$.\newline

We can fully justify them now: For (a) if $P\in\mathsf{C}$ is an object,
simply take the $0$-simplex $(0,P)$ as the point in $K(\mathsf{C})$. Using the
map in Equation \ref{lmexi1} we see that it lies in the connected component of
$[P]\in K_{0}(\mathsf{C})$, so this is in line with our descriptions given in
\S \ref{sect_Overview}. For (b), let us look at the Gillet--Grayson model
again. Unravelling the definition of $G_{1}(\mathsf{C})$ explicitly, the
$1$-simplices turn out to be given by pairs of exact sequences%
\begin{equation}%
\xymatrix{
P_0 \ar@{^{(}.>}[r] & P_1 \ar@{.>>}[r] & P_{1/0} & \qquad& P^{\prime}%
_0 \ar@{^{(}->}[r] & P^{\prime}_1 \ar@{->>}[r] & P_{1/0}
}
\label{lmexid1}%
\end{equation}
having the same cokernel. Using the description of paths in simplicial sets,
such a $1$-simplex is a path from $(P_{0},P_{0}^{\prime})$ to $(P_{1}%
,P_{1}^{\prime})$.

Hence, if $\varphi:X\rightarrow Y$ is an isomorphism in $\mathsf{C}$, attach
the $1$-simplex of the pair of exact sequences
\begin{equation}%
\xymatrix{
0 \ar@{^{(}.>}[r] & 0 \ar@{.>>}[r] & 0 & \qquad& X \ar@{^{(}->}[r]^{\varphi}
& Y \ar@{->>}[r] & 0
}
\label{lmexi4}%
\end{equation}
with matching cokernel zero to it. As discussed above, this is a path from
$(0,X)$ to $(0,Y)$, i.e. a path between the points associated to the objects
$X$ and $Y$ by principle (a).

Thus, principles (a) and (b) are set up rigorously now.

\subsubsection{Justification of principles (a') and
(b')\label{subsect_JusitfyPrinciplesABprime}}

We had also claimed that the same principles hold on the derived level,
essentially. Let $\mathsf{C}$ be any exact category. Let $\mathbf{Ch}%
^{b}(\mathsf{C})$ be the exact category of bounded chain complexes in
$\mathsf{C}$, \cite[\S 10]{MR2606234}. We write $q\mathbf{Ch}^{b}(\mathsf{C})$
to denote the subcategory where we only keep quasi-isomorphisms as morphisms,
\cite[\S 10.3]{MR2606234}. Now make $\mathbf{Ch}^{b}(\mathsf{C})$ a Waldhausen
category as in Remark \ref{rmk_ExactToWald}, but use the class of morphisms
$q$ (i.e. the quasi-isomorphisms) as weak equivalences instead. We note that
this is a pseudo-additive Waldhausen category in the sense of \cite{MR1167575}.

Write $K(\mathsf{C},w)$ if we wish to stress that we use the class of weak
equivalences $w$. By the\ Gillet--Waldhausen theorem (\cite[Chapter V, Theorem
2.2]{MR3076731}), we have the equivalence%
\begin{equation}
K(\mathsf{C},i)\overset{\sim}{\longrightarrow}K(\mathbf{Ch}^{b}(\mathsf{C}%
),q)\text{,} \label{lseip1}%
\end{equation}
which is usually proven in terms of the Waldhausen $S$-construction, but since
the\ Gillet--Grayson model of \cite{MR1167575} can also handle Waldhausen
categories with non-trivial weak equivalences, we obtain%
\[
K(\mathsf{C},i)\overset{\sim}{\longrightarrow}K(\mathbf{Ch}^{b}(\mathsf{C}%
),q)=\left\vert \operatorname*{diag}N_{\bullet}qG_{\bullet}(\mathbf{Ch}%
^{b}\mathsf{C})\right\vert \text{,}%
\]
using Definition \ref{def_KThyViaWaldGG} for the right-hand side of Equation
\ref{lseip1}. Now repeat the constructions of
\S \ref{subsect_JusitfyPrinciplesAB}. We obtain principle (a') since
$0$-simplices in $G_{\bullet}(\mathbf{Ch}^{b}\mathsf{C})$ are pairs $(-,-)$ of
bounded complexes in $\mathsf{C}$ now, and principle (b') since we can now
plug in quasi-isomorphisms for $\varphi$ in Equation \ref{lmexi4}.

\subsubsection{Justification of sum and
negation\label{subsect_JustifySumAndNegation}}

Next, let us set up the maps%
\begin{equation}
\left.  +\right.  :K(\mathsf{C})\times K(\mathsf{C})\longrightarrow
K(\mathsf{C})\qquad\text{and}\qquad\left.  -\right.  :K(\mathsf{C}%
)\longrightarrow K(\mathsf{C}) \label{lwimo1}%
\end{equation}
of\ Equation \ref{lww_AddSubtract}. For the addition in Equation \ref{lwimo1}
let%
\begin{equation}
\left.  \oplus\right.  :\mathsf{C}\times\mathsf{C}\longrightarrow\mathsf{C}
\label{lexit1}%
\end{equation}
be a symmetric monoidal structure giving the coproduct \textquotedblleft$\vee
$\textquotedblright\ (see Remark \ref{rmk_Coproducts}). Then define%
\begin{equation}
(P,Q)+(P^{\prime},Q^{\prime}):=(P\oplus P^{\prime},Q\oplus Q^{\prime})\text{.}
\label{l_maxiu_1}%
\end{equation}
Since $\left.  \oplus\right.  $ is a functor, one can naturally extend this to
a map%
\[
G_{\bullet}\mathsf{C}\times G_{\bullet}\mathsf{C}\longrightarrow G_{\bullet
}\mathsf{C}\text{.}%
\]
As we had pointed out, this map is neither associative nor commutative, and in
fact depends on choosing a concrete bifunctor as in Equation \ref{lexit1}
(since in general coproducts are only well-defined up to unique isomorphism).
However, the above definition is good enough for the moment. We shall later
set up a homotopy correct addition, see Definition \ref{def_SegalNerve}, but
the above operation is one possible representative. In particular, we defer
justifying that this map induces addition to Corollary \ref{cor_SegalNerveSum}
much later. On $K_{0}$ it is easy to check directly, of course.

We define the negation%
\[
\left.  -\right.  :K(\mathsf{C})\longrightarrow K(\mathsf{C})
\]
by simply swapping the Yin and Yang side, i.e. on $0$-simplices this is
$(P,Q)\mapsto(Q,P)$. These maps have all the properties we had discussed in
\S \ref{sect_Overview}, and more concretely:

\begin{proposition}
[\cite{MR909784}]\label{prop_NegationOnGGModel}With these definitions,
$K(\mathsf{C})$ is an $H$-space,

\begin{enumerate}
\item on all homotopy groups $\pi_{i}K(\mathsf{C})$ this addition map induces
the genuine addition of the homotopy group,

\item on all homotopy groups $\pi_{i}K(\mathsf{C})$ this negation map induces
multiplication with $-1$,
\end{enumerate}

and in particular on the level of homotopy groups both operations define an
abelian group structure.
\end{proposition}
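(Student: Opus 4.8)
The plan is to reduce everything to the Gillet--Grayson model $G_\bullet\mathsf{C}$ and to compare the ad hoc operations defined on $0$-simplices with the classical abelian-group structures on the homotopy groups $\pi_i K(\mathsf{C})$. The cleanest route is to recall that $K(\mathsf{C}) = |\operatorname{diag} N_\bullet w G_\bullet(\mathsf{C})|$ is a loop space (indeed an infinite loop space) for the well-known models of $K$-theory, so that its homotopy groups already carry a canonical abelian group structure coming from concatenation of loops; what must be checked is only that the concrete maps $+$ and $-$ defined via a chosen symmetric monoidal $\oplus$ and via Yin/Yang-swap \emph{realise} that structure. For $\pi_0$ this is elementary: using the isomorphism $\pi_0|G_\bullet(\mathsf{C})| \cong K_0(\mathsf{C})$ of Equation \ref{lmexi1}, sending a component $(P,Q)$ to $[Q]-[P]$, the formula $(P,Q)+(P',Q') := (P\oplus P', Q\oplus Q')$ goes to $([Q]+[Q'])-([P]+[P'])$, which is exactly addition in $K_0$, and the Yin/Yang swap $(P,Q)\mapsto(Q,P)$ goes to $[P]-[Q] = -([Q]-[P])$, i.e. multiplication by $-1$; this is precisely Remark \ref{rmk_MinusOneMapGC} transported across the group completion. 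So the bulk of the work is items (1) and (2) in positive degrees.

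For the higher homotopy groups I would invoke the structure that Gillet--Grayson (\cite{MR909784}) establish, namely that $G_\bullet\mathsf{C}$ models $\Omega |w S_\bullet \mathsf{C}|$ (equivalently, $|w G_\bullet \mathsf{C}| \simeq \Omega|w S_\bullet\mathsf{C}|$), so that $K(\mathsf{C})$ is a loop space and hence an $H$-space whose multiplication on homotopy is the standard one. The symmetric monoidal structure $\oplus$ on $\mathsf{C}$ induces a map $G_\bullet\mathsf{C}\times G_\bullet\mathsf{C}\to G_\bullet\mathsf{C}$, and one checks — this is where I would cite Gillet--Grayson directly rather than reprove it — that on the delooping $w S_\bullet\mathsf{C}$ the induced $H$-space structure coincides with the canonical loop-sum, because $S_\bullet$ already carries an $E_\infty$-structure from direct sum and the Gillet--Grayson square \eqref{lkiops1} is compatible with it. A slightly more self-contained alternative for (1): exhibit, for any two points $x,y$ of $G_\bullet\mathsf{C}$, an explicit path between $x \oplus y$ (in the sense of \ref{l_maxiu_1}) and the concatenation of representing loops; this is the standard ``Eckmann--Hilton'' argument — two unital binary operations on $\pi_i$ with $i\ge 1$ that are compatible agree and are commutative — once one knows $\oplus$ is unital up to homotopy (the zero object is a strict unit on $0$-simplices, and associativity/commutativity hold up to the natural isomorphisms of the monoidal structure, which by principle (b) yield paths, hence homotopies after realisation).

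For the negation, the key identity to verify is that the Yin/Yang-swap map $\tau: G_\bullet\mathsf{C}\to G_\bullet\mathsf{C}$ satisfies $x + \tau(x) \simeq 0$ in $\pi_i$, i.e. that $\tau$ induces an additive inverse; combined with uniqueness of inverses in a monoid this forces $\tau_* = -1$ on every $\pi_i$. On $0$-simplices this is visible from $(P,Q)+(Q,P) = (P\oplus Q, Q\oplus P)$, which is connected to $(0,0)$ by the path coming from the symmetry isomorphism $P\oplus Q\cong Q\oplus P$ together with principle (b); on higher simplices the same symmetry of the monoidal structure, applied entrywise to the Yin and Yang diagrams of Example \ref{example_GG}, gives the required higher homotopy. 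Since $\tau$ is a genuine simplicial map (swapping the two factors in the Cartesian square \eqref{lkiops1}), this is functorial and needs no coherence bookkeeping. The last clause — that $\pi_i K(\mathsf{C})$ is thereby an abelian group — is then immediate: we have a unital, homotopy-associative, homotopy-commutative operation with two-sided inverses, which is exactly the definition of an abelian group structure on the set $\pi_i$.

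The main obstacle is not any single computation but rather the bookkeeping of \emph{which} model is being used and matching the ad hoc $\oplus$-map to the intrinsic $H$-space structure of the loop space $\Omega|wS_\bullet\mathsf{C}|$; the honest way to discharge this is to cite Gillet--Grayson's theorem that $G_\bullet\mathsf{C}$ is a model for this loop space together with their analysis of its $H$-space structure, exactly as the statement's attribution ``\cite{MR909784}'' signals. Everything else — the $\pi_0$ check, the Eckmann--Hilton deduction of commutativity, and the inverse identity for $\tau$ — is routine given principles (a) and (b) and the explicit description of $1$-simplices in \eqref{lmexid1}.
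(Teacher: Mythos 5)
Your proposal is correct and ultimately rests on the same foundation as the paper, whose entire proof of this proposition is the citation to Gillet--Grayson \cite[Theorem 3.1]{MR909784} together with the remark that the Yin/Yang swap $(P,Q)\mapsto(Q,P)$ mirrors the inversion in the group completion (Remark \ref{rmk_MinusOneMapGC}); your closing acknowledgement that the model-comparison and $H$-space analysis should be discharged by that citation is exactly what the paper does. Your extra sketch (the explicit $\pi_{0}$ check, Eckmann--Hilton for $i\geq1$, and the inverse property of the swap) is a sound reconstruction, with the one caveat that the phrase ``the symmetry applied entrywise gives the required higher homotopy'' is precisely the nontrivial content being cited: to promote the pointwise identification $x+\tau(x)\sim(E,E)$ to a homotopy of maps one needs that a natural isomorphism of simplicial functors induces a simplicial homotopy on $\operatorname{diag}N_{\bullet}wG_{\bullet}(\mathsf{C})$ and that the diagonal copy of the path space $PS_{\bullet}\mathsf{C}$ in the square \ref{lkiops1} is contractible, which is the heart of Gillet--Grayson's argument.
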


This is proven by Gillet and\ Grayson in \cite[Theorem 3.1]{MR909784}. Note
that negation is given by swapping $(P,Q)\mapsto(Q,P)$, fully analogous to
what happens for the group completion $GC(-)$, see Remark
\ref{rmk_MinusOneMapGC}.

\begin{convention}
[Rigorous interpretation of \S \ref{sect_Overview}]%
\label{convention_KThySpace}Use the space $K(\mathbf{Ch}^{b}(\mathsf{C}),q)$
of \S \ref{subsect_JusitfyPrinciplesABprime} as the meaning of the $K$-theory
space for the constructions in \S \ref{sect_Overview}. As we have just
explained, principles (a') and (b') are available, and so are (a) and (b) by
viewing objects as complexes concentrated in degree zero. Along with the sum
and negation, now all the operations employed in \S \ref{sect_Overview} have a
rigorous foundation.
\end{convention}

\begin{remark}
We have just set up $K$-theory as a space and $H$-space here. There is also a
canonical infinite loop space structure. We will not discuss this yet because
it only becomes relevant later, but the reader may jump ahead to Lemma
\ref{lemma_InfLoopSpaceStructures} to see how this structure arises.
\end{remark}

\subsubsection{Explicit structure of $K_{1}$}

The explicit description of $K_{0}$ in \S \ref{subsect_ExplicitK0} can be
complemented by a description of $K_{1}$. Suppose $\mathsf{C}$ is a pointed
exact category. We write $0$ for the designated zero object. A \emph{double
(short) exact sequence} (in the sense of Nenashev) consists of two short exact
sequences%
\[
\mathrm{Yin}:A\overset{p}{\hookrightarrow}B\overset{r}{\twoheadrightarrow
}C\qquad\quad\text{and}\quad\qquad\mathrm{Yang}:A\overset{q}{\hookrightarrow
}B\overset{s}{\twoheadrightarrow}C\text{,}%
\]
whose three objects are the same for Yin and Yang, but the morphisms $p,r$
resp. $q,s$ need not agree. We denote this datum in the format%
\[
l=\left[
\xymatrix{
A \ar@<1ex>@{^{(}->}[r]^{p} \ar@<-1ex>@{^{(}.>}[r]_{q} & B \ar@<1ex>@{->>}%
[r]^{r} \ar@<-1ex>@{.>>}[r]_{s} & C,
}%
\right]
\]
as a shorthand. Given any such $l$, it describes a closed loop around the base
point $(0,0)$ in the Gillet--Grayson model $G_{\bullet}(\mathsf{C})$ which is
made up from the concatenation of three elementary paths (i.e. three
$1$-simplices), namely%
\begin{equation}%
{\includegraphics[
height=1.2246in,
width=1.6873in
]%
{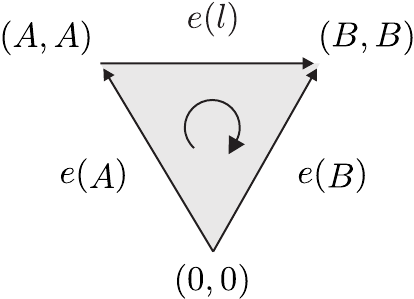}%
}
\label{lmixi3b}%
\end{equation}
where $e(l)$ denotes the $1$-simplex from $(A,A)$ to $(B,B)$ which comes from
interpreting $l$ as a pair of exact sequences with the same cokernel (see
Equation \ref{lmexid1}, where we had already talked about the $1$-simplices).
Moreover, for any object $A\in\mathsf{C}$, $e(A)$ denotes the $1$-simplex from
the pair $0\hookrightarrow A\overset{1}{\twoheadrightarrow}A$, taken both for
Yin and\ Yang, which also defines a pair of exact sequences with the same
cokernel. See also \cite[Definition 3.1]{etnclca2} or \cite{MR1409623,
MR1621690, MR1637539}.

\begin{theorem}
[Nenashev]Suppose $\mathsf{C}$ is an arbitrary exact category. Then the
abelian group $K_{1}(\mathsf{C})$ has the following explicit presentation:

\begin{enumerate}
\item Attach an abstract generator to each double exact sequence%
\[%
\xymatrix{
A \ar@<1ex>@{^{(}->}[r]^{p} \ar@<-1ex>@{^{(}.>}[r]_{q} & B \ar@<1ex>@{->>}%
[r]^{r} \ar@<-1ex>@{.>>}[r]_{s} & C.
}%
\]

\item Whenever the Yin and Yang sides happen to agree, i.e.,%
\[%
\xymatrix{
A \ar@<1ex>@{^{(}->}[r]^{p}_{=} \ar@<-1ex>@{^{(}.>}[r]_{p} & B \ar@
<1ex>@{->>}[r]^{r}_{=} \ar@<-1ex>@{.>>}[r]_{r} & C\text{,}
}%
\]
declare the class of this generator to vanish.

\item Suppose there is a (not necessarily commutative) $(3\times3)$-diagram%
\[%
\xymatrix@W=0.3in@H=0.3in{
A \ar@<1ex>@{^{(}->}[r] \ar@<1ex>@{^{(}.>}[d] \ar@<-1ex>@{^{(}->}%
[d] \ar@<-1ex>@{^{(}.>}[r] & B \ar@<1ex>@{->>}[r] \ar@<-1ex>@{.>>}%
[r] \ar@<1ex>@{^{(}.>}[d] \ar@<-1ex>@{^{(}->}[d] & C \ar@<1ex>@{^{(}%
.>}[d] \ar@<-1ex>@{^{(}->}[d] \\
D \ar@<1ex>@{^{(}->}[r] \ar@<-1ex>@{^{(}.>}[r] \ar@<1ex>@{.>>}[d] \ar@
<-1ex>@{->>}[d] & E \ar@<1ex>@{->>}[r] \ar@<-1ex>@{.>>}[r] \ar@<1ex>@{.>>}%
[d] \ar@<-1ex>@{->>}[d] & F \ar@<1ex>@{.>>}[d] \ar@<-1ex>@{->>}[d] \\
G \ar@<1ex>@{^{(}->}[r] \ar@<-1ex>@{^{(}.>}[r] & H \ar@<1ex>@{->>}%
[r] \ar@<-1ex>@{.>>}[r] & I, \\
}%
\]
whose rows $Row_{i}$ and columns $Col_{j}$ are each a double exact sequence.
Suppose after removing all Yin (resp. all Yang) exact sequences, the remaining
diagram commutes. Whenever this holds, impose the relation%
\begin{equation}
Row_{1}-Row_{2}+Row_{3}=Col_{1}-Col_{2}+Col_{3}\text{.} \label{l_C_Nenashev}%
\end{equation}

\end{enumerate}
\end{theorem}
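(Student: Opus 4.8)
The plan is to compute $\pi_{1}$ of the $K$-theory space $K(\mathsf{C})$ of Definition~\ref{def_KThyViaWaldGG}, based at the zero object $(0,0)$, from its low-dimensional skeleton, and to identify the resulting presentation with the one asserted; throughout one uses that $K_{1}(\mathsf{C})=\pi_{1}K(\mathsf{C})$ is abelian (Proposition~\ref{prop_NegationOnGGModel}). Write $N(\mathsf{C})$ for the abelian group presented by the generators and relations (1)--(3). First I would build a homomorphism $\Phi\colon N(\mathsf{C})\to K_{1}(\mathsf{C})$: send each double exact sequence $l$ to the class of the closed loop $\mathrm{loop}(l)$ at $(0,0)$ assembled from the three $1$-simplices of Equation~\ref{lmixi3b} (one being the edge $e(l)$ from $(A,A)$ to $(B,B)$, the others of type $e(-)$ attached to objects of $\mathsf{C}$), and extend $\mathbb{Z}$-linearly from the free abelian group on double exact sequences. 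Relation~(2) is sent to $0$: when the Yin and Yang sides of $l$ coincide, $l$ is an ordinary short exact sequence, which fits into a $2$-simplex of $G_{2}(\mathsf{C})$ with coinciding Yin and Yang data (cf.\ Example~\ref{example_GG}), and that $2$-simplex exhibits $\mathrm{loop}(l)$ as nullhomotopic. So $\Phi$ descends to $N(\mathsf{C})$ once relation~(3) is checked.

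For relation~(3): the hypotheses on a $(3\times3)$-diagram --- that after deleting the Yang (resp.\ Yin) sequences the remaining grid of maps commutes, with every row and column a short exact sequence --- are exactly what is needed to organise the nine objects and their maps into a $2$-dimensional piece of the Gillet--Grayson model $G_{\bullet}(\mathsf{C})$ in the pattern of Example~\ref{example_GG}, with the three rows and three columns appearing among its boundary edges after collapsing degenerate corners to the base point via Step~1. The standard relation $d_{1}\tau=d_{2}\tau\cdot d_{0}\tau$ that any $2$-simplex $\tau$ imposes on $\pi_{1}$ then unwinds --- after orienting every edge consistently --- to precisely $\mathrm{loop}(Row_{1})-\mathrm{loop}(Row_{2})+\mathrm{loop}(Row_{3})=\mathrm{loop}(Col_{1})-\mathrm{loop}(Col_{2})+\mathrm{loop}(Col_{3})$, i.e.\ Equation~\ref{l_C_Nenashev}. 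This is the computational heart of well-definedness.

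To see that $\Phi$ is an isomorphism I would use that the inclusion of the $2$-skeleton into $K(\mathsf{C})$ is a $\pi_{1}$-isomorphism, so that $\pi_{1}$ in the component of $(0,0)$ is presented by one generator per edge, with degenerate edges trivial and one relation $d_{1}\tau=d_{2}\tau\cdot d_{0}\tau$ per $2$-simplex, after collapsing a maximal tree $T$ of the $1$-skeleton of that component. For surjectivity: the edges of $\operatorname*{diag}N_{\bullet}wG_{\bullet}(\mathsf{C})$ are of two kinds, the Gillet--Grayson edges (pairs of short exact sequences with common cokernel, Equation~\ref{lmexid1}) and the nerve-direction edges (isomorphisms of pairs of objects); choosing $T$ to contain every edge $e(P)$ and every isomorphism edge, each remaining generating loop is visibly $\mathrm{loop}(l)$ for a double exact sequence $l$, so $\Phi$ is onto. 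For injectivity one checks that every $2$-simplex relation follows from (2) and (3): the $2$-simplices supported in a single $G_{2}(\mathsf{C})$ are, by the identification above, the $(3\times3)$-diagrams themselves and give relation~(3); those supported in a nerve fibre come from composable pairs of isomorphisms and, after the $T$-collapse, become trivial or reduce to relation~(2) together with functoriality; the mixed diagonal $2$-simplices interleave an isomorphism with a cofibration sequence and, again after the $T$-collapse, reduce to consequences of (2). Hence $\Phi$ is injective, so it is an isomorphism, which is the asserted presentation.

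The main obstacle is the matching used for relation~(3) together with the injectivity half of the last step: one must pin down, on the nose, how the nine objects and all morphisms of a Nenashev $(3\times3)$-diagram sit inside the simplices of $G_{\bullet}(\mathsf{C})$ of Example~\ref{example_GG} (which objects play which role, which faces degenerate, how the Yin/Yang split interacts with taking faces), verify that the mixed diagonal $2$-simplices contribute nothing beyond (2), and keep the orientation and sign bookkeeping consistent throughout --- the alternating signs of Equation~\ref{l_C_Nenashev}, the inverses coming from reversed edges, and the $-1$ induced on $\pi_{1}$ by the Yin/Yang swap of Proposition~\ref{prop_NegationOnGGModel}. This is precisely the delicacy with signs already flagged in the Acknowledgement.
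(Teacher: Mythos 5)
This theorem is not proved in the paper at all: it is Nenashev's theorem, quoted with references to his papers, and the full statement (in particular that relations (2) and (3) \emph{suffice}) is the main result of a long and technical series of works. Your sketch essentially compresses that entire content into a few asserted sentences, and the compression hides genuine errors. First, your surjectivity step is already broken: you propose a maximal tree $T$ containing ``every isomorphism edge'', but isomorphism edges include automorphism edges, which are loops at a vertex and can lie in no tree; moreover these automorphism loops are typically nontrivial in $K_{1}$ (units of a ring), and expressing them through double exact sequences is exactly the nontrivial point of Example \ref{example_MakeAutToNenashevRepresentative} and of Nenashev's first paper, not something a tree-collapse gives you. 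In addition, a general non-tree edge of $G_{1}(\mathsf{C})$ is a pair of short exact sequences with common cokernel but with \emph{different} Yin and Yang objects (Equation \ref{lmexid1}); such an edge is not ``visibly'' a double exact sequence, so even the identification of the edge-generators with your generators (1) requires the deformation arguments you have omitted.

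Second, your treatment of the relations misidentifies the $2$-cells. A $2$-simplex of $G_{\bullet}(\mathsf{C})$ is a Yin/Yang pair of two-step filtration diagrams as in Example \ref{example_GG} with $n=2$; this is not a Nenashev $(3\times3)$-diagram, in which every row and every column is itself a double exact sequence, so the claim that ``the $2$-simplices supported in a single $G_{2}(\mathsf{C})$ are the $(3\times3)$-diagrams themselves'' is false as stated. Worse, the space in Definition \ref{def_KThyViaWaldGG} is the diagonal of the bisimplicial set $N_{\bullet}wG_{\bullet}(\mathsf{C})$, so its $1$-simplices are weak equivalences between $G_{1}$-objects and its $2$-simplices are chains of two composable weak equivalences between $G_{2}$-objects; the combinatorics of which relations these impose, and why they all follow from (2) and (3) alone, is precisely the hard part of Nenashev's injectivity theorem and is not addressed by saying the mixed simplices ``reduce to consequences of (2)''. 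So while the overall strategy (map the abstractly presented group to $\pi_{1}$ of the Gillet--Grayson model and compare presentations) is the right general direction, the proposal as written does not constitute a proof; if you want to include an argument rather than a citation, you would have to reproduce Nenashev's analysis of loops in the $G$-construction, which is a substantial undertaking.
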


\begin{example}
\label{example_MakeAutToNenashevRepresentative}We use the same notation as in
the paper \cite{etnclca2}, which is a little different from the one in
Nenashev's papers. If $\varphi:X\rightarrow X$ is an automorphism of an object
in $\mathsf{C}$, the canonical map $\operatorname*{Aut}(X)\rightarrow
K_{1}(\mathsf{C})$ sends it to the Nenashev representative%
\begin{equation}%
\xymatrix{
0 \ar@<1ex>@{^{(}->}[r] \ar@<-1ex>@{^{(}.>}[r] & X \ar@<1ex>@{->>}%
[r]^{\varphi} \ar@<-1ex>@{.>>}[r]_{1} & X
}%
\text{.} \label{lmixi3a}%
\end{equation}

\end{example}

\subsection{\label{sect_Setup}Equivariance setup}

Let $F$ be a number field and $\mathcal{O}_{F}$ its ring of integers. Let $A$
be a finite-dimensional semisimple $F$-algebra and $\mathfrak{A}$ an
$\mathcal{O}_{F}$-order. This assumption implies that $\mathcal{O}_{F}$ lies
in the center of $\mathfrak{A}$.

These assumptions in place, $A$ is also a finite-dimensional semisimple
$\mathbb{Q}$-algebra and $\mathfrak{A}$ a $\mathbb{Z}$-order, i.e. an order in
the usual sense.

For any place $\mathfrak{p}$ of $\mathcal{O}_{F}$, we write $A_{\mathfrak{p}%
}:=A\otimes_{F}F_{\mathfrak{p}}$, where $F_{\mathfrak{p}}$ is the local field
at $\mathfrak{p}$. If $\mathfrak{p}$ is a finite place, define $\mathfrak{A}%
_{\mathfrak{p}}:=\mathfrak{A}\otimes_{\mathcal{O}_{F}}\mathcal{O}%
_{F_{\mathfrak{p}}}$, where $\mathcal{O}_{F_{\mathfrak{p}}}$ is the ring of
integers of $F_{\mathfrak{p}}$. For any finitely generated projective right
$\mathfrak{A}$-module $\mathfrak{X}$, we introduce the shorthands%
\[
\mathfrak{X}_{\mathfrak{p}}:=\mathfrak{X}\otimes_{\mathfrak{A}}\mathfrak{A}%
_{\mathfrak{p}}\text{,}\qquad\qquad X_{\mathfrak{p}}:=\mathfrak{X}%
\otimes_{\mathfrak{A}}A_{\mathfrak{p}}\text{,}\qquad\qquad X:=\mathfrak{X}%
\otimes_{\mathfrak{A}}A\text{.}%
\]
Here we tacitly equip $X$ with the discrete topology, i.e. its natural
topology as a finite-dimensional $\mathbb{Q}$-vector space. We equip
$X_{\mathfrak{p}}$ with its natural topology as a finite-dimensional
$F_{\mathfrak{p}}$-vector space. We also write%
\[
X_{\mathbb{R}}:=X\otimes_{A}A_{\mathbb{R}}=X\otimes_{\mathbb{Q}}%
\mathbb{R}\text{,}%
\]
equipped with the real vector space topology.

\begin{remark}
In the special case $F=\mathbb{Q}$ this is compatible with the notation used
by Burns and Flach, see \cite[\S 2.7]{MR1884523}. In particular, it is
compatible with the notation in \cite{etnclca} and \cite{etnclca2}.
\end{remark}

We also use the notation%
\[
\widehat{A}:=A\otimes_{\mathbb{Z}}\widehat{\mathbb{Z}}=\mathfrak{A}%
\otimes_{\mathbb{Z}}\mathbb{A}_{fin}\text{.}%
\]
Here $\mathbb{A}_{fin}$ denotes the \textit{finite part }of the ad\`{e}les of
the rational number field $\mathbb{Q}$, i.e. the restricted product $\left.
\prod\nolimits_{p}^{\prime}\right.  (\mathbb{Q}_{p},\mathbb{Z}_{p})$, where
$p$ only runs over the primes. We equip $\widehat{A}$ with the locally compact
topology coming from the ad\`{e}les. Note that%
\[
\widehat{F}=F\otimes_{\mathbb{Z}}\widehat{\mathbb{Z}}%
\]
agrees with the finite part of the ad\`{e}les of the number field $F$.

Next, we discuss id\`{e}les in the non-commutative setting, following
Fr\"{o}hlich \cite[\S 2]{MR0376619}. Define the \emph{id\`{e}le group} by%
\begin{equation}
J(A):=\left\{  (a_{\mathfrak{p}})_{\mathfrak{p}}\in\left.  \prod
_{\mathfrak{p}}A_{\mathfrak{p}}^{\times}\right\vert a_{\mathfrak{p}}%
\in\mathfrak{A}_{\mathfrak{p}}^{\times}\text{ for all but finitely many places
}\mathfrak{p}\right\}  \text{.} \label{lmixi1}%
\end{equation}
This group is independent of the choice of the order $\mathfrak{A}$ (because
if $\mathfrak{A}^{\prime}$ is a further $\mathcal{O}_{F}$-order in $A$, we
have $\mathfrak{A}_{\mathfrak{p}}=(\mathfrak{A}^{\prime})_{\mathfrak{p}}$ for
all but finitely many places $\mathfrak{p}$). If $\mathfrak{A}\subset A$ is a
fixed order, we also get the group of \emph{unit finite id\`{e}les
}$U(\mathfrak{A})\subset J(A)$ defined by%
\begin{equation}
U(\mathfrak{A}):=\left\{  (a_{\mathfrak{p}})_{\mathfrak{p}}\in\prod
_{\mathfrak{p}\text{ finite}}\mathfrak{A}_{\mathfrak{p}}^{\times}\right\}
\qquad\text{(also denoted }U^{\operatorname*{fin}}(\mathfrak{A})\text{)}
\label{lcixiDex1}%
\end{equation}
This group depends crucially on the choice of the order $\mathfrak{A}$. We
view this as a subgroup of $J(A)$ by letting $a_{\mathfrak{p}}=1$ for all
infinite places $\mathfrak{p}$.

\begin{remark}
\label{rmk_FroehlichU_DifferentDef}This differs from \cite{MR0376619}, because
Fr\"{o}hlich's definition of $U(\mathfrak{A})$ includes the infinite places,
so what he calls $U$ is $U(\mathfrak{A})\cdot A_{\mathbb{R}}^{\times}$ in our notation.
\end{remark}

The \emph{ideal class group} of $\mathfrak{A}$ can be defined as%
\begin{equation}
\operatorname*{Cl}(\mathfrak{A}):=\ker\left(  \operatorname*{rk}%
:K_{0}(\mathfrak{A})\longrightarrow\mathbb{Z}\right)  \text{,} \label{lmixi7}%
\end{equation}
where \textquotedblleft$\operatorname*{rk}$\textquotedblright\ denotes the
rank map. If $A$ is a number field and $\mathfrak{A}$ any order,
$\operatorname*{Cl}(\mathfrak{A})$ agrees with the usual ideal class group. To
see this, use \cite[Ch. II, Corollary 2.6.3]{MR3076731}.

There is a reduced norm%
\[
\operatorname*{nr}:A^{\times}\longrightarrow\zeta(A)^{\times}\text{,}%
\]
where $\zeta(A)$ denotes the center of $A$, see \cite[\S 2]{MR0376619} (if $A$
is not simple, define it by taking the direct sum of the reduced norms of each
simple summand). We define the \emph{reduced norm one subgroup}%
\[
J^{1}(A):=\ker\left(  \operatorname*{nr}:J(A)\longrightarrow\zeta(A)^{\times
}\right)  \text{.}%
\]
As $\zeta(A)^{\times}$ is abelian, it follows that $[J(A),J(A)]\subseteq
J^{1}(A)$, i.e. the commutator subgroup is contained in $J^{1}(A)$. Next, we
recall the classification of finitely generated projective right
$\mathfrak{A}$-modules. Given any $a=(a_{\mathfrak{p}})_{\mathfrak{p}}\in
J(A)$, there exists a unique $\mathcal{O}_{F}$-lattice, denoted
\textquotedblleft$a\mathfrak{A}$\textquotedblright, inside $A$ such that%
\begin{equation}
(a\mathfrak{A})_{\mathfrak{p}}=a_{\mathfrak{p}}\mathfrak{A}_{\mathfrak{p}}
\label{lciops1}%
\end{equation}
for all finite places $\mathfrak{p}$ of $F$. By a result of Fr\"{o}hlich
\cite[Theorem 1]{MR0376619}, every finitely generated projective right
$\mathfrak{A}$-module $\mathfrak{X}$ of rank $m\geq1$ is isomorphic to%
\begin{equation}
\mathfrak{X}\cong a_{1}\mathfrak{A}\oplus\cdots\oplus a_{m}\mathfrak{A}
\label{lmixi4}%
\end{equation}
for some $a_{1},\ldots,a_{m}\in J(A)$, and further any two such are isomorphic
if and only if they have (a) the same rank and, (b) moreover $a_{1}\cdots
a_{m}\equiv a_{1}^{\prime}\cdots a_{m}^{\prime}$ holds in the double coset set
$A^{\times}\backslash J(A)/(U(\mathfrak{A})\cdot A_{\mathbb{R}}^{\times})$ for
$m=1$, resp. in the quotient group%
\begin{equation}
\frac{J(A)}{J^{1}(A)\cdot A^{\times}\cdot U(\mathfrak{A})\cdot A_{\mathbb{R}%
}^{\times}} \label{lmixiDex1}%
\end{equation}
in the case of $m\geq2$. (Keep in mind Remark
\ref{rmk_FroehlichU_DifferentDef} when comparing this with \cite{MR0376619}.)

\begin{example}
\label{example_CancellationRankTwo}For id\`{e}les $a=(a_{\mathfrak{p}%
})_{\mathfrak{p}}$ and $b=(b_{\mathfrak{p}})_{\mathfrak{p}}$, there exists an
isomorphism $a\mathfrak{A}\oplus b\mathfrak{A}\overset{\sim}{\longrightarrow
}ab\mathfrak{A}\oplus\mathfrak{A}$.
\end{example}

\begin{example}
\label{example_IdeleNontrivOnlyAtInfinity}If an id\`{e}le $a=(a_{\mathfrak{p}%
})_{\mathfrak{p}}$ satisfies $a_{\mathfrak{p}}=1$ for all finite places, then
$a\mathfrak{A}=\mathfrak{A}$.
\end{example}

Since $J^{1}(A)$ contains the commutator subgroup, Equation \ref{lmixiDex1}
describes an abelian group. The classification result generalizes Steinitz's
classification of vector bundles over affine Dedekind schemes. Based on this
result, Fr\"{o}hlich obtains a second characterization of the ideal class
group of Equation \ref{lmixi7}.

\begin{theorem}
[Fr\"{o}hlich]\label{thm_FrohlichTheory}There exists an isomorphism%
\begin{align}
\operatorname*{Cl}(\mathfrak{A})  &  \longrightarrow\frac{J(A)}{J^{1}(A)\cdot
A^{\times}\cdot U(\mathfrak{A})\cdot A_{\mathbb{R}}^{\times}}\label{lmixi5}\\
\lbrack\mathfrak{X}]-[\mathfrak{A}^{n}]  &  \longmapsto\lbrack a_{1}\cdots
a_{m}]\text{,}\nonumber
\end{align}
where we take any presentation of the module $\mathfrak{X}$ as in Equation
\ref{lmixi4} for any $m\geq2$ and $n:=\operatorname*{rk}(\mathfrak{A})$. Since
all classes on the left-hand side are represented by rank one modules, the map
is uniquely determined by declaring $[a\mathfrak{A}]-[\mathfrak{A}%
]\mapsto\lbrack a]$, using the notation $a\mathfrak{A}$ of Equation
\ref{lciops1}.
\end{theorem}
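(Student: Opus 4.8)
The plan is to construct, not the map of the theorem directly, but a group homomorphism out of the id\`ele group, $\psi\colon J(A)\longrightarrow\operatorname{Cl}(\mathfrak{A})$, $a\mapsto[a\mathfrak{A}]-[\mathfrak{A}]$, then to compute its image and kernel, and finally to read off the isomorphism from the first isomorphism theorem. The first point to check is that $\psi$ is well defined with values in $\operatorname{Cl}(\mathfrak{A})$: the module $a\mathfrak{A}$ is an $\mathcal{O}_F$-lattice inside $A$, hence a rank one locally free right $\mathfrak{A}$-module, so $[a\mathfrak{A}]-[\mathfrak{A}]$ has rank $0$ and lies in $\ker(\operatorname{rk})$ as in Equation~\ref{lmixi7}. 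That $\psi$ is a homomorphism is immediate from Example~\ref{example_CancellationRankTwo}: the isomorphism $a\mathfrak{A}\oplus b\mathfrak{A}\cong ab\mathfrak{A}\oplus\mathfrak{A}$ gives $[a\mathfrak{A}]+[b\mathfrak{A}]=[ab\mathfrak{A}]+[\mathfrak{A}]$ in $K_0(\mathfrak{A})$, i.e. $\psi(a)+\psi(b)=\psi(ab)$; in particular $\psi(a^{-1})=-\psi(a)$, which makes sense since $J(A)$ is a group.

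Surjectivity comes next. An arbitrary element of $\operatorname{Cl}(\mathfrak{A})$ is $[\mathfrak{X}]-[\mathfrak{Y}]$ with $\mathfrak{X},\mathfrak{Y}$ finitely generated projective right $\mathfrak{A}$-modules of the same rank $n$ (the case $n=0$ gives the class $0=\psi(1)$). Applying Fr\"ohlich's presentation (Equation~\ref{lmixi4}) and then iterating the cancellation isomorphism of Example~\ref{example_CancellationRankTwo}, one gets $\mathfrak{X}\cong a\mathfrak{A}\oplus\mathfrak{A}^{n-1}$ and $\mathfrak{Y}\cong b\mathfrak{A}\oplus\mathfrak{A}^{n-1}$ for suitable id\`eles $a,b\in J(A)$, whence $[\mathfrak{X}]-[\mathfrak{Y}]=[a\mathfrak{A}]-[b\mathfrak{A}]=\psi(ab^{-1})$. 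Along the way this shows that every class in $\operatorname{Cl}(\mathfrak{A})$ already has the shape $[c\mathfrak{A}]-[\mathfrak{A}]$, which is what makes the formula in the theorem meaningful, and it also handles its well-definedness: if two presentations of $\mathfrak{X}$ yield products $c$ and $c'$, then $\psi(c)=[\mathfrak{X}]-[\mathfrak{A}^n]=\psi(c')$.

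The crux is the kernel, and this is where the hard uniqueness half of Fr\"ohlich's classification carries the weight. One has $\psi(a)=0$ if and only if $[a\mathfrak{A}]=[\mathfrak{A}]$ in $K_0(\mathfrak{A})$, i.e. $a\mathfrak{A}$ is stably free, say $a\mathfrak{A}\oplus\mathfrak{A}^k\cong\mathfrak{A}^{k+1}$ for some $k\ge1$. Both modules now have rank $k+1\ge2$, so the uniqueness statement accompanying Equation~\ref{lmixi4} applies: the left-hand side has presentation product $a\cdot1\cdots1=a$ and the right-hand side product $1$, so they are isomorphic precisely when $a\equiv1$ in the quotient group of Equation~\ref{lmixiDex1}, that is when $a\in J^1(A)\cdot A^\times\cdot U(\mathfrak{A})\cdot A_{\mathbb R}^\times$. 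Conversely, if $a$ lies in this subgroup, the same comparison run backwards gives $a\mathfrak{A}\oplus\mathfrak{A}\cong\mathfrak{A}^2$ and hence $\psi(a)=0$. So $\ker\psi=J^1(A)\cdot A^\times\cdot U(\mathfrak{A})\cdot A_{\mathbb R}^\times$, and the first isomorphism theorem delivers an isomorphism $J(A)/(J^1(A)\cdot A^\times\cdot U(\mathfrak{A})\cdot A_{\mathbb R}^\times)\overset{\sim}{\longrightarrow}\operatorname{Cl}(\mathfrak{A})$; its inverse is the map of the theorem, since for $\mathfrak{X}=a_1\mathfrak{A}\oplus\cdots\oplus a_m\mathfrak{A}$ iterated cancellation gives $\mathfrak{X}\cong(a_1\cdots a_m)\mathfrak{A}\oplus\mathfrak{A}^{m-1}$, so $[\mathfrak{X}]-[\mathfrak{A}^m]=\psi(a_1\cdots a_m)$ corresponds to $[a_1\cdots a_m]$.

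I expect the main obstacle to be not any single step but the bookkeeping at the interface with the two halves of Fr\"ohlich's result as quoted in the excerpt: the rank one case is phrased in terms of the \emph{double coset} $A^\times\backslash J(A)/(U(\mathfrak{A})\cdot A_{\mathbb R}^\times)$, which is not a group, while the rank $\ge2$ case uses the honest quotient \emph{group} of Equation~\ref{lmixiDex1}. The argument above is arranged so that every comparison of modules is carried out after stabilizing by a free summand, hence always in rank $\ge2$, where the target is a genuine group and $\psi$ together with its inverse are honest homomorphisms. The only other thing to be careful about is that $\operatorname{Cl}(\mathfrak{A})$ is actually generated by the classes $[c\mathfrak{A}]-[\mathfrak{A}]$, and that drops out of the surjectivity argument.
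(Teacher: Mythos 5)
Your proposal is correct, and it is essentially the argument behind the result the paper simply cites: the paper gives no proof of Theorem \ref{thm_FrohlichTheory} beyond referring to Fr\"ohlich's ``Consequence II of Theorem 1'', and your derivation --- building the homomorphism $a\mapsto[a\mathfrak{A}]-[\mathfrak{A}]$, using the cancellation isomorphism of Example \ref{example_CancellationRankTwo} for additivity and surjectivity, and invoking the rank $\geq2$ uniqueness statement of Equation \ref{lmixi4}/\ref{lmixiDex1} for the kernel --- is exactly how that consequence is deduced from the classification the paper quotes. Your care to stabilize so that all comparisons happen in rank $\geq2$, where the quotient of Equation \ref{lmixiDex1} is an honest group rather than the double coset set, is precisely the right bookkeeping.
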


See \cite[Consequence \textquotedblleft II\textquotedblright\ of Theorem
1]{MR0376619} (and again keep in mind Remark \ref{rmk_FroehlichU_DifferentDef}).

\section{Additivity and its consequences}

Let us recall the Additivity Theorem of algebraic $K$-theory. Suppose
$\mathsf{C}$, $\mathsf{D}$ are exact categories and $f_{i}:\mathsf{C}%
\rightarrow\mathsf{D}$ for $i=1,2,3$ are exact functors such that for each
object $C\in\mathsf{C}$ we get a short exact sequence $f_{1}(C)\hookrightarrow
f_{2}(C)\twoheadrightarrow f_{3}(C)$, and this short exact sequence is
functorial in $C$. A more elegant and precise way to set this up is to write
$\mathcal{E}\mathsf{D}$ for the exact category of exact sequences in
$\mathsf{D}$, \cite[Exercise 3.9]{MR2606234} and consider $(f_{i})_{i=1,2,3}$
a single exact functor $\mathsf{C}\rightarrow\mathcal{E}\mathsf{D}$.%
\begin{equation}%
\xymatrix{
& C \ar[dl]_{f_1} \ar[d]^{f_2} \ar[dr]^{f_3} \\
f_1(C) \ar@{^{(}->}[r] & f_2(C) \ar@{->>}[r] & f_3(C)
}
\label{lmisu6}%
\end{equation}

\begin{theoremannounce}
[Additivity]For every exact functor $(f_{i})_{i=1,2,3}:\mathsf{C}%
\rightarrow\mathcal{E}\mathsf{D}$ we have $f_{2\ast}=f_{1\ast}+f_{3\ast}$,
where $f_{i\ast}:K(\mathsf{C})\rightarrow K(\mathsf{D})$ denotes the map
induced from the exact functor $f_{i}$.
\end{theoremannounce}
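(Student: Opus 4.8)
The plan is to reduce everything to the Gillet--Grayson model $G_\bullet(\mathsf C)$ and exhibit the required homotopy directly at the level of simplicial sets. Recall that an exact functor $(f_i)_{i=1,2,3}\colon\mathsf C\to\mathcal E\mathsf D$ is, by definition, a single exact functor into the exact category of short exact sequences in $\mathsf D$. So first I would set up the three ``forgetful'' exact functors $s_j\colon\mathcal E\mathsf D\to\mathsf D$ ($j=1,2,3$) picking out the sub-, total, and quotient object of a short exact sequence, and observe $f_{i\ast}=(s_i)_\ast\circ(f_i)_\ast$. This shows it suffices to prove additivity for the universal case $\mathsf C=\mathcal E\mathsf D$, i.e.\ that $(s_2)_\ast\simeq(s_1)_\ast+(s_3)_\ast$ as maps $K(\mathcal E\mathsf D)\to K(\mathsf D)$, where the ``$+$'' is the $H$-space addition of \S\ref{subsect_JustifySumAndNegation}.

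Next I would unravel what $(s_1)_\ast+(s_3)_\ast$ means concretely on $G_\bullet$. By the definition of the addition map, on $0$-simplices it sends the pair-of-objects underlying a short exact sequence $B'\hookrightarrow B\twoheadrightarrow B''$ to $(s_1\oplus s_3)$ applied termwise, i.e.\ it is induced by the exact functor $\mathcal E\mathsf D\to\mathsf D$, $(B'\hookrightarrow B\twoheadrightarrow B'')\mapsto B'\oplus B''$. Meanwhile $(s_2)_\ast$ is induced by $(B'\hookrightarrow B\twoheadrightarrow B'')\mapsto B$. The heart of the argument is then to produce a natural transformation (a chain of them) connecting the exact functors $B'\oplus B''$ and $B$ on $\mathcal E\mathsf D$, and to promote it to a simplicial homotopy between the induced maps on $wG_\bullet$. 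The standard device is: the splitting $B'\oplus B''\to B$ does \emph{not} exist functorially, but one can interpose the functor $(B'\hookrightarrow B\twoheadrightarrow B'')\mapsto B'\oplus B''$ via two natural \emph{admissible monomorphisms}, $B'\hookrightarrow B'\oplus B''$ (the inclusion) and the identity $B'\hookrightarrow B$, sitting inside a natural short exact sequence; dually using $B\twoheadrightarrow B''$ and $B'\oplus B''\twoheadrightarrow B''$. Concretely, the functor $E\mapsto(B'\hookrightarrow B'\oplus B''\twoheadrightarrow B'')$ and the functor $E\mapsto(B'\hookrightarrow B\twoheadrightarrow B'')$ are connected by a natural weak equivalence of diagrams in $wG_\bullet$ built from the $1$-simplices of Equation \ref{lmexid1}: indeed for a \emph{fixed} short exact sequence the two monos $B'\rightarrowtail B'\oplus B''$ and $B'\rightarrowtail B$ have the same cokernel $B''$, so they assemble to a $1$-simplex in $G_\bullet$, and naturality in $E$ makes this a homotopy of maps $K(\mathcal E\mathsf D)\to K(\mathsf D)$.

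So the key steps, in order, are: (1) reduce to the universal case $\mathsf C=\mathcal E\mathsf D$ using the forgetful functors; (2) identify $(s_1)_\ast+(s_3)_\ast$ with the map induced by the direct-sum functor $E\mapsto B'\oplus B''$, using the explicit $H$-space addition on $G_\bullet$ from \S\ref{subsect_JustifySumAndNegation}; (3) construct, functorially in $E\in\mathcal E\mathsf D$, the connecting $1$-simplices in $G_\bullet(\mathsf D)$ witnessing that $B'\hookrightarrow B\twoheadrightarrow B''$ and $B'\hookrightarrow B'\oplus B''\twoheadrightarrow B''$ have a common cokernel (resp.\ kernel), and assemble these into a simplicial homotopy between $(s_2)_\ast$ and the direct-sum map; (4) check that passing to $wG_\bullet$ and then to the diagonal of the nerve preserves this homotopy. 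The main obstacle is step (3): one must be careful that the $1$-simplices of $G_\bullet$ are pairs of short exact sequences with a genuinely \emph{common} cokernel object (not merely isomorphic cokernels), so the naturality has to be arranged on the nose, and one must verify the two ``halves'' of the $3\times3$-style diagram glue to give a single homotopy rather than two unrelated ones — exactly the kind of bookkeeping that the Gillet--Grayson formalism (Example \ref{example_GG}) is designed to handle, but which still needs to be spelled out. Alternatively, and perhaps more cleanly, one can quote the Additivity Theorem for the Waldhausen $S_\bullet$-construction (as in \cite[Chapter V, \S1]{MR3076731}) and then transport it along the Gillet--Grayson equivalence $|wG_\bullet\mathsf C|\simeq|wS_\bullet\mathsf C|$, checking only that this equivalence is compatible with the $H$-space additions on both sides — which is the route I would actually take if a self-contained simplicial homotopy turns out to be too fiddly.
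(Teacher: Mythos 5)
Your fallback route is, in effect, what the paper does: the paper offers no proof of Additivity at all, but simply cites the literature (Weibel, Ch.~V, Theorem 1.2), and transporting the $S_{\bullet}$-construction statement to the Gillet--Grayson model is exactly the implicit step; note only that the comparison is the loop-space equivalence $\left\vert wG_{\bullet}\mathsf{C}\right\vert \simeq\Omega\left\vert wS_{\bullet}\mathsf{C}\right\vert$, not $\left\vert wG_{\bullet}\mathsf{C}\right\vert \simeq\left\vert wS_{\bullet}\mathsf{C}\right\vert$, and the compatibility of the additions you rightly ask for is precisely what Lemma \ref{lemma_InfLoopSpaceStructures} and Corollary \ref{cor_SegalNerveSum} supply. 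Your step (1), the reduction to the universal case $\mathsf{C}=\mathcal{E}\mathsf{D}$ via the sub-, total- and quotient-object functors, and your step (2), identifying $(s_{1})_{\ast}+(s_{3})_{\ast}$ with the map induced by $E\mapsto B^{\prime}\oplus B^{\prime\prime}$, are both fine.

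The genuine gap is in step (3) of your primary, self-contained route, and it is not bookkeeping: it is the theorem. There is no natural transformation, nor chain of natural weak equivalences, connecting the exact functors $E\mapsto B$ and $E\mapsto B^{\prime}\oplus B^{\prime\prime}$ on $\mathcal{E}\mathsf{D}$ (that would amount to a natural splitting), and the whole content of Additivity is that they nevertheless induce homotopic maps. The $1$-simplices you write down --- the pair $B^{\prime}\hookrightarrow B\twoheadrightarrow B^{\prime\prime}$ and $B^{\prime}\hookrightarrow B^{\prime}\oplus B^{\prime\prime}\twoheadrightarrow B^{\prime\prime}$ with literally the same cokernel --- are legitimate $1$-simplices of $G_{\bullet}(\mathsf{D})$ as in Equation \ref{lmexid1}, but they are paths from $(B^{\prime},B^{\prime})$ to $(B,B^{\prime}\oplus B^{\prime\prime})$: they place $s_{2}$ on one of the Yin/Yang coordinates and $s_{1}\oplus s_{3}$ on the other, which is how one sees $[B]=[B^{\prime}]+[B^{\prime\prime}]$ in $K_{0}$, but it is not even the $0$-simplex level of a simplicial homotopy between the two induced maps (that would require, for each $0$-simplex $x$ of $G_{\bullet}(\mathcal{E}\mathsf{D})$, a path from $s_{2\ast}(x)$ to $(s_{1}\oplus s_{3})_{\ast}(x)$, plus compatible prisms over all higher simplices of $\operatorname*{diag}N_{\bullet}wG_{\bullet}(\mathcal{E}\mathsf{D})$, and producing those would again demand natural splittings). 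If such an elementary homotopy existed, Additivity would be a formal triviality; in fact every known proof (Quillen via Theorem A/B, Waldhausen, McCarthy, Grayson's direct simplicial argument) has to do substantive work at exactly this point. So either carry out one of those genuine arguments, or --- as you suggest and as the paper itself does --- quote the standard Additivity Theorem and transport it along the model comparison.
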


See \cite[Ch. V, Theorem 1.2]{MR3076731}. First of all, we deduce the
following standard vanishing theorem.

\begin{lemma}
[Eilenberg swindle]\label{lemma_EilenbergSwindle}If an exact category
$\mathsf{C}$ is closed under countable products (or under countable
coproducts), then $K(\mathsf{C})=0$.
\end{lemma}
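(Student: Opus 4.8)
The plan is to prove the Eilenberg swindle vanishing from the Additivity Theorem in the standard way. Suppose $\mathsf{C}$ is closed under countable coproducts (the case of countable products is dual, using negation, or follows by applying the result to $\mathsf{C}^{op}$ together with the fact that $K(\mathsf{C})\simeq K(\mathsf{C}^{op})$; alternatively one runs the same argument with products in place of coproducts). First I would introduce the endofunctor $T:\mathsf{C}\longrightarrow\mathsf{C}$ sending an object $C$ to the countable coproduct $T(C):=\bigvee_{n\geq 1}C = C\vee C\vee\cdots$, and likewise on morphisms. This functor is exact: countable coproducts exist by hypothesis, and in an exact category closed under countable coproducts the coproduct of a sequence of admissible monics (resp. admissible epis, resp. short exact sequences) is again one, so $T$ preserves short exact sequences. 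Let $\operatorname{id}_{\mathsf{C}}$ denote the identity functor.

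The key step is the observation that there is a natural short exact sequence of functors
\[
\operatorname{id}_{\mathsf{C}}\hookrightarrow T \twoheadrightarrow T,
\]
functorial in $C\in\mathsf{C}$: the left map is the inclusion of the first summand $C\hookrightarrow C\vee(C\vee C\vee\cdots)$, and the cokernel is again $C\vee C\vee\cdots\cong T(C)$ via the evident shift identification. Concretely this packages into a single exact functor $\mathsf{C}\rightarrow\mathcal{E}\mathsf{C}$ in the sense of Diagram \ref{lmisu6}, with $f_1=\operatorname{id}_{\mathsf{C}}$, $f_2=T$, $f_3=T$. Applying the Additivity Theorem gives $T_{\ast}=\operatorname{id}_{\ast}+T_{\ast}$ as maps $K(\mathsf{C})\rightarrow K(\mathsf{C})$, where $\operatorname{id}_{\ast}$ is the identity map of $K(\mathsf{C})$. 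Since $K(\mathsf{C})$ is a group-like $H$-space on homotopy groups (Proposition \ref{prop_NegationOnGGModel}), we may cancel $T_{\ast}$: on each homotopy group $\pi_i K(\mathsf{C})$ the induced endomorphisms satisfy $T_{\ast}=\operatorname{id}+T_{\ast}$, hence $\operatorname{id}=0$ on $\pi_i K(\mathsf{C})$ for every $i$. Therefore $K_i(\mathsf{C})=0$ for all $i$, i.e. $K(\mathsf{C})$ is weakly contractible, which is what we mean by $K(\mathsf{C})=0$.

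The main thing to be careful about — the only real obstacle — is verifying that $T$ is genuinely an exact functor, i.e. that the relevant exact-category axioms interact well with countable coproducts: one needs that a countable coproduct of admissible monomorphisms is an admissible monomorphism with the expected cokernel, so that $\operatorname{id}_{\mathsf{C}}\hookrightarrow T\twoheadrightarrow T$ really is a (functorial) short exact sequence rather than merely a sequence of maps. This is routine once one notes that coproducts are exact in an exact category that admits them (coproducts commute with cokernels, and a split monic pushed out stays admissible), but it is the step where the hypothesis "closed under countable coproducts" is actually used in an essential way, and it is worth stating explicitly. Everything after that is a formal consequence of Additivity and the group structure on homotopy groups already established in Proposition \ref{prop_NegationOnGGModel}.
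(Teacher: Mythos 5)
Your argument is correct and is essentially the paper's own proof: the same exact functor $X\mapsto\bigl(X\hookrightarrow\bigoplus_{i\in\mathbb{N}}X\twoheadrightarrow\bigoplus_{i\in\mathbb{N}}X\bigr)$ with the shift as cokernel, followed by Additivity and cancellation on homotopy groups, with the product case handled by the same sequence using products. The only cosmetic difference is your extra remark about dualizing via $\mathsf{C}^{op}$, which the paper avoids by simply rerunning the argument with products.
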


\begin{proof}
For example given in \cite[Lemma 4.2]{obloc}, but since the proof illustrates
how to use Additivity in a powerful way, we repeat the full argument here:
Suppose $\mathsf{C}$ is closed under coproducts. Use the exact functor
$\mathsf{C}\rightarrow\mathcal{E}\mathsf{C}$ sending any object $X$ to the
exact sequence $X\hookrightarrow\bigoplus_{i\in\mathbb{N}}X\overset
{s}{\twoheadrightarrow}\bigoplus_{i\in\mathbb{N}}X$, where the map $s$ sends
the $i$-th factor to the $(i-1)$-th for $i\geq1$. Naming these functors
$f_{1},f_{2},f_{3}$ as in Diagram \ref{lmisu6}, we obtain $\operatorname*{id}%
_{\mathsf{C}\ast}=f_{2\ast}-f_{3\ast}$, but $f_{2}=f_{3}$, showing that the
identity map agrees with the zero map, forcing our claim to hold. If
$\mathsf{C}$ is closed under products instead, use the same sequence, but with
products instead.
\end{proof}

We can now prove several fundamental theorems solely on the basis of
Additivity and topological considerations in the category $\mathsf{LCA}%
_{\mathfrak{A}}$. In particular, at this point we will do a few things which
hinge mostly on topology, and far less on the underlying algebraic right
$\mathfrak{A}$-module structure of objects.

\begin{theorem}
[Local Triviality]\label{thm_LocalTriviality}Let $F$ be a number field and
$\mathfrak{A}$ an order in a finite-dimensional semisimple $F$-algebra $A$.
Suppose $\mathfrak{p}$ is a finite place of $F$. Let $\mathfrak{O}$ be any
order in $A_{\mathfrak{p}}$ (for example $\mathfrak{A}_{p}$ or the maximal
order). Then the composition%
\[
K(\mathfrak{O})\longrightarrow K(A_{\mathfrak{p}})\longrightarrow
K(\mathsf{LCA}_{\mathfrak{A}})
\]
is zero. Here the first arrow is induced from the ring inclusion
$\mathfrak{O}\subset A_{\mathfrak{p}}$, while the latter sends
$A_{\mathfrak{p}}$ to itself, but equipped with the locally compact topology.
\end{theorem}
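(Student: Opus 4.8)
The plan is to exhibit a single exact functor into the category of exact sequences in $\mathsf{LCA}_{\mathfrak{A}}$ and then invoke the Additivity Theorem to force the composition to be null-homotopic. The key observation is that $A_{\mathfrak{p}}$, as a locally compact $\mathfrak{A}$-module, is \emph{self-similar} in a way reminiscent of the Eilenberg swindle: the field $F_{\mathfrak{p}}$ carries a decreasing filtration by the powers $\mathfrak{m}^{n}$ of its maximal ideal, each quotient $\mathfrak{m}^{n}/\mathfrak{m}^{n+1}$ is finite, and $\bigcup_{n}\mathfrak{m}^{-n}=F_{\mathfrak{p}}$ while $\bigcap_{n}\mathfrak{m}^{n}=0$. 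Passing to $A_{\mathfrak{p}}$ and to any order $\mathfrak{O}$ inside it, we get that for a projective $\mathfrak{O}$-module $\mathfrak{X}$ the module $X_{\mathfrak{p}}=\mathfrak{X}\otimes_{\mathfrak{O}}A_{\mathfrak{p}}$, equipped with its locally compact topology, contains a copy of $\mathfrak{X}$ as an open compact submodule with discrete (hence also locally compact) quotient, and multiplication by a uniformizer $\pi$ is an automorphism of $X_{\mathfrak{p}}$ shifting this filtration.

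Concretely, I would fix a uniformizer $\pi\in\mathcal{O}_{F_{\mathfrak{p}}}$ and, for each object $\mathfrak{X}\in\operatorname{PMod}(\mathfrak{O})$, form in $\mathsf{LCA}_{\mathfrak{A}}$ the short exact sequence
\[
X_{\mathfrak{p}}\overset{\pi}{\hooklongrightarrow}X_{\mathfrak{p}}\twoheadlongrightarrow X_{\mathfrak{p}}/\pi X_{\mathfrak{p}},
\]
where $X_{\mathfrak{p}}$ carries the locally compact $F_{\mathfrak{p}}$-vector space topology and $X_{\mathfrak{p}}/\pi X_{\mathfrak{p}}$ is a \emph{discrete} (in fact finite) $\mathfrak{A}$-module. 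This is functorial in $\mathfrak{X}$, so it defines an exact functor $\operatorname{PMod}(\mathfrak{O})\to\mathcal{E}\,\mathsf{LCA}_{\mathfrak{A}}$ whose three component functors $f_1,f_2,f_3$ are: $f_1=f_2=$ ``$(-)\otimes_{\mathfrak{O}}A_{\mathfrak{p}}$ with locally compact topology'' (these literally coincide, since $\pi$ induces an automorphism), and $f_3=$ ``$(-)\otimes_{\mathfrak{O}}(A_{\mathfrak{p}}/\pi)$ with discrete topology''. Additivity gives $f_{2\ast}=f_{1\ast}+f_{3\ast}$ on $K(\operatorname{PMod}(\mathfrak{O}))\to K(\mathsf{LCA}_{\mathfrak{A}})$, hence $f_{3\ast}=0$. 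But $f_{3}$ factors as $\operatorname{PMod}(\mathfrak{O})\to\operatorname{PMod}(A_{\mathfrak{p}}/\pi)\to\mathsf{LCA}_{\mathfrak{A}}$ (finite discrete modules), which is \emph{not} the functor we want. So instead I use the automorphism directly: since $\pi$ acts invertibly on $X_{\mathfrak{p}}$, the functor ``$(-)\otimes_{\mathfrak{O}}A_{\mathfrak{p}}$ into $\mathsf{LCA}_{\mathfrak{A}}$'' equipped with the endomorphism $\pi$ is naturally isomorphic to itself via $\pi$, and I instead feed the \emph{filtration-shift} swindle: the functor sending $\mathfrak{X}$ to $\bigoplus_{n\geq 0}\pi^{-n}\mathfrak{X}_{\mathfrak{p}}^{\circ}$ — no; cleaner is to note that $X_{\mathfrak{p}}$ with its locally compact topology is the colimit $\varinjlim_{n}\pi^{-n}\mathfrak{X}^{\wedge}$ of compact opens, and to run the Eilenberg-swindle functor $X\mapsto(X\hookrightarrow\widehat{\bigoplus}_{n\in\mathbb{N}}X\twoheadrightarrow\widehat{\bigoplus}_{n\in\mathbb{N}}X)$ where $\widehat{\bigoplus}$ is the appropriate locally compact completed sum available in $\mathsf{LCA}_{\mathfrak{A}}$; by Lemma~\ref{lemma_EilenbergSwindle}-style reasoning this kills the class of $X_{\mathfrak{p}}$.

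The main obstacle, and the point that needs real care, is that $\mathsf{LCA}_{\mathfrak{A}}$ is \emph{not} closed under arbitrary countable coproducts, so Lemma~\ref{lemma_EilenbergSwindle} does not apply verbatim — one must check that the specific completed/restricted countable sum built from copies of $X_{\mathfrak{p}}$ (or the union $\varinjlim_n \pi^{-n}\mathfrak{X}_\mathfrak{p}$ of compact opens) really does land in $\mathsf{LCA}_{\mathfrak{A}}$ and that the shift map is an admissible epimorphism there. The clean way around this is probably to argue \emph{not} with an infinite swindle but with the single exact sequence above plus a dévissage/comparison of topologies: the composite $K(\mathfrak{O})\to K(A_{\mathfrak{p}})\to K(\mathsf{LCA}_{\mathfrak{A}})$ factors through the $K$-theory of compact, resp. discrete, $\mathfrak{A}$-modules via the open-compact-submodule filtration, and both of those summands vanish in $K(\mathsf{LCA}_{\mathfrak{A}})$ — compacts by an Eilenberg swindle on the \emph{Pontryagin-dual} side (profinite modules admit countable products inside $\mathsf{LCA}_{\mathfrak{A}}$), discretes by the countable-coproduct swindle among discrete modules (which \emph{is} available, as the coproduct of discrete modules is discrete hence locally compact). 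Then the class of $X_{\mathfrak{p}}$, sitting in an extension of a discrete by a compact module, is a sum of two classes each of which is zero, so it vanishes; since this holds functorially, the induced map on $K$-theory is null-homotopic, which is the assertion.
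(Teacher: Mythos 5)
Your final paragraph is essentially the paper's own proof: tensor up to the functorial exact sequence (compact open submodule) $\hookrightarrow X_{\mathfrak{p}} \twoheadrightarrow$ (discrete quotient) in $\mathsf{LCA}_{\mathfrak{A}}$, apply Additivity, and kill the two outer terms by Eilenberg swindles in the compact subcategory (closed under products, by Tychonoff) and the discrete subcategory (closed under coproducts) — the paper does precisely this with $\mathfrak{O}\hookrightarrow A_{\mathfrak{p}}\twoheadrightarrow A_{\mathfrak{p}}/\mathfrak{O}$. The earlier $\pi$-multiplication detour should simply be dropped (since $\pi$ is invertible in $A_{\mathfrak{p}}$, the quotient $X_{\mathfrak{p}}/\pi X_{\mathfrak{p}}$ is zero, not finite), and the nullhomotopy of the composite should be attributed to Additivity applied to the functorial exact sequence together with the vanishing of $K(\mathsf{LCA}_{\mathfrak{A},C})$ and $K(\mathsf{LCA}_{\mathfrak{A},D})$, not to objectwise vanishing of classes.
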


\begin{proof}
We define an exact functor $p:\operatorname*{PMod}(\mathfrak{O})\rightarrow
\mathcal{E}\mathsf{LCA}_{\mathfrak{A}}$. We define it on the projective
generator $\mathfrak{O}$ of the category $\operatorname*{PMod}(\mathfrak{O})$
by sending it to%
\[
\mathfrak{O}\hookrightarrow A_{\mathfrak{p}}\twoheadrightarrow A_{\mathfrak{p}%
}/\mathfrak{O}%
\]
in $\mathsf{LCA}_{\mathfrak{A}}$. Let $p$ be the residual characteristic of
the local field $Z_{\mathfrak{p}}$. Then $\mathfrak{O}$ carries the topology
of a finite rank free $\mathcal{O}_{\mathfrak{p}}$-module, where
$\mathcal{O}_{\mathfrak{p}}$ is the ring of integers of $Z_{\mathfrak{p}}$,
$A_{\mathfrak{p}}$ carries the topology of a finite-dimensional
$Z_{\mathfrak{p}}$-vector space and the quotient $A_{\mathfrak{p}%
}/\mathfrak{O}$ is seen to necessarily carry the discrete topology. Note that
$\mathfrak{O}$ is compact. The Additivity Theorem implies that $p_{2\ast
}=p_{1\ast}+p_{3\ast}$, where $p_{1},p_{2},p_{3}$ denote the exact functors to
the left (resp. middle, resp. right) entry of the short exact sequence. Since
$p_{1\ast}$ and $p_{3\ast}$ factor over $\mathsf{LCA}_{\mathfrak{A},D}$ resp.
$\mathsf{LCA}_{\mathfrak{A},C}$, both of which have zero $K$-theory (Lemma
\ref{lemma_EilenbergSwindle}, use that arbitrary direct sums of discrete
groups are discrete, and arbitrary products of compact groups are compact by
Tychonoff's Theorem), it follows that $p_{2\ast}=0+0$.
\end{proof}

\begin{remark}
Of course it would have been sufficient to prove this with $\mathfrak{O}$ the
unique maximal order and use that every order is contained in it. However, the
way we present the proof above it is particularly clear that all which is
really used is the compactness of $\mathfrak{O}$ and the discreteness of the
respective cokernel, so the above proof is in a way simpler since it does not
even use the algebraic theory of orders in semisimple algebras.
\end{remark}

In degree one this has the following important consequence.

\begin{corollary}
\label{cor_LocalTrivInDegreeOne}Let $F$ be a number field and $\mathfrak{A}$
an order in a finite-dimensional semisimple $F$-algebra $A$. Suppose
$\mathfrak{p}$ is a finite place of $F$. Then the composition%
\[
\mathfrak{A}_{\mathfrak{p}}^{\times}\longrightarrow A_{\mathfrak{p}}^{\times
}\longrightarrow K_{1}(\mathsf{LCA}_{\mathfrak{A}})
\]
is zero.
\end{corollary}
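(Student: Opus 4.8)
The plan is to deduce Corollary \ref{cor_LocalTrivInDegreeOne} directly from Theorem \ref{thm_LocalTriviality} by passing to fundamental groups. First I would specialize the theorem to the case $\mathfrak{O}=\mathfrak{A}_{\mathfrak{p}}$, the $\mathfrak{p}$-adic completion of the given order, which is indeed an order in $A_{\mathfrak{p}}$. The theorem then tells us that the composite of pointed maps
\[
K(\mathfrak{A}_{\mathfrak{p}})\longrightarrow K(A_{\mathfrak{p}})\longrightarrow K(\mathsf{LCA}_{\mathfrak{A}})
\]
is (homotopic to) the constant map at the base point. Applying the functor $\pi_{1}$ and recalling that $\pi_{1}K(\mathsf{C})=K_{1}(\mathsf{C})$ for any exact category, together with the identification $K_{1}(R)=\operatorname*{GL}(R)/[\operatorname*{GL}(R),\operatorname*{GL}(R)]$ from Equation \ref{lww1}, we get that the induced group homomorphism
\[
K_{1}(\mathfrak{A}_{\mathfrak{p}})\longrightarrow K_{1}(A_{\mathfrak{p}})\longrightarrow K_{1}(\mathsf{LCA}_{\mathfrak{A}})
\]
is the zero map.

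Next I would connect the unit groups to these $K_{1}$-groups. The inclusion $\mathfrak{A}_{\mathfrak{p}}^{\times}=\operatorname*{GL}_{1}(\mathfrak{A}_{\mathfrak{p}})\hookrightarrow\operatorname*{GL}(\mathfrak{A}_{\mathfrak{p}})$ followed by the abelianization map gives a canonical homomorphism $\mathfrak{A}_{\mathfrak{p}}^{\times}\to K_{1}(\mathfrak{A}_{\mathfrak{p}})$, and similarly $A_{\mathfrak{p}}^{\times}\to K_{1}(A_{\mathfrak{p}})$. These fit into a commuting square with the ring inclusion $\mathfrak{A}_{\mathfrak{p}}\subset A_{\mathfrak{p}}$ on top and the functorially induced map $K_{1}(\mathfrak{A}_{\mathfrak{p}})\to K_{1}(A_{\mathfrak{p}})$ on the bottom, simply because the map $\operatorname*{Aut}(X)\to K_{1}(\mathsf{C})$ is natural in the exact category (concretely, on the Gillet--Grayson/Nenashev side, $\alpha\in\mathfrak{A}_{\mathfrak{p}}^{\times}$ goes to the Nenashev representative of Equation \ref{lmixi3a} with $X=\mathfrak{A}_{\mathfrak{p}}$ and $\varphi=\alpha$, and this is compatible with base change). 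I would also invoke the (standard) fact that the map $A_{\mathfrak{p}}^{\times}\to K_{1}(A_{\mathfrak{p}})$ induced by the embedding into $\operatorname*{GL}_1$ is actually surjective for a semisimple algebra over a local field — though surjectivity is not even needed here, only that the square commutes.

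Putting the two pieces together: the composition $\mathfrak{A}_{\mathfrak{p}}^{\times}\to A_{\mathfrak{p}}^{\times}\to K_{1}(\mathsf{LCA}_{\mathfrak{A}})$ factors, by the commuting square, as $\mathfrak{A}_{\mathfrak{p}}^{\times}\to K_{1}(\mathfrak{A}_{\mathfrak{p}})\to K_{1}(A_{\mathfrak{p}})\to K_{1}(\mathsf{LCA}_{\mathfrak{A}})$, and we have just seen that the last two arrows compose to zero. Hence the whole composition is zero, which is exactly the claim.

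The step I expect to require the most care is the compatibility of the maps $\mathfrak{A}_{\mathfrak{p}}^{\times}\to K_{1}(\mathfrak{A}_{\mathfrak{p}})$, $A_{\mathfrak{p}}^{\times}\to K_{1}(A_{\mathfrak{p}})$, and $A_{\mathfrak{p}}^{\times}\to K_{1}(\mathsf{LCA}_{\mathfrak{A}})$ with the respective functors of exact categories — i.e. verifying that the diagram built from $\operatorname*{PMod}(\mathfrak{A}_{\mathfrak{p}})\to\operatorname*{PMod}(A_{\mathfrak{p}})\to\mathsf{LCA}_{\mathfrak{A}}$ genuinely commutes after taking $\pi_{1}$, and that the map $A_{\mathfrak{p}}^{\times}\to K_1(\mathsf{LCA}_{\mathfrak A})$ appearing in the corollary is literally the one induced by sending $A_{\mathfrak{p}}$, with its locally compact topology, to the corresponding object of $\mathsf{LCA}_{\mathfrak{A}}$ and an automorphism to the loop of Equation \ref{lmixi3a}. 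This is a matter of unwinding the Gillet--Grayson model and the naturality of principle (b) / Example \ref{example_MakeAutToNenashevRepresentative} under exact functors; everything else is a one-line consequence of Theorem \ref{thm_LocalTriviality} applied in degree one.
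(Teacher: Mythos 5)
Your proposal is correct and is exactly the paper's (implicit) argument: the corollary is stated as the degree-one consequence of Theorem \ref{thm_LocalTriviality} with $\mathfrak{O}=\mathfrak{A}_{\mathfrak{p}}$, obtained by applying $\pi_{1}$ and the natural maps $\mathfrak{A}_{\mathfrak{p}}^{\times}\to K_{1}(\mathfrak{A}_{\mathfrak{p}})$, $A_{\mathfrak{p}}^{\times}\to K_{1}(A_{\mathfrak{p}})$, whose compatibility with the induced maps on $K$-theory is the naturality you spell out. Nothing further is needed.
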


We recall the reciprocity law, \cite[Theorem 13.1]{etnclca}:

\begin{theorem}
[Reciprocity Law]\label{thm_reciprocity_law}Let $F$ be a number field and
$\mathfrak{A}$ an order in a finite-dimensional semisimple $F$-algebra $A$.
Then the composition%
\begin{equation}
K(A)\longrightarrow K(\widehat{A})\oplus K(A_{\mathbb{R}})\overset
{\operatorname*{sum}}{\longrightarrow}K(\mathsf{LCA}_{\mathfrak{A}})
\label{lmits1}%
\end{equation}
is zero.
\end{theorem}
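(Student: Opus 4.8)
The plan is to exhibit the composition in Equation~\ref{lmits1} as the ``middle'' map of an additive exact functor whose ``sub'' and ``quotient'' pieces have vanishing $K$-theory, exactly in the style of the proof of Theorem~\ref{thm_LocalTriviality}. Concretely, I would define an exact functor
\[
q=(q_{1},q_{2},q_{3})\colon\operatorname*{PMod}(A)\longrightarrow\mathcal{E}\mathsf{LCA}_{\mathfrak{A}}
\]
by sending a finitely generated projective right $A$-module $P$ to the sequence
\[
P\hookrightarrow P\otimes_{A}(\widehat{A}\oplus A_{\mathbb{R}})\twoheadrightarrow\bigl(P\otimes_{A}(\widehat{A}\oplus A_{\mathbb{R}})\bigr)/P\text{,}
\]
where the left term carries the discrete topology, the middle term its natural locally compact (adelic) topology, the right term the quotient topology, and the left arrow is the diagonal embedding induced by $A\hookrightarrow\widehat{A}$ and $A\hookrightarrow A_{\mathbb{R}}$. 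One should keep in mind that $\widehat{A}\oplus A_{\mathbb{R}}=A\otimes_{\mathbb{Q}}\mathbb{A}_{\mathbb{Q}}$ is just the adele ring of $A$.

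The first point to pin down -- and I expect this to be the real obstacle -- is that this is an admissible short exact sequence in $\mathsf{LCA}_{\mathfrak{A}}$, functorially in $P$. For $P=A$ this is the classical arithmetic statement that $A$, diagonally embedded, is a discrete cocompact $\mathbb{Z}$-submodule of its adele ring: the embedding is a closed topological embedding with discrete image, and the cokernel is compact (reduce to $A=\mathbb{Q}$, where $\mathbb{A}_{\mathbb{Q}}=\mathbb{Q}+(\widehat{\mathbb{Z}}\times[0,1])$, by choosing a $\mathbb{Q}$-basis of $A$). For general $P$ one reduces to $P=A^{n}$ and then to a direct summand thereof, checking that base change along $A\to\widehat{A}$ and $A\to A_{\mathbb{R}}$ respects the prescribed topologies; this produces an honest exact functor into $\mathcal{E}\mathsf{LCA}_{\mathfrak{A}}$, the discreteness of $q_{1}(P)$ and the compactness of $q_{3}(P)$ being preserved under direct summands.

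With $q$ in place, the Additivity Theorem gives $q_{2\ast}=q_{1\ast}+q_{3\ast}$ as maps $K(A)\to K(\mathsf{LCA}_{\mathfrak{A}})$, where $+$ is the $H$-space addition. Since $q_{2}(P)=(P\otimes_{A}\widehat{A})\oplus(P\otimes_{A}A_{\mathbb{R}})$ and the point of a direct sum is the sum of the points of the summands, $q_{2\ast}$ is precisely the composition $K(A)\to K(\widehat{A})\oplus K(A_{\mathbb{R}})\overset{\operatorname*{sum}}{\longrightarrow}K(\mathsf{LCA}_{\mathfrak{A}})$ of Equation~\ref{lmits1}. It therefore remains to see $q_{1\ast}=0$ and $q_{3\ast}=0$. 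The functor $q_{1}$ factors through the full exact subcategory of \emph{discrete} objects of $\mathsf{LCA}_{\mathfrak{A}}$, which is closed under countable coproducts (a countable direct sum of discrete modules, with the direct-sum topology, is discrete), hence has trivial $K$-theory by the Eilenberg swindle, Lemma~\ref{lemma_EilenbergSwindle}; thus $q_{1\ast}=0$. Dually, $q_{3}$ factors through the full exact subcategory of \emph{compact} objects, which is closed under countable products by Tychonoff's theorem, so again Lemma~\ref{lemma_EilenbergSwindle} forces $q_{3\ast}=0$. Consequently the composition in Equation~\ref{lmits1} equals $q_{2\ast}=q_{1\ast}+q_{3\ast}=0$, which is the assertion; the only difference from Theorem~\ref{thm_LocalTriviality} is that the compactness/discreteness dichotomy now comes from the \emph{global} adelic sequence rather than from a single completion.
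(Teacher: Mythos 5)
Your proof is correct, but note that the paper itself does not prove Theorem \ref{thm_reciprocity_law} at all: it is imported verbatim as \cite[Theorem 13.1]{etnclca}, so there is no in-text argument to match against. What you have written is a legitimate self-contained proof, and it is exactly the global counterpart of the paper's proof of Local Triviality, Theorem \ref{thm_LocalTriviality}: there the sequence $\mathfrak{O}\hookrightarrow A_{\mathfrak{p}}\twoheadrightarrow A_{\mathfrak{p}}/\mathfrak{O}$ (compact sub, discrete quotient) feeds Additivity plus the Eilenberg swindle, Lemma \ref{lemma_EilenbergSwindle}; you replace it by the adelic sequence $P\hookrightarrow P\otimes_{A}(\widehat{A}\oplus A_{\mathbb{R}})\twoheadrightarrow (P\otimes_{A}(\widehat{A}\oplus A_{\mathbb{R}}))/P$, with the classical discreteness and cocompactness of $A$ in its adele ring doing the work, and the same swindle killing the discrete and compact subcategories. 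You correctly flag the two points that need care: (i) admissibility and functoriality of the sequence in $\mathsf{LCA}_{\mathfrak{A}}$, which reduces to $P=A^{n}$ and direct summands ($P=eA^{n}$ is closed and discrete in $e\,\mathbb{A}^{n}$ with compact quotient, since $eA^{n}=A^{n}\cap e\,\mathbb{A}^{n}$ and $\mathbb{A}^{n}/A^{n}$ splits topologically); and (ii) the identification of $q_{2\ast}$ with the composition in Equation \ref{lmits1}, which uses that the $H$-space sum of the two maps is realized by the direct-sum functor equipped with the product (adelic times real) topology — consistent with the sign convention stressed in Remark \ref{rem_SignInReciprocityLaw}. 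With those points spelled out as you indicate, the argument is complete; its only cost relative to citing \cite{etnclca} is redoing the adelic topology checks, and its benefit is that the whole reciprocity statement becomes visibly an instance of the same compact--discrete dichotomy already exploited in Theorem \ref{thm_LocalTriviality}.
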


Here the first map stems from the exact functor $X\mapsto(X\otimes_{A}%
\widehat{A}\,,\,X\otimes_{\mathbb{Z}}\mathbb{R)}$. The second map sends an
$\widehat{A}$-module to itself, but equipped with the adelic topology, and
maps a free right $A_{\mathbb{R}}$-module to itself, equipped with the real
vector space topology.

\begin{remark}
[Signs]\label{rem_SignInReciprocityLaw}In Theorem \ref{thm_reciprocity_law} we
really mean the sum map on the right, and not the difference.
\end{remark}

\begin{corollary}
\label{cor_RecipLawInDegreeOne}Let $F$ be a number field and $\mathfrak{A}$ an
order in a finite-dimensional semisimple $F$-algebra $A$. Then the composition%
\[
A^{\times}\longrightarrow J(A)\longrightarrow K_{1}(\mathsf{LCA}%
_{\mathfrak{A}})
\]
is zero.
\end{corollary}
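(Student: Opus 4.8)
The plan is to deduce this from the Reciprocity Law, Theorem \ref{thm_reciprocity_law}, in exactly the way Corollary \ref{cor_LocalTrivInDegreeOne} is deduced from Theorem \ref{thm_LocalTriviality}: by applying $\pi_{1}(-)$. Since $\pi_{1}$ sends a null-homotopic map of pointed spaces to the zero homomorphism, applying it to the composition in Theorem \ref{thm_reciprocity_law} yields that
\[
K_{1}(A)\longrightarrow K_{1}(\widehat{A})\oplus K_{1}(A_{\mathbb{R}})\overset{\operatorname*{sum}}{\longrightarrow}K_{1}(\mathsf{LCA}_{\mathfrak{A}})
\]
is the zero map of abelian groups, where the first arrow is the one induced by the base change functor $X\mapsto(X\otimes_{A}\widehat{A},\,X\otimes_{\mathbb{Z}}\mathbb{R})$.

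Next I would match the two maps occurring in the Corollary with this sequence. Writing $J(A)=\widehat{A}^{\times}\times A_{\mathbb{R}}^{\times}$ and using the canonical maps $GL_{1}\to K_{1}$ on each factor, the map $J(A)\to K_{1}(\mathsf{LCA}_{\mathfrak{A}})$ is the degree-one incarnation of the second arrow of Theorem \ref{thm_reciprocity_law}, i.e. the composite $J(A)\to K_{1}(\widehat{A})\oplus K_{1}(A_{\mathbb{R}})\overset{\operatorname*{sum}}{\longrightarrow}K_{1}(\mathsf{LCA}_{\mathfrak{A}})$. On the other hand, the diagonal embedding $A^{\times}\hookrightarrow J(A)$ is, coordinatewise, induced by the ring homomorphisms $A\to\widehat{A}$ and $A\to A_{\mathbb{R}}$; hence by functoriality of the assignment $R\mapsto\left(GL_{1}(R)\to K_{1}(R)\right)$, together with its compatibility with the base change maps of Theorem \ref{thm_reciprocity_law}, the composite $A^{\times}\to J(A)\to K_{1}(\widehat{A})\oplus K_{1}(A_{\mathbb{R}})$ agrees with $A^{\times}=GL_{1}(A)\to K_{1}(A)\to K_{1}(\widehat{A})\oplus K_{1}(A_{\mathbb{R}})$, the last arrow again being base change.

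Combining the two observations finishes the argument: the composite in the Corollary factors through $K_{1}(A)\to K_{1}(\widehat{A})\oplus K_{1}(A_{\mathbb{R}})\overset{\operatorname*{sum}}{\longrightarrow}K_{1}(\mathsf{LCA}_{\mathfrak{A}})$, which we have just seen to be zero. I do not expect a genuine obstacle; the only point demanding care is the bookkeeping in the middle paragraph, namely checking that the ``principal id\`{e}le'' description and the ``base change on $K_{1}$'' description of the homomorphism leaving $A^{\times}$ are literally the same. This is a formal consequence of functoriality and of the fact that an automorphism of a free rank-one module is recorded in $K_{1}$ precisely by its image under $GL_{1}\to GL\to K_{1}$, so it should go through without difficulty.
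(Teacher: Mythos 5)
Your proposal is correct and is exactly the deduction the paper intends: the corollary is the degree-one shadow of Theorem \ref{thm_reciprocity_law}, obtained by applying $\pi_{1}$ and identifying the diagonal map $A^{\times}\to J(A)\to K_{1}(\mathsf{LCA}_{\mathfrak{A}})$ with $A^{\times}\cong K_{1}(A)\to K_{1}(\widehat{A})\oplus K_{1}(A_{\mathbb{R}})\overset{\operatorname*{sum}}{\to}K_{1}(\mathsf{LCA}_{\mathfrak{A}})$, the compatibility check being the routine functoriality of $GL_{1}\to K_{1}$ with base change that you spell out.
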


\section{Noncommutative id\`{e}les I}

In this section we shall establish an id\`{e}le presentation of the group
$K_{1}(\mathsf{LCA}_{\mathfrak{A}})$. We first prove an analogous result using
$K_{1}$-id\`{e}les and then use reduced norms to translate this into the claim
which we want to prove. This is analogous to the proof of Fr\"{o}hlich's
id\`{e}le presentation, Equation \ref{lmixi5} in Curtis--Reiner \cite[\S 49A]%
{MR892316}. It originates from ideas of Wilson \cite{MR0447211}.

For auxiliary purposes, we define the $K_{1}$-analogue of the id\`{e}le group,%
\begin{equation}
JK_{1}(A):=\left\{  (a_{\mathfrak{p}})_{\mathfrak{p}}\in\left.  \prod
_{\mathfrak{p}}K_{1}(A_{\mathfrak{p}})\right\vert a_{\mathfrak{p}}%
\in\operatorname*{im}K_{1}(\mathfrak{A}_{\mathfrak{p}})\text{ for all but
finitely many places }\mathfrak{p}\right\}  \text{,} \label{lzal1}%
\end{equation}
where $\mathfrak{p}$ runs over all places, finite and infinite, and the
condition on $a_{\mathfrak{p}}$ is considered satisfied if $\mathfrak{p}$ is
an infinite place. The image in \textquotedblleft$\operatorname*{im}%
K_{1}(\mathfrak{A}_{\mathfrak{p}})$\textquotedblright\ refers to the natural
map $K_{1}(\mathfrak{A}_{p})\longrightarrow K_{1}(A_{\mathfrak{p}})$, which in
general need not be injective. The definition of $JK_{1}(A)$ does not depend
on the choice of the order $\mathfrak{A}$, for the same reason as in the
definition of $J(A)$. If $\mathfrak{A}^{\prime}$ is a further order, we have
$\mathfrak{A}_{\mathfrak{p}}=(\mathfrak{A}^{\prime})_{\mathfrak{p}}$ for all
but finitely many places. Next, we define%
\begin{equation}
UK_{1}^{\operatorname{fin}}(\mathfrak{A}):=\left\{  (a_{\mathfrak{p}%
})_{\mathfrak{p}}\in\prod_{\mathfrak{p}\text{ finite}}K_{1}(\mathfrak{A}%
_{\mathfrak{p}})\right\}  \text{.} \label{lmixi5a}%
\end{equation}
These definitions roughly match the ones in Curtis--Reiner
\cite[(49.16)\ Proposition]{MR892316} and Wilson \cite{MR0447211}, except that
we also include the infinite places.

\begin{definition}
Suppose $\mathfrak{p}$ is any place of $F$. Define%
\begin{equation}
\tilde{\xi}_{\mathfrak{p}}:K_{1}(A_{\mathfrak{p}})\longrightarrow
K_{1}(\mathsf{LCA}_{\mathfrak{A}})\text{,} \label{lmixi6}%
\end{equation}
based on the exact functor sending a finitely generated projective right
$A_{\mathfrak{p}}$-module to itself, equipped with its natural locally compact
topology (i.e. the $\mathbb{Q}_{p}$-vector space topology if $\mathfrak{p}$ is
a finite place over the prime $p$, or the $\mathbb{R}$-vector space topology
if $\mathfrak{p}$ is an infinite place).
\end{definition}

We can make this map explicit in the Nenashev presentation: By Example
\ref{example_MakeAutToNenashevRepresentative} the natural morphism below on
the left%
\begin{equation}
A_{\mathfrak{p}}^{\times}\rightarrow K_{1}(A_{\mathfrak{p}})\text{,}\qquad
a\mapsto\left[
\xymatrix{
0 \ar@<1ex>@{^{(}->}[r]^-{0} \ar@<-1ex>@{^{(}.>}[r]_-{0} & {A_{\mathfrak{p}}}
\ar@<1ex>@{->>}[r]^{\cdot a} \ar@<-1ex>@{.>>}[r]_{1} & {{A_{\mathfrak{p}}}}
}%
\right]  \label{lmixi10_1}%
\end{equation}
is given in terms of the Nenashev presentation by the double exact sequence
above on the right, and moreover this map is an isomorphism since
$A_{\mathfrak{p}}$ is semisimple. Use the same Nenashev representative for its
image in $K_{1}(\mathsf{LCA}_{\mathfrak{A}})$, just additionally equipped with
the natural topology.

\begin{proposition}
\label{Prop_IdentifyK1LCA_Use_K1Ideles}Let $\mathfrak{A}$ be a regular order
in a finite-dimensional semisimple $\mathbb{Q}$-algebra $A$. The map%
\[
\tilde{\xi}:\frac{JK_{1}(A)}{\operatorname*{im}K_{1}(A)+\operatorname*{im}%
UK_{1}^{\operatorname{fin}}(\mathfrak{A})}\overset{\sim}{\longrightarrow}%
K_{1}(\mathsf{LCA}_{\mathfrak{A}})
\]
given by $\tilde{\xi}_{\mathfrak{p}}$ on all factors in the restricted product
in Equation \ref{lzal1}, induces an isomorphism.
\end{proposition}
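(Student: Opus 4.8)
The plan is to mimic the classical proof of Fröhlich's idèle description of $\operatorname{Cl}(\mathfrak{A})$ as presented in Curtis--Reiner \cite[\S 49A]{MR892316}, but one $K$-theoretic degree higher and with $\mathsf{LCA}_{\mathfrak{A}}$ playing the role of the target. The central input is a long exact localization-type sequence relating the $K$-theory of $\mathfrak{A}$, of its local completions, and of $\mathsf{LCA}_{\mathfrak{A}}$. More precisely, I would first establish (or quote from \cite{etnclca}, since the isomorphism $K_0(\mathfrak{A},\mathbb{R})\cong K_1(\mathsf{LCA}_{\mathfrak{A}})$ is already available there) that there is an exact sequence of abelian groups
\[
K_1(\widehat{\mathfrak{A}})\times K_1(A)\longrightarrow K_1(\widehat{A})\times K_1(A_{\mathbb{R}})\longrightarrow K_1(\mathsf{LCA}_{\mathfrak{A}})\longrightarrow K_0(\widehat{\mathfrak{A}})\times K_0(A)\longrightarrow\cdots,
\]
obtained as the long exact homotopy sequence of the principal idèle fibration (Theorem \ref{thm_PrincipalIdeleFibration}), or equivalently from the localization sequence built in the previous paper. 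The surjectivity half of the Proposition will then come from the surjectivity of the map $K_1(\widehat{A})\times K_1(A_{\mathbb{R}})\to K_1(\mathsf{LCA}_{\mathfrak{A}})$ onto its image together with the fact that every class in $K_1(\widehat{A})$ is represented by an automorphism supported at finitely many places (semisimplicity of the $A_{\mathfrak{p}}$, and $K_1(\mathfrak{A}_{\mathfrak{p}})\to K_1(A_{\mathfrak{p}})$ surjective for almost all $\mathfrak{p}$ by \cite[\S 49]{MR892316}), so that a class in $K_1(\widehat{A})\times K_1(A_{\mathbb{R}})$ is exactly the data of a $JK_1(A)$-element modulo $\operatorname{im}UK_1^{\operatorname{fin}}(\mathfrak{A})$; I would spell this identification out carefully using the Nenashev representatives of Equation \ref{lmixi10_1}.

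For the well-definedness and injectivity I would argue as follows. The map $\tilde\xi$ is $\tilde\xi_{\mathfrak p}$ on each local factor; by Corollary \ref{cor_LocalTrivInDegreeOne}, the composite $\mathfrak{A}_{\mathfrak p}^\times\to A_{\mathfrak p}^\times\to K_1(\mathsf{LCA}_{\mathfrak{A}})$ is zero, and passing through $K_1$ (again using semisimplicity so that $A_{\mathfrak p}^\times\to K_1(A_{\mathfrak p})$ is onto) this shows $\operatorname{im}UK_1^{\operatorname{fin}}(\mathfrak{A})$ lies in the kernel of $\tilde\xi$. Likewise, Corollary \ref{cor_RecipLawInDegreeOne} gives that $A^\times\to J(A)\to K_1(\mathsf{LCA}_{\mathfrak{A}})$ is zero, and its $K_1$-analogue — which I would deduce from the reciprocity law, Theorem \ref{thm_reciprocity_law}, in degree one, i.e. that $K_1(A)\to K_1(\widehat A)\oplus K_1(A_{\mathbb R})\xrightarrow{\operatorname{sum}}K_1(\mathsf{LCA}_{\mathfrak{A}})$ vanishes — shows $\operatorname{im}K_1(A)$ is killed as well. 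Hence $\tilde\xi$ descends to the displayed quotient. For injectivity, suppose a class in $JK_1(A)$ maps to zero in $K_1(\mathsf{LCA}_{\mathfrak{A}})$; by exactness of the long exact sequence this means its image in $K_1(\widehat A)\times K_1(A_{\mathbb R})$ comes from $K_1(\widehat{\mathfrak{A}})\times K_1(A)$, i.e. it can be written as a product of a norm-one-type contribution from $K_1(A)$ and a contribution from $K_1(\widehat{\mathfrak A})$; the latter is precisely $\operatorname{im}UK_1^{\operatorname{fin}}(\mathfrak{A})$ (here regularity of $\mathfrak{A}$ is used, so that $K_1(\widehat{\mathfrak A})\to K_1(\widehat A)$ behaves well and $K_1(\widehat{\mathfrak A})=\prod'_{\mathfrak p}K_1(\mathfrak A_{\mathfrak p})$ maps onto what we want), and the former is $\operatorname{im}K_1(A)$, so the original class lies in the subgroup we quotiented out.

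The step I expect to be the main obstacle is the precise bookkeeping in the exactness argument: translating the homotopy-theoretic statement that the composite in the principal idèle fibration is null-homotopic into the clean algebraic statement that $\operatorname{im}K_1(A)+\operatorname{im}UK_1^{\operatorname{fin}}(\mathfrak{A})$ is \emph{exactly} the kernel of $JK_1(A)\to K_1(\mathsf{LCA}_{\mathfrak{A}})$, rather than merely contained in it. Showing the reverse inclusion requires knowing that the connecting map $K_1(\mathsf{LCA}_{\mathfrak{A}})\to K_0(\widehat{\mathfrak A})\times K_0(A)$ has the expected kernel and that $K_1(\widehat{\mathfrak A})$ really is the restricted product of the local $K_1(\mathfrak A_{\mathfrak p})$ (a continuity / compactness statement about $\mathsf{LCA}_{\mathfrak{A}}$), and this is exactly where regularity of the order is indispensable; without it $K_1(\mathfrak A_{\mathfrak p})$ could fail to have the clean description needed and the whole identification breaks. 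A secondary technical nuisance is keeping the Nenashev-representative bookkeeping compatible with the idèle multiplication, i.e. checking that the product structure on $JK_1(A)$ matches concatenation of loops in the Gillet--Grayson model; this is routine given Example \ref{example_MakeAutToNenashevRepresentative} but must be stated carefully.
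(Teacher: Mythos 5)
Your route is genuinely different from the paper's. You try to read the statement off the long exact homotopy sequence of the principal id\`ele fibration (Theorem \ref{thm_PrincipalIdeleFibration}); the paper never uses that fibration here. Instead it builds the comparison diagram \ref{lfigA1}, whose right column is the truncated sequence $K_1(\mathfrak{A})\to K_1(A_{\mathbb{R}})\to K_1(\mathsf{LCA}_{\mathfrak{A}})\to\operatorname{Cl}(\mathfrak{A})\to 0$ from \cite[Theorem 11.3]{etnclca} (this is where regularity enters), whose left column is controlled by Wall's sequence, and whose bottom row is Fr\"ohlich's classical isomorphism in the form of \cite[(49.16)]{MR892316}; the Snake Lemma then gives surjectivity and the exact kernel simultaneously. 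The genuinely hard step in the paper is the commutativity of square $Z$, i.e. matching the boundary $K_1(\mathsf{LCA}_{\mathfrak{A}})\to\operatorname{Cl}(\mathfrak{A})$ with Fr\"ohlich's id\`ele-to-class map, proved by an explicit lift of Nenashev loops in the Gillet--Grayson model using uniformizers of the local division algebras. Your sketch contains nothing playing this role, and the obligations it incurs instead are not discharged.

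Concretely, two gaps. First, surjectivity: from the fibration you only get exactness at $K_1(\widehat{A})\times K_1(A_{\mathbb{R}})$; to conclude that this group surjects onto $K_1(\mathsf{LCA}_{\mathfrak{A}})$ you must show the connecting map $K_1(\mathsf{LCA}_{\mathfrak{A}})\to K_0(\widehat{\mathfrak{A}})\oplus K_0(A)$ vanishes, equivalently that $K_0(\widehat{\mathfrak{A}})\oplus K_0(A)\to K_0(\widehat{A})\oplus K_0(A_{\mathbb{R}})$ is injective on the relevant image; your phrase ``surjectivity of the map onto its image'' is vacuous and this step is simply missing. Second, the translation between the fibration terms and the id\`ele quotient: you need that $K_1(\widehat{A})$ maps onto the restricted product $\left.\prod_{\mathfrak{p}}'\right.(K_1(A_{\mathfrak{p}}):\operatorname{im}K_1(\mathfrak{A}_{\mathfrak{p}}))$ with kernel absorbed by $\operatorname{im}K_1(\widehat{\mathfrak{A}})$, and that $\operatorname{im}K_1(\widehat{\mathfrak{A}})$ equals $\operatorname{im}UK_1^{\operatorname{fin}}(\mathfrak{A})$; since $\widehat{\mathfrak{A}}=\prod_p\mathfrak{A}_p$ and $\widehat{A}$ is a restricted product, and $K$-theory does not commute with infinite products in general, this requires a colimit/bounded-matrix argument that you assert rather than give. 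Your stated justification is moreover false: $K_1(\mathfrak{A}_{\mathfrak{p}})\to K_1(A_{\mathfrak{p}})$ is \emph{not} surjective for almost all $\mathfrak{p}$ (already $\mathbb{Z}_p^\times\to\mathbb{Q}_p^\times$ misses the valuation), and ``a class in $K_1(\widehat{A})\times K_1(A_{\mathbb{R}})$ is exactly a $JK_1(A)$-element modulo $\operatorname{im}UK_1^{\operatorname{fin}}(\mathfrak{A})$'' is wrong as an identification of groups --- at best the two become equal after further dividing by the image of $K_1(\widehat{\mathfrak{A}})$, which is exactly the point needing proof. Finally, regularity is not what makes the local $K_1$'s behave ($\mathfrak{A}_{\mathfrak{p}}$ is semilocal for any order, so $\mathfrak{A}_{\mathfrak{p}}^\times\twoheadrightarrow K_1(\mathfrak{A}_{\mathfrak{p}})$ regardless); in your route regularity is needed only because the fibration theorem and the sequence from \cite{etnclca} assume it. The well-definedness part of your argument (Local Triviality plus the Reciprocity Law) does coincide with the paper's Lemma \ref{mz1} and is fine.
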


We shall see how the possibility to quotient out the image of $K_{1}(A)$ comes
precisely from the Reciprocity Law, Theorem \ref{thm_reciprocity_law}, while
quotienting out the image of $UK_{1}^{\operatorname{fin}}(\mathfrak{A})$ stems
from Local Triviality, Theorem \ref{thm_LocalTriviality}. The key point in the
proof of the proposition is to show that these account for the entire kernel
of $\tilde{\xi}$.\medskip

We split the proof into a series of individual verifications.

\begin{lemma}
\label{mz1}The map $\tilde{\xi}$ is well-defined.
\end{lemma}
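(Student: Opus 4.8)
The plan is to check three things in turn: that the formula $(a_{\mathfrak{p}})_{\mathfrak{p}}\mapsto\sum_{\mathfrak{p}}\tilde{\xi}_{\mathfrak{p}}(a_{\mathfrak{p}})$ makes sense as a group homomorphism on $JK_{1}(A)$, i.e. that the sum has only finitely many nonzero terms, and then that this homomorphism annihilates both $\operatorname{im}UK_{1}^{\operatorname{fin}}(\mathfrak{A})$ and $\operatorname{im}K_{1}(A)$, so that it descends to the displayed quotient.

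\emph{Finiteness.} If $(a_{\mathfrak{p}})_{\mathfrak{p}}\in JK_{1}(A)$, then by definition there is a finite set $S$ of places such that for $\mathfrak{p}\notin S$ the place $\mathfrak{p}$ is finite and $a_{\mathfrak{p}}\in\operatorname{im}(K_{1}(\mathfrak{A}_{\mathfrak{p}})\to K_{1}(A_{\mathfrak{p}}))$. For such $\mathfrak{p}$ the element $\tilde{\xi}_{\mathfrak{p}}(a_{\mathfrak{p}})$ lies in the image of the composition $K_{1}(\mathfrak{A}_{\mathfrak{p}})\to K_{1}(A_{\mathfrak{p}})\xrightarrow{\tilde{\xi}_{\mathfrak{p}}}K_{1}(\mathsf{LCA}_{\mathfrak{A}})$, which is exactly the degree-one part of the composition $K(\mathfrak{A}_{\mathfrak{p}})\to K(A_{\mathfrak{p}})\to K(\mathsf{LCA}_{\mathfrak{A}})$ of Theorem~\ref{thm_LocalTriviality} with $\mathfrak{O}=\mathfrak{A}_{\mathfrak{p}}$, hence zero. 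So only the finitely many factors indexed by $S$ can contribute, and $\tilde{\xi}$ is a well-defined homomorphism on $JK_{1}(A)$. The very same Local Triviality argument shows that $\tilde{\xi}$ kills $\operatorname{im}UK_{1}^{\operatorname{fin}}(\mathfrak{A})$, since every component of an element of $UK_{1}^{\operatorname{fin}}(\mathfrak{A})$ already lies in $\operatorname{im}K_{1}(\mathfrak{A}_{\mathfrak{p}})$ for a finite $\mathfrak{p}$.

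\emph{Annihilation of $\operatorname{im}K_{1}(A)$.} This is where the Reciprocity Law enters. For $\alpha\in K_{1}(A)$, its image in $JK_{1}(A)$ is the tuple $(\alpha_{\mathfrak{p}})_{\mathfrak{p}}$ of base changes along $K_{1}(A)\to K_{1}(A_{\mathfrak{p}})$, and we must show $\sum_{\mathfrak{p}}\tilde{\xi}_{\mathfrak{p}}(\alpha_{\mathfrak{p}})=0$. I would split the sum into its infinite and finite parts. For the infinite part, $A_{\mathbb{R}}=\prod_{\mathfrak{p}\mid\infty}A_{\mathfrak{p}}$ is a finite product, so $K_{1}(A_{\mathbb{R}})=\prod_{\mathfrak{p}\mid\infty}K_{1}(A_{\mathfrak{p}})$ and $\sum_{\mathfrak{p}\mid\infty}\tilde{\xi}_{\mathfrak{p}}(\alpha_{\mathfrak{p}})$ is visibly the image of $\alpha$ under $K_{1}(A)\to K_{1}(A_{\mathbb{R}})\to K_{1}(\mathsf{LCA}_{\mathfrak{A}})$ induced by $X\mapsto X\otimes_{\mathbb{Z}}\mathbb{R}$ equipped with the real topology. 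For the finite part, I would write $\widehat{A}$ as the filtered colimit over finite $S$ of the rings $\prod_{\mathfrak{p}\in S}A_{\mathfrak{p}}\times\prod_{\mathfrak{p}\notin S}\mathfrak{A}_{\mathfrak{p}}$; since $K_{1}$ commutes with filtered colimits and with finite products, one obtains a natural map $K_{1}(\widehat{A})\to JK_{1}(A)$ onto the finite places through which $K_{1}(A)\to K_{1}(\widehat{A})$ composes to the finite part of the structural map $K_{1}(A)\to JK_{1}(A)$, and which is compatible with $\tilde{\xi}$ and with the $K$-theory map $K(\widehat{A})\to K(\mathsf{LCA}_{\mathfrak{A}})$ of Theorem~\ref{thm_reciprocity_law} (both being ``equip with the adelic topology''). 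Combining the two parts, $\sum_{\mathfrak{p}}\tilde{\xi}_{\mathfrak{p}}(\alpha_{\mathfrak{p}})$ is precisely the image of $\alpha$ under the composition $K(A)\to K(\widehat{A})\oplus K(A_{\mathbb{R}})\xrightarrow{\operatorname{sum}}K(\mathsf{LCA}_{\mathfrak{A}})$, which vanishes by Theorem~\ref{thm_reciprocity_law}; in degree one this is Corollary~\ref{cor_RecipLawInDegreeOne}.

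The main obstacle will be this last step: the careful identification of the place-by-place map $\tilde{\xi}$ restricted to $\operatorname{im}K_{1}(A)$ with the adelic reciprocity composite, including getting the \emph{sign} right --- the Reciprocity Law is stated with the sum, not the difference, see Remark~\ref{rem_SignInReciprocityLaw} --- and handling the fact that $K_{1}$ of the restricted product $\widehat{A}$ is only a filtered colimit of finite products of local $K_{1}$-groups rather than literally $\prod'_{\mathfrak{p}}K_{1}(A_{\mathfrak{p}})$, so that the comparison map $K_{1}(\widehat{A})\to JK_{1}(A)$ must be produced by hand. Everything else reduces to a direct application of Local Triviality, and, notably, none of the three verifications uses regularity of $\mathfrak{A}$.
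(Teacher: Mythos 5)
Your proposal is correct and takes essentially the same route as the paper: the paper likewise uses Local Triviality (Theorem \ref{thm_LocalTriviality}) both to make the definition on the restricted product legitimate and to kill $\operatorname{im}UK_{1}^{\operatorname{fin}}(\mathfrak{A})$, and the Reciprocity Law (Theorem \ref{thm_reciprocity_law}) to kill $\operatorname{im}K_{1}(A)$, merely packaging the finiteness bookkeeping as a colimit of maps $\tilde{\xi}_{S}$ over finite sets $S$ of places instead of noting that almost all terms vanish. The identification of the place-wise sum with the adelic ``sum'' composite, which you flag as the main obstacle, is left implicit in the paper's one-line appeal to reciprocity, so your sketch is if anything more explicit at that point.
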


\begin{proof}
Observe that%
\[
JK_{1}(A)=\underset{S}{\underrightarrow{\operatorname*{colim}}}\left(
\operatorname*{im}UK_{1}^{\operatorname{fin}}(\mathfrak{A})+\bigoplus
_{\mathfrak{p}\in S}K_{1}(A_{\mathfrak{p}})\right)  \text{,}%
\]
where $S$ runs over all finite subsets of places of $Z$, partially ordered by
inclusion, and we understand the sum in the big round brackets as the subgroup
generated inside $JK_{1}(A)$ by these subgroups. Hence, in order to define
$\tilde{\xi}$ it suffices to define it on each $\operatorname*{im}%
UK_{1}^{\operatorname{fin}}(\mathfrak{A})+\bigoplus_{\mathfrak{p}\in S}%
K_{1}(A_{\mathfrak{p}})$ in a way compatible with replacing $S$ by a bigger
finite set. We define%
\[
\tilde{\xi}_{S}:u\cdot\prod_{\mathfrak{p}\in S}a_{\mathfrak{p}}\mapsto
\prod_{\mathfrak{p}\in S}\tilde{\xi}_{\mathfrak{p}}(a_{\mathfrak{p}}%
)\qquad\text{for}\qquad u\in\operatorname*{im}UK_{1}^{\operatorname{fin}%
}(\mathfrak{A})\text{, }a_{\mathfrak{p}}\in K_{1}(A_{\mathfrak{p}})\text{.}%
\]
We claim that $\tilde{\xi}_{S}$ is well-defined: We only need to show that the
intersection%
\[
\operatorname*{im}UK_{1}^{\operatorname{fin}}(\mathfrak{A})\cap\left(
\bigoplus_{\mathfrak{p}\in S}K_{1}(A_{\mathfrak{p}})\right)  =\bigoplus
_{\mathfrak{p}\in S}\operatorname*{im}K_{1}(\mathfrak{A}_{\mathfrak{p}})
\]
gets sent to zero. However, this follows from Local Triviality, Theorem
\ref{thm_LocalTriviality}. Thus, $\tilde{\xi}:=$ $\operatorname*{colim}%
_{S}\tilde{\xi}_{S}$, verifying that we get a well-defined map%
\begin{equation}
\frac{JK_{1}(A)}{\operatorname*{im}UK_{1}^{\operatorname{fin}}(\mathfrak{A}%
)}\longrightarrow K_{1}(\mathsf{LCA}_{\mathfrak{A}})\text{.} \label{lmixi2}%
\end{equation}
Given any $a\in K_{1}(A)$, we obtain that $\tilde{\xi}(a)=\tilde{\xi}_{S}(a)$
for $S$ sufficiently big. Hence, by the fundamental Reciprocity Law, Theorem
\ref{thm_reciprocity_law}, we have $\xi(a)=0$. Thus, the morphism set up in
Equation \ref{lmixi2} descends to the quotient modulo $\operatorname*{im}%
K_{1}(A)+\operatorname*{im}UK_{1}^{\operatorname{fin}}(\mathfrak{A})$.
\end{proof}

Next, we set up a commutative diagram%
\begin{equation}%
\xymatrix{
& K_2(\mathsf{LCA}_{\mathfrak{A} }) \ar[d] \\
K_1(\mathfrak{A}) \ar[d]_{\gamma} \ar[r]^{1} \ar@{}[dr]|{X}
& K_1(\mathfrak{A}) \ar[d] \\
K_1(A) \oplus K_1(A_{\mathbb{R}}) \ar[r]^{\operatorname{pr}_2 } \ar@
{->}[d]_{\alpha} \ar@{}[dr]|{Y} & K_1(A_{\mathbb{R}}) \ar[d] \\
\frac{JK_{1}(A)}{\operatorname*{im}UK^{\operatorname{fin}}_{1}(\mathfrak{A})}
\ar[r]^{\tilde{\xi} }
\ar@{->>}[d]_{j} \ar@{}[dr]|{Z} & K_1(\mathsf{LCA}_{\mathfrak{A} }) \ar@
{->>}[d] \\
\frac{JK_{1}(A)}{\operatorname*{im}K_1(A) + \operatorname*{im}%
UK^{\operatorname{fin}}_{1}(\mathfrak{A}) + \operatorname*{im}K_1(A_{\mathbb
{R}})} \ar[r]_-{w}
& \operatorname{Cl}(\mathfrak{A}),
}
\label{lfigA1}%
\end{equation}
also in several steps. The morphism $\alpha$ is the difference of the natural
maps (we elaborate on the precise definition in the proof),
$\operatorname*{pr}_{2}$ denotes the projection to the second summand. The
bottom horizontal map $w$ amounts to Fr\"{o}hlich's id\`{e}le description of
the class group, Equation \ref{lmixi5}.

\begin{lemma}
\label{mz2}The bottom horizontal map $w$ is an isomorphism.
\end{lemma}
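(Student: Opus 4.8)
The plan is to recognise $w$ as (essentially) Fröhlich's isomorphism of Theorem \ref{thm_FrohlichTheory}, transported along a reduced-norm comparison between the $K_{1}$-idèle quotient on the left and the ordinary idèle quotient of Equation \ref{lmixi5}. Concretely, $w$ sends the class of an element of $JK_{1}(A)$ to $[a\mathfrak{A}]-[\mathfrak{A}]\in\operatorname{Cl}(\mathfrak{A})$, where $a\in J(A)$ is any idèle lifting it, and the task is to see that this is well defined and bijective; I would do so by factoring it through the comparison isomorphism displayed below.

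The key input is the local triviality of $SK_{1}$. Over every non-archimedean completion the reduced norm $\operatorname{nr}\colon K_{1}(A_{\mathfrak{p}})\hookrightarrow\zeta(A_{\mathfrak{p}})^{\times}$ is injective (Wang's theorem, \cite[\S 45]{MR892316}), and the same holds at the archimedean place since $SK_{1}(\mathbb{H})=0$. Hence $A_{\mathfrak{p}}^{\times}\to K_{1}(A_{\mathfrak{p}})$ is surjective with kernel the reduced norm one subgroup of $A_{\mathfrak{p}}^{\times}$, and likewise $A_{\mathbb{R}}^{\times}\to K_{1}(A_{\mathbb{R}})$ is surjective. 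Moreover the image of $K_{1}(\mathfrak{A}_{\mathfrak{p}})$ in $K_{1}(A_{\mathfrak{p}})$ coincides with the image of $\mathfrak{A}_{\mathfrak{p}}^{\times}$: this is immediate at the all-but-finitely-many primes where $\mathfrak{A}_{\mathfrak{p}}$ is maximal, and it is precisely here that the regularity hypothesis on $\mathfrak{A}$ is used for the remaining primes (cf. \cite[\S 49]{MR892316}). Assembling these statements place by place, the maps $A_{\mathfrak{p}}^{\times}\to K_{1}(A_{\mathfrak{p}})$ produce a surjection $J(A)\twoheadrightarrow JK_{1}(A)$, where $JK_{1}(A)$ is as in Equation \ref{lzal1}, with kernel $J^{1}(A)$ and carrying $A^{\times}$, the group $U(\mathfrak{A})$ of Equation \ref{lcixiDex1}, and $A_{\mathbb{R}}^{\times}$ onto $\operatorname{im}K_{1}(A)$, $\operatorname{im}UK_{1}^{\operatorname{fin}}(\mathfrak{A})$ of Equation \ref{lmixi5a}, and $\operatorname{im}K_{1}(A_{\mathbb{R}})$. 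Passing to quotients gives a canonical isomorphism
\[
\frac{J(A)}{J^{1}(A)\cdot A^{\times}\cdot U(\mathfrak{A})\cdot A_{\mathbb{R}}^{\times}}\;\overset{\sim}{\longrightarrow}\;\frac{JK_{1}(A)}{\operatorname{im}K_{1}(A)+\operatorname{im}UK_{1}^{\operatorname{fin}}(\mathfrak{A})+\operatorname{im}K_{1}(A_{\mathbb{R}})}\text{.}
\]
Chasing definitions, $w$ equals the inverse of this isomorphism followed by the inverse of Fröhlich's isomorphism of Theorem \ref{thm_FrohlichTheory}; in particular the ambiguity in choosing the lift $a$ lies in $J^{1}(A)$ and therefore vanishes in $\operatorname{Cl}(\mathfrak{A})$, so $w$ is well defined, and as a composite of isomorphisms it is itself an isomorphism.

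The step I expect to be the main obstacle is the bookkeeping hidden in the second paragraph: showing that at the finitely many non-maximal primes the image of $K_{1}(\mathfrak{A}_{\mathfrak{p}})$ in $K_{1}(A_{\mathfrak{p}})$ is no larger than the image of $\mathfrak{A}_{\mathfrak{p}}^{\times}$ (the genuine use of regularity), together with a careful check that the restricted-product conditions defining $J(A)$, $JK_{1}(A)$, $U(\mathfrak{A})$ and $UK_{1}^{\operatorname{fin}}(\mathfrak{A})$ and the treatment of the archimedean place all match under the reduced norm, so that the displayed comparison isomorphism holds exactly and not merely up to finitely many corrections.
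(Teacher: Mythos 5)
Your proposal is correct, but it takes a genuinely different route from the paper. The paper's proof of this lemma is essentially a citation: the statement is \cite[(49.16) Proposition]{MR892316} (the $K_{1}$-id\`{e}le description of $\operatorname{Cl}(\mathfrak{A})$), together with the observation that Curtis--Reiner's $JK_{1}(A)$ omits the infinite places and that this discrepancy is exactly absorbed by the extra quotient $\operatorname{im}K_{1}(A_{\mathbb{R}})$ in the bottom row. You instead rederive that proposition from Fr\"{o}hlich's classical id\`{e}le description (Theorem \ref{thm_FrohlichTheory}) by comparing the two id\`{e}le quotients through the place-wise surjections $A_{\mathfrak{p}}^{\times}\twoheadrightarrow K_{1}(A_{\mathfrak{p}})$, using local vanishing of $SK_{1}$ to identify the kernel of $J(A)\twoheadrightarrow JK_{1}(A)$ with $J^{1}(A)$. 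This comparison is sound, and it is in fact the same transport the paper itself carries out (in the opposite direction) in Steps 2--3 of the proof of Theorem \ref{thm_GlobalLocal_Swan} to pass between formulations (1) and (2); so your argument buys a more self-contained proof at the price of re-proving part of the content of \cite[\S 49]{MR892316}/Wilson, while the paper simply outsources it.

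One correction, which actually removes the step you flag as the main obstacle: the equality of $\operatorname{im}\bigl(K_{1}(\mathfrak{A}_{\mathfrak{p}})\to K_{1}(A_{\mathfrak{p}})\bigr)$ with the image of $\mathfrak{A}_{\mathfrak{p}}^{\times}$ has nothing to do with regularity. The completion $\mathfrak{A}_{\mathfrak{p}}$ is a semilocal ring, so $\mathfrak{A}_{\mathfrak{p}}^{\times}\to K_{1}(\mathfrak{A}_{\mathfrak{p}})$ is surjective (this is exactly what the paper invokes in Step 2 of the proof of Theorem \ref{thm_GlobalLocal_Swan}), hence the two images coincide at every finite place, maximal or not; likewise $A^{\times}\to K_{1}(A)$ and $A_{\mathbb{R}}^{\times}\to K_{1}(A_{\mathbb{R}})$ are surjective for the same semilocal reason, not because of injectivity of the reduced norm. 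Consequently the present lemma, like \cite[(49.16)]{MR892316}, holds for arbitrary orders; the regularity hypothesis enters this section only in Lemma \ref{mz3}, through the localization sequence of \cite[Theorem 11.3]{etnclca}. Finally, the exact normalization of $w$ (e.g.\ whether it is $\sigma_{\mathfrak{A}}$ or $-\sigma_{\mathfrak{A}}$, i.e.\ $[a\mathfrak{A}]-[\mathfrak{A}]$ versus its negative) is immaterial for the isomorphism claim, so your definition-chase at the end is fine.
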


\begin{proof}
This is \cite[(49.16) Proposition]{MR892316}. Loc. cit. Curtis and Reiner
define $JK_{1}(A)$ without the infinite places. However, since we additionally
quotient out by the infinite place contribution $\operatorname*{im}%
K_{1}(A_{\mathbb{R}})$, this difference gets remedied.
\end{proof}

\begin{lemma}
\label{mz3}The columns in Figure \ref{lfigA1} are exact.
\end{lemma}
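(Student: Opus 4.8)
I would prove the two columns separately, since they come from quite different sources.

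The right-hand column is, under the identification $K_{0}(\mathfrak{A},\mathbb{R})\cong K_{1}(\mathsf{LCA}_{\mathfrak{A}})$ of \cite{etnclca}, simply the localization long exact sequence attached to the base change $\mathfrak{A}\to A_{\mathbb{R}}$: its lower four terms $K_{1}(\mathfrak{A})\to K_{1}(A_{\mathbb{R}})\to K_{1}(\mathsf{LCA}_{\mathfrak{A}})\overset{\operatorname{cl}}{\longrightarrow}\operatorname{Cl}(\mathfrak{A})\to 0$ are precisely Equation \ref{l_Rado_1}, and the one extra term $K_{2}(\mathsf{LCA}_{\mathfrak{A}})\to K_{1}(\mathfrak{A})$ on the left (i.e.\ exactness at $K_{1}(\mathfrak{A})$) is just the continuation of the same relative $K$-theory sequence, after identifying $K_{1}(\mathfrak{A},\mathbb{R})$ with $K_{2}(\mathsf{LCA}_{\mathfrak{A}})$ as in \cite{etnclca}. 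So here nothing needs to be proven beyond quoting \cite{etnclca}, except for the check that the map $\operatorname{cl}:K_{1}(\mathsf{LCA}_{\mathfrak{A}})\to\operatorname{Cl}(\mathfrak{A})$ drawn in Figure \ref{lfigA1} is the boundary of Equation \ref{l_Rado_1}; that coincidence is the content of the introduction theorem around Equation \ref{l_Rado_3}.

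For the left-hand column, exactness at $\frac{JK_{1}(A)}{\operatorname{im}UK_{1}^{\operatorname{fin}}(\mathfrak{A})}$ is formal. By construction $j$ is the quotient by the images of the natural maps $K_{1}(A)\to\frac{JK_{1}(A)}{\operatorname{im}UK_{1}^{\operatorname{fin}}(\mathfrak{A})}$ and $K_{1}(A_{\mathbb{R}})\to\frac{JK_{1}(A)}{\operatorname{im}UK_{1}^{\operatorname{fin}}(\mathfrak{A})}$, so $\ker j$ is the subgroup generated by these two images; since $\alpha$ is by definition the \emph{difference} of exactly these two maps, and each of its two components sends $0$ to $0$, the set $\operatorname{im}\alpha$ already equals that subgroup, whence $\operatorname{im}\alpha=\ker j$. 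Exactness at $K_{1}(A)\oplus K_{1}(A_{\mathbb{R}})$ is the real content. Unravelling the definitions, $\alpha(x,y)=0$ says that the element of $JK_{1}(A)$ having the image of $x$ at each finite place and $x_{\infty}y^{-1}$ at $\infty$ lies in $\operatorname{im}UK_{1}^{\operatorname{fin}}(\mathfrak{A})$, which forces (i) $y$ to be the image $x_{\infty}$ of $x$ in $K_{1}(A_{\mathbb{R}})$, and (ii) the image of $x$ in $K_{1}(A_{\mathfrak{p}})$ to lie in $\operatorname{im}K_{1}(\mathfrak{A}_{\mathfrak{p}})$ for every finite place $\mathfrak{p}$; conversely $\gamma(z)$ visibly satisfies (i) and (ii) for every $z\in K_{1}(\mathfrak{A})$. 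So exactness here is equivalent to the local-to-global statement that an element of $K_{1}(A)$ which is integral at all finite places lies in $\operatorname{im}(K_{1}(\mathfrak{A})\to K_{1}(A))$. This is classical for regular orders; I would derive it from the Mayer--Vietoris sequence of the arithmetic square $\mathfrak{A}\to A$, $\mathfrak{A}\to\widehat{\mathfrak{A}}$, or simply quote Curtis--Reiner \cite[\S 49]{MR892316} directly, since the rest of the left column already follows that source, exactly as in the proof of Lemma \ref{mz2}.

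The main obstacle is this last step, and two delicate points are hidden inside it. First, the comparison between the restricted product $JK_{1}(A)$ (respectively $UK_{1}^{\operatorname{fin}}(\mathfrak{A})=\prod_{\mathfrak{p}\text{ finite}}K_{1}(\mathfrak{A}_{\mathfrak{p}})$) and $K_{1}(\widehat{A})$ (respectively $K_{1}(\widehat{\mathfrak{A}})$): these do not agree on the nose, because $K_{1}$ does not commute with infinite products, so the gap has to be bridged by reduced norms together with the vanishing of $SK_{1}(A_{\mathfrak{p}})$ and of $SK_{1}(\mathfrak{A}_{\mathfrak{p}})$ for all but finitely many $\mathfrak{p}$ (the places where $\mathfrak{A}_{\mathfrak{p}}$ is maximal), which is the technical heart of Curtis--Reiner's argument and where regularity of $\mathfrak{A}$ enters. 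Second, one must keep the signs of $\gamma$, of $\alpha$ (a difference, not a sum), and of the two $\operatorname{cl}$-type boundary maps consistent with one another and with the sign convention of the Reciprocity Law, Remark \ref{rem_SignInReciprocityLaw}; but this is pure bookkeeping. Modulo these two points, the lemma is the combination of the localization sequence of \cite{etnclca} with the classical id\`{e}le machinery of \cite[\S 49]{MR892316}.
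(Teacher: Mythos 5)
Your treatment of the left column is essentially the paper's own argument: the two formal exactness statements are handled the same way, and the substantive step (an element of $K_{1}(A)$ that is locally integral at every finite place comes from $K_{1}(\mathfrak{A})$) is settled in the paper exactly by the arithmetic-square/Wall sequence you propose, quoted from Curtis--Reiner \cite[(42.19)]{MR892316}; your flag about $K_{1}(\widehat{A})$ versus the restricted product $JK_{1}(A)$ is a fair caveat. One small correction: regularity of $\mathfrak{A}$ is not what makes this step work (Wall's sequence holds for any order); in the paper's proof regularity enters only through the right column, via the long exact sequence of \cite[Theorem 11.3]{etnclca}.

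The right column is where your argument has a genuine gap. In Figure \ref{lfigA1} that column is \emph{constructed} from the localization sequence of \cite[Theorem 11.3]{etnclca}, whose boundary lands in $K_{0}(\mathfrak{A})$, not in $\operatorname{Cl}(\mathfrak{A})$; so what actually has to be proven is that the image of $K_{1}(\mathsf{LCA}_{\mathfrak{A}})\rightarrow K_{0}(\mathfrak{A})$ is precisely $\operatorname{Cl}(\mathfrak{A})$, equivalently that $\ker\left(K_{0}(\mathfrak{A})\rightarrow K_{0}(A_{\mathbb{R}})\right)=\operatorname{Cl}(\mathfrak{A})$ in the notation of Equation \ref{lmixi8}. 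You instead propose to transport the exactness of the relative sequence (Equation \ref{l_Rado_1}) through the isomorphism $K_{0}(\mathfrak{A},\mathbb{R})\cong K_{1}(\mathsf{LCA}_{\mathfrak{A}})$ of \cite{etnclca}, asserting that the needed coincidence of boundary maps ``is the content of the introduction theorem around Equation \ref{l_Rado_3}.'' That does not work: the announced theorem is Theorem \ref{thm_GlobalLocal_Swan}/\ref{thm_GlobalLocal_Nenashev}, and the Nenashev form is proved \emph{from} this lemma via Proposition \ref{Prop_IdentifyK1LCA_Use_K1Ideles}, so invoking it here is circular; moreover nothing in the paper (which explicitly refuses to compare its several isomorphisms $K_{0}(\mathfrak{A},\mathbb{R})\cong K_{1}(\mathsf{LCA}_{\mathfrak{A}})$) establishes that the inexplicit isomorphism of \cite{etnclca} intertwines the maps from $K_{1}(A_{\mathbb{R}})$ and the two boundaries to $K_{0}(\mathfrak{A})$, and your identification of $K_{1}(\mathfrak{A},\mathbb{R})$ with $K_{2}(\mathsf{LCA}_{\mathfrak{A}})$ is likewise unsubstantiated. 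The correct (and short) route is the paper's: keep the sequence of \cite[Theorem 11.3]{etnclca} as it stands and identify the kernel of $K_{0}(\mathfrak{A})\rightarrow K_{0}(A_{\mathbb{R}})$ with $\operatorname{Cl}(\mathfrak{A})$ directly — it is contained in the rank-zero part (Equation \ref{lmixi7}) because rank is detected in $K_{0}(A_{\mathbb{R}})\cong\mathbb{Z}^{n}$, and conversely $\operatorname{Cl}(\mathfrak{A})$ is finite by Jordan--Zassenhaus while $\mathbb{Z}^{n}$ is torsion-free, so $\operatorname{Cl}(\mathfrak{A})$ maps to zero. Without this (or an honest proof of the compatibility you assume), the exactness of the right column at $K_{1}(\mathsf{LCA}_{\mathfrak{A}})$ and the surjectivity onto $\operatorname{Cl}(\mathfrak{A})$ are not established.
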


\begin{proof}
(Step 1) In the right column, we use the long exact sequence of \cite[Theorem
11.3]{etnclca}. This is the only input of the proof which uses the assumption
that $\mathfrak{A}$ is a regular order. This sequence terminates in%
\begin{equation}
\cdots\longrightarrow K_{1}(\mathsf{LCA}_{\mathfrak{A}})\overset
{c}{\longrightarrow}K_{0}(\mathfrak{A})\overset{a}{\longrightarrow}%
K_{0}(A_{\mathbb{R}})\longrightarrow K_{0}(\mathsf{LCA}_{\mathfrak{A}%
})\longrightarrow0\text{.} \label{lmixi8}%
\end{equation}
We know that $K_{0}(A_{\mathbb{R}})\cong\mathbb{Z}^{n}$ for some $n$ since
$A_{\mathbb{R}}$ is semisimple (and concretely $n$ is the number of factors in
the Artin--Wedderburn decomposition), and $\operatorname*{Cl}(\mathfrak{A}%
)\hookrightarrow K_{0}(\mathfrak{A})$\ is the kernel of the rank map. It
follows that $\ker(a)$, in the notation introduced in Equation \ref{lmixi8},
contains at most the subgroup $\operatorname*{Cl}(\mathfrak{A})$. On the other
hand, by the Jordan--Zassenhaus theorem \cite[(26.4) Theorem]{MR1972204} the
class group $\operatorname*{Cl}(\mathfrak{A})$ is finite, but $K_{0}%
(A_{\mathbb{R}})\cong\mathbb{Z}^{n}$ is torsion-free, so $\operatorname*{Cl}%
(\mathfrak{A})$ is contained in the kernel. We deduce $\ker
(a)=\operatorname*{Cl}(\mathfrak{A})$, and by the exactness of Equation
\ref{lmixi8}, we have $\operatorname*{im}(c)=\operatorname*{Cl}(\mathfrak{A}%
)$. This yields the truncation of the exact sequence which we use as the right
column.\newline(Step 2) The left column is set up as follows: The map $\gamma$
is just the sum of the natural maps coming from the ring homomorphisms
$\mathfrak{A}\rightarrow A$ and $\mathfrak{A}\rightarrow A_{\mathbb{R}}$.
Analogously, $\alpha$ is the difference of the identity map $K_{1}%
(A_{\mathbb{R}})\rightarrow K_{1}(A_{\mathbb{R}})$ for the infinite places,
minus the diagonal map%
\[
K_{1}(A)\longrightarrow JK_{1}(A)\text{,}\qquad\qquad a\longmapsto
(a,a,\ldots)
\]
(involving all places of $F$, even the infinite ones). The composition is
zero: Given any $\alpha\in K_{1}(\mathfrak{A})$, in the factors of $JK_{1}(A)$
corresponding to infinite places we subtract the same values, so it is zero at
the infinite places. At each finite place $\mathfrak{p}$, we have the
factorization $\mathfrak{A}\longrightarrow\mathfrak{A}_{\mathfrak{p}%
}\longrightarrow A_{p}$, showing that the image of this contribution comes
from $UK_{1}^{\operatorname{fin}}(\mathfrak{A})$.\newline(Step 3) It is also
exact at this position. Assume we are given $(x,y)\in K_{1}(A)\oplus
K_{1}(A_{\mathbb{R}})$ such that $\alpha(x,y)=0$. Firstly, this means that for
every finite place $\mathfrak{p}$ the image of $x$ under $K_{1}(A)\rightarrow
K_{1}(A_{\mathfrak{p}})$ lies in the image $\operatorname*{im}K_{1}%
(\mathfrak{A}_{\mathfrak{p}})$. Collecting this data for all finite places, we
find $x^{\prime}\in K_{1}(\widehat{\mathfrak{A}})$ such that $(x,x^{\prime})$
maps to zero in the Wall exact sequence%
\[
K_{1}(\mathfrak{A})\overset{\operatorname*{diag}}{\longrightarrow}%
\underset{(x,x^{\prime})}{K_{1}(A)\oplus K_{1}(\widehat{\mathfrak{A}}%
)}\overset{\operatorname*{diff}}{\longrightarrow}K_{1}(\widehat{A}%
)\longrightarrow K_{0}(\mathfrak{A})\longrightarrow\cdots\text{,}%
\]
see \cite[(42.19)\ Theorem]{MR892316}. By the exactness of this sequence, we
learn that $x=x^{\prime}\in K_{1}(\mathfrak{A})$. For the infinite places,
$\alpha(x,y)=0$ now just means that $y\in K_{1}(A_{\mathbb{R}})$ also agrees
with the image of $x$ under the map $\mathfrak{A}\rightarrow A_{\mathbb{R}}$.
However, this means that $(x,y)$ is diagonal coming from $K_{1}(\mathfrak{A}%
)$, settling exactness at this point of the column.\newline(Step 4) The
composition $j\circ\alpha$ is visibly zero, we just quotient out exactly the
image of this map; and for the same reason we have exactness here. Finally,
being a quotient, the last arrow is clearly surjective.
\end{proof}

\begin{lemma}
The square $X$ in Figure \ref{lfigA1} commutes.
\end{lemma}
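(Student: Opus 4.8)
The square $X$ carries the identity of $K_{1}(\mathfrak{A})$ along its top edge, so it commutes if and only if its right vertical map equals the composite $\operatorname{pr}_{2}\circ\gamma$ running down the left-hand side and then across the bottom. The plan is to recognise both as the single homomorphism $K_{1}(\mathfrak{A})\to K_{1}(A_{\mathbb{R}})$ induced by the ring homomorphism $\mathfrak{A}\to A_{\mathbb{R}}$, equivalently by the exact functor ${-}\otimes_{\mathfrak{A}}A_{\mathbb{R}}\colon\operatorname{PMod}(\mathfrak{A})\to\operatorname{PMod}(A_{\mathbb{R}})$.

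For the left-bottom path, recall from Step~2 of the proof of Lemma~\ref{mz3} that $\gamma$ sends $\xi\in K_{1}(\mathfrak{A})$ to the pair formed by its image under $K_{1}(\mathfrak{A})\to K_{1}(A)$ and under $K_{1}(\mathfrak{A})\to K_{1}(A_{\mathbb{R}})$. Hence $\operatorname{pr}_{2}\circ\gamma$ is, by construction, exactly the base-change map $K_{1}(\mathfrak{A})\to K_{1}(A_{\mathbb{R}})$. For the right vertical map, recall from Step~1 of the proof of Lemma~\ref{mz3} that the right column of Figure~\ref{lfigA1} is the long exact sequence of \cite[Theorem 11.3]{etnclca}; this is the homotopy long exact sequence attached to a fibre sequence whose first arrow is $K(\mathfrak{A})\to K(A_{\mathbb{R}})$ induced by ${-}\otimes_{\mathfrak{A}}A_{\mathbb{R}}$ (consistently with the fact that the map $a\colon K_{0}(\mathfrak{A})\to K_{0}(A_{\mathbb{R}})$ further down that column is the rank map, whose kernel is $\operatorname{Cl}(\mathfrak{A})$). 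Applying $\pi_{1}$ therefore identifies the occurrence of $K_{1}(\mathfrak{A})\to K_{1}(A_{\mathbb{R}})$ in that sequence with the same base-change map, and since the top edge of $X$ is the identity, the square commutes.

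The one point worth verifying rather than assuming is the sign. In passing from the categorical input to the homotopy long exact sequence of a fibre sequence a global sign can creep in (compare the warning in Remark~\ref{rem_SignInReciprocityLaw} distinguishing the sum from the difference), so I would check directly against \cite{etnclca} that the edge $K_{1}(\mathfrak{A})\to K_{1}(A_{\mathbb{R}})$ there is $+$(base change) and not its negative. This sign check is the only genuine content; once it is in place, the commutativity of $X$ is a matter of unwinding the definitions of $\gamma$ and of the cited long exact sequence.
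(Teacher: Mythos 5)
Your argument is correct and is exactly the unwinding the paper has in mind (its proof is simply ``Obvious''): by the definition of $\gamma$ in Step 2 of Lemma \ref{mz3}, $\operatorname{pr}_{2}\circ\gamma$ is the base-change map $K_{1}(\mathfrak{A})\to K_{1}(A_{\mathbb{R}})$, and the right vertical arrow is the same induced map appearing in the long exact sequence of \cite[Theorem 11.3]{etnclca}. The sign caveat you raise is harmless here, since that arrow is induced by the exact functor $-\otimes_{\mathfrak{A}}A_{\mathbb{R}}$ rather than being a connecting map, so no sign ambiguity arises.
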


\begin{proof}
Obvious.
\end{proof}

\begin{lemma}
\label{sw_SqY}The square $Y$ in Figure \ref{lfigA1} commutes.
\end{lemma}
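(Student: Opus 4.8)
The plan is to make all four arrows of the square $Y$ completely explicit on idèles and then observe that the asserted commutativity is a one-line consequence of the Reciprocity Law — more precisely, of the fact, already recorded inside the proof of Lemma \ref{mz1}, that $\tilde{\xi}$ annihilates the diagonal image of $K_1(A)$. So the strategy is pure bookkeeping plus one citation.

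First I would pin down the right-hand vertical arrow of $Y$, namely the map $K_1(A_{\mathbb{R}})\to K_1(\mathsf{LCA}_{\mathfrak{A}})$ of the right column. It is induced by the exact functor equipping a finitely generated projective $A_{\mathbb{R}}$-module with its real vector space topology, viewed in $\mathsf{LCA}_{\mathfrak{A}}$. Decomposing $A_{\mathbb{R}}\cong\prod_{v\mid\infty}A_v$, hence $K_1(A_{\mathbb{R}})\cong\bigoplus_{v\mid\infty}K_1(A_v)$, and using that the real topology is the product of the $F_v$-topologies, this arrow is exactly $\sum_{v\mid\infty}\tilde{\xi}_v$. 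Writing $\iota_\infty\colon K_1(A_{\mathbb{R}})\hookrightarrow JK_1(A)$ for the inclusion of those idèles whose finite components are trivial, this says precisely that the right-hand vertical arrow equals $\tilde{\xi}\circ\iota_\infty$; here one uses that $\tilde{\xi}$ is a group homomorphism and $\tilde{\xi}_{\mathfrak{p}}(1)=0$, so the trivial finite components contribute nothing.

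Next I would invoke the description of $\alpha$ from Step 2 of the proof of Lemma \ref{mz3}: for $(x,y)\in K_1(A)\oplus K_1(A_{\mathbb{R}})$ one has $\alpha(x,y)=\iota_\infty(y)-\operatorname{diag}(x)$ in $JK_1(A)/\operatorname{im}UK_1^{\operatorname{fin}}(\mathfrak{A})$, where $\operatorname{diag}(x)=(x,x,\dots)$ is the diagonal idèle of $x$. Since $\tilde{\xi}$ is additive, $\tilde{\xi}(\alpha(x,y))=\tilde{\xi}(\iota_\infty(y))-\tilde{\xi}(\operatorname{diag}(x))$. By Lemma \ref{mz1} the homomorphism $\tilde{\xi}$ already descends modulo $\operatorname{im}K_1(A)$, so $\tilde{\xi}(\operatorname{diag}(x))=0$; this is the single point at which the Reciprocity Law (Theorem \ref{thm_reciprocity_law}) enters. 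What remains is $\tilde{\xi}(\alpha(x,y))=\tilde{\xi}(\iota_\infty(y))$, and by the previous step the right-hand side is the image of $y=\operatorname{pr}_2(x,y)$ under the right-hand vertical arrow. That is exactly the commutativity of $Y$.

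The only genuine point of care — and what I would regard as the main obstacle, modest though it is — is the bookkeeping in the first step: one must check that the arrow $K_1(A_{\mathbb{R}})\to K_1(\mathsf{LCA}_{\mathfrak{A}})$ produced by the localization sequence of \cite[Theorem 11.3]{etnclca} that builds the right column really does coincide with $\sum_{v\mid\infty}\tilde{\xi}_v$ under the Artin--Wedderburn-type identification $K_1(A_{\mathbb{R}})\cong\bigoplus_{v\mid\infty}K_1(A_v)$, with no stray sign or reindexing. Both arrows come from the same exact functor ``equip with the real topology'', so this is routine, but it should be stated cleanly; similarly one should note in passing that $\operatorname{diag}$ and $\iota_\infty$ genuinely land in $JK_1(A)$ (for almost all finite $\mathfrak{p}$ the image of $x$ in $K_1(A_{\mathfrak{p}})$ comes from $K_1(\mathfrak{A}_{\mathfrak{p}})$, which is standard). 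Everything else is formal, resting on the already-established additivity of $\tilde{\xi}$ and its already-proven vanishing on diagonal classes.
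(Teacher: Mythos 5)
Your proposal is correct and follows essentially the same route as the paper: both decompose $\alpha(x,y)=\iota_\infty(y)-\operatorname{diag}(x)$, kill the diagonal $K_1(A)$-part via the Reciprocity Law (which is exactly what underlies the descent established in Lemma \ref{mz1}), and identify the right-hand vertical arrow with $\tilde{\xi}_{\mathfrak{p}}$ at the infinite places since both are induced by the functor ``equip with the real vector space topology''. The only cosmetic difference is that the paper checks the two summands $(x,0)$ and $(0,y)$ separately while you use additivity of $\tilde{\xi}$ to treat $(x,y)$ at once; the mathematical content is identical.
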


\begin{proof}
(Step 1) Suppose $x\in K_{1}(A)$. Then $\alpha$ sends it to the diagonal
element $(-x,-x,\ldots)$. By the Reciprocity Law, Theorem
\ref{thm_reciprocity_law}, this gets mapped to zero in $K_{1}(\mathsf{LCA}%
_{\mathfrak{A}})$. Correspondingly, the projection on the second factor,
$\operatorname*{pr}_{2}$, also sends it to zero. (Step 2) Suppose $y\in
K_{1}(A_{\mathbb{R}})$. Then $\alpha$ sends it to the $K_{1}$-id\`{e}le
$(z_{\mathfrak{p}})_{\mathfrak{p}}$ with $z_{\mathfrak{p}}=y$ (under the
natural map) for $\mathfrak{p}$ an infinite place, and $z_{\mathfrak{p}}=1$
for a finite place. The map $K_{1}(A_{\mathbb{R}})\longrightarrow
K_{1}(\mathsf{LCA}_{\mathfrak{A}})$ is induced from sending a right
$A_{\mathbb{R}}$-module to itself, equipped with the real topology, see
\cite[Theorem 11.2]{etnclca}. However, this is the same as what $\tilde{\xi
}_{\mathfrak{p}}$ (of Equation \ref{lmixi6}) does in the case of
$\mathfrak{p}$ infinite.
\end{proof}

It remains to check that the square $Z$ commutes. This is a little more
difficult than the previous steps in the proof. Let us first recall a few
useful facts about the structure of division algebras over the $p$-adic numbers.

\begin{elaboration}
\label{rmk_LocalStructureAtFinitePlace}Let $F$ be a number field and suppose
$\mathfrak{p}$ is a finite place. Suppose $D$ is a division algebra over $F$
(this means: its center comes with a given inclusion $F\hookrightarrow
\zeta(D)$) and whose center is a finite field extension of $F_{\mathfrak{p}}$.
Let%
\[
v:F_{\mathfrak{p}}^{\times}\longrightarrow\mathbb{R}%
\]
be the normalized $\mathfrak{p}$-adic valuation, i.e. its image (usually
called the `value group') is $\alpha\mathbb{Z}\subset\mathbb{R}$ for some
$\alpha\in\mathbb{R}$. Then there is a unique extension $\tilde{v}:D^{\times
}\rightarrow\mathbb{R}$ to a discrete valuation on the division algebra. It is
still a discrete valuation with value group $\frac{\alpha}{e}\mathbb{Z}%
\subset\mathbb{R}$ for some integer $e\geq1$. Define%
\[
\Delta:=\{x\in D\mid\tilde{v}(x)\geq0\}\text{.}%
\]
Then $\Delta$ is an $\mathcal{O}_{\mathfrak{p}}$-order in $D$ and more
generally (a) it is the unique maximal $\mathcal{O}_{F}$-order inside
$D_{\mathfrak{p}}$, (b) it can alternatively be characterized as the integral
closure of $\mathcal{O}_{\mathfrak{p}}$ inside $D$. Upon normalization to have
integer values, the valuation $\tilde{v}$ gives rise to an exact sequence%
\begin{equation}
\Delta^{\times}\hookrightarrow D^{\times}\twoheadrightarrow\mathbb{Z}\text{.}
\label{lmixi11}%
\end{equation}
A uniformizer $\pi$ is any element $\pi\in D^{\times}$ which gets mapped to
$+1$ in this sequence, as in the commutative case. These results are found as
an overview in \cite[\S 1.4]{MR1278263}, or with complete proofs in \cite[Ch.
3, \S 12]{MR1972204}.
\end{elaboration}

\begin{lemma}
The square $Z$ in Figure \ref{lfigA1} commutes.
\end{lemma}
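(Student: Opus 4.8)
The plan is to verify the identity $c\circ\tilde\xi=w\circ j$ for the square $Z$, where $c$ denotes the right-hand vertical map (the corestriction to $\operatorname*{Cl}(\mathfrak A)$ of the boundary map in Equation \ref{lmixi8}, which is legitimate by Lemma \ref{mz3}), checking it one place at a time. By the colimit presentation of $JK_1(A)$ used in the proof of Lemma \ref{mz1}, the group $\frac{JK_1(A)}{\operatorname*{im}UK_1^{\operatorname{fin}}(\mathfrak A)}$ is generated by the images of the local groups $K_1(A_{\mathfrak p})$; since all four maps bordering $Z$ are group homomorphisms, it suffices to treat, for each place $\mathfrak p$ of $F$, a generator represented by some $a\in K_1(A_{\mathfrak p})$ placed in the $\mathfrak p$-component and equal to $1$ elsewhere.

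First I would dispose of the infinite places. If $\mathfrak p$ is infinite, such a generator equals $\alpha(0,y)$ for a suitable $y\in K_1(A_{\mathbb R})$ (put $a$ in the $\mathfrak p$-component of $y$ and $1$ in the remaining infinite components). On the one hand $j$ annihilates $\operatorname*{im}\alpha$ by exactness of the left column (Lemma \ref{mz3}), so $w(j(\alpha(0,y)))=0$. On the other hand $\tilde\xi(\alpha(0,y))=\tilde\xi_\infty(y)$ by commutativity of the square $Y$ (Lemma \ref{sw_SqY}), and $c$ annihilates the image of $K_1(A_{\mathbb R})\to K_1(\mathsf{LCA}_{\mathfrak A})$ by exactness of the right column (Lemma \ref{mz3}); hence $c(\tilde\xi(\alpha(0,y)))=0$ as well. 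So both sides of $Z$ vanish at the infinite places, and we are left with the finite ones.

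This is the substance. Fix a finite place $\mathfrak p$ of $F$ and $a\in A_{\mathfrak p}^{\times}$ — recall $K_1(A_{\mathfrak p})$ is generated by such elements since $A_{\mathfrak p}$ is semisimple — and let $\alpha\in J(A)$ be the id\`ele which is $a$ at $\mathfrak p$ and $1$ everywhere else. Along the route $j$ then $w$: by Lemma \ref{mz2} the composite is Fr\"ohlich's id\`ele description of $\operatorname*{Cl}(\mathfrak A)$ in its $K_1$-id\`ele form, i.e.\ \cite[(49.16) Proposition]{MR892316}; this map is multiplicative and is built from the id\`ele-to-lattice assignment of Equation \ref{lciops1}, so its value on our single-place generator is the class $[\alpha\mathfrak A]-[\mathfrak A]$. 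Along the route $\tilde\xi$ then $c$: the element $\tilde\xi_{\mathfrak p}(a)$ is represented, in the Nenashev presentation, by the double short exact sequence with objects $0,A_{\mathfrak p},A_{\mathfrak p}$, both injections the zero map, Yin surjection $\cdot a$ and Yang surjection $\operatorname{id}$ (cf.\ Equation \ref{lmixi10_1} and Example \ref{example_MakeAutToNenashevRepresentative}), the object $A_{\mathfrak p}$ carrying its locally compact $F_{\mathfrak p}$-vector space topology and hence containing the compact open $\mathfrak A$-submodule $\mathfrak A_{\mathfrak p}$. Unwinding the definition of the boundary map $c$ from \cite[Theorem 11.3]{etnclca}, applied to the automorphism $\cdot a$ of $A_{\mathfrak p}$, one sees that it records the change of $\mathfrak A$-lattice $\mathfrak A_{\mathfrak p}\leadsto a\mathfrak A_{\mathfrak p}$ inside $A_{\mathfrak p}$, packaged as a global locally free class; by the very definition of the lattice $\alpha\mathfrak A$ in Equation \ref{lciops1} this class is again $[\alpha\mathfrak A]-[\mathfrak A]$. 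Comparing the two computations shows $c\circ\tilde\xi=w\circ j$, i.e.\ $Z$ commutes.

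The main obstacle is this last computation of $c(\tilde\xi_{\mathfrak p}(a))$. The two lattices $\mathfrak A_{\mathfrak p}$ and $a\mathfrak A_{\mathfrak p}$ need not be nested, so one first reduces — by multiplicativity in $a$, and on the individual division-algebra factors by means of the valuation exact sequence $\Delta^{\times}\hookrightarrow D^{\times}\twoheadrightarrow\mathbb Z$ of Elaboration \ref{rmk_LocalStructureAtFinitePlace} — to the case $a\mathfrak A_{\mathfrak p}\subseteq\mathfrak A_{\mathfrak p}$ with finite quotient, and then identifies that finite-length quotient with the correct class in $K_0(\mathfrak A)$. Orientation is the delicate point: whether $c$ returns $[\alpha\mathfrak A]-[\mathfrak A]$ or its negative, and whether Fr\"ohlich's normalization matches — getting these signs consistent (compare the remark in the acknowledgement) is where the care goes, but it is forced once one fixes the normalizations already chosen for $\tilde\xi_{\mathfrak p}$ in Equation \ref{lmixi10_1} and for $w$ in Lemma \ref{mz2}.
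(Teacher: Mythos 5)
Your skeleton matches the paper's: reduce to single-place generators (all maps are homomorphisms), kill the infinite places using the commutativity of square $Y$ together with the exactness of the columns, and at a finite place reduce via Artin--Wedderburn/Morita and the valuation sequence $\Delta^{\times}\hookrightarrow D^{\times}\twoheadrightarrow\mathbb{Z}$ to $a=u$ or $a=\pi$. But the heart of the lemma is exactly the step you dispatch with ``unwinding the definition of the boundary map $c$ \ldots one sees that it records the change of lattice'', and that is asserted, not proved. In the paper this is the bulk of the argument: one writes $\partial=\partial^{\ast}\circ\Phi^{-1}\circ q$ (Equation \ref{lmixi12}), passes to the quotient $\mathsf{LCA}_{\mathfrak{A}}/\mathsf{LCA}_{\mathfrak{A},cg}$ where the compact $\Delta$ becomes a zero object so that the Nenashev representative on $D$ may be replaced by one on $D/\Delta$, transports this back through $\Phi$ to $\mathsf{Mod}_{\mathfrak{A}}/\mathsf{Mod}_{\mathfrak{A},fg}$, and then computes $\partial^{\ast}$ by exhibiting an explicit lift of the closed loop in the Gillet--Grayson model (the $1$-simplex $T$ built from $a^{-1}\Delta/\Delta\hookrightarrow D/\Delta\twoheadrightarrow D/\Delta$), whose endpoint is the vertex $(0,a^{-1}\Delta/\Delta)$, giving $\partial(\tilde{\xi}(a))=[a^{-1}\Delta/\Delta]$. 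Nothing in your text supplies a substitute for this path-lifting computation; without it the claim that $c\circ\tilde{\xi}$ lands on the lattice-change class is precisely the statement to be established.

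The sign question you defer is also not ``forced once one fixes the normalizations''; it is settled only by carrying out both computations in the same normalization. With the Curtis--Reiner description actually used for $w$ (proof of (49.14) Corollary), the single-place id\`{e}le $a\in\Delta$ goes to $[\Delta/a\Delta]$, i.e.\ to $[\mathfrak{A}]-[\alpha\mathfrak{A}]$ rather than to the value $[\alpha\mathfrak{A}]-[\mathfrak{A}]$ you state (this mismatch is the same phenomenon as the explicit $-\sigma_{\mathfrak{A}}$ appearing in the bottom row of Diagram \ref{lbixi1}), and on the other side the boundary yields $[a^{-1}\Delta/\Delta]$; the two agree only because of the concrete isomorphism $a^{-1}\Delta/\Delta\overset{\sim}{\rightarrow}\Delta/a\Delta$ given by multiplication by $a$. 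So your two asserted values are each off by a sign relative to the conventions in force, and the fact that the two errors cancel is not something you can invoke --- it is the conclusion of the missing computation. To close the gap you would need either the paper's explicit lift in the Gillet--Grayson model (or an equivalent concrete evaluation of $\partial^{\ast}$ on the Nenashev class), carried out in a fixed normalization on both routes around the square.
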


\begin{proof}
(Step 1) It suffices to check this for an arbitrary place $\mathfrak{p}$ and
an arbitrary $a\in K_{1}(A_{\mathfrak{p}})$, because if the claim is settled
in all these cases, it follows from all maps being group homomorphisms. So,
let us assume $\mathfrak{p}$ is chosen and fixed.\newline(Step 2) Let us get
the case of $\mathfrak{p}$ an infinite place out of the way. In this case, we
need to check it for an $a$ coming from a summand of $K_{1}(A_{\mathbb{R}})$.
However, any such $a$ lies in the image of $\alpha$ and hence by the
commutativity of the square $Y$ (Lemma \ref{sw_SqY}), any such element goes to
zero in the bottom row of Figure \ref{lfigA1}. In particular, the square $Z$
commutes for this input.\newline(Step 3)\ It remains to deal with
$\mathfrak{p}$ a finite place. By the Artin--Wedderburn Theorem, we can split
$A_{\mathfrak{p}}$ into a direct sum of matrix algebras $M_{n}(D)$ over
division algebras over $F_{\mathfrak{p}}$. By Morita invariance of $K$-theory,
we have the equivalence of $K$-theory spaces%
\[
K(D)\overset{\sim}{\longrightarrow}K(M_{n}(D))\qquad\text{under}\qquad
D\hookrightarrow M_{n}(D)
\]
as a top left $(1\times1)$-minor in an $(n\times n)$-matrix, so it suffices to
check it for arbitrary $a\in K_{1}(D)$, where $D$ is a division algebra over
$F_{\mathfrak{p}}$. The natural map $A_{\mathfrak{p}}^{\times}\rightarrow
K_{1}(A_{\mathfrak{p}})$ is surjective, see Equation \ref{lmixi10_1}. Hence,
we can start with an arbitrary $a\in A_{\mathfrak{p}}^{\times}$. Now, we use a
little bit of structure theory: Since $D$ is a division algebra over $F$ and
$\mathfrak{p}$ a finite place, and we are in the setting which we had recalled
in Elaboration \ref{rmk_LocalStructureAtFinitePlace}. Let us use the notation
loc. cit. Then by Equation \ref{lmixi11} we may write $a=u\pi^{n}$ for
$u\in\Delta^{\times}$ a unit of the maximal order, $\pi$ a uniformizer and
$n\in\mathbb{Z}$. Since our maps are group homomorphisms, it suffices to check
commutativity in the two cases%
\begin{equation}
\text{(a) }a:=u\qquad\qquad\text{and}\qquad\qquad\text{(b) }a:=\pi
\label{lmixi15}%
\end{equation}
separately. Doing this, the first steps of the computation agree in the cases
(a) and (b), so henceforth assume we are in one (but any) of these
cases.\newline(Step 4) We begin by considering first the map $\tilde{\xi}$,
followed by the right downward arrow to $\operatorname*{Cl}(\mathfrak{A})$.
From here onward, the proof uses the same strategy as \cite[Lemma
3.5]{etnclca2}. We recall from loc. cit. that the right downward sequence
comes from the long exact sequence%
\[
\cdots\longrightarrow K_{1}(A_{\mathbb{R}})\longrightarrow K_{1}%
(\mathsf{LCA}_{\mathfrak{A}})\overset{\partial}{\longrightarrow}%
K_{0}(\mathsf{Mod}_{\mathfrak{A},fg})\longrightarrow K_{0}(A_{\mathbb{R}%
})\longrightarrow\cdots
\]
and more specifically the right downward arrow in Figure \ref{lfigA1}
corresponds to $\partial$ in the above sequence. Thus, in order to compute
$\partial$, we need to go through the construction of this long exact
sequence. As was explained loc. cit., this differential agrees with%
\begin{equation}
\partial=\partial^{\ast}\circ\Phi^{-1}\circ q\text{,} \label{lmixi12}%
\end{equation}
where these maps come from the diagram%
\begin{equation}%
\xymatrix{
\cdots\ar[r] & {{\pi}_1}K({\mathsf{Mod}_{\mathfrak{A}}}) \ar[r] \ar
[d] & {{\pi}_1}K({{\mathsf{Mod}_{\mathfrak{A}}}}/{{\mathsf{Mod}_{{\mathfrak
{A}},fg}}}) \ar@{=}[d]^{\Phi} \ar[r]^-{{\partial}^{\ast}} & {{\pi}%
_0}K({\mathsf{Mod}_{{\mathfrak{A}},fg}}) \ar[d] \\
{{\pi}_1}K(\mathsf{LCA}_{\mathfrak{A},cg}) \ar[r] & {{\pi}_1}K(\mathsf
{LCA}_{\mathfrak{A}}) \ar[r]_-{q} & {{\pi}_1}K({\mathsf{LCA}_{\mathfrak{A}}%
}/{\mathsf{LCA}_{\mathfrak{A},cg}}) \ar[r] & {{\pi}_0}K(\mathsf{LCA}%
_{\mathfrak{A},cg}),
}
\label{lmixi14}%
\end{equation}
where in turn $\partial^{\ast}$ arises as the boundary map of the long exact
sequence of homotopy groups coming from the localization sequence%
\[
\mathsf{Mod}_{\mathfrak{A},fg}\longrightarrow\mathsf{Mod}_{\mathfrak{A}%
}\longrightarrow\mathsf{Mod}_{\mathfrak{A}}/\mathsf{Mod}_{\mathfrak{A}%
,fg}\text{.}%
\]
We refer to the proof of \cite[Lemma 3.5]{etnclca2} for a little more
background how these facts are proven. Following Equation \ref{lmixi10_1}, the
$K_{1}$-class we consider can explicitly be spelled out as%
\[
\tilde{\xi}(a)=\left[
\xymatrix{
0 \ar@<1ex>@{^{(}->}[r]^-{0} \ar@<-1ex>@{^{(}.>}[r]_-{0} & D \ar@
<1ex>@{->>}[r]^{a} \ar@<-1ex>@{.>>}[r]_{1} & D
}%
\right]
\]
in the Nenashev presentation. Now, we simply follow the formula in Equation
\ref{lmixi12} step by step. Applying $q$, we still can use the same
representative in $K_{1}(\mathsf{LCA}_{\mathfrak{A}}/\mathsf{LCA}%
_{\mathfrak{A},cg})$. However, we also have the exact sequence $\Delta
\hookrightarrow D\twoheadrightarrow D/\Delta$, in $\mathsf{LCA}_{\mathfrak{A}%
}$, where $\Delta$ is the maximal order of $D$. Now, $\Delta$ is a free finite
rank $\mathbb{Z}_{p}$-module and a compact clopen subgroup of $D$. Hence, the
quotient $D/\Delta$ carries the discrete topology. Since $\Delta$ is compact,
it is in particular compactly generated, and thus a zero object in the
quotient exact category $\mathsf{LCA}_{\mathfrak{A}}/\mathsf{LCA}%
_{\mathfrak{A},cg}$. Thus, we get an isomorphism $D\cong D/\Delta$ in
$\mathsf{LCA}_{\mathfrak{A}}/\mathsf{LCA}_{\mathfrak{A},cg}$. Hence,%
\begin{equation}
\left[
\xymatrix{
0 \ar@<1ex>@{^{(}->}[r]^-{0} \ar@<-1ex>@{^{(}.>}[r]_-{0} & D/{\Delta}
\ar@<1ex>@{->>}[r]^{a} \ar@<-1ex>@{.>>}[r]_{1} & D/{\Delta}
}%
\right]  \label{lmixi14b}%
\end{equation}
represents that \textit{same} class in $K_{1}(\mathsf{LCA}_{\mathfrak{A}%
}/\mathsf{LCA}_{\mathfrak{A},cg})$. Now, simply read the above as a Nenashev
representative in $K_{1}(\mathsf{Mod}_{\mathfrak{A}}/\mathsf{Mod}%
_{\mathfrak{A},fg})$. The exact functor $\Phi$ as in Diagram \ref{lmixi14}
sends this to itself, equipped with the discrete topology, but since
$D/\Delta$ has carried the discrete topology anyway, we see that we have found
a preimage of the element in Equation \ref{lmixi14b} under $\Phi$. Hence, in
view of Equation \ref{lmixi12}, it suffices to compute $\partial^{\ast}$ of
the element, regarded in $\mathsf{Mod}_{\mathfrak{A}}/\mathsf{Mod}%
_{\mathfrak{A},fg}$. As was explained in the proof of \cite[Lemma
3.5]{etnclca2}, this reduces to a homotopical problem: The boundary map
$\partial^{\ast}$ in%
\[
K_{1}(\mathsf{Mod}_{\mathfrak{A}}/\mathsf{Mod}_{\mathfrak{A},fg}%
)\longrightarrow K_{0}(\mathsf{Mod}_{\mathfrak{A},fg})
\]
does the following: Starting with a closed loop around the zero object in the
$K$-theory space of $\mathsf{Mod}_{\mathfrak{A}}/\mathsf{Mod}_{\mathfrak{A}%
,fg}$, it lifts it to a non-closed path from zero to some object $(P,Q)$ under
the fibration%
\[
K(\mathsf{Mod}_{\mathfrak{A}})\longrightarrow K(\mathsf{Mod}_{\mathfrak{A}%
}/\mathsf{Mod}_{\mathfrak{A},fg})\text{,}%
\]
and then the output is the $\pi_{0}$-element corresponding to the connected
component of $(P,Q)$ in which this path ends. Hence, in order to compute
$\partial^{\ast}$, we need to produce an explicit lift of the closed loop in
question.\newline(Step 5) Following the concrete properties of the
Gillet--Grayson model as summarized in \S \ref{sect_AlgKThy} and Figure
\ref{lmixi3b}, the element in Equation \ref{lmixi14b} corresponds to a loop,
depicted below on the left:%
\begin{equation}%
{\includegraphics[
height=1.2168in,
width=4.9415in
]%
{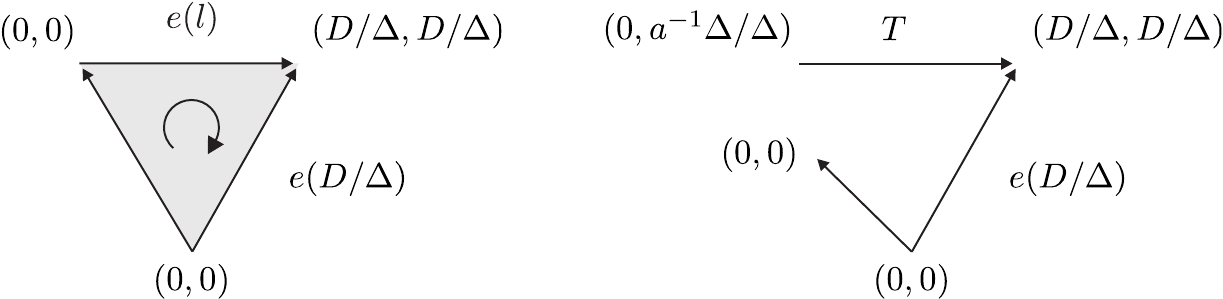}%
}
\label{lmixivio}%
\end{equation}
where the top horizontal arrow $e(l)$ comes from the $1$-simplex%
\[%
\xymatrix@!=0.5in{
0 \ar@{^{(}.>}[r] & D/{\Delta} \ar@{.>>}[r]^{1} & D/{\Delta} & \qquad&
0 \ar@{^{(}->}[r] & D/{\Delta} \ar@{->>}[r]^{a} & D/{\Delta}
}%
\]
in the Gillet--Grayson model of $K(\mathsf{Mod}_{\mathfrak{A}}/\mathsf{Mod}%
_{\mathfrak{A},fg})$. Consider the $1$-simplex $T$ given by%
\[%
\xymatrix@!=0.5in{
0 \ar@{^{(}.>}[r] & D/{\Delta} \ar@{.>>}[r]^{1} & D/{\Delta} & \qquad&
{a^{-1}\Delta/ \Delta} \ar@{^{(}->}[r] & D/{\Delta} \ar@{->>}[r]^{a}
& D/{\Delta}
}%
\]
in the Gillet--Grayson model of $K(\mathsf{Mod}_{\mathfrak{A}})$. Next, note
that $a^{-1}\Delta/\Delta$ is a finitely generated $\mathfrak{p}$-torsion
right $\mathfrak{A}_{\mathfrak{p}}$-module (note: this is true for both $a$ a
unit since then it is zero, or if $a=\pi$, for $\pi$ is a uniformizer of $D$
and since the valuation of $D$ extends the one of $F_{\mathfrak{p}}$, the
$\mathfrak{p}$-torsion property follows, see Elaboration
\ref{rmk_LocalStructureAtFinitePlace}). But being $\mathfrak{p}$-torsion, it
follows that $a^{-1}\Delta/\Delta$ is also a finitely generated right
($\mathfrak{p}$-torsion) $\mathfrak{A}$-module. Thus, $a^{-1}\Delta/\Delta
\in\mathsf{Mod}_{\mathfrak{A},fg}$ and we conclude that the quotient exact
functor $\mathsf{Mod}_{\mathfrak{A}}\longrightarrow\mathsf{Mod}_{\mathfrak{A}%
}/\mathsf{Mod}_{\mathfrak{A},fg}$ sends the $1$-simplex $T$ to the top
horizontal arrow in Figure \ref{lmixivio}. Thus, we have found a candidate for
the desired lift of the closed loop. It is depicted above in Figure
\ref{lmixivio} on the right. Analogous to the argument in the proof of
\cite[Lemma 3.5]{etnclca2}, we obtain that the endpoint of the path is the
vertex $(0,a^{-1}\Delta/\Delta)$ in the Gillet--Grayson model. In summary,%
\begin{align*}
\partial(a)  &  =\left(  \partial^{\ast}\circ\Phi^{-1}\circ q\right)  \left[
\xymatrix{
0 \ar@<1ex>@{^{(}->}[r]^-{0} \ar@<-1ex>@{^{(}.>}[r]_-{0} & D \ar@
<1ex>@{->>}[r]^{\cdot a} \ar@<-1ex>@{.>>}[r]_{1} & D
}%
\right] \\
&  =\text{connected component of }(0,a^{-1}\Delta/\Delta)
\end{align*}
and by \S \ref{subsect_ExplicitK0}, this is $[a^{-1}\Delta/\Delta]-[0]\in
K_{0}(\mathsf{Mod}_{\mathfrak{A},fg})$.\newline(Step 6) We follow the element
$a$ through the square $Z$ in the other way. The map $j$ sends it simply to
itself, merely under a further quotient operation. Finally, the bottom
horizontal map $w$ in Figure \ref{lfigA1} needs to be unravelled. To this end,
we refer to \cite[proof of (49.14)\ Corollary, p. 223]{MR892316}. In the
notation loc. cit. we consider the id\`{e}le formed with $a$ in the
$\mathfrak{p}$-component and $1$ as the component of all other places. Since
$a\in\Delta$ in both cases (a) and (b) of Equation \ref{lmixi15} we get%
\[
\lbrack\Delta/a\Delta]\in K_{0}(\mathfrak{A})\text{.}%
\]
Finally, we have the isomorphism%
\[
\frac{a^{-1}\Delta}{\Delta}\underset{\cdot a}{\overset{\sim}{\longrightarrow}%
}\frac{\Delta}{a\Delta}\text{,}%
\]
and hence $[\Delta/a\Delta]=[a^{-1}\Delta/\Delta]$ agree in $K_{0}%
(\mathfrak{A})$. This confirms that in both cases (a) and (b) we get the same
element, whichever way we follow the square $Z$. This finishes the proof.
\end{proof}

Now we can finally prove Proposition \ref{Prop_IdentifyK1LCA_Use_K1Ideles}.

\begin{proof}
[Proof of Proposition \ref{Prop_IdentifyK1LCA_Use_K1Ideles}]Consider the
commutative diagram in Figure \ref{lfigA1}. (Step 1) We truncate it to three
rows by quotienting out the images of the top vertical arrows, so that the top
horizontal arrow now reads%
\[
\operatorname*{pr}\nolimits_{2}:\left(  K_{1}(A)\oplus K_{1}(A_{\mathbb{R}%
})\right)  /\operatorname*{im}K_{1}(\mathfrak{A})\longrightarrow
K_{1}(A_{\mathbb{R}})/\operatorname*{im}K_{1}(\mathfrak{A})\text{.}%
\]
Since the map is projection to the second factor, the image of $K_{1}(A)$ in
the left-hand side quotient is the kernel of this map. On the other hand, the
map is obviously surjective. (Step 2) Now apply the Snake Lemma to the
remaining diagram. Since $w$ is an isomorphism by Lemma \ref{mz2}, the
resulting snake long exact sequence is%
\[
0\rightarrow\operatorname*{im}K_{1}(A)\overset{\alpha}{\rightarrow}\ker
(\tilde{\xi})\rightarrow0\rightarrow0\rightarrow\operatorname*{coker}%
(\tilde{\xi})\rightarrow0\text{,}%
\]
where by \textquotedblleft$\operatorname*{im}K_{1}(A)$\textquotedblright\ we
mean the kernel discussed in Step 1. We deduce that $\tilde{\xi}$ is
surjective, and that once we additionally quotient out by $\operatorname*{im}%
K_{1}(A)$ on the left-hand side (now with image taken under $\alpha$),
$\tilde{\xi}$ will be additionally injective on the quotient. However, this is
precisely the claim of Proposition \ref{Prop_IdentifyK1LCA_Use_K1Ideles}.
\end{proof}

\section{Noncommutative id\`{e}les II}

We work under the standing assumptions of \S \ref{sect_Setup}. In particular,
$F$ denotes a number field and $\mathfrak{A}\subset A$ an $\mathcal{O}_{F}%
$-order in a finite-dimensional semisimple $F$-algebra $A$. We shall write
\textquotedblleft$K_{0}(\mathfrak{A},\mathbb{R})_{\operatorname{Swan}}%
$\textquotedblright\ whenever we want to stress that we think of the relative
$K$-group $K_{0}(\mathfrak{A},\mathbb{R})$ in terms of the explicit Swan
presentation. Concretely, in the case at hand this means that generators have
the shape $[P,\varphi,Q]$, where $P,Q$ are finitely generated projective right
$\mathfrak{A}$-modules and%
\[
\varphi:P_{\mathbb{R}}\overset{\sim}{\longrightarrow}Q_{\mathbb{R}}%
\]
an isomorphism of right $A_{\mathbb{R}}$-modules. Then $K_{0}(\mathfrak{A}%
,\mathbb{R})_{\operatorname{Swan}}$ is the free abelian group generated by
these formal elements modulo Swan's Relation A and Relation B. We will not
recall these in full, see \cite[\S 1.1]{etnclca2}, or \cite[Chapter II,
Definition 2.10]{MR3076731}, where they are called \textquotedblleft relation
(a) and (b)\textquotedblright.

\begin{example}
\label{example_OneMiddleIsZero}In $K_{0}(\mathfrak{A},\mathbb{R}%
)_{\operatorname{Swan}}$ the identity $[\mathfrak{X},1,\mathfrak{X}]=0$ holds
for any finitely generated projective right $\mathfrak{A}$-module
$\mathfrak{X}$. To see this, use Swan's Relation B to obtain $[\mathfrak{X}%
,1,\mathfrak{X}]+[\mathfrak{X},1,\mathfrak{X}]=[\mathfrak{X},1\cdot
1,\mathfrak{X}]$.
\end{example}

\begin{definition}
\label{def_FrohlichTheory}Let an id\`{e}le $a=(a_{\mathfrak{p}})_{\mathfrak{p}%
}\in J(A)$ be given.

\begin{enumerate}
\item Then there is a unique $\mathcal{O}_{F}$-lattice \textquotedblleft%
$a\mathfrak{A}$\textquotedblright\ inside $A$ such that%
\begin{equation}
(a\mathfrak{A})_{\mathfrak{p}}=a_{\mathfrak{p}}\mathfrak{A}_{\mathfrak{p}}
\label{lciops1E}%
\end{equation}
holds for all finite places $\mathfrak{p}$ of $F$. See \cite[\S 2, Equation
2.2 and Theorem 1]{MR0376619} for background.

\item Secondly, we write $a_{\infty}$ for the map%
\[
a_{\infty}:A_{\mathbb{R}}\overset{\sim}{\longrightarrow}A_{\mathbb{R}}%
\]
coming from those components $a_{\mathfrak{p}}$ alone for which $\mathfrak{p}$
runs through the infinite places of $F$.
\end{enumerate}
\end{definition}

Given any $a\mathfrak{A}$ as in part (1) of the definition, tensoring with the
rationals yields an injection%
\[
a\mathfrak{A}\subset\mathbb{Q}\cdot(a\mathfrak{A})=\mathbb{Q}\cdot
\mathfrak{A}=A
\]
since each $a\mathfrak{A}$ is a torsion-free right $\mathfrak{A}$-module. We
we view $a\mathfrak{A}\subset A\subset A_{\mathbb{R}}$ as a full rank
$\mathbb{Z}$-lattice inside the real vector space $A_{\mathbb{R}}$. Thus, we
may alternatively regard $a_{\infty}$ as a map $a_{\infty}:\mathfrak{A}%
_{\mathbb{R}}\overset{\sim}{\rightarrow}(a\mathfrak{A})_{\mathbb{R}}$. We use
this for the following definition.

\begin{definition}
\label{def_MapTheta}Define%
\[
\theta:J(A)\longrightarrow K_{0}(\mathfrak{A},\mathbb{R})\text{,}\qquad
\qquad(a_{\mathfrak{p}})_{\mathfrak{p}}\longmapsto\lbrack\mathfrak{A}%
,a_{\infty},a\mathfrak{A}]
\]
with $a\mathfrak{A}$ and $a_{\infty}$ as in Definition
\ref{def_FrohlichTheory}.
\end{definition}

We first need to check that this definition makes sense at all.

\begin{lemma}
\label{lemma_ThetaWelldef}The map $\theta$ is well-defined.
\end{lemma}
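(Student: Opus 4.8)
The plan is to observe that $\theta(a)$ involves no choices once the id\`{e}le $a=(a_{\mathfrak{p}})_{\mathfrak{p}}\in J(A)$ is fixed: the lattice $a\mathfrak{A}$ is uniquely determined by Equation \ref{lciops1}, and $a_{\infty}$ is determined by the infinite components of $a$. So the only thing to verify is that the triple $[\mathfrak{A},a_{\infty},a\mathfrak{A}]$ is an admissible Swan generator of $K_{0}(\mathfrak{A},\mathbb{R})$, i.e. that $\mathfrak{A}$ and $a\mathfrak{A}$ are finitely generated projective right $\mathfrak{A}$-modules and that $a_{\infty}$ is an isomorphism of right $A_{\mathbb{R}}$-modules between their real completions.

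First I would handle $a\mathfrak{A}$. By the classification recalled before Theorem \ref{thm_FrohlichTheory} (the rank-one case of Equation \ref{lmixi4}, going back to \cite[Theorem 1]{MR0376619}), the lattice $a\mathfrak{A}$ exists and is unique; moreover it is a finitely generated projective right $\mathfrak{A}$-module, since its localization at each finite place $\mathfrak{p}$ equals $a_{\mathfrak{p}}\mathfrak{A}_{\mathfrak{p}}$, which is free of rank one over $\mathfrak{A}_{\mathfrak{p}}$ because $a_{\mathfrak{p}}\in A_{\mathfrak{p}}^{\times}$, so $a\mathfrak{A}$ is locally free of rank one, hence projective, and finitely generated as a full lattice in the finite-dimensional $\mathbb{Q}$-algebra $A$. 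That $\mathfrak{A}$ itself is finitely generated projective is clear.

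Next I would check the isomorphism. The infinite components $(a_{\mathfrak{p}})_{\mathfrak{p}\mid\infty}$ lie in $\prod_{\mathfrak{p}\mid\infty}A_{\mathfrak{p}}^{\times}=A_{\mathbb{R}}^{\times}$, so left multiplication by $a_{\infty}$ is an automorphism of $A_{\mathbb{R}}$ commuting with the right $A_{\mathbb{R}}$-action, hence a right $A_{\mathbb{R}}$-module isomorphism. As already noted in the paragraph preceding the lemma, $\mathbb{Q}\cdot\mathfrak{A}=\mathbb{Q}\cdot(a\mathfrak{A})=A$, so the canonical maps $\mathfrak{A}_{\mathbb{R}}=\mathfrak{A}\otimes_{\mathbb{Z}}\mathbb{R}\rightarrow A_{\mathbb{R}}$ and $(a\mathfrak{A})_{\mathbb{R}}=a\mathfrak{A}\otimes_{\mathbb{Z}}\mathbb{R}\rightarrow A_{\mathbb{R}}$ are isomorphisms of right $A_{\mathbb{R}}$-modules; transporting $a_{\infty}$ through them exhibits it as an isomorphism $\mathfrak{A}_{\mathbb{R}}\overset{\sim}{\rightarrow}(a\mathfrak{A})_{\mathbb{R}}$. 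Hence $[\mathfrak{A},a_{\infty},a\mathfrak{A}]$ is a genuine Swan generator and $\theta$ is well-defined.

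I do not expect a serious obstacle; the only point requiring a little care is making the identification $(a\mathfrak{A})_{\mathbb{R}}\cong A_{\mathbb{R}}$ explicit and checking that under it $a_{\infty}$ really is left multiplication by the infinite part of $a$, and in particular that one must use \emph{left} multiplication so as to stay within the category of right $\mathfrak{A}$- and $A_{\mathbb{R}}$-module maps.
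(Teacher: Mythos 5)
Your proposal only verifies that each value $[\mathfrak{A},a_{\infty},a\mathfrak{A}]$ is a legitimate Swan generator (that $a\mathfrak{A}$ is finitely generated projective and that $a_{\infty}$ gives an isomorphism $\mathfrak{A}_{\mathbb{R}}\overset{\sim}{\rightarrow}(a\mathfrak{A})_{\mathbb{R}}$ of right $A_{\mathbb{R}}$-modules). Those observations are correct, but they are essentially already contained in the discussion preceding Definition \ref{def_MapTheta} (Fr\"ohlich's classification guarantees existence, uniqueness and projectivity of $a\mathfrak{A}$, and the paragraph before the definition explains how to regard $a_{\infty}$ as a map $\mathfrak{A}_{\mathbb{R}}\rightarrow(a\mathfrak{A})_{\mathbb{R}}$). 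What the lemma is really there to establish, and what the paper's proof consists of, is that $\theta$ is a \emph{group homomorphism}: $\theta(ab)=\theta(a)+\theta(b)$ in $K_{0}(\mathfrak{A},\mathbb{R})$. This is the nontrivial content, and it is used repeatedly afterwards — Lemma \ref{lemma_mixi2} explicitly invokes ``since $\theta$ is a group homomorphism'', and Diagram \ref{lbixi1} together with the Five Lemma argument in Theorem \ref{thm_GlobalLocal_Swan} requires $\theta$ to descend to a homomorphism on quotient groups of $JK_{1}(A)$. Your proposal does not address this at all, so it has a genuine gap.

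The missing argument runs as follows: by Swan's Relation A for split exact sequences, $\theta(a)+\theta(b)=[\mathfrak{A}\oplus\mathfrak{A},a_{\infty}\oplus b_{\infty},a\mathfrak{A}\oplus b\mathfrak{A}]$; then one uses Fr\"ohlich's cancellation isomorphism $\varphi:a\mathfrak{A}\oplus b\mathfrak{A}\overset{\sim}{\rightarrow}ab\mathfrak{A}\oplus\mathfrak{A}$ (Example \ref{example_CancellationRankTwo}), fitted into a commutative diagram whose vertical maps only exist after tensoring with $A_{\mathbb{R}}$, as input for Swan's Relation A again, to convert this class into $[\mathfrak{A},a_{\infty}b_{\infty},ab\mathfrak{A}]+[\mathfrak{A},1,\mathfrak{A}]$, and finally $[\mathfrak{A},1,\mathfrak{A}]=0$ by Example \ref{example_OneMiddleIsZero}. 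Without this additivity check, the later reductions and the descent of $\theta$ to the id\`ele quotients do not go through, so your proof as written does not suffice for the role the lemma plays in the paper.
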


\begin{proof}
We check that $\theta$ is a group homomorphism. Suppose $a:=(a_{\mathfrak{p}%
})_{\mathfrak{p}}$ and $b:=(b_{\mathfrak{p}})_{\mathfrak{p}}$ are id\`{e}les.
We compute%
\[
\theta(a)+\theta(b)=[\mathfrak{A},a_{\infty},a\mathfrak{A}]+[\mathfrak{A}%
,b_{\infty},b\mathfrak{A}]=[\mathfrak{A}\oplus\mathfrak{A},a_{\infty}\oplus
b_{\infty},a\mathfrak{A}\oplus b\mathfrak{A}]
\]
by using\ Swan's Relation A for the split exact sequence of the direct sum.
Now we use the classification of projective modules, in the following concrete
form: There exists an isomorphism of right $\mathfrak{A}$-modules,%
\[
\varphi:a\mathfrak{A}\oplus b\mathfrak{A}\overset{\sim}{\longrightarrow
}ab\mathfrak{A}\oplus\mathfrak{A}\text{.}%
\]
by\ Example \ref{example_CancellationRankTwo} (see \cite[Theorem 1,
(ii)]{MR0376619} for the proof this example is based on), and this isomorphism
sits in a commutative diagram of projective right $\mathfrak{A}$-modules
comprising the solid arrows in%
\[%
\xymatrix{
\mathfrak{A} \oplus\mathfrak{A} \ar@{..>}[d]_{a_{\infty} \oplus b_{\infty}}
\ar@{^{(}->}[r]^{1 \oplus1} & \mathfrak{A} \oplus\mathfrak{A} \ar@
{->>}[r] \ar@{..>}[d]_{a_{\infty} \cdot b_{\infty} \oplus1}  & 0 \ar@
{..>}[d]^{1} \\
a\mathfrak{A} \oplus b\mathfrak{A} \ar@{^{(}->}[r]_{\varphi} & ab\mathfrak{A}
\oplus\mathfrak{A} \ar@{->>}[r] & 0, \\
}%
\]
while the dotted downward arrows only exist (and commute alongside the solid
arrows) after tensoring everything over $\mathfrak{A}$ with $A_{\mathbb{R}}$.
This data can serve as the input for Swan's Relation A, and implies that%
\begin{align*}
\lbrack\mathfrak{A}\oplus\mathfrak{A},a_{\infty}\oplus b_{\infty
},a\mathfrak{A}\oplus b\mathfrak{A}]  &  =[\mathfrak{A}\oplus\mathfrak{A}%
,(a_{\infty}\cdot b_{\infty})\oplus1,ab\mathfrak{A}\oplus\mathfrak{A}%
]-[0,1,0]\\
&  =[\mathfrak{A},a_{\infty}b_{\infty},ab\mathfrak{A}]+[\mathfrak{A}%
,1,\mathfrak{A}]-[0,1,0]
\end{align*}
and by Example \ref{example_OneMiddleIsZero} we obtain equality to
$[\mathfrak{A},a_{\infty}b_{\infty},ab\mathfrak{A}]=\theta(a\cdot b)$, proving
our claim.
\end{proof}

\begin{lemma}
\label{lemma_mixi1}The map $\theta$ sends $\operatorname*{im}%
UK^{\operatorname{fin}}_{1}(\mathfrak{A})\subseteq JK_{1}(A)$ to zero.
\end{lemma}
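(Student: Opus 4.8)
The plan is to reduce the assertion to the single statement that $\theta$ vanishes on the group of unit finite id\`{e}les $U(\mathfrak{A})\subseteq J(A)$ of Equation \ref{lcixiDex1}, and then to read this off Definition \ref{def_MapTheta} together with Swan's Relation~B. For the reduction: the natural map $A_{\mathfrak{p}}^{\times}\twoheadrightarrow K_{1}(A_{\mathfrak{p}})$ is surjective at every place (Equation \ref{lmixi10_1}), and $\mathfrak{A}_{\mathfrak{p}}^{\times}\twoheadrightarrow K_{1}(\mathfrak{A}_{\mathfrak{p}})$ is surjective because $\mathfrak{A}_{\mathfrak{p}}$ is semilocal and therefore has stable rank one. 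These assemble into a surjection $J(A)\twoheadrightarrow JK_{1}(A)$ that carries $U(\mathfrak{A})$ onto $\operatorname{im}UK_{1}^{\operatorname{fin}}(\mathfrak{A})$; hence it suffices to show $\theta(u)=0$ for each $u=(u_{\mathfrak{p}})_{\mathfrak{p}}\in U(\mathfrak{A})$, i.e.\ for each id\`{e}le with $u_{\mathfrak{p}}\in\mathfrak{A}_{\mathfrak{p}}^{\times}$ at all finite $\mathfrak{p}$ and $u_{\mathfrak{p}}=1$ at all infinite $\mathfrak{p}$.

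Now fix such a $u$. Since every infinite component of $u$ is $1$, the isomorphism $u_{\infty}\colon A_{\mathbb{R}}\overset{\sim}{\to}A_{\mathbb{R}}$ of Definition \ref{def_FrohlichTheory}(2) is the identity. For every finite place $\mathfrak{p}$ we have $(u\mathfrak{A})_{\mathfrak{p}}=u_{\mathfrak{p}}\mathfrak{A}_{\mathfrak{p}}=\mathfrak{A}_{\mathfrak{p}}$, because $u_{\mathfrak{p}}$ is a unit of $\mathfrak{A}_{\mathfrak{p}}$; since a full $\mathcal{O}_{F}$-lattice inside $A$ is pinned down by its completions at the finite places (the uniqueness clause of Definition \ref{def_FrohlichTheory}(1), the same reasoning as in Example \ref{example_IdeleNontrivOnlyAtInfinity}), this forces $u\mathfrak{A}=\mathfrak{A}$. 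Therefore
\[
\theta(u)=[\mathfrak{A},u_{\infty},u\mathfrak{A}]=[\mathfrak{A},1,\mathfrak{A}]=0,
\]
the last equality being Example \ref{example_OneMiddleIsZero}, i.e.\ Swan's Relation~B applied to $1=1\cdot 1$.

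I do not expect a genuine obstacle in this lemma; the only point deserving care is the reduction step --- that $\operatorname{im}UK_{1}^{\operatorname{fin}}(\mathfrak{A})$ is honestly represented by id\`{e}les lying inside $U(\mathfrak{A})$ --- which is precisely why one needs the surjectivity of $\mathfrak{A}_{\mathfrak{p}}^{\times}\to K_{1}(\mathfrak{A}_{\mathfrak{p}})$. (If one prefers to read the statement as an assertion about the map induced by $\theta$ on $JK_{1}(A)$, one additionally invokes the fact that $\theta$ factors through $JK_{1}(A)$, i.e.\ that it already vanishes on $J^{1}(A)$; this is established separately and is not needed for the vanishing claim itself.) Granting the reduction, everything that remains is the short computation above.
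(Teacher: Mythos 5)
Your argument is correct and is essentially the paper's own proof: choose a representative idèle with components in $\mathfrak{A}_{\mathfrak{p}}^{\times}$ at the finite places (using surjectivity of $\mathfrak{A}_{\mathfrak{p}}^{\times}\rightarrow K_{1}(\mathfrak{A}_{\mathfrak{p}})$) and $1$ at the infinite places, so that $a_{\infty}=1$ and $a\mathfrak{A}=\mathfrak{A}$, whence $\theta(a)=[\mathfrak{A},1,\mathfrak{A}]=0$ by Example \ref{example_OneMiddleIsZero}. Your explicit reduction step and the parenthetical remark about factoring through $JK_{1}(A)$ only spell out what the paper leaves implicit.
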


\begin{proof}
Suppose the id\`{e}le $a:=(a_{\mathfrak{p}})_{\mathfrak{p}}$ comes from
$UK_{1}^{\operatorname{fin}}(\mathfrak{A})$. Then it sits entirely in the
components of the finite places, so $a_{\infty}=1$. Next, by Equation
\ref{lciops1E} (or Equation \ref{lciops1}) the lattice $a\mathfrak{A}$ is
uniquely determined by the equation $(a\mathfrak{A})_{\mathfrak{p}%
}=a_{\mathfrak{p}}\mathfrak{A}_{\mathfrak{p}}$ for all finite places
$\mathfrak{p}$. Since $\mathfrak{A}_{p}^{\times}\rightarrow K_{1}%
(\mathfrak{A}_{p})$ is surjective, we have $a_{\mathfrak{p}}\cdot
\mathfrak{A}_{\mathfrak{p}}=\mathfrak{A}_{\mathfrak{p}}$, and thus
$(a\mathfrak{A})_{\mathfrak{p}}=\mathfrak{A}_{\mathfrak{p}}$ holds for all
finite $\mathfrak{p}$, uniquely characterizing $a\mathfrak{A}$ as
$a\mathfrak{A}=\mathfrak{A}$. Hence, $\theta(a)=[\mathfrak{A},a_{\infty
},a\mathfrak{A}]=[\mathfrak{A},1,\mathfrak{A}]=0$ by Example
\ref{example_OneMiddleIsZero}.
\end{proof}

\begin{lemma}
\label{lemma_mixi2}The map $\theta$ sends $\operatorname*{im}K_{1}(A)\subseteq
JK_{1}(A)$ to zero.
\end{lemma}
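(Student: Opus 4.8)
The plan is to reduce to \emph{principal} idèles and then annihilate the resulting Swan generator by a single application of Swan's Relation~A, in close analogy with the proof of Lemma~\ref{lemma_mixi1}.

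Since $A^{\times}\rightarrow K_{1}(A)$ is surjective ($A$ being semisimple, via the Dieudonné determinant), every element of $\operatorname*{im}K_{1}(A)\subseteq JK_{1}(A)$ lifts to a principal idèle $a=(a,a,\dots)\in J(A)$ with $a\in A^{\times}$, so it suffices to show $\theta(a)=0$ for such $a$. First I would unravel Definition~\ref{def_FrohlichTheory} in this special case: the lattice ``$a\mathfrak{A}$'' is just the honest right $\mathfrak{A}$-submodule $a\cdot\mathfrak{A}=\{ax:x\in\mathfrak{A}\}\subseteq A$ — it is the unique $\mathcal{O}_{F}$-lattice with $(a\cdot\mathfrak{A})_{\mathfrak{p}}=a\mathfrak{A}_{\mathfrak{p}}$ at every finite place — and $a_{\infty}\colon A_{\mathbb{R}}\overset{\sim}{\rightarrow}A_{\mathbb{R}}$ is simply left multiplication by (the image of) $a$.

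The crucial observation is then that left multiplication $\lambda_{a}\colon\mathfrak{A}\rightarrow a\mathfrak{A}$, $x\mapsto ax$, is \emph{already} an isomorphism of right $\mathfrak{A}$-modules (it is $\mathfrak{A}$-linear on the right and has inverse $\lambda_{a^{-1}}$), and its base change $\lambda_{a}\otimes_{\mathfrak{A}}A_{\mathbb{R}}$ is precisely $a_{\infty}$. Hence $\theta(a)=[\mathfrak{A},a_{\infty},a\mathfrak{A}]$ is a Swan generator whose glueing isomorphism already descends from a genuine $\mathfrak{A}$-module isomorphism. I would then conclude by feeding the two short exact sequences
\[
\mathfrak{A}\overset{\lambda_{a}}{\hookrightarrow}a\mathfrak{A}\twoheadrightarrow 0
\qquad\text{and}\qquad
a\mathfrak{A}\overset{=}{\hookrightarrow}a\mathfrak{A}\twoheadrightarrow 0
\]
together with the three vertical isomorphisms $a_{\infty}$, $\operatorname*{id}_{(a\mathfrak{A})_{\mathbb{R}}}$, $\operatorname*{id}_{0}$ — which form a commutative ladder after applying $(-)\otimes_{\mathfrak{A}}A_{\mathbb{R}}$ — into Swan's Relation~A. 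This yields $[a\mathfrak{A},1,a\mathfrak{A}]=[\mathfrak{A},a_{\infty},a\mathfrak{A}]+[0,1,0]$, and since both $[a\mathfrak{A},1,a\mathfrak{A}]$ and $[0,1,0]$ vanish by Example~\ref{example_OneMiddleIsZero}, we obtain $\theta(a)=0$.

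I do not expect a genuine obstacle here: morally this merely records that a lattice which is globally $\mathfrak{A}$-isomorphic to $\mathfrak{A}$ contributes nothing in relative $K$-theory. The only points worth double-checking are that the chosen ladder really commutes after base change to $A_{\mathbb{R}}$ (so that Swan's Relation~A applies verbatim with the stated vertical maps) and that $[a\mathfrak{A},1,a\mathfrak{A}]=0$, which is exactly Example~\ref{example_OneMiddleIsZero} applied to $\mathfrak{X}=a\mathfrak{A}$.
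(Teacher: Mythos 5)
Your argument is correct and is essentially the paper's own proof: reduce to principal id\`{e}les via surjectivity of $A^{\times}\to K_{1}(A)$, observe that $a_{\infty}$ descends from the genuine right-$\mathfrak{A}$-module isomorphism $\lambda_{a}\colon\mathfrak{A}\to a\mathfrak{A}$, and kill the resulting Swan generator by one application of Relation A together with Example \ref{example_OneMiddleIsZero}. The only (harmless) deviation is that you feed Relation A an arbitrary $a\in A^{\times}$ directly, whereas the paper first rescales to an element of $\mathfrak{A}\setminus\{0\}$ before exhibiting the analogous ladder; your version is, if anything, slightly cleaner.
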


\begin{proof}
The map $A^{\times}\rightarrow K_{1}(A)$ is an isomorphism. Thus, the image in
question consists of the diagonally constant id\`{e}les $(a)_{\mathfrak{p}}$,
with $a\in A^{\times}$. In particular, $a_{\infty}=a$. Since $A=\mathfrak{A}%
\otimes_{\mathbb{Z}}\mathbb{Q}$, we can write $a=\frac{1}{n}a_{0}$ for some
$a_{0}\in\mathfrak{A}\setminus\{0\}$ and $n\in\mathbb{Z}_{\geq1}%
\subset\mathfrak{A}\setminus\{0\}$. We have $\mathfrak{A}\setminus\{0\}\subset
A^{\times}$ and since $\theta$ is a group homomorphism, without loss of
generality it suffices to prove our claim for all elements $a\in
\mathfrak{A}\setminus\{0\}$. For such an element, we find a commutative
diagram%
\[%
\xymatrix{
\mathfrak{A} \ar@{..>}[d]_{1} \ar@{^{(}->}[r]^{1} & \mathfrak{A} \ar@
{->>}[r] \ar@{..>}[d]_{a}  & 0 \ar@{..>}[d]^{1} \\
\mathfrak{A} \ar@{^{(}->}[r]_{a} & a\mathfrak{A} \ar@{->>}[r] & 0, \\
}%
\]
where the solid arrows exist on the level of projective right $\mathfrak{A}%
$-modules and the dotted arrows only after tensoring with $A_{\mathbb{R}}$
(the same notation which we had used in the proof of Lemma
\ref{lemma_ThetaWelldef}). Having such a diagram, Swan's Relation A yields
$[\mathfrak{A},a,a\mathfrak{A}]=[\mathfrak{A},1,\mathfrak{A}]+[0,1,0]$ and by
Example \ref{example_OneMiddleIsZero} both terms on the right vanish.
\end{proof}

We will construct a commutative diagram%
\begin{equation}%
\xymatrix{
K_1(\mathfrak{A}) \ar[d] \ar[r]^{1} \ar@{}[dr]|{X}
& K_1(\mathfrak{A}) \ar[d] \\
K_1(A_{\mathbb{R}}) \ar[r]^{1} \ar@{->}[d]_{\beta} \ar@{}[dr]|{Y}
& K_1(A_{\mathbb{R}}) \ar[d] \\
\frac{JK_{1}(A)}{\operatorname*{im}K_1(A) + \operatorname*{im}%
UK^{\operatorname{fin}}_{1}(\mathfrak{A})} \ar[r]^{{\theta} }
\ar@{->>}[d]_{j} \ar@{}[dr]|{Z} & K_0({\mathfrak{A} },\mathbb{R}) \ar@
{->>}[d] \\
\frac{JK_{1}(A)}{\operatorname*{im}K_1(A) + \operatorname*{im}%
UK^{\operatorname{fin}}_{1}(\mathfrak{A}) + \operatorname*{im}K_1(A_{\mathbb
{R}})} \ar[r]_-{-{\sigma}_{\mathfrak{A}}}
& \operatorname{Cl}(\mathfrak{A}),
}
\label{lbixi1}%
\end{equation}
using the following maps: (a) The map $\beta$ sends $a\in A_{\mathbb{R}%
}^{\times}$ to the id\`{e}le $(a_{\mathfrak{p}})_{\mathfrak{p}}$ with
$a_{\mathfrak{p}}=1$ for all finite places $\mathfrak{p}$, while
$a_{\mathfrak{p}}$ agrees with the corresponding component of $a$ for all
infinite places. We may suggestively write%
\[
(1,1,\ldots,1,\underset{a_{\infty}}{\underbrace{a_{v},\ldots,a_{v^{\prime}}}%
})\text{,}%
\]
where $v,\ldots,v^{\prime}$ are the infinite places. (b)\ The map
$\sigma_{\mathfrak{A}}$ sends an id\`{e}le to its associated ideal class. This
construction comes from Fr\"{o}hlich's id\`{e}le classification of projective
modules, see \cite[\S 2, Theorem 1, especially\ Consequence \textquotedblleft
II\textquotedblright]{MR0376619}. This is the inverse map to the one in
Theorem \ref{thm_FrohlichTheory}.

\begin{lemma}
Diagram \ref{lbixi1} commutes.
\end{lemma}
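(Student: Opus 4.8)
The plan is to verify the three squares $X$, $Y$, $Z$ of Diagram \ref{lbixi1} one at a time; commutativity of the outer rectangle then follows formally. Every arrow in sight is a group homomorphism ($\theta$ by Lemma \ref{lemma_ThetaWelldef}, and it descends to the displayed quotient by Lemmas \ref{lemma_mixi1} and \ref{lemma_mixi2}; the remaining maps are manifestly homomorphisms), so in each square I would compare the two composites only on a convenient generating set. Square $X$ needs no computation: both of its vertical arrows are the one map $K_{1}(\mathfrak{A})\to K_{1}(A_{\mathbb{R}})$ induced by $\mathfrak{A}\to A_{\mathbb{R}}$, and both horizontal arrows are the identity.

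For square $Y$ I would unravel $\theta\circ\beta$ on a representative. Since $A_{\mathbb{R}}$ is semisimple, $A_{\mathbb{R}}^{\times}\twoheadrightarrow K_{1}(A_{\mathbb{R}})$, so it suffices to take $a\in A_{\mathbb{R}}^{\times}$. By construction $\beta(a)$ is the id\`{e}le that equals $1$ at every finite place and whose infinite components constitute $a$; hence $a_{\infty}=a$ and, by Example \ref{example_IdeleNontrivOnlyAtInfinity}, its associated lattice is $a\mathfrak{A}=\mathfrak{A}$. Thus $\theta(\beta(a))=[\mathfrak{A},a,\mathfrak{A}]$, which is precisely the image of $a$ under the right-hand vertical arrow $K_{1}(A_{\mathbb{R}})\to K_{0}(\mathfrak{A},\mathbb{R})$ of $Y$ (the connecting map of the relative $K$-theory sequence, taking the class of $\alpha\in\operatorname*{GL}_{n}(A_{\mathbb{R}})$ to $[\mathfrak{A}^{n},\alpha,\mathfrak{A}^{n}]$). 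Hence $Y$ commutes.

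For square $Z$, the substantive one, I would trace a rank-one representative $a=(a_{\mathfrak{p}})_{\mathfrak{p}}$ through both composites. Going right and then down, $\theta(a)=[\mathfrak{A},a_{\infty},a\mathfrak{A}]$, and the right vertical arrow $K_{0}(\mathfrak{A},\mathbb{R})\to\operatorname*{Cl}(\mathfrak{A})$ is the map $\operatorname*{cl}$ of Equation \ref{l_Rado_1}, which sends $[P,\varphi,Q]$ to $[P]-[Q]$; so this composite gives $[\mathfrak{A}]-[a\mathfrak{A}]$. Going down and then right, $j(a)$ is represented by the same id\`{e}le, and $\sigma_{\mathfrak{A}}$ is the inverse of Fr\"{o}hlich's isomorphism of Theorem \ref{thm_FrohlichTheory}, so it sends the id\`{e}le class of $a$ to $[a\mathfrak{A}]-[\mathfrak{A}]$; applying the extra sign gives $-\sigma_{\mathfrak{A}}(j(a))=[\mathfrak{A}]-[a\mathfrak{A}]$. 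The two composites coincide, so $Z$ commutes.

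The hard part is not conceptual but the careful coordination of the several sign conventions entering the diagram: the orientation of $\operatorname*{cl}$ (whether $[P,\varphi,Q]\mapsto[P]-[Q]$ or $[Q]-[P]$), the orientation of Fr\"{o}hlich's identification (whether the class of $a\mathfrak{A}$ corresponds to the class of $a$ or of $a^{-1}$, and whether one normalizes via $[\mathfrak{X}]-[\mathfrak{A}^{m}]$ or $[\mathfrak{A}^{m}]-[\mathfrak{X}]$), and the explicit $-\sigma_{\mathfrak{A}}$ placed along the bottom of Diagram \ref{lbixi1}. All of these have to be fixed consistently with the conventions used earlier in the paper and with \cite{MR0376619,MR892316}; this is exactly the delicate sign bookkeeping flagged in the acknowledgement. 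Once it is pinned down, the computations above close the argument.
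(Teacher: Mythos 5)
Your proposal is correct and follows essentially the same route as the paper: square $X$ is trivial, square $Y$ is checked on $a\in A_{\mathbb{R}}^{\times}$ via Example \ref{example_IdeleNontrivOnlyAtInfinity} and the standard description of the connecting map $K_{1}(A_{\mathbb{R}})\to K_{0}(\mathfrak{A},\mathbb{R})$ as $\alpha\mapsto[\mathfrak{A}^{n},\alpha,\mathfrak{A}^{n}]$, and square $Z$ is checked on rank-one representatives by comparing $[\mathfrak{A}]-[a\mathfrak{A}]$ with $-\sigma_{\mathfrak{A}}(a)=[\mathfrak{A}]-[a\mathfrak{A}]$, exactly as in the paper. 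Your closing remark about pinning down the sign conventions is precisely the point the paper handles by inserting the explicit $-\sigma_{\mathfrak{A}}$ in the bottom row.
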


\begin{proof}
The commutativity of the square $X$ is obvious. Square $Y$ is not much
harder:\ Suppose we are given $a\in A_{\mathbb{R}}$. Then $\beta$ maps it to
the id\`{e}le%
\[
\hat{a}:=(1,1,\ldots,1,\underset{a_{\infty}}{\underbrace{a_{v},\ldots
,a_{v^{\prime}}}})
\]
with $a_{\infty}=a$. Next, $\theta$ sends this to $[\mathfrak{A},a,\hat
{a}\mathfrak{A}]$. However, the id\`{e}le $\hat{a}$ differs from the neutral
element $(1,1,\ldots,1)$ only in the infinite places. Thus, $\hat
{a}\mathfrak{A}=\mathfrak{A}$ by Example
\ref{example_IdeleNontrivOnlyAtInfinity}, i.e. we get $[\mathfrak{A}%
,a,\mathfrak{A}]$. On the other hand, the map $K_{1}(A_{\mathbb{R}%
})\rightarrow K_{0}(\mathfrak{A},\mathbb{R})$ sends $a$ to the same class, by
definition (see \cite[Theorem 3.2]{etnclca2}, the map $\delta$ in the diagram
loc. cit., or \cite[p. 215]{MR0245634}). Hence, Square $Y$ commutes. The map
$\theta$ is well-defined and vanishes on $\operatorname*{im}UK_{1}%
^{\operatorname{fin}}(\mathfrak{A})+\operatorname*{im}K_{1}(A)$ by Lemma
\ref{lemma_mixi1} and \ref{lemma_mixi2}. It remains to check Square $Z$. Let
$a:=(a_{\mathfrak{p}})_{\mathfrak{p}}$ be some id\`{e}le in $JK_{1}(A)$. Then
$\theta(a)=[\mathfrak{A},a_{\infty},a\mathfrak{A}]$ and going down on the
right sends this to $[\mathfrak{A}]-[a\mathfrak{A}]$ (see \cite[Theorem
3.2]{etnclca2}, and concretely the proof of Lemma 3.5 loc. cit.). On the other
hand, the id\`{e}le $a$ under Fr\"{o}hlich's id\`{e}le classification of
projective right $\mathfrak{A}$-modules corresponds to $\sigma_{\mathfrak{A}%
}(a)=[a\mathfrak{A}]-[\mathfrak{A}]$, see \cite[\S 2, Consequence
\textquotedblleft II\textquotedblright\ of Theorem 1]{MR0376619}. The map
$\sigma_{\mathfrak{A}}$ loc. cit. specifically describes the class group as
the kernel from $K_{0}(\mathfrak{A})$ under the rank, as in Equation
\ref{lmixi7}. We use the same notation as in Fr\"{o}hlich's article. Thanks to
the negative sign, Square $Z$ commutes.
\end{proof}

\begin{lemma}
\label{Lemma_ExactColumns}The columns in Diagram \ref{lbixi1} are exact.
\end{lemma}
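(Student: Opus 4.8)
The plan is to handle the two columns separately. The right column I would recognise as the tail of a standard long exact sequence, and the left column I would check position by position, reusing verbatim the arguments already carried out for Figure~\ref{lfigA1}.

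For the \emph{right column}, observe that
\[
K_{1}(\mathfrak{A})\longrightarrow K_{1}(A_{\mathbb{R}})\overset{\delta}{\longrightarrow}K_{0}(\mathfrak{A},\mathbb{R})\overset{\operatorname{cl}}{\longrightarrow}K_{0}(\mathfrak{A})
\]
is the tail of the localization sequence attached to the Swan presentation of the relative group; exactness at $K_{1}(A_{\mathbb{R}})$ and at $K_{0}(\mathfrak{A},\mathbb{R})$ is classical (see \cite[Theorem~3.2]{etnclca2} and \cite[p.~215]{MR0245634}). It then only remains to see that cutting this off at $\operatorname{Cl}(\mathfrak{A})\subseteq K_{0}(\mathfrak{A})$ keeps the sequence exact and makes the last map surjective; this is exactly the truncation argument of Step~1 in the proof of Lemma~\ref{mz3}, using that $\operatorname{cl}$ lands inside $\operatorname{Cl}(\mathfrak{A})=\ker(\operatorname*{rk})$, that $\operatorname{Cl}(\mathfrak{A})$ is finite by Jordan--Zassenhaus, and that $K_{0}(A_{\mathbb{R}})$ is torsion-free. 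Unlike the right column of Figure~\ref{lfigA1}, this involves no $\mathsf{LCA}_{\mathfrak{A}}$ and hence does not require $\mathfrak{A}$ to be regular.

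For the \emph{left column} I would verify exactness at $K_{1}(A_{\mathbb{R}})$, exactness at the third entry, and surjectivity of $j$. Surjectivity of $j$ is automatic, $j$ being a quotient map. Exactness at the third entry is essentially a matter of definitions: $\ker(j)$ is the image, inside $\tfrac{JK_{1}(A)}{\operatorname*{im}K_{1}(A)+\operatorname*{im}UK_{1}^{\operatorname{fin}}(\mathfrak{A})}$, of the subgroup $\operatorname*{im}K_{1}(A_{\mathbb{R}})\subseteq JK_{1}(A)$, and $K_{1}(A_{\mathbb{R}})$ is embedded in $JK_{1}(A)$ precisely by $a\mapsto(1,\dots,1,a_{\infty})$, that is, exactly by $\beta$; so $\ker(j)=\operatorname*{im}(\beta)$ on the nose. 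Exactness at $K_{1}(A_{\mathbb{R}})$ is the one point with actual content. That the composite $K_{1}(\mathfrak{A})\to K_{1}(A_{\mathbb{R}})\overset{\beta}{\to}\tfrac{JK_{1}(A)}{\operatorname*{im}K_{1}(A)+\operatorname*{im}UK_{1}^{\operatorname{fin}}(\mathfrak{A})}$ is zero follows by writing, for $\alpha\in K_{1}(\mathfrak{A})$, $\beta(\alpha_{\infty})=(\alpha,\dots,\alpha)\cdot(\alpha^{-1},\dots,\alpha^{-1},1,\dots,1)$, with the first factor in $\operatorname*{im}K_{1}(A)$ and the second in $\operatorname*{im}UK_{1}^{\operatorname{fin}}(\mathfrak{A})$ (since $K_{1}(\mathfrak{A})$ maps into each $K_{1}(\mathfrak{A}_{\mathfrak{p}})$). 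For the reverse inclusion, if $\beta(a)$ vanishes in the quotient, write $(1,\dots,1,a)=(x,\dots,x)\cdot u$ with $x\in K_{1}(A)$ diagonal and $u\in UK_{1}^{\operatorname{fin}}(\mathfrak{A})$; comparing finite components forces the image of $x$ in every $K_{1}(A_{\mathfrak{p}})$ into $\operatorname*{im}K_{1}(\mathfrak{A}_{\mathfrak{p}})$, so by the Wall exact sequence \cite[(42.19)~Theorem]{MR892316} $x$ lifts to $K_{1}(\mathfrak{A})$, and comparing the infinite components then exhibits $a$ as the image of that lift under $K_{1}(\mathfrak{A})\to K_{1}(A_{\mathbb{R}})$. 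This is the identical argument to Step~3 of the proof of Lemma~\ref{mz3}.

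I do not expect a genuine obstacle: the only real input is the Wall exact sequence (needing no regularity hypothesis), and everything else is either a one-line diagram chase or a direct quotation of Lemma~\ref{mz3}. The sole point requiring care is to be sure that the copy of $K_{1}(A_{\mathbb{R}})$ appearing in the denominator of the bottom term is embedded in $JK_{1}(A)$ through the very same map $\beta$, since it is this identification that renders exactness at the third entry tautological rather than something to be proved.
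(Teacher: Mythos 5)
Your proof is correct and takes essentially the same route as the paper: the right column via the standard long exact sequence of relative $K$-groups together with the identification $\ker\left(K_{0}(\mathfrak{A})\rightarrow K_{0}(A_{\mathbb{R}})\right)=\operatorname{Cl}(\mathfrak{A})$ (which you justify by the Jordan--Zassenhaus finiteness and torsion-freeness of $K_{0}(A_{\mathbb{R}})$, exactly the argument of Step 1 of Lemma \ref{mz3} that the paper invokes implicitly), and the left column reduced to exactness at $K_{1}(A_{\mathbb{R}})$ via Wall's exact sequence, with $\ker(j)=\operatorname{im}(\beta)$ tautological. Your explicit factorization $\beta(\alpha_{\infty})=(\alpha,\dots,\alpha)\cdot(\alpha^{-1},\dots,\alpha^{-1},1,\dots,1)$ even spells out the vanishing of the composite a bit more carefully than the paper's one-line remark.
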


\begin{proof}
(Step 1)\ The right column comes from the standard long exact sequence of
relative $K$-groups,%
\[
\cdots\longrightarrow K_{1}(\mathfrak{A})\longrightarrow K_{1}(A_{\mathbb{R}%
})\longrightarrow K_{0}(\mathfrak{A},\mathbb{R})\longrightarrow K_{0}%
(\mathfrak{A})\longrightarrow K_{0}(A_{\mathbb{R}})\longrightarrow
\cdots\text{.}%
\]
From this sequence we obtain that%
\[
K_{1}(\mathfrak{A})\longrightarrow K_{1}(A_{\mathbb{R}})\longrightarrow
K_{0}(\mathfrak{A},\mathbb{R})\longrightarrow\ker\left(  K_{0}(\mathfrak{A}%
)\longrightarrow K_{0}(A_{\mathbb{R}})\right)  \longrightarrow0
\]
is exact, but one can show that the kernel on the right consists precisely of
those classes in $K_{0}(\mathfrak{A})$ with vanishing rank, i.e. this kernel
agrees with the one in Equation \ref{lmixi7}. This settles the right column.
(Step 2) For the left column it is clear that $j$ is just the quotient map
under the image coming from $\beta$. Thus, exactness is clear, except perhaps
at $K_{1}(A_{\mathbb{R}})$. We check this now: Firstly, we have $j\circ
\beta=0$ because the image of $K_{1}(\mathfrak{A})$ in $JK_{1}(A)$ is
contained in the image of $K_{1}(A)$, which we had quotiented out. Thus, we
only need to show that every $a\in K_{1}(A_{\mathbb{R}})$ such that
$\beta(a)\equiv0$ comes from $K_{1}(\mathfrak{A})$. Suppose such an $a\in
K_{1}(A_{\mathbb{R}})$ is given. As an id\`{e}le, we may write a
representative of its image in $JK_{1}(A)$ suggestively as%
\begin{equation}
\beta(a)=(1,1,\ldots,1,\underset{a_{\infty}}{\underbrace{a_{v},\ldots
,a_{v^{\prime}}}})\text{.} \label{lmixi20}%
\end{equation}
However, since we had assumed that $\beta(a)\equiv0$, we also know that
$\beta(a)=x\cdot y$ with $x\in UK_{1}^{\operatorname{fin}}(\mathfrak{A})$ and
$y\in K_{1}(A)$. We have tacitly dropped distinguishing between these elements
are their images in the id\`{e}les. Since the image of $UK_{1}%
^{\operatorname{fin}}(\mathfrak{A})$ is supported in the finite places alone
by Equation \ref{lmixi5a}, we learn that $y_{\mathfrak{p}}=a_{\mathfrak{p}}$
holds for all infinite places. Further, since $\operatorname*{im}K_{1}(A)$ is
diagonal, this means that we can assume that $a=y\otimes1_{\mathbb{R}}$.
However, Equation \ref{lmixi20} also implies that%
\[
1=x_{\mathfrak{p}}\cdot y_{\mathfrak{p}}\qquad\text{for all finite places
}\mathfrak{p}\text{.}%
\]
Thus, in Wall's exact sequence for $K$-theory, \cite[(42.19)\ Theorem]%
{MR892316},%
\[
K_{1}(\mathfrak{A})\overset{\operatorname*{diag}}{\longrightarrow}%
\underset{(y,x)}{K_{1}(A)\oplus K_{1}(\widehat{\mathfrak{A}})}\overset
{\operatorname*{diff}}{\longrightarrow}K_{1}(\widehat{A})\longrightarrow
K_{0}(\mathfrak{A})\longrightarrow\cdots
\]
we learn that the pair $(y,x)$ goes to zero, and therefore there exists some
$z\in K_{1}(\mathfrak{A})$ such that $(y,x)=(z\otimes1_{A},z\otimes
1_{\widehat{\mathfrak{A}}})$ under the diagonal map. Hence, $a=y\otimes
1_{\mathbb{R}}=(z\otimes1_{A})\otimes1_{\mathbb{R}}$, proving that $a$ lies in
the image from $K_{1}(\mathfrak{A})$ as desired.
\end{proof}

\subsection{Main results regarding Fr\"{o}hlich's id\`{e}le perspective}

We write $U^{\operatorname*{fin}}(\mathfrak{A})$ with the same meaning as
$U(\mathfrak{A})$ in this section in order to stress the analogy with
$UK_{1}^{\operatorname*{fin}}$. In case the reader has forgotten the
definition, see Equation \ref{lcixiDex1}.

\begin{theorem}
[Global--Local Formula, Swan presentation]\label{thm_GlobalLocal_Swan}Let $F$
be a number field, $\mathcal{O}_{F}$ its ring of integers. Suppose
$\mathfrak{A}\subset A$ is an $\mathcal{O}_{F}$-order in a finite-dimensional
semisimple $F$-algebra $A$. Then the following diagrams, whose rows are
isomorphisms, commute:

\begin{enumerate}
\item (Classical id\`{e}le formulation)%
\[%
\xymatrix{
\cfrac{J(A)}{J^{1}(A)+\operatorname*{im}(A^{\times})+\operatorname
*{im}U^{\operatorname{fin}}(\mathfrak{A})} \ar[r]^-{\theta}_-{\sim} \ar@
{->>}[d] &
K_{0}(\mathfrak{A},\mathbb{R}) \ar@{->>}[d] \\
\cfrac{J(A)}{J^{1}(A)+\operatorname*{im}(A^{\times})+\operatorname
*{im}U^{\operatorname{fin}}(\mathfrak{A})+\operatorname*{im}(A_{\mathbb{R}
}^{\times})}
\ar[r]_-{\sim} & \operatorname{Cl}(\mathfrak{A})
}%
\]

\item ($K_{1}$-id\`{e}le formulation)%
\[%
\xymatrix{
\cfrac{JK_{1}(A)}{\operatorname*{im}K_{1}(A)+\operatorname*{im}UK_{1}%
^{\operatorname{fin}}(\mathfrak{A})} \ar[r]^-{\theta}_-{\sim} \ar@{->>}[d] &
K_{0}(\mathfrak{A},\mathbb{R}) \ar@{->>}[d] \\
\cfrac{JK_{1}(A)}{\operatorname*{im}K_{1}(A)+\operatorname*{im}UK_{1}%
^{\operatorname{fin}}(\mathfrak{A}) + \operatorname*{im}K_1(A_{\mathbb{R} })}
\ar[r]_-{\sim} & \operatorname{Cl}(\mathfrak{A})
}%
\]

\item (Formulation in terms of the center)%
\[%
\xymatrix{
\cfrac{J(\zeta(A))}{\operatorname*{im}(\zeta(A)^{+,\times})+\prod
_{\mathfrak{p},\text{fin.}}\operatorname*{im}(\operatorname{nr}(\mathfrak
{A}^{\times}_{\mathfrak{p} }))} \ar[r]^-{\vartheta}_-{\sim} \ar@{->>}[d] &
K_{0}(\mathfrak{A},\mathbb{R}) \ar@{->>}[d] \\
\cfrac{J(\zeta(A))}{\operatorname*{im}(\zeta(A)^{+,\times})+\prod
_{\mathfrak{p},\text{fin.}}\operatorname*{im}(\operatorname{nr}(\mathfrak
{A}^{\times}_{\mathfrak{p} }))+\operatorname*{im}(A_{\mathbb{R} }^{\times})}
\ar[r]_-{\sim} & \operatorname{Cl}(\mathfrak{A})
}%
\]
Here $\zeta(-)$ denotes the center, and $()^{+}$ means: We restrict to
$a\in\zeta(A)$ such that $a_{\mathfrak{p}}>0$ for all real places of $F$ which
ramify in $A$. The products run only over the finite places of $F$.
\end{enumerate}

Recall the notation \textquotedblleft$a\mathfrak{A}$\textquotedblright\ of
Fr\"{o}hlich's id\`{e}le classification of projective $\mathfrak{A}%
$-modules\ (Equation \ref{lciops1}). In terms of the Swan presentation, the
maps are given by%
\[
\theta:(a_{\mathfrak{p}})_{\mathfrak{p}}\longmapsto\lbrack\mathfrak{A}%
,a_{\infty},a\mathfrak{A}]\qquad\vartheta:(a_{\mathfrak{p}})_{\mathfrak{p}%
}\longmapsto\lbrack\mathfrak{A},\operatorname*{nr}\nolimits^{-1}(a_{\infty
}),\operatorname*{nr}\nolimits^{-1}(a)\mathfrak{A}]\text{.}%
\]
Here the reduced norm maps $\operatorname*{nr}\nolimits^{-1}(-)$ are
understood component-wise for each place $\mathfrak{p}$. See the proof for
further clarification.
\end{theorem}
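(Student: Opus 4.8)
The plan is to prove part (2) first, since the entire scaffolding is already in place, and then to obtain parts (1) and (3) by transporting the whole commutative diagram along the componentwise Dieudonn\'{e} determinant resp. the componentwise reduced norm. For part (2) I would argue exactly as in the proof of Proposition~\ref{Prop_IdentifyK1LCA_Use_K1Ideles}: we already know that Diagram~\ref{lbixi1} commutes, that its columns are exact (Lemma~\ref{Lemma_ExactColumns}), that $\theta$ is a well-defined group homomorphism annihilating $\operatorname{im}UK_1^{\operatorname{fin}}(\mathfrak{A})$ and $\operatorname{im}K_1(A)$ (Lemmas~\ref{lemma_ThetaWelldef}, \ref{lemma_mixi1}, \ref{lemma_mixi2}), and that the bottom map $-\sigma_{\mathfrak{A}}$ is Fr\"{o}hlich's id\`{e}le description of $\operatorname{Cl}(\mathfrak{A})$ and hence an isomorphism (Lemma~\ref{mz2}). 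First I would truncate Diagram~\ref{lbixi1} to three rows by factoring out the images of the top vertical arrows; since the square $X$ has identity horizontal maps the resulting top row is still an isomorphism, and the three columns become short exact sequences. Then the Snake Lemma, applied with the top and bottom rows both isomorphisms, forces $\ker\theta=\operatorname{coker}\theta=0$, so $\theta$ is an isomorphism, and the lower square of part (2) is literally the lower square of the truncated diagram. The Swan-presentation formula $\theta\colon(a_{\mathfrak{p}})_{\mathfrak{p}}\mapsto[\mathfrak{A},a_{\infty},a\mathfrak{A}]$ is just Definition~\ref{def_MapTheta}.

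For part (1) the point is that everything in sight is the image of the classical picture under the Dieudonn\'{e} determinant. At each place $A_{\mathfrak{p}}^{\times}\to K_1(A_{\mathfrak{p}})$ is surjective (Equation~\ref{lmixi10_1}) with kernel the commutator subgroup, which over a local field and over $\mathbb{R}$ equals the reduced norm one group $SL_1(A_{\mathfrak{p}})$ (vanishing of $SK_1$, \cite{MR892316}); passing to the restricted product gives a short exact sequence $1\to J^1(A)\to J(A)\to JK_1(A)\to 1$. It carries $A^{\times}$ diagonally onto $\operatorname{im}K_1(A)$ via the isomorphism $A^{\times}\cong K_1(A)$, carries $U^{\operatorname{fin}}(\mathfrak{A})$ onto $\operatorname{im}UK_1^{\operatorname{fin}}(\mathfrak{A})$, and carries $A_{\mathbb{R}}^{\times}$ onto $\operatorname{im}K_1(A_{\mathbb{R}})$, so it identifies the left-hand quotients of parts (1) and (2) for both rows. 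Moreover $\theta$ of Definition~\ref{def_MapTheta} descends to $J(A)/J^1(A)$: for $s\in J^1(A)$ the class $[\mathfrak{A},s_{\infty},s\mathfrak{A}]$ lies in $\operatorname{im}(K_1(A_{\mathbb{R}})\to K_0(\mathfrak{A},\mathbb{R}))$, since $[s\mathfrak{A}]=[\mathfrak{A}]$ in $K_0(\mathfrak{A})$ by Fr\"{o}hlich's classification, and it unwinds to $\delta$ applied to the class of $s_{\infty}$ in $K_1(A_{\mathbb{R}})$, which is trivial because $s_{\infty}$ has reduced norm one. Thus part (1) follows from part (2), the lower isomorphism now being exactly Theorem~\ref{thm_FrohlichTheory}.

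For part (3) I would push one step further via the componentwise reduced norm $\operatorname{nr}\colon A_{\mathfrak{p}}^{\times}\to\zeta(A_{\mathfrak{p}})^{\times}$, which annihilates precisely $SL_1(A_{\mathfrak{p}})$ and hence factors through $K_1(A_{\mathfrak{p}})$. At finite and complex places $\operatorname{nr}$ is surjective, giving $K_1(A_{\mathfrak{p}})\cong\zeta(A_{\mathfrak{p}})^{\times}$; at a real place the image is all of $\zeta(A_{\mathfrak{p}})^{\times}$ when $A$ is unramified there and the totally positive subgroup $\zeta(A_{\mathfrak{p}})^{+,\times}$ when $A$ is ramified (Hasse--Schilling--Maass, \cite{MR892316}), while globally $\operatorname{nr}(A^{\times})=\zeta(A)^{+,\times}$. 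Collecting these matches $\operatorname{im}K_1(A)$ with $\operatorname{im}(\zeta(A)^{+,\times})$, $\operatorname{im}UK_1^{\operatorname{fin}}(\mathfrak{A})$ with $\prod_{\mathfrak{p},\,\mathrm{fin.}}\operatorname{im}(\operatorname{nr}(\mathfrak{A}_{\mathfrak{p}}^{\times}))$, and $\operatorname{im}K_1(A_{\mathbb{R}})$ with $\operatorname{im}(A_{\mathbb{R}}^{\times})$, so it identifies the quotients of parts (2) and (3) for both rows. Transporting $\theta$ along this identification gives $\vartheta\colon(a_{\mathfrak{p}})_{\mathfrak{p}}\mapsto[\mathfrak{A},\operatorname{nr}^{-1}(a_{\infty}),\operatorname{nr}^{-1}(a)\mathfrak{A}]$, which is well defined because any two componentwise preimages under $\operatorname{nr}$ differ by an element of $J^1(A)$, already shown to be killed in part (1).

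I expect the main obstacle to be the archimedean and ramified-place bookkeeping: keeping straight how $A_{\mathbb{R}}^{\times}$, the reduced norm one subgroups, and the totally positive condition ``$()^{+}$'' correspond across the three formulations. This is exactly where Hasse--Schilling--Maass enters, together with the inputs already used in Lemma~\ref{mz3}, namely the finiteness of $\operatorname{Cl}(\mathfrak{A})$ and the torsion-freeness of $K_0(A_{\mathbb{R}})$, which force $\ker(a)=\operatorname{Cl}(\mathfrak{A})$ and thus pin down the right-hand columns. One also has to be careful about the sign in $-\sigma_{\mathfrak{A}}$ and about the rank-one cancellation subtleties in Fr\"{o}hlich's classification that are what ultimately make $\theta$ descend to the stated quotients.
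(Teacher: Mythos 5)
Your proposal takes essentially the same route as the paper: part (2) comes from the commutative diagram \ref{lbixi1} with exact columns and Fr\"ohlich's isomorphism $-\sigma_{\mathfrak{A}}$ at the bottom (your snake-lemma chase on the truncated diagram is just a repackaging of the paper's Five Lemma step), and parts (1) and (3) are then obtained by transporting the quotients along the componentwise surjection $J(A)\twoheadrightarrow JK_{1}(A)$ and the componentwise reduced norm, invoking Hasse--Schilling--Maass and the local structure of the norm map exactly as in the paper. The only cosmetic difference is the order of deduction and the way the kernel of $J(A)\rightarrow JK_{1}(A)$ is identified with $J^{1}(A)$ (you cite vanishing of local $SK_{1}$, the paper uses the injectivity of $\operatorname{nr}$ on $K_{1}(A_{\mathfrak{p}})$ via the reduced-norm triangle after first proving (3)), which is the same underlying fact.
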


The notation $(-)^{+}$ is standard, and for example also used by Curtis and
Reiner \cite{MR1038525}, \cite{MR892316}. Note that its meaning depends on $A$
and not just on $\zeta(A)$. Let us point out that some readers might prefer to
think of the id\`{e}les as a multiplicative gadget and would prefer writing
\textquotedblleft$\cdot$\textquotedblright\ in the quotients rather than
\textquotedblleft$+$\textquotedblright. This is a matter of taste and we hope
it does not lead to confusion.

\begin{remark}
The bottom horizontal map in all formulations (1)-(3) of the theorem give
well-known variations on a theme due to Fr\"{o}hlich. The id\`{e}le
formulation (1) of the bottom horizontal map appears as Consequence
\textquotedblleft II\textquotedblright\ in Fr\"{o}hlich \cite{MR0376619}. For
the other formulations we refer the reader to \cite[(49.16)-(49.23)]{MR892316}
or Wilson's paper \cite{MR0447211}. Thus, in a sense, the above theorem
generalizes these results from the locally free class group
$\operatorname*{Cl}(\mathfrak{A})$ to all of $K_{0}(\mathfrak{A},\mathbb{R})$.
\end{remark}

\begin{remark}
The theorem is not really new. Agboola and Burns \cite{MR2192383} basically
give the $\operatorname*{Hom}$-description version of a more general result,
also for more general relative $K$-groups $K(\mathfrak{A},-)$, see
\cite[Theorem 3.5]{MR2192383}. See also \cite[Example 3.9, (2)]{MR2192383} for
an id\`{e}le description derived from it, especially Equation (10) loc. cit.,
which agrees literally with our formula (in this particular example
$\mathfrak{A}$ is commutative, so there is no $J^{1}$-quotient in their paper
since it is zero on the nose).
\end{remark}

\begin{proof}
(Step 1) The core of the proof is the verification of (2), the $K_{1}%
$-id\`{e}le formulation. We claim that the map $\theta$ of Definition
\ref{def_MapTheta} induces the isomorphism. Diagram \ref{lbixi1} is
commutative with exact columns by Lemma \ref{Lemma_ExactColumns}. The two top
horizontal maps are the identity and thus isomorphisms. The bottom horizontal
map $-\sigma_{\mathfrak{A}}$ is an isomorphism by Fr\"{o}hlich's description
of the locally free class group, in the format of \cite[(49.16) Proposition]%
{MR892316}. A crucial remark on notation: Loc. cit. the group Curtis
and\ Reiner denote by \textquotedblleft$JK_{1}(A)$\textquotedblright\ does not
contain the infinite places. Correspondingly, their \textquotedblleft%
$UK_{1}(A)$\textquotedblright\ is precisely the group $UK_{1}%
^{\operatorname{fin}}(\mathfrak{A})$ in this paper. Since we also quotient out
by $\operatorname*{im}K_{1}(A_{\mathbb{R}})$ in the bottom row of Diagram
\ref{lbixi1}, the quotient on the left is the same as the one discussed in
Curtis and\ Reiner \cite[(49.16) Proposition]{MR892316} on the left. Finally,
the\ Five Lemma implies that $\theta$ is also an isomorphism. Now the
commutativity of Square $Z$ in Diagram \ref{lbixi1}, as well as both
horizontal maps being isomorphisms, is the same as claim (2) in the above
theorem.\newline(Step 2) The rest of the proof follows the pattern of
\cite[(49.17)-(49.23)]{MR892316}. We first prove (3). As in the proof of
\cite[(49.17) Theorem]{MR892316}, the reduced norm induces isomorphisms%
\begin{equation}
\operatorname*{nr}:K_{1}(A_{\mathfrak{p}})\overset{\sim}{\longrightarrow}%
\zeta(A_{\mathfrak{p}})^{\times} \label{lfixi1}%
\end{equation}
for all places $\mathfrak{p}$. Hence, we obtain an isomorphism%
\begin{equation}
\operatorname*{nr}:JK_{1}(A)\overset{\sim}{\longrightarrow}J(\zeta(A))\text{.}
\label{lfixi2}%
\end{equation}
Next, by the Hasse--Schilling--Maass theorem \cite[(33.15)\ Theorem]%
{MR1972204}, the image of the reduced norm of $\operatorname*{im}K_{1}(A)$
inside $JK_{1}(A)$ under this map is $\zeta(A)^{+,\times}$. Similarly, the
image of the group $K_{1}(\mathfrak{A}_{\mathfrak{p}})$ gets sent to
$\operatorname*{im}(\operatorname*{nr}K_{1}(\mathfrak{A}_{\mathfrak{p}}))$.
However, since $\mathfrak{A}_{\mathfrak{p}}$ is local, the natural map
$\mathfrak{A}_{\mathfrak{p}}^{\times}\rightarrow K_{1}(\mathfrak{A}%
_{\mathfrak{p}})$ is surjective, so this image agrees with the image
$\operatorname*{im}(\operatorname*{nr}\mathfrak{A}_{\mathfrak{p}}^{\times})$
under the composition%
\[
\mathfrak{A}_{\mathfrak{p}}^{\times}\longrightarrow K_{1}(\mathfrak{A}%
_{\mathfrak{p}})\longrightarrow K_{1}(A_{\mathfrak{p}})\overset
{\operatorname*{nr}}{\longrightarrow}\zeta(A_{\mathfrak{p}})^{\times}\text{.}%
\]
All we have just done was transporting the subgroups appearing in the
denominator in (2) under the isomorphism of Equation \ref{lfixi2}. Thus, we
obtain an isomorphism%
\[
\frac{JK_{1}(A)}{\operatorname*{im}K_{1}(A)+\operatorname*{im}UK_{1}%
^{\operatorname{fin}}(\mathfrak{A})}\overset{\sim}{\longrightarrow}%
\frac{J(\zeta(A))}{\operatorname*{im}\zeta(A)^{+,\times}+%
{\textstyle\prod\nolimits_{\mathfrak{p}}}
\operatorname*{im}(\operatorname*{nr}\mathfrak{A}_{\mathfrak{p}}^{\times}%
)}\text{.}%
\]
Since the map is still described by the reduced norm in all components of the
id\`{e}les, formulation (2) implies formulation (3) in our claim. Finally, use
that the natural map $A_{\mathbb{R}}^{\times}\rightarrow K_{1}(A_{\mathbb{R}%
})$ is also surjective.\newline(Step 3) Finally, we prove formulation (1).
Define $c:J(A)\longrightarrow JK_{1}(A)$ by using the maps $A_{\mathfrak{p}%
}^{\times}\rightarrow K_{1}(A_{\mathfrak{p}})$ for all places $\mathfrak{p}$.
Since both $A_{\mathfrak{p}}^{\times}\rightarrow K_{1}(A_{\mathfrak{p}})$ as
well as $\mathfrak{A}_{\mathfrak{p}}^{\times}\rightarrow K_{1}(\mathfrak{A}%
_{\mathfrak{p}})$ are surjective, it is clear that the morphism $c$ is
surjective. Let $J^{1}(A)$ denote the kernel of this map. Consider the
commutative diagram%
\[%
\xymatrix{
J(A) \ar@{->>}[r]^-{c} \ar[dr]_{\operatorname{nr}} & JK_1(A) \ar
[d]^{\operatorname{nr}}_{\cong} \\
& J(\zeta(A)).
}%
\]
This yields the alternative characterization%
\[
J^{1}(A)=\left\{  (a_{\mathfrak{p}})_{\mathfrak{p}}\in J(A)\mid
\operatorname*{nr}\nolimits_{A_{\mathfrak{p}}}(a_{\mathfrak{p}})=1\right\}
\]
as the id\`{e}les of reduced norm one. We obtain the isomorphism of groups%
\[
\frac{J(A)}{J^{1}(A)}\underset{c}{\overset{\sim}{\longrightarrow}}%
JK_{1}(A)\text{.}%
\]
Moreover, under this isomorphism, the image $\operatorname*{im}A^{\times}$
inside $J(A)$ gets identified with $\operatorname*{im}K_{1}(A)$, and the image
$\operatorname*{im}U^{\operatorname{fin}}(\mathfrak{A})$ with
$\operatorname*{im}UK_{1}^{\operatorname{fin}}(\mathfrak{A})$. This finishes
the proof.
\end{proof}

\begin{theorem}
[Global--Local Formula, Nenashev presentation]\label{thm_GlobalLocal_Nenashev}%
Let $F$ be a number field, $\mathcal{O}_{F}$ its ring of integers. Suppose
$\mathfrak{A}\subset A$ is a regular $\mathcal{O}_{F}$-order in a
finite-dimensional semisimple $F$-algebra $A$. Then the following diagrams,
whose rows are isomorphisms, commute:

\begin{enumerate}
\item (Classical id\`{e}le formulation)%
\[%
\xymatrix{
\cfrac{J(A)}{J^{1}(A)+\operatorname*{im}(A^{\times})+\operatorname
*{im}U^{\operatorname{fin}}(\mathfrak{A})} \ar[r]^-{\theta}_-{\sim} \ar@
{->>}[d] &
{K_{1}(\mathsf{LCA}_{\mathfrak{A} })} \ar@{->>}[d] \\
\cfrac{J(A)}{J^{1}(A)+\operatorname*{im}(A^{\times})+\operatorname
*{im}U^{\operatorname{fin}}(\mathfrak{A})+\operatorname*{im}(A_{\mathbb{R}
}^{\times})}
\ar[r]_-{\sim} & \operatorname{Cl}(\mathfrak{A})
}%
\]

\item ($K_{1}$-id\`{e}le formulation)%
\[%
\xymatrix{
\cfrac{JK_{1}(A)}{\operatorname*{im}K_{1}(A)+\operatorname*{im}UK_{1}%
^{\operatorname{fin}}(\mathfrak{A})} \ar[r]^-{\theta}_-{\sim} \ar@{->>}[d] &
{K_{1}(\mathsf{LCA}_{\mathfrak{A} })} \ar@{->>}[d] \\
\cfrac{JK_{1}(A)}{\operatorname*{im}K_{1}(A)+\operatorname*{im}UK_{1}%
^{\operatorname{fin}}(\mathfrak{A}) + \operatorname*{im}K_1(A_{\mathbb{R} })}
\ar[r]_-{\sim} & \operatorname{Cl}(\mathfrak{A})
}%
\]

\item (Formulation in terms of the center)%
\[%
\xymatrix{
\cfrac{J(\zeta(A))}{\operatorname*{im}(\zeta(A)^{+,\times})+\prod
_{\mathfrak{p},\text{fin.}}\operatorname*{im}(\operatorname{nr}(\mathfrak
{A}^{\times}_{\mathfrak{p} }))} \ar[r]^-{\vartheta}_-{\sim} \ar@{->>}[d] &
{K_{1}(\mathsf{LCA}_{\mathfrak{A} })} \ar@{->>}[d] \\
\cfrac{J(\zeta(A))}{\operatorname*{im}(\zeta(A)^{+,\times})+\prod
_{\mathfrak{p},\text{fin.}}\operatorname*{im}(\operatorname{nr}(\mathfrak
{A}^{\times}_{\mathfrak{p} }))+\operatorname*{im}(A_{\mathbb{R} }^{\times})}
\ar[r]_-{\sim} & \operatorname{Cl}(\mathfrak{A})
}%
\]
Here $\zeta(-)$ denotes the center, and $()^{+}$ means: We restrict to
$a\in\zeta(A)$ such that $a_{\mathfrak{p}}>0$ for all real places of $F$ which
ramify in $A$. The products run only over the finite places of $F$.
\end{enumerate}

In terms of the Nenashev presentation, the maps are given by%
\[
\theta:(a_{\mathfrak{p}})_{\mathfrak{p}}\mapsto\left[
\xymatrix{
0 \ar@<1ex>@{^{(}->}[r]^-{0} \ar@<-1ex>@{^{(}.>}[r]_-{0} & {A_{\mathbb{A}}}
\ar@<1ex>@{->>}[r]^{\cdot(\ldots,a_{\mathfrak{p}},\ldots)} \ar@<-1ex>@{.>>}%
[r]_{1}
& {A_{\mathbb{A}}}
}%
\right]  \qquad\vartheta:(a_{\mathfrak{p}})_{\mathfrak{p}}\mapsto\left[
\xymatrix{
0 \ar@<1ex>@{^{(}->}[r]^-{0} \ar@<-1ex>@{^{(}.>}[r]_-{0} & {A_{\mathbb{A}}}
\ar@<1ex>@{->>}[r]^{\cdot(\ldots,{\operatorname{nr}\nolimits^{-1}}%
a_{\mathfrak{p}},\ldots)} \ar@<-1ex>@{.>>}[r]_{1}
& {A_{\mathbb{A}}}
}%
\right]  \text{.}%
\]

\end{theorem}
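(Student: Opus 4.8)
The plan is to take part (2), the $K_1$-id\`ele formulation, as the backbone. Parts (1) and (3) will then follow from it by exactly the formal manipulations used in the proof of Theorem~\ref{thm_GlobalLocal_Swan}: one transports everything along the surjection $c\colon J(A)\twoheadrightarrow JK_1(A)$ (whose kernel is by definition $J^1(A)$) to pass between $J(A)$ and $JK_1(A)$, and along the reduced-norm isomorphisms $\operatorname{nr}\colon K_1(A_{\mathfrak p})\xrightarrow{\sim}\zeta(A_{\mathfrak p})^{\times}$, assembled into $\operatorname{nr}\colon JK_1(A)\xrightarrow{\sim}J(\zeta(A))$, to pass to the centre; the Hasse--Schilling--Maass theorem \cite[(33.15) Theorem]{MR1972204} identifies $\operatorname{nr}(\operatorname{im}K_1(A))$ with $\zeta(A)^{+,\times}$, and surjectivity of $\mathfrak A_{\mathfrak p}^{\times}\to K_1(\mathfrak A_{\mathfrak p})$ identifies the finite-level denominators. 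So the only genuinely new work is to assemble the commutative square in (2) and to rewrite its top horizontal map in the Nenashev presentation as claimed.

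The commutative square in (2) is, in substance, already in hand. Its top row $\tilde\xi$ is an isomorphism by Proposition~\ref{Prop_IdentifyK1LCA_Use_K1Ideles} --- and this is the one place where regularity of $\mathfrak A$ enters, via the long exact sequence of \cite[Theorem 11.3]{etnclca} --- its bottom row is Fr\"ohlich's id\`ele description of $\operatorname{Cl}(\mathfrak A)$ in the form of Lemma~\ref{mz2}, the right vertical map is the canonical surjection $K_1(\mathsf{LCA}_{\mathfrak A})\twoheadrightarrow\operatorname{Cl}(\mathfrak A)$ coming from the long exact sequence \ref{lmixi8}, and commutativity is precisely the commutativity of the square $Z$ in Figure~\ref{lfigA1}, which was established there. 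Thus (2) is already a commutative square with isomorphic rows; all that is left for (2) is the explicit Nenashev formula for the top map.

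So it remains to identify the map $\theta=\tilde\xi$ of part (2) with the displayed double exact sequence on $A_{\mathbb A}$. Recall that $\tilde\xi$ is built from the local maps $\tilde\xi_{\mathfrak p}$ of Equation~\ref{lmixi6} through a colimit over finite sets $S$ of places. Fix an id\`ele $a=(a_{\mathfrak p})_{\mathfrak p}\in J(A)$ and choose $S$ containing every infinite place and every finite place at which $a_{\mathfrak p}\notin\mathfrak A_{\mathfrak p}^{\times}$. Unwinding the colimit exactly as in the proof of Lemma~\ref{mz1}, one gets $\tilde\xi(a)=\sum_{\mathfrak p\in S}\tilde\xi_{\mathfrak p}(a_{\mathfrak p})$, and by Example~\ref{example_MakeAutToNenashevRepresentative} together with Equation~\ref{lmixi10_1} each summand is the Nenashev class of the automorphism $\cdot a_{\mathfrak p}$ of $A_{\mathfrak p}$, with $A_{\mathfrak p}$ carrying its locally compact topology. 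Since Nenashev addition corresponds to passing to block-diagonal automorphisms (apply the relation \ref{l_C_Nenashev} to a block $3\times3$ diagram), this sum is the Nenashev class of the automorphism $\bigoplus_{\mathfrak p\in S}(\cdot a_{\mathfrak p})$ of $\bigoplus_{\mathfrak p\in S}A_{\mathfrak p}$.

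Finally one matches this with the adelic formula. In $\mathsf{LCA}_{\mathfrak A}$ there is a canonical decomposition $A_{\mathbb A}\cong\bigl(\bigoplus_{\mathfrak p\in S}A_{\mathfrak p}\bigr)\oplus A_{\mathbb A}^{(S)}$, where $A_{\mathbb A}^{(S)}$ is the restricted product of the $A_{\mathfrak p}$ over $\mathfrak p\notin S$ with respect to the $\mathfrak A_{\mathfrak p}$, again a locally compact $\mathfrak A$-module; under it, multiplication by $a$ becomes the block automorphism $\bigl(\bigoplus_{\mathfrak p\in S}(\cdot a_{\mathfrak p})\bigr)\oplus(\cdot a^{(S)})$, where $a^{(S)}=(a_{\mathfrak p})_{\mathfrak p\notin S}$ is a unit finite id\`ele of $\widehat{\mathfrak A}$. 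Because an isomorphism of objects induces a path, and block-diagonal automorphisms add in $K_1$, the Nenashev class of $\cdot a$ on $A_{\mathbb A}$ --- i.e.\ the double exact sequence written in the statement --- equals $\tilde\xi(a)$ plus the Nenashev class of $\cdot a^{(S)}$ on $A_{\mathbb A}^{(S)}$. This last class vanishes: $A_{\mathbb A}^{(S)}$ sits in an exact sequence in $\mathsf{LCA}_{\mathfrak A}$ with compact subobject $\prod_{\mathfrak p\notin S}\mathfrak A_{\mathfrak p}$ and discrete quotient, the automorphism $\cdot a^{(S)}$ respects it, so by the Additivity Theorem its class splits as a sum of one class pulled back from $K_1(\mathsf{LCA}_{\mathfrak A,C})$ and one from $K_1(\mathsf{LCA}_{\mathfrak A,D})$, both of which vanish by the Eilenberg swindle (Lemma~\ref{lemma_EilenbergSwindle}, using Tychonoff), exactly as in the proof of Theorem~\ref{thm_LocalTriviality}. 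Hence $\theta(a)=\tilde\xi(a)$ is represented by the claimed double exact sequence on $A_{\mathbb A}$, and the $\vartheta$-formula of part (3) is the image of the $\theta$-formula under the reduced-norm identification of the first paragraph. I expect the main obstacle to be precisely this last passage --- going from the colimit-defined $\tilde\xi$ to a single Nenashev representative on the honest ad\`ele ring --- where one must check that the displayed decomposition really is an isomorphism \emph{in $\mathsf{LCA}_{\mathfrak A}$} (topologies included) and that conjugating and block-summing Nenashev representatives is legitimate on $K_1$, which is where the relations (2)--(3) of Nenashev's presentation, together with the signs, need care.
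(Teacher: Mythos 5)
Your proposal is correct and follows the paper's own route: part (2) is exactly Proposition \ref{Prop_IdentifyK1LCA_Use_K1Ideles} together with the square $Z$ of Figure \ref{lfigA1}, and parts (1) and (3) are obtained by repeating Steps 2 and 3 of the proof of Theorem \ref{thm_GlobalLocal_Swan} (the surjection $c$ with kernel $J^{1}(A)$, the reduced-norm isomorphism onto $J(\zeta(A))$ and Hasse--Schilling--Maass). The only difference is that you additionally justify the displayed Nenashev formula on $A_{\mathbb{A}}$ --- identifying the colimit-defined $\tilde{\xi}$ with multiplication by the id\`ele by splitting off the finitely many relevant places and killing the complementary block through its compact subobject and discrete quotient, exactly as in Theorem \ref{thm_LocalTriviality} --- a verification the paper's two-line proof leaves implicit, and your argument for it is sound.
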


\begin{proof}
We use the same proof as for Theorem \ref{thm_GlobalLocal_Swan}. Simply
replace Step 1 loc. cit. by Proposition \ref{Prop_IdentifyK1LCA_Use_K1Ideles}.
Step 2 and Step 3 then follow analogously.
\end{proof}

\subsection{Extended boundary map}

We define the \emph{relative free class group} as%
\[
\operatorname*{Cl}(\mathfrak{A},\mathbb{R}):=\ker\left(  K_{1}(\mathsf{LCA}%
_{\mathfrak{A}})\overset{\partial}{\longrightarrow}K_{0}(\mathfrak{A}%
)\longrightarrow\prod_{p}K_{0}(\mathfrak{A}_{p})\right)  \text{,}%
\]
where $\partial$ is the boundary map in the long exact sequence of
\cite[Theorem 11.3]{etnclca}. This theorem also implies that this definition
is equivalent to the one in Burns--Flach \cite[\S 2.9]{MR1884523}. We follow
the notation of \textit{loc. cit.}: For an associative algebra $R$, we write
$\zeta(R)$ for its center, and $\operatorname*{nr}_{R}$ denotes the reduced
norm (see also \cite[\S 7D]{MR1038525}).

\begin{definition}
We define the \emph{extended boundary map} $\hat{\delta}_{\mathfrak{A}%
,\mathbb{R}}^{1}:\zeta(A_{\mathbb{R}})^{\times}\rightarrow\operatorname*{Cl}%
(\mathfrak{A},\mathbb{R})$ as follows: Given $y\in\zeta(A_{\mathbb{R}%
})^{\times}$, pick some $\lambda\in\zeta(A)^{\times}$ such that $\lambda
y\in\operatorname*{im}(\operatorname*{nr}\nolimits_{A_{\mathbb{R}}})$. Then
define%
\[
\psi_{y,\lambda}:=\left(  \prod_{p}\operatorname*{nr}\nolimits_{A_{p}}%
^{-1}(\lambda),\operatorname*{nr}\nolimits_{A_{\mathbb{R}}}^{-1}(\lambda
y)\right)  \in K_{1}(\widehat{A})\oplus K_{1}(A_{\mathbb{R}})\text{.}%
\]
Then $\hat{\delta}_{\mathfrak{A},\mathbb{R}}^{1}(y):=\operatorname*{sum}%
(\psi_{y,\lambda})$, where the sum map is the one from Theorem
\ref{thm_reciprocity_law}.%
\[%
\xymatrix{
\cdots\ar[r] & K_1(\mathfrak{A}) \ar[r] & K_1(A_{\mathbb{R}}) \ar[r] \ar@
{^{(}->}[d]_{\operatorname{nr}_{A_{\mathbb{R}}}} & \operatorname
*{Cl}(\mathfrak{A},\mathbb{R}) \\
& & \zeta(A_{\mathbb{R}})^{\times} \ar@{-->}[ur]_{\hat{\delta}_{\mathfrak
{A},\mathbb{R}}^{1}}
}%
\]

\end{definition}

This definition is very close to the one given in Burns--Flach, albeit with a
sum instead of a difference (and this is for the same reason as in Remark
\ref{rem_SignInReciprocityLaw}).

\begin{lemma}
The map $\hat{\delta}_{\mathfrak{A},\mathbb{R}}^{1}$ is well-defined.
\end{lemma}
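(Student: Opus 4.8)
The plan is to check, in turn, three points: that an auxiliary $\lambda$ of the required sort exists, that $\operatorname{sum}(\psi_{y,\lambda})$ does not depend on the choice of $\lambda$, and that the resulting element actually lies in the subgroup $\operatorname{Cl}(\mathfrak{A},\mathbb{R})$ rather than merely in $K_1(\mathsf{LCA}_{\mathfrak{A}})$.

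For the first point, recall that over $\mathbb{R}$ and $\mathbb{C}$ the reduced norm is a determinant, so $\operatorname{nr}_{A_{\mathbb{R}}}\colon K_1(A_{\mathbb{R}})\to\zeta(A_{\mathbb{R}})^{\times}$ is injective, and by the local Hasse--Schilling--Maass theorem (place by place, cf. \cite[(33.15) Theorem]{MR1972204}) its image is the subgroup $\zeta(A_{\mathbb{R}})^{+,\times}$ of elements positive at every real place of $\zeta(A)$ ramifying in $A$. Writing $\zeta(A)=\prod_iZ_i$ as a product of number fields, weak approximation produces, for each $i$, an element of $Z_i^{\times}$ with prescribed signs at the real places; hence one may choose $\lambda\in\zeta(A)^{\times}$ with $\lambda y\in\zeta(A_{\mathbb{R}})^{+,\times}=\operatorname{im}(\operatorname{nr}_{A_{\mathbb{R}}})$, so that $\operatorname{nr}_{A_{\mathbb{R}}}^{-1}(\lambda y)\in K_1(A_{\mathbb{R}})$ is defined. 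At each finite place $\operatorname{nr}_{A_{\mathfrak{p}}}\colon K_1(A_{\mathfrak{p}})\xrightarrow{\sim}\zeta(A_{\mathfrak{p}})^{\times}$ is an isomorphism (Equation \ref{lfixi1}), so $\operatorname{nr}_{A_p}^{-1}(\lambda)$ makes sense; and since $\lambda,\lambda^{-1}$ are integral outside a finite set of places, $\lambda\in\zeta(\mathfrak{A}_p)^{\times}$ for almost all $p$, at almost all of which $\mathfrak{A}_p$ is a maximal order and $\operatorname{nr}(\mathfrak{A}_p^{\times})=\zeta(\mathfrak{A}_p)^{\times}$. Thus $\operatorname{nr}_{A_p}^{-1}(\lambda)$ lies in the image of $K_1(\mathfrak{A}_p)$ for almost all $p$, so $(\operatorname{nr}_{A_p}^{-1}(\lambda))_p$ is a genuine element of $K_1(\widehat{A})$ and $\psi_{y,\lambda}$ is a legitimate element of $K_1(\widehat{A})\oplus K_1(A_{\mathbb{R}})$.

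The heart of the matter is independence of $\lambda$. If $\lambda,\lambda'\in\zeta(A)^{\times}$ both work, put $\mu:=\lambda'\lambda^{-1}\in\zeta(A)^{\times}$; then $\mu=(\lambda'y)(\lambda y)^{-1}$ lies in $\zeta(A)^{\times}\cap\zeta(A_{\mathbb{R}})^{+,\times}=\zeta(A)^{+,\times}$, which by the global Hasse--Schilling--Maass theorem equals $\operatorname{nr}_A(A^{\times})$. Choose $\nu\in A^{\times}$ with $\operatorname{nr}_A(\nu)=\mu$ and let $[\nu]\in K_1(A)$ be its class. Because $\operatorname{nr}$ is compatible with all base-change maps, is an isomorphism on each $K_1(A_{\mathfrak{p}})$, and is injective on $K_1(A_{\mathbb{R}})$, the image of $[\nu]$ under $K_1(A)\to K_1(\widehat{A})\oplus K_1(A_{\mathbb{R}})$ has $p$-component $\operatorname{nr}_{A_p}^{-1}(\mu)=\operatorname{nr}_{A_p}^{-1}(\lambda')-\operatorname{nr}_{A_p}^{-1}(\lambda)$ and archimedean component $\operatorname{nr}_{A_{\mathbb{R}}}^{-1}(\lambda'y)-\operatorname{nr}_{A_{\mathbb{R}}}^{-1}(\lambda y)$; that is, it is exactly $\psi_{y,\lambda'}-\psi_{y,\lambda}$. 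By the Reciprocity Law (Theorem \ref{thm_reciprocity_law}; see also Corollary \ref{cor_RecipLawInDegreeOne}) the composite $K_1(A)\to K_1(\widehat{A})\oplus K_1(A_{\mathbb{R}})\xrightarrow{\operatorname{sum}}K_1(\mathsf{LCA}_{\mathfrak{A}})$ is zero, so $\operatorname{sum}(\psi_{y,\lambda'})=\operatorname{sum}(\psi_{y,\lambda})$, and $\hat{\delta}^1_{\mathfrak{A},\mathbb{R}}(y)$ depends only on $y$.

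Finally, one checks that $\operatorname{sum}(\psi_{y,\lambda})\in\operatorname{Cl}(\mathfrak{A},\mathbb{R})$, i.e. that its image under $K_1(\mathsf{LCA}_{\mathfrak{A}})\xrightarrow{\partial}K_0(\mathfrak{A})\to\prod_pK_0(\mathfrak{A}_p)$ vanishes. This is the easy step: by Lemma \ref{mz3} (Step 1) the image of $\partial$ is the finite locally free class group $\operatorname{Cl}(\mathfrak{A})$, while each $K_0(\mathfrak{A}_p)$ is a free abelian group; hence the composite kills $\operatorname{im}(\partial)$ outright, so in fact $\operatorname{Cl}(\mathfrak{A},\mathbb{R})=K_1(\mathsf{LCA}_{\mathfrak{A}})$. (Alternatively, tracking $\operatorname{sum}(\psi_{y,\lambda})$ through Theorem \ref{thm_GlobalLocal_Nenashev} identifies its boundary with the Fr\"ohlich ideal class $\pm([a\mathfrak{A}]-[\mathfrak{A}])$ of the corresponding id\`ele $a$, and $a\mathfrak{A}$ is locally isomorphic to $\mathfrak{A}$ by Equation \ref{lciops1}, so the class dies locally.) The only genuine obstacle is thus the independence step: everything there reduces to bookkeeping with reduced norms, but it does require the global Hasse--Schilling--Maass theorem (to realize the difference $\psi_{y,\lambda'}-\psi_{y,\lambda}$ as coming from $K_1(A)$) together with the Reciprocity Law — exactly the structure underlying the well-definedness of Fr\"ohlich's classical id\`ele map.
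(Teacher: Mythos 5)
Your argument is correct and follows essentially the same route as the paper's proof: weak approximation plus the description of the image of $\operatorname{nr}_{A_{\mathbb{R}}}$ produces $\lambda$, injectivity of the reduced norm on $K_{1}(A_{\mathbb{R}})$ and the almost-everywhere integrality of $\lambda$ make $\psi_{y,\lambda}$ a genuine element of $K_{1}(\widehat{A})\oplus K_{1}(A_{\mathbb{R}})$, and the discrepancy $\psi_{y,\lambda'}-\psi_{y,\lambda}$ is identified, via the Hasse--Schilling--Maass theorem and the compatibility of reduced norms under base change, with the image of a class from $K_{1}(A)$, which the Reciprocity Law (Theorem \ref{thm_reciprocity_law}) kills. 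Your extra verification that the value actually lies in $\operatorname{Cl}(\mathfrak{A},\mathbb{R})$ is a point the paper's proof leaves implicit, and your argument for it is consistent with the paper's own facts (finiteness of $\operatorname{Cl}(\mathfrak{A})$ as used in Lemma \ref{mz3} and torsion-freeness of $K_{0}(\mathfrak{A}_{p})$), so it is a harmless and reasonable addition.
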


\begin{proof}
We adapt \cite[Lemma 9]{MR1884523} to our locally compact setting. We shall
use the structure of the image of the reduced norm map in both the local as
well as the global situation, see \cite[(45.3)]{MR892316} for a summary
sufficient for our purposes. As in loc. cit., given $y$, by Weak Approximation
we find a (highly non-unique) $\lambda\in\zeta(A)^{\times}$ such that
$y\lambda\in\operatorname*{im}(\operatorname*{nr}\nolimits_{A_{\mathbb{R}}})$.
This is possible by the description of the image of the reduced norm of units
over the reals, \cite[(33.4)\ Theorem]{MR1972204}, i.e. we just need to make
$y\lambda$ positive at real places. Then $\operatorname*{nr}%
\nolimits_{A_{\mathbb{R}}}^{-1}(\lambda y)$ is a unique element, because the
reduced norm is injective when restricted to $K_{1}(A_{\mathbb{R}})$ by
\cite[(33.1)\ Theorem, (ii)]{MR1972204}. For all but finitely many primes $p$,
we have that the image of $\lambda$ in $\zeta(A_{p})^{\times}$ lies even in
$\operatorname*{nr}\nolimits_{A_{p}}^{-1}\zeta(\mathfrak{A}_{p})^{\times}$ and
that the latter lies in the image of $K_{1}(\mathfrak{A}_{p})$. If
$\lambda^{\prime}$ is an alternative choice, we find%
\[
\psi_{y,\lambda}\psi_{y,\lambda^{\prime}}^{-1}=\left(  \prod_{p}%
\operatorname*{nr}\nolimits_{A_{p}}^{-1}(\lambda\lambda^{\prime-1}%
),\operatorname*{nr}\nolimits_{A_{\mathbb{R}}}^{-1}(\lambda
y)\operatorname*{nr}\nolimits_{A_{\mathbb{R}}}^{-1}(\lambda^{\prime}%
y)^{-1}\right)  =\left(  \prod_{p}\operatorname*{nr}\nolimits_{A_{p}}%
^{-1}(\lambda\lambda^{\prime-1}),\operatorname*{nr}\nolimits_{A_{\mathbb{R}}%
}^{-1}(\lambda\lambda^{\prime-1})\right)  \text{.}%
\]
However, for elements $x\in A$ we have $\operatorname*{nr}\nolimits_{A}%
(x)=\operatorname*{nr}\nolimits_{A_{p}}(x)=\operatorname*{nr}%
\nolimits_{A_{\mathbb{R}}}(x)$ by \cite[(33.3) Theorem]{MR1972204}. Thus, we
get%
\[
=\left(  \prod_{p}\operatorname*{nr}\nolimits_{A}^{-1}(\lambda\lambda
^{\prime-1}),\operatorname*{nr}\nolimits_{A}^{-1}(\lambda\lambda^{\prime
-1})\right)
\]
and then $\lambda\lambda^{\prime-1}\in\operatorname*{im}(\operatorname*{nr}%
_{A})$ by the Hasse--Schilling--Maass norm theorem, see
\cite[(33.15)\ Theorem]{MR1972204}. Thus, $\psi_{y,\lambda}\psi_{y,\lambda
^{\prime}}^{-1}$ is the image of $\operatorname*{nr}_{A}^{-1}(\lambda
\lambda^{\prime-1})$ in $K_{1}(\widehat{A})\oplus K_{1}(A_{\mathbb{R}})$ in
Equation \ref{lmits1}. But then $\operatorname*{sum}(\psi_{y,\lambda}%
\psi_{y,\lambda^{\prime}}^{-1})=0$ by the reciprocity law, Theorem
\ref{thm_reciprocity_law}.
\end{proof}

Similarly to the discussion in \cite[\S 2.9]{MR1884523}, the exact sequence%
\[
\cdots\longrightarrow K_{1}(A_{\mathbb{R}})\longrightarrow K_{1}%
(\mathsf{LCA}_{\mathfrak{A}})\longrightarrow K_{0}(\mathfrak{A}%
)\longrightarrow\cdots
\]
can be truncated on the right and re-spliced to
\[
\cdots\longrightarrow K_{1}(A_{\mathbb{R}})\longrightarrow\operatorname*{Cl}%
(\mathfrak{A},\mathbb{R})\longrightarrow\operatorname*{Cl}(\mathfrak{A}%
)\longrightarrow0\text{.}%
\]

\section{Proof of the principal id\`{e}le fibration}

This section is fairly independent of the rest of the text. It is entirely
devoted to proving that%
\[
K(\widehat{\mathfrak{A}})\times K(A)\longrightarrow K(\widehat{A})\times
K(A_{\mathbb{R}})\longrightarrow K(\mathsf{LCA}_{\mathfrak{A}})
\]
in Equation \ref{lcixi1} is indeed a fibration. While loc. cit. it is stated
as a fibration of pointed simplicial sets having our conventions of
\S \ref{sect_GettingPrecise} in mind, we work on the level of spectra in this
section, relying on the results and language of the previous article
\cite{etnclca}. As the $K$-theory spaces in question are infinite loop spaces,
this amounts to the same and is just a change of language.

A certain sign switch will play an important r\^{o}le in the proof, so let us
begin with some careful considerations around signs:

\begin{elaboration}
\label{elab_SignsHtpyCartesianSquares}Choose some $\varepsilon\in\{-1,+1\}$.
Suppose $\mathsf{C}$ is a stable $\infty$-category and $h\mathsf{C}$ its
homotopy category. We write $\Sigma$ and $\Omega=\Sigma^{-1}$ for the
translation functors of $h\mathsf{C}$. Then a square%
\begin{equation}%
\xymatrix{
A \ar[r]^{a} \ar[d]_{f} & B \ar[d]^{g} \\
A^{\prime} \ar[r]_{a^{\prime}} & B^{\prime}
}
\label{lmisu2}%
\end{equation}
in $\mathsf{C}$ is called \emph{(homotopy)\ Cartesian} if there exists a
morphism $\partial_{\square}:B^{\prime}\rightarrow\Sigma A$ in $h\mathsf{C}$
such that%
\begin{equation}%
\xymatrix{
A \ar[r]^-{f+a} & A^{\prime} \oplus B \ar[r]^-{\varepsilon({a^{\prime}} - g)}
& B^{\prime} \ar[r]^-{\partial_{\square}} & \Sigma A
}
\label{lmisu1}%
\end{equation}
is a distinguished triangle in the category $h\mathsf{C}$. See Neeman
\cite[\S 1.4]{MR1812507} for a careful discussion purely on the level of
$h\mathsf{C}$. For both choices of $\varepsilon$ this definition makes sense
and one obtains the full theory. This choice of orientation is also discussed
by\ Lurie, from a slightly different angle \cite[Lemma 1.1.2.10]{LurieHA}. In
this paper we use the convention $\varepsilon:=1$ (which is compatible to
\cite{obloc}, \cite{etnclca}, \cite{etnclca2}), but the other option would
also work. Nothing would change, except a few signs here and there.
Nonetheless, the following is important: Suppose we are given the commutative
diagram%
\begin{equation}%
\xymatrix{
A \ar[r]^{a} \ar@{}[dr]|{\square} \ar[d]_{f} & B \ar[d]^{g} \ar[r]^{b}
& C \ar[r]^-{\partial_{F_1}} \ar[d]^{h}_{\cong} & \Sigma A \ar[d]^{\Sigma f}
\\
A^{\prime} \ar[r]_{a^{\prime}} & B^{\prime} \ar[r]_{b^{\prime}} & C^{\prime}
\ar[r]_-{\partial_{F_2}} & \Sigma A^{\prime}
}
\label{lmisu4}%
\end{equation}
in $h\mathsf{C}$ (with the left two squares lifted to $\mathsf{C}$) and with
$h$ an isomorphism in $h\mathsf{C}$. Then there is an attached distinguished
triangle as in Equation \ref{lmisu1} with $\partial_{\square}$ given as the
composition%
\[
B^{\prime}\overset{b^{\prime}}{\longrightarrow}C^{\prime}\underset{\sim
}{\overset{h}{\longleftarrow}}C\overset{\partial_{F_{1}}}{\longrightarrow
}\Sigma A\text{.}%
\]
in $h\mathsf{C}$. This is a variation of \cite[Lemma 1.4.3]{MR1812507}. By the
above definition, this means that the square on the left (marked by the
central `$\square$') is homotopy Cartesian in the stable $\infty$-category.
Now, what if $f$ instead of $h$ is an equivalence? To figure this out, we
rotate both distinguished triangles, giving the commutative diagram%
\[%
\xymatrix{
\Omega C \ar[r]^-{-\Omega\partial_{F_1}} \ar[d]_{\Omega h}^{\cong}
& A \ar[r]^{-a} \ar[d]_{f} & B \ar[d]^{g} \ar[r]^{-b} & C  \ar[d]^{h}_{\cong}
\\
\Omega C^{\prime} \ar[r]_-{-\Omega\partial_{F_2}} & A^{\prime} \ar
[r]_{-a^{\prime}} & B^{\prime} \ar[r]_{-b^{\prime}} & C^{\prime}
}%
\]
in $h\mathsf{C}$ so that upon renaming $A,B,C$ we are in the desired
situation. Next, check that%
\[%
\xymatrix@L=2.6mm{
A \ar[r]^-{f-a} & A^{\prime} \oplus B \ar[r]^-{\varepsilon({-a^{\prime}} - g)}
& B^{\prime} \ar[r]^-{-\partial_{\square}} & \Sigma A
}%
\]
is isomorphic to the triangle in Equation \ref{lmisu1} (to see this: Map $A$
and $A^{\prime}$ to themselves via the identity, on $B$ and $B^{\prime}$ use
the negative of the identity; all resulting squares commute. Note that this is
only true because we use $-\partial_{\square}$; it is not possible to make
this work without changing the sign there, or at some other point). It follows
that this triangle is also distinguished. Now rename $\tilde{a}:=-a$,
$\tilde{a}^{\prime}:=-a^{\prime}$ (same for $b,b^{\prime}$) and $C:=\Sigma D$.
Then%
\begin{equation}%
\xymatrix{
A \ar[r]^-{f+\tilde{a}} & A^{\prime} \oplus B \ar[r]^-{\varepsilon({\tilde
{a}^{\prime}} - g)} & B^{\prime} \ar[r]^-{-\partial_{\square}} & \Sigma A
}
\label{lmisu3}%
\end{equation}
is distinguished, and our input diagram reads%
\begin{equation}%
\xymatrix{
D \ar[r] \ar[d]_{\Omega h}^{\cong} & A \ar[r]^{\tilde{a}} \ar[d]_{f} \ar@
{}[dr]|{\square} & B \ar[d]^{g} \ar[r]^{\tilde{b}} & \Sigma D  \ar
[d]^{h}_{\cong} \\
D^{\prime} \ar[r] & A^{\prime} \ar[r]_{{\tilde{a}}^{\prime}} & B^{\prime}
\ar[r]_{{\tilde{b}}^{\prime}} & \Sigma D^{\prime}.
}
\label{lmisu3b}%
\end{equation}
Note that the distinguished triangle in Equation \ref{lmisu3} has exactly the
same shape as the one in Equation \ref{lmisu1} except for the different sign
of $\partial_{\square}$. We may summarize this as follows: Depending on
whether the first or third vertical arrow of a commutative diagram of the
shape of Equation \ref{lmisu4} is an isomorphism (that is: $f$ or $h$), the
other square will be homotopy Cartesian, and both variants only differ by the
sign of $\partial_{\square}$ (which by an extension of \cite[Lemma
1.1.2.10]{LurieHA} is equivalent to mirroring the diagram along the diagonal
from the upper left to the lower right). This is true independently of which
sign $\varepsilon$ we use in the first place. We repeat that we use the
convention $\varepsilon:=+1$ in this paper.
\end{elaboration}

With this preparation on signs in place, we can begin the proof. Firstly, we
elaborate on a theme due to Wall.

\begin{lemma}
\label{lemma_WallSeqForGTheory}Suppose $A$ is a finite-dimensional semisimple
$\mathbb{Q}$-algebra and $\mathfrak{A}\subset A$ an order. Then there is a
canonical fiber sequence%
\[
G(\mathfrak{A})\overset{\iota}{\longrightarrow}G(\widehat{\mathfrak{A}})\oplus
G(A)\overset{\operatorname*{diff}}{\longrightarrow}G(\widehat{A})
\]
in spectra. Here $\iota$ is the induced map on $K$-theory coming from the
exact functors of tensoring with $\widehat{\mathfrak{A}}$ resp. $A$ on the
right. Moreover, $\operatorname*{diff}=\rho-\tau$, where $\rho$ and $\tau$ are
induced from the exact functors of tensoring with $\widehat{A}$ in both cases.
\end{lemma}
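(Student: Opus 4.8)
The plan is to derive the claimed fiber sequence as the Mayer--Vietoris (homotopy pullback) sequence of the arithmetic square of rings
\[
\xymatrix{
\mathfrak{A} \ar[r] \ar[d] & \widehat{\mathfrak{A}} \ar[d] \\
A \ar[r] & \widehat{A}
}
\]
using only the localization theorem in $G$-theory; this is the $G$-theoretic counterpart of Wall's sequence, and it is in fact cleaner than the usual $K$-theoretic statement because $G$-theory enjoys localization. Put $S:=\mathbb{Z}\setminus\{0\}$, so that $A=S^{-1}\mathfrak{A}$ and $\widehat{A}=S^{-1}\widehat{\mathfrak{A}}$. Then Quillen's localization theorem yields two fiber sequences
\[
G(\mathsf{T}(\mathfrak{A}))\longrightarrow G(\mathfrak{A})\longrightarrow G(A)\qquad\text{and}\qquad G(\mathsf{T}(\widehat{\mathfrak{A}}))\longrightarrow G(\widehat{\mathfrak{A}})\longrightarrow G(\widehat{A})\text{,}
\]
where $\mathsf{T}(\mathfrak{A})$ and $\mathsf{T}(\widehat{\mathfrak{A}})$ are the Serre subcategories of finitely generated $S$-torsion modules; since tensoring with $\widehat{\mathfrak{A}}$ over $\mathfrak{A}$ is exact and compatible with the relevant inclusions and localization functors, it induces a map from the first sequence to the second.

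The heart of the matter will be to show that this map is an equivalence on the fiber terms, $G(\mathsf{T}(\mathfrak{A}))\overset{\sim}{\longrightarrow}G(\mathsf{T}(\widehat{\mathfrak{A}}))$. In fact $(-)\otimes_{\mathfrak{A}}\widehat{\mathfrak{A}}$ is already an equivalence of the underlying exact categories: a finitely generated $S$-torsion $\widehat{\mathfrak{A}}$-module is, being module-finite over $\widehat{\mathbb{Z}}$, a finitely generated torsion $\widehat{\mathbb{Z}}$-module, hence finite, hence killed by some $N\geq1$, so it is a module over $\widehat{\mathfrak{A}}/N\widehat{\mathfrak{A}}\cong\mathfrak{A}/N\mathfrak{A}$; but this is exactly the finite ring whose finite modules are the finite $S$-torsion $\mathfrak{A}$-modules, and on $N$-torsion modules $(-)\otimes_{\mathfrak{A}}\widehat{\mathfrak{A}}$ is canonically the identity. (No d\'{e}vissage is needed; it is a direct comparison of categories.) Granting this, the two localization fiber sequences have a common fiber, so the square above becomes homotopy Cartesian in spectra. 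Unwinding a homotopy Cartesian square into a distinguished triangle by Elaboration \ref{elab_SignsHtpyCartesianSquares}, with the convention $\varepsilon:=+1$, produces precisely a triangle
\[
G(\mathfrak{A})\overset{\iota}{\longrightarrow}G(\widehat{\mathfrak{A}})\oplus G(A)\overset{\operatorname{diff}}{\longrightarrow}G(\widehat{A})\longrightarrow\Sigma G(\mathfrak{A})
\]
whose first map is the sum of the two tensor functors and whose second map is the difference $\operatorname{diff}=\rho-\tau$ of the two remaining base changes to $\widehat{A}$ --- which is the assertion of the lemma.

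I expect the main obstacles to be two. First, $\widehat{\mathfrak{A}}$ and $\widehat{A}$ are not Noetherian, so the very meaning of $G(\widehat{\mathfrak{A}})$ and $G(\widehat{A})$, and the validity of the localization sequence for them, is not a matter of course; here I would invoke the $G$-theory foundations of \cite{etnclca} --- equivalently, work throughout with the full subcategory on which the appropriate finiteness holds, so that Quillen's theorem for abelian categories applies verbatim --- and note that the finiteness statement above is exactly what insulates the torsion side from this point. Second, and this is the genuinely delicate step flagged in Elaboration \ref{elab_SignsHtpyCartesianSquares}, one must orient the homotopy Cartesian square correctly when converting it to the Mayer--Vietoris triangle, so that the second map comes out as $\rho-\tau$ and not $\tau-\rho$; this sign bookkeeping, and not the homotopy theory, is where the care is required.
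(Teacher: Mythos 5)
Your proposal is correct and follows essentially the same route as the paper: two Quillen localization sequences for $\mathsf{Mod}_{\mathfrak{A},fg}$ and $\mathsf{Mod}_{\widehat{\mathfrak{A}},fg}$ with the $\mathbb{Z}$-torsion subcategories as fibers, the observation that base change $(-)\otimes_{\mathfrak{A}}\widehat{\mathfrak{A}}$ is an equivalence on those torsion categories, and then conversion of the resulting homotopy Cartesian square into the Mayer--Vietoris triangle with the sign conventions of Elaboration \ref{elab_SignsHtpyCartesianSquares} ($\varepsilon=+1$). Your explicit finiteness argument for the torsion comparison and your caveat about non-Noetherian foundations (handled in the paper by flatness of completion and the conventions of \cite{etnclca}) are consistent with, and if anything slightly more detailed than, the paper's own proof.
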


For the algebraic $K$-theory of projective modules and restricted to low
degrees, this result was originally established by Wall. It was originally
proven using a different method based on excision squares. A more general
version is due to Swan, \cite[(42.22)\ Remark, (ii)]{MR892316}. We give a
quick self-contained account in contemporary language, if only to set up
notation and signs.

\begin{proof}
Let $R$ be a unital associative ring, finite as a $\mathbb{Z}$-module. We
write $\mathsf{Mod}_{R,fg}^{tor}$ for the abelian category of finitely
generated right $R$-modules which are torsion over $\mathbb{Z}$, that is: The
support of each modules over $\mathbb{Z}$ is supposed to be of codimension
$\geq1$ in $\operatorname*{Spec}\mathbb{Z}$. Then $\mathsf{Mod}_{R,fg}^{tor}$
is a Serre subcategory of $\mathsf{Mod}_{R,fg}$ and the quotient abelian
category is $\mathsf{Mod}_{R\otimes\mathbb{Q},fg}$. Now, applying Quillen's
Localization Theorem for Serre subcategories \cite[Ch. V, Theorem
5.1]{MR3076731} both for $R=\mathfrak{A}$ as well as $R=\widehat{\mathfrak{A}%
}$, we obtain that the two rows in the diagram
\begin{equation}%
\xymatrix@L=2.6mm@!C=0.9in{
K({\mathsf{Mod}_{\mathfrak{A},fg}^{tor}}) \ar[r] \ar[d]_{i}^{\cong}
& K({\mathsf{Mod}_{\mathfrak{A},fg}}) \ar@{}[dr]|{\Diamond} \ar[r] \ar[d]_{j}
& K({\mathsf{Mod}_{A,fg}}) \ar[d]^{k} \ar[r]^{\partial_{\mathfrak{A}%
}^{tor \hookrightarrow fg}} & \Sigma K({\mathsf{Mod}_{\mathfrak{A},fg}^{tor}%
}) \ar[d]^{\Sigma i}_{\cong} \\
K({\mathsf{Mod}_{\widehat{\mathfrak{A}},fg}^{tor}}) \ar[r] & K({\mathsf
{Mod}_{\widehat{\mathfrak{A}},fg}}) \ar[r] & K({\mathsf{Mod}_{\widehat{A},fg}%
}) \ar[r]_{\partial_{\widehat{\mathfrak{A}}}^{tor \hookrightarrow fg}}
& \Sigma K({\mathsf{Mod}_{\widehat{\mathfrak{A}},fg}^{tor}})
}
\label{ltixi1}%
\end{equation}
are distinguished. The downward arrows are%
\[
j:\mathsf{Mod}_{\mathfrak{A},fg}\longrightarrow\mathsf{Mod}_{\widehat
{\mathfrak{A}},fg}\text{,}\qquad M\mapsto M\otimes_{\mathfrak{A}}%
\widehat{\mathfrak{A}}%
\]%
\[
k:\mathsf{Mod}_{A,fg}\longrightarrow\mathsf{Mod}_{\widehat{A},fg}%
\text{,}\qquad M\mapsto M\otimes_{A}\widehat{A}%
\]
and $i$ is the restriction of $j$ to torsion modules. Note that since
completions are flat, the functors $j$ and $k$ are exact. The functor $i$ is
not just exact; it induces an equivalence of categories. By functoriality of
localization, Diagram \ref{ltixi1} commutes. Thus, we are in the situation of
Diagram \ref{lmisu3b} in Elaboration \ref{elab_SignsHtpyCartesianSquares}.
Hence, Equation \ref{lmisu3} gives a corresponding distinguished square in the
homotopy category of spectra $h\mathsf{Sp}$. Concretely, this means that%
\begin{equation}%
\xymatrix@L=2.6mm{
K({\mathsf{Mod}_{\mathfrak{A},fg}}) \ar[r]^-{(-)\otimes\widehat{\mathbb{Z}%
}+(-)\otimes\mathbb{Q}} & K({\mathsf{Mod}_{\widehat{\mathfrak{A}},fg}}) \oplus
K({\mathsf{Mod}_{A,fg}}) \ar[r]^-{{(-)\otimes\mathbb{Q}} - {(-)\otimes
\widehat{\mathbb{Q}}}} & K({\mathsf{Mod}_{\widehat{A},fg}}) \ar[r]^-{-\partial
_{\Diamond}} & \Sigma K({\mathsf{Mod}_{\mathfrak{A},fg}})
}
\label{l_WallDistTriangle}%
\end{equation}
is distinguished, where `$\Diamond$' refers to the respective square\ Diagram
\ref{ltixi1}. Note the negative sign in front of $\partial_{\Diamond}$.
\end{proof}

\begin{theorem}
\label{thm_PrincipalIdeleFibration}Let $A$ be a finite-dimensional semisimple
$\mathbb{Q}$-algebra. Suppose $\mathfrak{A}\subset A$ is a regular order. Then
there is a canonical fibration of pointed spaces%
\[
K(\widehat{\mathfrak{A}})\times K(A)\longrightarrow K(\widehat{A})\times
K(A_{\mathbb{R}})\longrightarrow K(\mathsf{LCA}_{\mathfrak{A}})\text{,}%
\]
which we call the \emph{principal id\`{e}le fibration}.

\begin{enumerate}
\item The first arrow is induced from the exact functors%
\begin{align*}
\operatorname*{PMod}(\widehat{\mathfrak{A}})  &  \longrightarrow
\operatorname*{PMod}(\widehat{A})\text{,}\qquad X\mapsto X\otimes
_{\widehat{\mathfrak{A}}}\widehat{A}\\
\operatorname*{PMod}(A)  &  \longrightarrow\operatorname*{PMod}(\widehat
{A})\times\operatorname*{PMod}(A_{\mathbb{R}})\text{,}\qquad X\mapsto
(X\otimes_{A}\widehat{A},X\otimes_{A}A_{\mathbb{R}})\text{.}%
\end{align*}

\item The second arrow is induced from the exact functor sending a right
$\widehat{A}$-module to itself, but equipped with the natural ad\`{e}le
topology. Similarly, a right $A_{\mathbb{R}}$-module gets sent to itself,
equipped with the natural real vector space topology.
\end{enumerate}
\end{theorem}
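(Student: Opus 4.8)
The plan is to deduce the principal idèle fibration from the long exact localization sequence for $\mathsf{LCA}_{\mathfrak{A}}$ established in the previous article \cite{etnclca}, together with the Wall-type fiber sequence of Lemma \ref{lemma_WallSeqForGTheory}, by assembling them into a map of (co)fiber sequences of spectra. First I would recall from \cite[Theorem 11.3 or similar]{etnclca} the fiber sequence relating $K(\widehat{\mathfrak{A}})$, $K(\widehat{A})\oplus K(A_{\mathbb{R}})$ and $K(\mathsf{LCA}_{\mathfrak{A}})$; the regularity hypothesis on $\mathfrak{A}$ is exactly what is needed to make $K$ and $G$ agree there and to have that sequence available. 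The point is that the claimed three-term sequence
\[
K(\widehat{\mathfrak{A}})\times K(A)\longrightarrow K(\widehat{A})\times K(A_{\mathbb{R}})\longrightarrow K(\mathsf{LCA}_{\mathfrak{A}})
\]
should be obtained by "adding $K(A)$ to the fiber and $K(A_{\mathbb{R}})$ to the total space" in a way that is compatible along the map induced by $A\hookrightarrow A_{\mathbb{R}}$. Concretely, I would exhibit a commutative diagram of spectra whose rows are the existing localization/Wall fiber sequences and whose vertical maps are the base-change functors, and then use the standard fact that a map of fiber sequences in which the map on fibers and the map on total spaces combine appropriately yields a new fiber sequence on the mapping cones (or, dually, on the homotopy fibers). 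Since $K$-theory spaces here are infinite loop spaces, it is harmless to argue on the level of spectra and then pass back to spaces, as the text already notes.

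The concrete engine is Elaboration \ref{elab_SignsHtpyCartesianSquares}: I would fit the relevant square into the shape of Diagram \ref{lmisu3b}, with one vertical arrow an equivalence (the $i$-type arrow coming from an equivalence of torsion module categories, or the analogous statement for $\mathsf{LCA}$), so that the associated square is homotopy Cartesian in $h\mathsf{Sp}$. Unwinding a homotopy Cartesian square
\[
\xymatrix{
P \ar[r] \ar[d] & K(\widehat{A})\times K(A_{\mathbb{R}}) \ar[d] \\
0 \ar[r] & K(\mathsf{LCA}_{\mathfrak{A}})
}
\]
identifies $P$ with the homotopy fiber of the second map; the remaining task is to identify $P$ with $K(\widehat{\mathfrak{A}})\times K(A)$. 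For the $K(\widehat{\mathfrak{A}})$ summand this is precisely the content of the localization sequence of \cite{etnclca} (the finite idèlic part), and for the $K(A)$ summand it comes from the Reciprocity Law (Theorem \ref{thm_reciprocity_law}), which says the composite $K(A)\to K(\widehat{A})\oplus K(A_{\mathbb{R}})\to K(\mathsf{LCA}_{\mathfrak{A}})$ is canonically zero, hence factors through the fiber. One then checks that the resulting map $K(\widehat{\mathfrak{A}})\times K(A)\to P$ is an equivalence, e.g. by comparing long exact sequences of homotopy groups and invoking the Five Lemma, using the Wall sequence of Lemma \ref{lemma_WallSeqForGTheory} to control the interaction between the $\widehat{\mathfrak{A}}$- and $A$-contributions over $\widehat{A}$.

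The main obstacle I anticipate is bookkeeping of the \emph{signs and orientations} when splicing the localization square, the Wall square, and the reciprocity nullhomotopy together — exactly the issue flagged in the acknowledgements and in Elaboration \ref{elab_SignsHtpyCartesianSquares}, and in Remark \ref{rem_SignInReciprocityLaw} (the reciprocity law uses the sum, not the difference). Getting these to line up is what forces one to be careful about which vertical arrow of the comparison diagram is the equivalence, since that choice flips the sign of the connecting map $\partial_{\square}$; choosing it wrongly would make the two fiber sequences incompatible and the assembled sequence would fail to be a fibration on the nose. A secondary, more routine point is verifying that the second functor of the theorem (equip an $\widehat{A}$-module, resp. an $A_{\mathbb{R}}$-module, with its natural locally compact topology) is genuinely exact as a functor into $\mathsf{LCA}_{\mathfrak{A}}$ and that it is the same map already used in \cite{etnclca}; granting that, the homotopy-Cartesianness is then purely formal from the machinery above.
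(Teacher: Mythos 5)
Your overall architecture --- splice the Wall sequence of Lemma \ref{lemma_WallSeqForGTheory} with the localization sequence for $\mathsf{LCA}_{\mathfrak{A}}$, map $K(\widehat{\mathfrak{A}})\times K(A)$ into the homotopy fiber using nullhomotopies coming from Local Triviality and the Reciprocity Law, and control signs via Elaboration \ref{elab_SignsHtpyCartesianSquares} --- is indeed the shape of the paper's argument. But your first ingredient does not exist as cited: \cite{etnclca} supplies the fiber sequence relating $K(\mathfrak{A})$, $K(A_{\mathbb{R}})$ and $K(\mathsf{LCA}_{\mathfrak{A}})$ (more precisely, the diagram of \cite[Proposition 11.1]{etnclca} relating $K(\mathsf{Mod}_{\mathfrak{A},fg})$, $K(\mathsf{LCA}_{\mathfrak{A},cg})$ and $K(\mathsf{LCA}_{\mathfrak{A}})$, which after the Eilenberg swindle yields that sequence), \emph{not} a sequence relating $K(\widehat{\mathfrak{A}})$, $K(\widehat{A})\oplus K(A_{\mathbb{R}})$ and $K(\mathsf{LCA}_{\mathfrak{A}})$; the latter is, up to the $K(A)$ factor, precisely what the theorem asserts, so invoking it is circular. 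Likewise, the vanishing of $K(\widehat{\mathfrak{A}})\to K(\widehat{A})\to K(\mathsf{LCA}_{\mathfrak{A}})$ is not ``the finite id\`{e}lic part of the localization sequence'' of the earlier paper; it has to be produced, e.g.\ by the compact/discrete Additivity argument applied to $\widehat{\mathfrak{A}}\hookrightarrow\widehat{A}\twoheadrightarrow\widehat{A}/\widehat{\mathfrak{A}}$ together with the Eilenberg swindle, which is how the actual proof handles the corresponding square.

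The genuine gap is that ``check the map $K(\widehat{\mathfrak{A}})\times K(A)\to P$ is an equivalence by the Five Lemma'' is not a formal step: to have a map of long exact sequences at all, your two nullhomotopies must be shown compatible with the identifications, on both sides, of the fiber over $K(\widehat{A})$ with $K(\mathfrak{A})$ --- that is, compatible with the connecting maps $\partial_{\Diamond}$ of the Wall triangle and $\partial_{\ddagger}$ of the $\mathsf{LCA}$-triangle. Knowing that the two composites vanish (Reciprocity, Local Triviality) determines the map into the fiber only up to an ambiguity measured exactly by these boundary terms, and comparing them is the mathematical core of the theorem, not bookkeeping. In the paper this occupies most of the proof: the square `$\sharp$' (identifying the discrete inclusion with $(-)\otimes\mathbb{R}$ on $K$-theory via Additivity applied to $X\hookrightarrow X_{\mathbb{R}}\twoheadrightarrow X_{\mathbb{R}}/X$ and the vanishing of $K$ of compact modules), and above all the square `$\natural$', where the identity $\partial_{\ddagger}=-\partial_{\Diamond}$ is established by two further Additivity arguments on $\widehat{\mathfrak{A}}\hookrightarrow\widehat{A}\twoheadrightarrow\widehat{A}/\widehat{\mathfrak{A}}$, once in $\mathsf{LCA}_{\mathfrak{A}}/\mathsf{LCA}_{\mathfrak{A},cg}$ and once in $\mathsf{Mod}_{\mathfrak{A}}/\mathsf{Mod}_{\mathfrak{A},fg}$, fed through the equivalence $\Phi$. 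Your proposal flags this as an anticipated sign issue but offers no mechanism to verify it; without that verification the Five Lemma cannot be run, so the proof is incomplete at its central point.
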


\begin{proof}
(Step 1) The commutative diagram%
\begin{equation}%
\xymatrix@L=2.6mm@!C=1.2in{
K({\mathsf{Mod}_{{\mathfrak{A}},fg}}) \ar[r] \ar[d]_{l} \ar@{}[dr]|{\ddag}
& K({\mathsf{Mod}_{\mathfrak{A}}}) \ar[r] \ar[d]_{m} & K({{\mathsf
{Mod}_{\mathfrak{A}}}}/{{\mathsf{Mod}_{{\mathfrak{A}},fg}}}) \ar[d]^{\Phi
}_{\cong} \ar[r]^-{\partial_{\mathfrak{A}}^{fg \hookrightarrow all}} & \Sigma
K({\mathsf{Mod}_{\mathfrak{A},fg}}) \ar[d]^{\Sigma l} \\
K(\mathsf{LCA}_{\mathfrak{A},cg}) \ar[r] & K(\mathsf{LCA}_{\mathfrak{A}}%
) \ar[r] & K({\mathsf{LCA}_{\mathfrak{A}}}/{\mathsf{LCA}_{\mathfrak{A},cg}%
}) \ar[r]_-{\partial_{\mathsf{LCA}}^{cg \hookrightarrow all}} & \Sigma
K(\mathsf{LCA}_{\mathfrak{A},cg})
}
\label{lmisu5}%
\end{equation}
was set up in \cite[Proposition 11.1]{etnclca}, using the same notation. Loc.
cit. we have only spelled out a commutative diagram of fiber sequences in
$\mathsf{Sp}$, whereas here we have expanded the entire datum including the
maps $\partial$ belonging to the underlying homotopy Cartesian squares. The
maps $l$ and $m$ come from reading the discrete $\mathfrak{A}$-modules and
locally compact $\mathfrak{A}$-modules, equipped with the discrete topology.
This clearly defines an exact functor. Since $\Phi$ (in the notation of the
reference) stems from an exact equivalence of exact categories, it induces an
isomorphism in $h\mathsf{Sp}$. Hence, we are in the situation of Diagram
\ref{lmisu4}. Thus, the square denoted by `$\ddag$' is homotopy Cartesian.
Unravelling the meaning of this along\ Elaboration
\ref{elab_SignsHtpyCartesianSquares}, we obtain the distinguished triangle%
\[%
\xymatrix@L=2.6mm{
K({\mathsf{Mod}_{\mathfrak{A},fg}}) \ar[r]^-{\text{incl}+\operatorname{incl}}
& K(\mathsf{LCA}_{\mathfrak{A},cg}) \oplus K({\mathsf{Mod}_{\mathbb{A}}}%
) \ar[r]^-{\operatorname{incl}_1 - \operatorname{incl}_2} & K(\mathsf
{LCA}_{\mathfrak{A}}) \ar[r]^-{\partial_{\ddag}} & \Sigma K({\mathsf
{Mod}_{\mathfrak{A},fg}})
}%
\]
Let us stress that this time the map $\partial$ carries a positive sign, as
carefully discussed in Elaboration \ref{elab_SignsHtpyCartesianSquares} on the
basis of the equivalence $\Phi$ in Diagram \ref{lmisu5} sitting on a different
position as in Diagram \ref{ltixi1}. Note that the signs we get here are
exactly the ones as in \cite[Proposition 11.1]{etnclca}, justifying our choice
of $\varepsilon=+1$ in Elaboration \ref{elab_SignsHtpyCartesianSquares}%
.\newline(Step 2) Next, the category of all right $\mathfrak{A}$-modules
$\mathsf{Mod}_{\mathfrak{A}}$ is closed under coproducts, so by the\ Eilenberg
swindle, Lemma \ref{lemma_EilenbergSwindle}, we have $K(\mathsf{Mod}%
_{\mathfrak{A}})=0$.\newline(Step 3) Now we shall set up the following
diagram:%
\[%
\xymatrix@L=2.6mm{
K({\mathsf{Mod}_{\mathfrak{A},fg}}) \ar@{}[dr]|{\sharp} \ar[d]_{1}
\ar[r]^-{(-)\otimes\widehat{\mathbb{Z}}+(-)\otimes\mathbb{Q}} & K({\mathsf
{Mod}_{\widehat{\mathfrak{A}},fg}}) \oplus K({\mathsf{Mod}_{A,fg}}) \ar@
{}[dr]|{\square} \ar[d]^{(0,(-)\otimes\mathbb{R})} \ar[r]^-{{(-)\otimes
\mathbb{Q}} - {(-)\otimes\widehat{\mathbb{Q}}}} & K({\mathsf{Mod}_{\widehat
{A},fg}}) \ar@{}[dr]|{\natural} \ar[d]_{\text{top. realiz.}}
\ar[r]^-{-\partial_{\Diamond}} & \Sigma K({\mathsf{Mod}_{\mathfrak{A},fg}%
}) \ar[d]^{1} \\
K({\mathsf{Mod}_{\mathfrak{A},fg}}) \ar[r]_-{\operatorname{incl}}
& K(\mathsf{LCA}_{\mathfrak{A},cg}) \ar[r]_-{\operatorname{incl}}
& K(\mathsf{LCA}_{\mathfrak{A}}) \ar[r]_-{\partial_{\ddag}} & \Sigma
K({\mathsf{Mod}_{\mathfrak{A},fg}})
}%
\]
Both rows are the distinguished triangles which we had produced in Lemma
\ref{lemma_WallSeqForGTheory} (and more specifically given in detail
in\ Equation \ref{l_WallDistTriangle}), and the one coming from Step 1 and
Step 2. Thus, it remains to describe the downward arrows and prove the
commutativity of the three squares.\newline(Square `$\sharp$') We compose the
underlying exact functors, first going down and then right resp. the other way
round. We obtain the exact functors%
\[
h_{i}:\mathsf{Mod}_{\mathfrak{A},fg}\longrightarrow\mathsf{LCA}_{\mathfrak{A}%
,cg}\qquad\text{(for }i=1,2\text{),}%
\]
where $h_{1}$ sends a right $\mathfrak{A}$-module $X$ to itself, equipped with
the discrete topology, while $h_{2}$ sends it to $X_{\mathbb{R}}%
:=X\otimes_{\mathfrak{A}}A_{\mathbb{R}}$, and regards this as a topological
right $\mathfrak{A}$-module, equipped with the real topology. Clearly,
$h_{1}\neq h_{2}$ as exact functors. However, we only need to show that the
induced square commutes in $h\mathsf{Sp}$ after taking $K$-theory. To this
end, consider the exact functor $\operatorname*{PMod}(\mathfrak{A}%
)\rightarrow\mathcal{E}\mathsf{LCA}_{\mathfrak{A},cg}$ sending a finitely
generated projective right $\mathfrak{A}$-module $X$ to the exact sequence%
\[
X\hookrightarrow X_{\mathbb{R}}\twoheadrightarrow X_{\mathbb{R}}/X
\]
in $\mathsf{LCA}_{\mathfrak{A},cg}$. Here $X$ carries the discrete topology,
$X_{\mathbb{R}}$ the real vector space topology and $X_{\mathbb{R}}/X$ the
torus topology (topologically it stems from quotienting a real vector space by
a full rank $\mathbb{Z}$-lattice). Denote the individual functors $f_{i}$ for
$i=1,2,3$ for the left\ (resp. middle, resp. right) individual exact functor.
Since $\mathfrak{A}$ is regular, $\operatorname*{PMod}(\mathfrak{A})$ and
$\mathsf{Mod}_{\mathfrak{A},fg}$ have the same $K$-theory by resolution. Thus,
it suffices to define this exact functor on $\operatorname*{PMod}%
(\mathfrak{A})$. By Additivity we get $f_{2\ast}=f_{1\ast}+f_{3\ast}$. Next,
note that $f_{3}$ can be factored as%
\begin{equation}
\operatorname*{PMod}(\mathfrak{A})\longrightarrow\mathsf{LCA}_{\mathfrak{A}%
,C}\longrightarrow\mathsf{LCA}_{\mathfrak{A},cg}\text{,} \label{lmisu7}%
\end{equation}
where $\mathsf{LCA}_{\mathfrak{A},C}$ denotes the exact category of compact
right $\mathfrak{A}$-modules. Since products of compact spaces are compact,
the latter category is closed under products, so $K(\mathsf{LCA}%
_{\mathfrak{A},C})$ by the Eilenberg swindle, Lemma
\ref{lemma_EilenbergSwindle}. Thus, we necessarily have $f_{3\ast}=0$ since it
can be factored over a zero object. Hence, $f_{2\ast}=f_{1\ast}$, but
$f_{1}=h_{1}$ and $f_{2}=h_{2}$, proving $h_{1\ast}=h_{2\ast}$, and thus
proving the commutativity of the square `$\sharp$'. Let us point out that a
factorization of $f_{2}$ as in%
\begin{equation}
\text{\textquotedblleft}\operatorname*{PMod}(\mathfrak{A})\longrightarrow
\mathsf{LCA}_{\mathfrak{A},D}\longrightarrow\mathsf{LCA}_{\mathfrak{A}%
,cg}\text{\textquotedblright} \label{lmisu7a}%
\end{equation}
with $\mathsf{LCA}_{\mathfrak{A},D}$ the discrete right $\mathfrak{A}$-modules
\textit{does not exist}. The point is that while all compact right
$\mathfrak{A}$-modules are compactly generated, leading to Equation
\ref{lmisu7}, a discrete right $\mathfrak{A}$-module is compactly generated if
and only if it is finitely generated, so we cannot define the second arrow in
Equation \ref{lmisu7a} on all of $\mathsf{LCA}_{\mathfrak{A},D}$. We could
only define it on the finitely generated ones at best, but this category then
is not closed under countable coproducts, so the Eilenberg swindle cannot be
applied.\newline(Square `$\square$') This square commutes if and only if the
following two squares commute:%
\[%
\xymatrix{
K({\mathsf{Mod}_{\widehat{\mathfrak{A}},fg}}) \ar[d]^{0} \ar[r]^-{{(-)\otimes
\mathbb{Q}}} & K({\mathsf{Mod}_{\widehat{A},fg}}) \ar[d]_{\text{top. realiz.}}
\\
K(\mathsf{LCA}_{\mathfrak{A},cg}) \ar[r]_-{\operatorname{incl}} & K(\mathsf
{LCA}_{\mathfrak{A}})
}%
\qquad%
\xymatrix{
K({\mathsf{Mod}_{A,fg}}) \ar[d]^{(-)\otimes\mathbb{R}} \ar[r]^-{-{(-)\otimes
\widehat{\mathbb{Q}}}} & K({\mathsf{Mod}_{\widehat{A},fg}}) \ar[d]_{\text
{top. realiz.}} \\
K(\mathsf{LCA}_{\mathfrak{A},cg}) \ar[r]_-{\operatorname{incl}} & K(\mathsf
{LCA}_{\mathfrak{A}})
}%
\]
In the left square, we compare the zero map with the composition%
\[
K(\mathsf{Mod}_{\widehat{\mathfrak{A}},fg})\longrightarrow K(\mathsf{Mod}%
_{\widehat{A},fg})\longrightarrow K(\mathsf{LCA}_{\mathfrak{A}})\text{,}%
\]
but the latter is also zero by Local Triviality, Theorem
\ref{thm_LocalTriviality} (either give a precise argument using the
isomorphism of rings $\widehat{\mathfrak{A}}\cong\prod\widehat{\mathfrak{A}%
}_{p}$ and an approximation argument, or much more elegantly: Copy the proof
of Theorem \ref{thm_LocalTriviality} and use that $\widehat{\mathfrak{A}}$ is
a compact clopen in $\widehat{A}$ with discrete quotient $\widehat{A}%
/\widehat{\mathfrak{A}}$ if we equip $\widehat{\mathfrak{A}}$ with its natural
profinite topology, and $\widehat{A}/\widehat{\mathfrak{A}}$ with the natural
product as a restricted product of $\mathbb{Q}_{p}$-vector spaces. This way,
one can avoid any approximation argument). The right square can be done very
similarly: The two functors are induced from%
\[
v_{i}:\mathsf{Mod}_{A,fg}\longrightarrow\mathsf{LCA}_{\mathfrak{A}}%
\qquad\text{(for }i=1,2\text{)}%
\]%
\[
v_{1}(X):=X_{\mathbb{R}}\text{ (}=X\otimes_{A}A_{\mathbb{R}}\text{)}%
\qquad\text{and}\qquad v_{2}(X):=\widehat{X}\text{ (}=X\otimes_{A}\widehat
{A}\text{),}%
\]
where instead of $v_{2}$ we take the negative of what is induced by this
functor. Because of this sign switch, the two induced maps on $K$-theory agree
if and only if \textit{their sum} is the zero map $K(\mathsf{Mod}%
_{A,fg})\rightarrow K(\mathsf{LCA}_{\mathfrak{A}})$. However, this is
precisely the statement of the fundamental Reciprocity Law, Theorem
\ref{thm_reciprocity_law}.\newline(Square `$\natural$') The commutativity of
this square is the most delicate part of the proof.%
\begin{equation}%
\xymatrix@L=2.6mm{
K({\mathsf{Mod}_{\widehat{A},fg}}) \ar@{}[dr]|{\natural} \ar[d]_{\text
{top. realiz.}}
\ar[r]^-{-\partial_{\Diamond}} & \Sigma K({\mathsf{Mod}_{\mathfrak{A},fg}%
}) \ar[d]^{1} \\
K(\mathsf{LCA}_{\mathfrak{A}}) \ar[r]_-{\partial_{\ddag}} & \Sigma
K({\mathsf{Mod}_{\mathfrak{A},fg}})
}
\label{ltixi8b}%
\end{equation}
Note that, from the point of view of regarding $\mathsf{Sp}$ as a stable
$\infty$-category, checking the commutativity of this square amounts to
checking that the fiber sequences attached to the two rows have compatible
nullhomotopies.\newline We first follow the top horizontal arrow and then go
down. We unravel the definition of $\partial_{\Diamond}$. It comes from the
homotopy Cartesian square in Diagram \ref{ltixi1}. We have recalled how to set
up the attached distinguished triangle in Elaboration
\ref{elab_SignsHtpyCartesianSquares}, namely%
\begin{equation}
\partial_{\Diamond}:%
\xymatrix{
K(\mathsf{Mod}_{\widehat{A},fg}) \ar[r]^-{\partial_{\widehat{\mathfrak{A}}%
}^{tor \hookrightarrow fg}} & \Sigma K(\mathsf{Mod}_{\widehat{\mathfrak{A}%
},fg}^{tor}) & \Sigma K(\mathsf{Mod}_{\mathfrak{A},fg}^{tor}) \ar
[r] \ar[l]^{\sim} & \Sigma K(\mathsf{Mod}_{\mathfrak{A},fg})
}%
\text{.} \label{ltixi8a}%
\end{equation}
On the other hand, going around the square `$\natural$' the other way, we
unravel\bigskip%
\begin{equation}
\partial_{\ddag}:%
\xymatrix@L=1.4mm{
K(\mathsf{Mod}_{\widehat{A},fg}) \ar[r]^-{\text{top. realiz.}} \ar@
/^2pc/[rr]^{f_2} & K(\mathsf{LCA}_{\mathfrak{A}}) \ar[r] & K(\mathsf
{LCA}_{\mathfrak{A}}/\mathsf{LCA}_{\mathfrak{A},cg}) \\
& K(\mathsf{Mod}_{\mathfrak{A}}/\mathsf{Mod}_{\mathfrak{A},fg}) \ar
[ur]^-{\sim}_-{\Phi} \ar[r]_-{\partial_{{\mathfrak{A}}}^{fg \hookrightarrow
all}} & \Sigma K(\mathsf{Mod}_{\mathfrak{A},fg})
}
\label{ltixi4}%
\end{equation}
Ignore the arrow with the label \textquotedblleft$f_{2}$\textquotedblright%
\ temporarily. Let us first focus on $\partial_{\ddag}$. We have an exact
equivalence of exact categories%
\[
\mathsf{Mod}_{\widehat{\mathfrak{A}},fg}/\mathsf{Mod}_{\widehat{\mathfrak{A}%
},fg}^{tor}\overset{\sim}{\longrightarrow}\mathsf{Mod}_{\widehat{A}}%
\text{,}\qquad M\mapsto M\otimes\mathbb{Q}\text{.}%
\]
This is the same equivalence which underlies the fiber sequences in Diagram
\ref{ltixi1}. Consider the exact functor%
\begin{equation}
\operatorname*{PMod}(\widehat{\mathfrak{A}})\longrightarrow\mathcal{E}%
\mathsf{LCA}_{\mathfrak{A}} \label{ltixi3}%
\end{equation}
sending $\mathfrak{A}$ to the exact sequence%
\begin{equation}
\widehat{\mathfrak{A}}\hookrightarrow\widehat{A}\twoheadrightarrow\widehat
{A}/\widehat{\mathfrak{A}} \label{ltixi3a}%
\end{equation}
in $\mathsf{LCA}_{\mathfrak{A}}$, where (a) $\widehat{\mathfrak{A}}$ is
equipped with its natural compact topology. Its underlying LCA group is a
product $\prod\mathbb{Z}_{p}$; (b) $\widehat{A}$ is equipped with its natural
locally compact topology. Its underlying LCA group is a restricted product
$\left.  \prod\nolimits^{\prime}\right.  (\mathbb{Q}_{p}:\mathbb{Z}_{p})$; (c)
and $\widehat{A}/\widehat{\mathfrak{A}}$ is equipped with the quotient
topology. This just amounts to the discrete topology since by the construction
of the restricted product topology, $\widehat{\mathfrak{A}}$ sits as a clopen
subgroup in it. We want to use the Additivity Theorem. Write $f_{i}$,
$i=1,2,3$ for the three exact functors $f_{i}:\mathsf{Mod}_{\widehat
{\mathfrak{A}},fg}\longrightarrow\mathsf{LCA}_{\mathfrak{A}}$ pinned down by
the functor in Equation \ref{ltixi3}. We get an induced exact functor%
\begin{equation}
\mathsf{Mod}_{\widehat{\mathfrak{A}},fg}/\mathsf{Mod}_{\widehat{\mathfrak{A}%
},fg}^{tor}\longrightarrow\mathcal{E}\left(  \mathsf{LCA}_{\mathfrak{A}%
}/\mathsf{LCA}_{\mathfrak{A},cg}\right)  \text{.} \label{ltixi5}%
\end{equation}
By the Additivity Theorem, $f_{2\ast}=f_{1\ast}+f_{3\ast}$. However, since
$\widehat{\mathfrak{A}}$ is compact, it is compactly generated, so $f_{1}$
sends all objects to zero objects in the quotient exact category
$\mathsf{LCA}_{\mathfrak{A}}/\mathsf{LCA}_{\mathfrak{A},cg}$. Thus, $f_{2\ast
}=f_{3\ast}$. However, note that $f_{2}$ agrees with the functor, suggestively
denoted by $f_{2}$, in Diagram \ref{ltixi4}: The straight arrows just equip
$\widehat{A}$ with its natural locally compact topology. This is the same as
using the identification of Equation \ref{ltixi5} first, and then equipping
the outcome with the topology as discussed above in (b). Thus, by Additivity,
we may work with the functor underlying $f_{3}$ instead, since it induces the
same map on the level of $K$-theory.\newline Now, we repeat the same trick in
a similar fashion. Consider the exact functor%
\[
\operatorname*{PMod}(\widehat{\mathfrak{A}})\longrightarrow\mathcal{E}\left(
\mathsf{Mod}_{\mathfrak{A}}/\mathsf{Mod}_{\mathfrak{A},fg}\right)  \text{,}%
\]
sending $\mathfrak{A}$ again to the exact sequence $\widehat{\mathfrak{A}%
}\hookrightarrow\widehat{A}\twoheadrightarrow\widehat{A}/\widehat
{\mathfrak{A}}$, but now regarded in the category $\mathsf{Mod}_{\mathfrak{A}%
}/\mathsf{Mod}_{\mathfrak{A},fg}$ (this is a precise statement already, but
note that philosophically it corresponds to considering the same functor, but
this time equipping all terms in the exact sequence with the discrete topology
instead. Of course this is still exact). Again, we get an induced functor from
$\mathsf{Mod}_{\widehat{\mathfrak{A}},fg}/\mathsf{Mod}_{\widehat{\mathfrak{A}%
},fg}^{tor}$ since the torsion modules go to zero objects in $\mathsf{Mod}%
_{\mathfrak{A}}/\mathsf{Mod}_{\mathfrak{A},fg}$. Write $g_{i}$, $i=1,2,3$ for
the three exact functors%
\[
g_{i}:\mathsf{Mod}_{\widehat{\mathfrak{A}},fg}/\mathsf{Mod}_{\widehat
{\mathfrak{A}},fg}^{tor}\longrightarrow\mathsf{Mod}_{\mathfrak{A}%
}/\mathsf{Mod}_{\mathfrak{A},fg}\text{.}%
\]
The key point is the following: Running the equivalence $\Phi$ in Diagram
\ref{ltixi4} backwards, we get%
\[
\Phi_{\ast}^{-1}\circ f_{3\ast}=g_{3\ast}\text{.}%
\]
The point behind this is that the quotient $\widehat{A}/\widehat{\mathfrak{A}%
}$ in Equation \ref{ltixi3a} carries the discrete topology. However, by
Additivity we have $g_{2\ast}=g_{1\ast}+g_{3\ast}$, so combining these two
equations, and remembering $f_{2\ast}=f_{3\ast}$, we get%
\begin{equation}
\Phi_{\ast}^{-1}\circ f_{2\ast}=g_{2\ast}-g_{1\ast}\text{.} \label{ltixi5a}%
\end{equation}
Finally, consider the diagram%
\[%
\xymatrix@L=2.6mm@!C=1.2in{
& & K({\mathsf{Mod}_{{\widehat{\mathfrak{A}}},fg}}/{{\mathsf{Mod}%
^{tor}_{{\widehat{\mathfrak{A}}},fg}}}) \ar[d]^{\otimes\mathbb{Q}}_{g_2}
\ar@{-->}[dl] \ar[dr]^{0} \\
\cdots\ar[r]
& K({\mathsf{Mod}_{\mathfrak{A}}}) \ar[r] & K({{\mathsf{Mod}_{\mathfrak{A}}}%
}/{{\mathsf{Mod}_{{\mathfrak{A}},fg}}}) \ar[r]_-{\partial_{\mathfrak{A}%
}^{fg \hookrightarrow all}} & \Sigma K({\mathsf{Mod}_{\mathfrak{A},fg}}).
}%
\]
The bottom row stems from the localization sequence. The exact functor $g_{2}$
admits a lift to $\mathsf{Mod}_{\mathfrak{A}}$. This would not work for
$g_{1}$ for example since $g_{1}$ would send the torsion modules
$\mathsf{Mod}_{\widehat{\mathfrak{A}},fg}^{tor}$ would go to non-zero objects
in $\mathsf{Mod}_{\mathfrak{A}}$. However, since $g_{2}$ sends torsion modules
to zero anyway, this lift exists. We deduce that%
\[
\partial_{\mathfrak{A}}^{fg\hookrightarrow all}\circ g_{2\ast}=0\text{.}%
\]
Thus, Equation \ref{ltixi5a} leads to%
\begin{equation}
\partial_{\mathfrak{A}}^{fg\hookrightarrow all}\circ\Phi_{\ast}^{-1}\circ
f_{2\ast}=\partial_{\mathfrak{A}}^{fg\hookrightarrow all}\circ g_{2\ast
}-\partial_{\mathfrak{A}}^{fg\hookrightarrow all}\circ g_{1\ast}%
=-\partial_{\mathfrak{A}}^{fg\hookrightarrow all}\circ g_{1\ast}\text{.}
\label{ltixi5b}%
\end{equation}
Returning to Diagram \ref{ltixi4}, we have shown that the morphism
$\partial_{\ddag}$ is the same morphism as%
\[%
\xymatrix@L=2.6mm{
K(\mathsf{Mod}_{\widehat{A},fg}) & K(\mathsf{Mod}_{\widehat{\mathfrak{A}}%
,fg}/\mathsf{Mod}_{\widehat{\mathfrak{A}},fg}^{tor})
\ar[l]_-{\sim} \ar[r]^-{-\partial_{\mathfrak{A}}^{fg\hookrightarrow all}}
& K(\mathsf{Mod}_{\mathfrak{A},fg})
}%
\]
since the functor $g_{1}$ just sends $\widehat{\mathfrak{A}}$ to itself,
treated as a right $\mathfrak{A}$-module. We swallow this rather na\"{\i}ve
operation into the notation. Consider the commutative diagram%
\[%
\xymatrix@L=2.6mm@!C=0.9in{
K({\mathsf{Mod}_{\widehat{\mathfrak{A}},fg}^{tor}}) \ar[r] \ar[d]_{s}
& K({\mathsf{Mod}_{\widehat{\mathfrak{A}},fg}}) \ar[r] \ar[d] & K({\mathsf
{Mod}_{\widehat{A},fg}}) \ar[d]_{g_1} \ar[r]^-{\partial_{\widehat{\mathfrak
{A}}}^{tor \hookrightarrow fg}}
& \Sigma K({\mathsf{Mod}_{\widehat{\mathfrak{A}},fg}^{tor}}) \ar[d]^{\Sigma s}
\\
K(\mathsf{Mod}_{\mathfrak{A},fg}) \ar[r] & K(\mathsf{Mod}_{\mathfrak{A}}%
) \ar[r] &
K({\mathsf{Mod}_{\mathfrak{A}}}/\mathsf{Mod}_{\mathfrak{A},fg}) \ar
[r]_-{\partial_{{\mathfrak{A}}}^{fg \hookrightarrow all}}
& \Sigma K(\mathsf{Mod}_{\mathfrak{A},fg}).
}%
\]
Both rows are distinguished triangles coming from the respective localization
sequences of $\mathsf{Mod}_{\widehat{\mathfrak{A}},fg}^{tor}$ as a Serre
subcategory of $\mathsf{Mod}_{\widehat{\mathfrak{A}},fg}$, resp.
$\mathsf{Mod}_{\mathfrak{A},fg}$ as a Serre subcategory of $\mathsf{Mod}%
_{\mathfrak{A}}$. The functor underlying $s$ sends a torsion right
$\widehat{\mathfrak{A}}$-module to itself, regarded as a right $\mathfrak{A}%
$-module. Note that just like $\mathsf{Mod}_{\widehat{\mathfrak{A}},fg}%
^{tor}\cong\mathsf{Mod}_{\mathfrak{A},fg}^{tor}$ are equivalent categories,
finitely generated torsion $\widehat{\mathfrak{A}}$-modules are indeed
finitely generated right $\widehat{\mathfrak{A}}$-modules. The commutativity
of the right square implies that we can continue the computation in Equation
\ref{ltixi5b} as%
\[
-\partial_{\mathfrak{A}}^{fg\hookrightarrow all}\circ g_{1\ast}=-(\Sigma
s)\circ\partial_{\widehat{\mathfrak{A}}}^{tor\hookrightarrow fg}\text{.}%
\]
Thus, in total, the map $\partial_{\ddag}$ of Diagram \ref{ltixi4} can be
expressed as follows.%
\[
\partial_{\ddag}=-(\Sigma s)\circ\partial_{\widehat{\mathfrak{A}}%
}^{tor\hookrightarrow fg}\text{.}%
\]
Finally, compare this to the map $\partial_{\Diamond}$ of Equation
\ref{ltixi8a}. The part $\partial_{\widehat{\mathfrak{A}}}^{tor\hookrightarrow
fg}$ agrees for both maps, and the following maps in Equation \ref{ltixi8a}
merely amount to regarding a finitely generated torsion right $\widehat
{\mathfrak{A}}$-module as a finitely generated right $\mathfrak{A}$-module.
This is the same functor as $s$. Thus, in total the only difference is the
sign, $\partial_{\ddag}=-\partial_{\Diamond}$. However, this is exactly what
we had to show, see Diagram \ref{ltixi8b}. Note that the appearance of this
sign is quite subtle. In our computations above it arose from a sign when
using the Additivity theorem, while in general it is needed for the right
compatibility because of r\^{o}le of signs as explained in Elaboration
\ref{elab_SignsHtpyCartesianSquares}.
\end{proof}

\section{Proof of compatibility\label{sect_CompatProofSection}}

In this section we will prove that our approach is equivalent to the original
construction of Burns and Flach in \cite{MR1884523}. To this end, let us go
through their construction, so roughly from \cite{MR1884523} \S 2.1 to \S 4.3
(although we can jump over certain parts).

Regarding our approach on the other hand, we use the construction of $T\Omega$
of \S \ref{sect_Overview}, using the rigorous justification of all
construction steps from \S \ref{sect_GettingPrecise}, especially Convention
\ref{convention_KThySpace}.

We recall the concept of a determinant functor from \cite[\S 4.3]{MR902592}.
Given any category $\mathsf{C}$, we write $\mathsf{C}^{\times}$ for its
internal groupoid, i.e. we delete all morphisms which are not isomorphisms.

\begin{definition}
\label{def_DeterminantFunctor}Suppose $\mathsf{C}$ is an exact category and
let $(\mathsf{P},\otimes)$ be a Picard groupoid. A \emph{determinant functor}
on $\mathsf{C}$ is a functor%
\[
\mathcal{D}:\mathsf{C}^{\times}\longrightarrow\mathsf{P}%
\]
along with the following extra structure and axioms:

\begin{enumerate}
\item For any exact sequence $\Sigma:G^{\prime}\hookrightarrow
G\twoheadrightarrow G^{\prime\prime}$ in $\mathsf{C}$, we are given an
isomorphism%
\[
\mathcal{D}(\Sigma):\mathcal{D}(G)\overset{\sim}{\longrightarrow}%
\mathcal{D}(G^{\prime})\underset{\mathsf{P}}{\otimes}\mathcal{D}%
(G^{\prime\prime})
\]
in $\mathsf{P}$. This isomorphism is required to be functorial in morphisms of
exact sequences.

\item For every zero object $Z$ of $\mathsf{C}$, we are given an isomorphism
$z:\mathcal{D}(Z)\overset{\sim}{\rightarrow}1_{\mathsf{P}}$ to the neutral
object of the Picard groupoid.

\item Suppose $f:G\rightarrow G^{\prime}$ is an isomorphism in $\mathsf{C}$.
We write%
\[
\Sigma_{l}:0\hookrightarrow G\twoheadrightarrow G^{\prime}\qquad
\text{and}\qquad\Sigma_{r}:G\hookrightarrow G^{\prime}\twoheadrightarrow0
\]
for the depicted exact sequences. We demand that the composition%
\begin{equation}
\mathcal{D}(G)\underset{\mathcal{D}(\Sigma_{l})}{\overset{\sim}%
{\longrightarrow}}\mathcal{D}(0)\underset{\mathsf{P}}{\otimes}\mathcal{D}%
(G^{\prime})\underset{z\otimes1}{\overset{\sim}{\longrightarrow}}%
1_{\mathsf{P}}\underset{\mathsf{P}}{\otimes}\mathcal{D}(G^{\prime}%
)\underset{\mathsf{P}}{\overset{\sim}{\longrightarrow}}\mathcal{D}(G^{\prime})
\label{l_CDetFunc1}%
\end{equation}
and the natural map $\mathcal{D}(f):\mathcal{D}(G)\overset{\sim}{\rightarrow
}\mathcal{D}(G^{\prime})$ agree. We further require that $\mathcal{D}(f^{-1})$
agrees with a variant of Equation \ref{l_CDetFunc1} using $\Sigma_{r}$ instead
of $\Sigma_{l}$.

\item If a two-step filtration $G_{1}\hookrightarrow G_{2}\hookrightarrow
G_{3}$ is given, we demand that the diagram%
\[%
\xymatrix{
\mathcal{D}(G_3) \ar[r]^-{\sim} \ar[d]_{\sim} & \mathcal{D}(G_1) \otimes
\mathcal{D}(G_3/G_1) \ar[d]^{\sim} \\
\mathcal{D}(G_2) \otimes\mathcal{D}(G_3/G_2) \ar[r]_-{\sim} & \mathcal
{D}(G_1) \otimes\mathcal{D}(G_2/G_1) \otimes\mathcal{D}(G_3/G_2)
}%
\]
commutes.

\item Given objects $G,G^{\prime}\in\mathsf{C}$ consider the exact sequences%
\[
\Sigma_{1}:G\hookrightarrow G\oplus G^{\prime}\twoheadrightarrow G^{\prime
}\qquad\text{and}\qquad\Sigma_{2}:G^{\prime}\hookrightarrow G\oplus G^{\prime
}\twoheadrightarrow G
\]
with the natural inclusion and projection morphisms. Then the diagram%
\[%
\xymatrix{
& \mathcal{D}(G \oplus G^{\prime}) \ar[dl]_{\mathcal{D}(\Sigma_1)}
\ar[dr]^{\mathcal{D}(\Sigma_2)} \\
\mathcal{D}(G) \otimes\mathcal{D}(G^{\prime}) \ar[rr]_{s_{G,G^{\prime}}}
& & \mathcal{D}(G^{\prime}) \otimes\mathcal{D}(G)
}%
\]
commutes, where $s_{G,G^{\prime}}$ denotes the symmetry constraint of
$\mathsf{P}$.
\end{enumerate}
\end{definition}

As usual, suppose $A$ is a finite-dimensional semisimple $\mathbb{Q}$-algebra,
$\mathfrak{A}\subset A$ an order. Also, let $F$ be a number field, $S_{\infty
}$ the set of infinite places of $F$, and $M\in\operatorname*{CHM}%
(F,\mathbb{Q})$ a Chow motive over $F$ (where we take the category of Chow
motives to have $\mathbb{Q}$-coefficients). Let%
\[
A\longrightarrow\operatorname*{End}\nolimits_{\operatorname*{CHM}%
(F,\mathbb{Q})}(X)\text{.}%
\]
be the action of $A$ on the motive. Pick a projective $\mathfrak{A}$-structure
$\{T_{v},v\in S_{\infty}\}$ and assume the\ \textit{Coherence Hypothesis} (as
defined and discussed in detail in \cite[\S 3.3]{MR1884523}).

In the construction of Burns and Flach, they work with the framework of Picard
groupoids\footnote{So this is inspired from the fact that before the
introduction of non-commutative coefficients, this would have been phrased in
terms of determinant lines (e.g. Fontaine, Perrin--Riou,\ldots), and these
form a Picard groupoid.}. The connection to our approach is as follows: in our
picture the Tamagawa number $T\Omega$ lives in $\pi_{1}K(\mathsf{LCA}%
_{\mathfrak{A}})$, so instead of working with the full $K$-theory space, it is
sufficient to work with a $1$-skeleton of that space (i.e. it does not matter
if we kill all homotopy groups $\pi_{i}$ for $i\geq2$). Viewed from the angle
of homotopy theory, this $1$-skeleton is a stable $(0,1)$-type. A priori it
would only be an unstable $(0,1)$-type, but since $K$-theory really comes from
a spectrum (or: when being viewed as a simplicial set in our setting of
\S \ref{subsect_Spaces}, it comes equipped with a $\Gamma$-space structure),
it is a stable homotopy type. However, the category of stable $(0,1)$-types
can alternatively be modelled in a somewhat more concrete fashion
through\ Picard groupoids.

The precise relation is as follows:

\begin{theorem}
\label{thm_PicardGrpdsAnd01Types}There is an equivalence of homotopy
categories,%
\begin{equation}
\Psi:\operatorname*{Ho}(\mathsf{Picard})\overset{\sim}{\longrightarrow
}\operatorname*{Ho}(\mathsf{Sp}^{0,1})\text{,} \label{lcimez20}%
\end{equation}
where $\mathsf{Picard}$ denotes the $\infty$-category of Picard groupoids, and
$\mathsf{Sp}^{0,1}$ denotes the $\infty$-category of spectra such that
$\pi_{i}X=0$ for $i\neq0,1$, also known as \emph{stable }$(0,1)$\emph{-types}.
The functor $\Psi^{-1}$ can be described as follows: If $E\in\mathsf{Sp}%
^{0,1}$ is the input spectrum, let $\Omega^{\infty}E$ denote its infinite loop
space. Define%
\[
\Psi^{-1}(E):=GP(\left\vert \Omega^{\infty}E\right\vert )\text{,}%
\]
i.e. where $GP$ denotes the fundamental groupoid (in the setting of simplicial
homotopy theory, see \cite[Ch. I, p. 42]{MR2840650} for the Gabriel--Zisman
fundamental groupoid). The infinite loop space structure equips this groupoid
with a symmetric monoidal structure, which gives rise to the Picard groupoid
structure in question.
\end{theorem}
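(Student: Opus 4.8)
The plan is to deduce this from the classical classification of Picard groupoids by stable $1$-types, which originates in Sinh's thesis and in Deligne's discussion of ``catégories de Picard'', and was given a clean model-/homotopy-categorical proof by Johnson and Osorno (``Modeling stable one-types''). First I would recall the homotopy-theoretic content of each side. Given a Picard groupoid $\mathsf{P}$, its nerve processed through the Segal $\Gamma$-space machine yields a connective $\Omega$-spectrum $B\mathsf{P}$ with $\pi_0 B\mathsf{P}$ the abelian group of $\otimes$-isomorphism classes of objects, $\pi_1 B\mathsf{P}=\operatorname{Aut}_{\mathsf{P}}(1_{\mathsf{P}})$, and $\pi_i B\mathsf{P}=0$ for $i\geq 2$; the invertibility of all objects is exactly what upgrades the monoid $\pi_0$ to a group, so $B\mathsf{P}\in\mathsf{Sp}^{0,1}$. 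This is functorial and sends equivalences to equivalences, hence descends to a functor $\operatorname{Ho}(\mathsf{Picard})\to\operatorname{Ho}(\mathsf{Sp}^{0,1})$, and I would take $\Psi$ to be this functor.

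Next I would construct the candidate inverse and identify it with the $\Psi^{-1}$ in the statement. Given $E\in\mathsf{Sp}^{0,1}$, the infinite loop space $\Omega^{\infty}E$ is a grouplike $E_\infty$-space, and its fundamental groupoid $GP(|\Omega^{\infty}E|)$ remembers precisely $\pi_0$, $\pi_1$ and the (trivial, since $\pi_1=\operatorname{Aut}(1)$ acts centrally) gluing data; the $E_\infty$-structure descends to a symmetric monoidal structure on this groupoid and grouplikeness makes every object invertible, so $GP(|\Omega^{\infty}E|)$ is a Picard groupoid in a natural way. I would then check $\Psi\circ\Psi^{-1}\simeq\operatorname{id}$ and $\Psi^{-1}\circ\Psi\simeq\operatorname{id}$ on homotopy categories by verifying that both round trips induce the identity on the three classifying invariants: the abelian group $\pi_0$, the abelian group $\pi_1$, and the $k$-invariant. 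For a stable $1$-type the $k$-invariant is a class in $[H\pi_0,\Sigma^2 H\pi_1]$, and on the Picard side it is encoded by the self-symmetry datum $x\mapsto \beta_{x,x}\in\operatorname{Aut}(x\otimes x)\cong\pi_1$, i.e.\ the obstruction to strictifying the symmetry constraint of $\mathsf{P}$; this is precisely the stable data governed by the action of the Hopf element $\eta$ (concretely, mod $2$, by $\operatorname{Sq}^2$). That the $\Gamma$-space construction carries this quadratic datum to the Postnikov $k$-invariant of $B\mathsf{P}$, and conversely that $GP(|\Omega^{\infty}E|)$ recovers it, is the heart of the Johnson–Osorno equivalence, so at this point the theorem follows, and the explicit formula $\Psi^{-1}(E)=GP(|\Omega^{\infty}E|)$ with its induced symmetric monoidal structure is exactly the quasi-inverse just described.

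The main obstacle — and the only part that is not formal infinite-loop-space bookkeeping — is this matching of the $k$-invariant: one must track how the symmetry constraint of a Picard groupoid, after nerve and $\Gamma$-space realization, produces the correct attaching data of the two-stage Postnikov tower of $B\mathsf{P}$, and dually that passing to $GP(|\Omega^{\infty}(-)|)$ reads it back. Everything else (identification of $\pi_0$ and $\pi_1$, functoriality, and the fact that $\Gamma$-space realization and the fundamental-groupoid functor are mutually inverse after truncation to degrees $0,1$) is standard. Accordingly I would present the proof as: invoke the Sinh–Deligne–Johnson–Osorno equivalence $\operatorname{Ho}(\mathsf{Picard})\xrightarrow{\sim}\operatorname{Ho}(\mathsf{Sp}^{0,1})$, observe that its quasi-inverse is naturally isomorphic to $E\mapsto GP(|\Omega^{\infty}E|)$, and note that the symmetric monoidal structure transported from the infinite loop space structure is the Picard groupoid structure asserted in the statement.
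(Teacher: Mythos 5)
Your proposal is correct and takes essentially the same route as the paper: the theorem is obtained by invoking the known classification of stable $(0,1)$-types by Picard groupoids (the paper cites Patel and Johnson--Osorno for the detailed proofs) and then identifying the quasi-inverse with $E\mapsto GP(\lvert\Omega^{\infty}E\rvert)$ carrying the symmetric monoidal structure induced by the infinite loop space structure, which the paper does in Proposition \ref{Prop_CompatPatel} by comparing the Gabriel--Zisman and topological fundamental groupoids. Your additional sketch of matching $\pi_{0}$, $\pi_{1}$ and the $k$-invariant encoded by the symmetry constraint is precisely the content delegated to those references.
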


We refer to \cite[\S 5.1, Theorem 5.3]{MR2981817} or alternatively
\cite[1.5\ Theorem]{MR2981952} for detailed proofs.

\begin{proposition}
\label{Prop_CompatPatel}This description of $\Psi$ is equivalent to the one
given by Patel \cite{MR2981817}.
\end{proposition}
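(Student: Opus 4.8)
The plan is to recall Patel's construction and then identify the composite $\Psi^{-1}\circ\Psi_{\mathrm{Patel}}$ with the identity, which suffices because $\Psi^{-1}$ (as described in Theorem \ref{thm_PicardGrpdsAnd01Types}) is an equivalence. Patel obtains from a Picard groupoid $\mathsf{P}$ a connective spectrum by feeding the underlying symmetric monoidal groupoid into an infinite loop space machine of Segal type (a special $\Gamma$-space built from the nerves $N_{\bullet}(\mathsf{P}^{n})$ with their $\otimes$-structure). First I would observe that, precisely because $\mathsf{P}$ is a \emph{Picard} groupoid, every object is $\otimes$-invertible, so the associated $\Gamma$-space is already grouplike; hence the output spectrum $E:=\Psi_{\mathrm{Patel}}(\mathsf{P})$ satisfies $\Omega^{\infty}E\simeq\left\vert N_{\bullet}\mathsf{P}\right\vert$ with no group completion needed, and $E$ lies in $\mathsf{Sp}^{0,1}$ since $\mathsf{P}$, being a groupoid, is a homotopy $1$-type (connectivity of the machine handles $\pi_{i}=0$ for $i<0$). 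The symmetric monoidal structure with which the machine equips $\Omega^{\infty}E$ is the one induced by $\otimes$.

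Next I would simply run the formula for $\Psi^{-1}$ on this $E$. Taking the Gabriel--Zisman fundamental groupoid $GP(\left\vert N_{\bullet}\mathsf{P}\right\vert)$ of the nerve of a groupoid returns a category canonically equivalent to $\mathsf{P}$, and under this equivalence the symmetric monoidal structure coming from the infinite loop space structure of $E$ is carried back to the original $\otimes$ on $\mathsf{P}$, the associativity, symmetry and unit constraints being transported along the bar construction underlying the machine. This gives $\Psi^{-1}\circ\Psi_{\mathrm{Patel}}\simeq\operatorname{id}_{\mathsf{Picard}}$ as functors of homotopy categories; composing on the left with a quasi-inverse $\Psi$ of $\Psi^{-1}$ yields $\Psi_{\mathrm{Patel}}\simeq\Psi$, which is the assertion. (If Patel states his equivalence in the other direction, the comparison is even more direct: one is then comparing two formulas for $\Psi^{-1}$ itself.)

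The main obstacle is bookkeeping rather than mathematics: one must check that the symmetric-monoidal (indeed Picard) structure, and not merely the underlying groupoid, is matched under each of these identifications, and that the specific delooping machine used by Patel agrees with the abstract infinite loop space functor occurring in Theorem \ref{thm_PicardGrpdsAnd01Types}. The first point is settled by tracing the commutativity, associativity and unit isomorphisms through the Segal maps defining the $\Gamma$-space attached to $\mathsf{P}$; the second is the uniqueness of infinite loop space machines on grouplike $E_{\infty}$-inputs, which permits replacing Patel's model by any other without affecting the homotopy type or its infinite loop structure. One may alternatively quote that both functors are quasi-inverse to the common functor $E\mapsto GP(\left\vert\Omega^{\infty}E\right\vert)$, as is implicit in \cite{MR2981952}. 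For the applications in \S\ref{sect_CompatProofSection} the only consequence we use is that this identification is compatible with the passage to $\pi_{0}$ and $\pi_{1}$ and with the boundary morphisms induced by exact sequences, which is immediate once the monoidal structures have been matched.
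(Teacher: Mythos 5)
There is a genuine gap, and it comes from comparing in the wrong direction. The construction of Patel that the proposition refers to (\cite[\S 5.1]{MR2981817}) does \emph{not} start from a Picard groupoid and produce a spectrum; it starts from a very special $\Gamma$-space $X$ and produces a Picard groupoid, namely the topological (Poincar\'e) fundamental groupoid whose objects are points of $X(\mathbf{1})$ and whose morphisms are homotopy classes of paths in $\left\vert X(\mathbf{1})\right\vert$. So the comparison to be made is between two formulas for $\Psi^{-1}$ -- exactly the case you dismiss in a parenthetical as ``even more direct'' -- and the bulk of your argument (running a Segal-type machine on $\mathsf{P}$, checking grouplikeness, recovering $\mathsf{P}$ from $GP(\left\vert N_{\bullet}\mathsf{P}\right\vert)$, transporting the monoidal constraints through the bar construction, invoking uniqueness of infinite loop space machines) addresses a comparison that is not the one at issue. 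In the case that actually occurs, the one substantive step is the identification of the Gabriel--Zisman (simplicial) fundamental groupoid of $\Omega^{\infty}E$ with the topological fundamental groupoid of its realization, which is a standard theorem (\cite[Chapter III, \S 1, Theorem 1.1]{MR2840650}); this is the entire content of the paper's proof, and it never appears in your proposal.

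There is also a logical worry with your main route even on its own terms: you justify the strategy by saying that $\Psi^{-1}$ ``as described in Theorem \ref{thm_PicardGrpdsAnd01Types} is an equivalence,'' but the theorem is proved in the paper precisely by citing Patel (and Johnson--Osorno \cite{MR2981952}), and Proposition \ref{Prop_CompatPatel} is what licenses that citation by matching the description in the theorem with Patel's. Using the theorem to prove the proposition therefore risks circularity, unless you independently establish that the $GP$-description yields an equivalence. Finally, the monoidal bookkeeping you defer (that the $\otimes$-structure survives the round trip through the delooping machine) is a nontrivial verification, whereas the direct comparison of the two fundamental-groupoid models requires none of it, since both are applied to the same underlying space with the same infinite-loop-space-induced monoidal structure.
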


\begin{proof}
Just follow Patel's description of his construction, \cite[\S 5.1]{MR2981817}.
Starting from a very special $\Gamma$-space $X$, we attaches to it the
topological fundamental groupoid (which he calls Poincar\'{e} groupoid.
Objects are points in $X(\mathbf{1})$ and morphisms are homotopy classes of
paths in $\left\vert X(\mathbf{1})\right\vert $). This is equivalent to what
we do; we just take the Gabriel--Zisman fundamental groupoid instead. The
equivalence of these two ways to form the fundamental groupoid is proven in
\cite[Chapter III, \S 1, Theorem 1.1]{MR2840650}.
\end{proof}

\begin{remark}
It is also equivalent to the one given by Johnson and Osorno \cite{MR2981952}.
Instead of using $\Gamma$-spaces to model the connective spectrum, they use
operads. However, the basic link is also a (topological) fundamental groupoid,
just as in Patel.
\end{remark}

We write $(V(\mathsf{C}),\boxtimes)$ for Deligne's Picard groupoid of virtual
objects of an exact category $\mathsf{C}$, \cite{MR902592}. Deligne proved in
this paper that there is a determinant functor%
\[
\mathcal{D}:\mathsf{C}^{\times}\longrightarrow(V(\mathsf{C}),\boxtimes
)\text{,}%
\]
which is actually (2-)universal, and in particular for any other determinant
functor $\mathcal{D}^{\prime}:\mathsf{C}^{\times}\longrightarrow
(\mathsf{P},\otimes)$ to some Picard groupoid $(\mathsf{P},\otimes)$, there
exists a factorization%
\[
\mathsf{C}^{\times}\overset{\mathcal{D}}{\longrightarrow}(V(\mathsf{C}%
),\boxtimes)\longrightarrow(\mathsf{P},\otimes)
\]
such that the composition is the given $\mathcal{D}^{\prime}$. The precise
notion of (2-)universality is actually rather subtle, see for example
\cite[\S 4.1]{MR2842932}, because it needs to take the entire symmetric
monoidal structure into consideration. Even better, there is also a map of
spaces%
\[
\mathsf{C}^{\times}\longrightarrow K(\mathsf{C})
\]
(where $\mathsf{C}^{\times}$ is regarded as its nerve) and under the
truncation to the $1$-skeleton,%
\[
\mathsf{C}^{\times}\longrightarrow K(\mathsf{C})\longrightarrow\tau_{\leq
1}K(\mathsf{C})\text{,}%
\]
if we apply $\Psi^{-1}$, this map transforms into the universal determinant
functor $\mathcal{D}$ above. In particular, it follows that there is a
canonical equivalence of stable $(0,1)$-types $\tau_{\leq1}K(\mathsf{C}%
)\overset{\sim}{\rightarrow}\Psi(V(\mathsf{C}),\boxtimes)$. Thus,
pre-composing this with the $1$-truncation, we obtain a map of spectra%
\begin{equation}
J:K(\mathsf{C})\longrightarrow\tau_{\leq1}K(\mathsf{C})\overset{\sim
}{\rightarrow}\Psi(V(\mathsf{C}),\boxtimes)\text{.} \label{llh}%
\end{equation}
Next, let us show that our concept of fundamental line is compatible with Burns--Flach.

\begin{theorem}
\label{thm_LinesAgree}The fundamental line point $\Xi(M)$ in $K(A)$ of
Equation \ref{ltiops1} under $J$ gets sent to the fundamental line virtual
object $\Xi(M)^{\operatorname*{BF}}$ of Burns--Flach \cite[\S 3.4]{MR1884523}.
\end{theorem}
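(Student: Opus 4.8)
The plan is to compare the two constructions of the fundamental line term by term, using the compatibility of the universal determinant functor $\mathcal{D}$ with the $K$-theory point attached by principle (a). Recall that $\Xi(M)^{\operatorname{BF}}$ is, by \cite[\S 3.4]{MR1884523}, the alternating tensor product (in Deligne's Picard groupoid $V(A)$, using $\boxtimes$) of the virtual objects $\mathcal{D}(H_f^i(F,M))$, $\mathcal{D}(H_f^i(F,M^\ast(1))^\ast)$, $\mathcal{D}(H_v(M)^{G_v})$ and $\mathcal{D}(H_{dR}(M)/F^0)$, with exactly the same indexing and the same signs as the summands in Equation \ref{ltiops1}. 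On the other side, $\Xi(M) \in K(A)$ is defined as the corresponding alternating sum of the $K$-theory points $[H]$ determined by those same $A$-modules via principle (a), formed using the $H$-space operations ``$+$'' and ``$-$'' of Equation \ref{lww_AddSubtract}. So the statement reduces to two facts: (i) $J$ sends the point $[H] \in K(A)$ of a single object $H \in \operatorname{PMod}(A)$ to the virtual object $\mathcal{D}(H) \in V(A)$; and (ii) $J$ takes ``$+$'' to $\boxtimes$ and ``$-$'' to the inverse (duality) in the Picard groupoid.

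First I would establish (i). By construction (Equation \ref{llh}), $J$ is the composite $K(A) \to \tau_{\leq 1}K(A) \xrightarrow{\sim} \Psi(V(A),\boxtimes)$, and the equivalence $\tau_{\leq 1}K(A) \cong \Psi(V(A),\boxtimes)$ is precisely the one identifying, under $\Psi^{-1}$, the natural map $A^\times = \operatorname{PMod}(A)^\times \to K(A)$ (nerve of the internal groupoid, followed by the canonical map to $K$-theory) with the universal determinant functor $\mathcal{D}$. This identification is exactly the content of the paragraph preceding Theorem \ref{thm_LinesAgree}, and it is there that one invokes Theorem \ref{thm_PicardGrpdsAnd01Types} together with Proposition \ref{Prop_CompatPatel} to know that the abstract equivalence really is computed by the fundamental-groupoid recipe. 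On $0$-simplices this says: the point $(0,H)$ of $H$ in the Gillet--Grayson model maps under $J$ to the object $\mathcal{D}(H)$ of $V(A)$. That is (i).

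Next (ii): under $\Psi^{-1}$, the infinite loop space structure on $K(A)$ — equivalently the $\Gamma$-space / $H$-space addition ``$+$'' of Equation \ref{lww_AddSubtract} — is precisely what equips the fundamental groupoid with its symmetric monoidal structure, which is the $\boxtimes$ of $V(A)$; this is part of the statement of Theorem \ref{thm_PicardGrpdsAnd01Types}. Concretely, the point of $H \oplus H'$ is the sum of the points of $H$ and $H'$, and under $J$ this becomes $\mathcal{D}(H \oplus H') \cong \mathcal{D}(H) \boxtimes \mathcal{D}(H')$, the isomorphism being axiom (1) (for the split sequence) in Definition \ref{def_DeterminantFunctor}. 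The negation map ``$-$'' on $K(A)$, which on $0$-simplices swaps Yin and Yang $(P,Q)\mapsto(Q,P)$ (Proposition \ref{prop_NegationOnGGModel}), induces multiplication by $-1$ on $\pi_i$ and hence, under $\Psi^{-1}$, the inversion functor $\ell \mapsto \ell^{-1}$ of the Picard groupoid; and in $V(A)$ the inverse of $\mathcal{D}(H)$ is $\mathcal{D}(H)^{-1}$, which Burns--Flach write using the $(-1)$-shift / duality — matching the use of $(-)^\ast$ and the minus signs in $\Xi(M)^{\operatorname{BF}}$. Combining (i) and (ii) and applying $J$ to the defining expression of $\Xi(M)$, we get the expression defining $\Xi(M)^{\operatorname{BF}}$, with matching bracketing because both are unravelled left to right (and in any case a posteriori the bracketing is irrelevant).

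The main obstacle I expect is bookkeeping rather than conceptual: one must check that the signs and the order of factors in Equation \ref{ltiops1} are \emph{literally} the same as those chosen by Burns--Flach in \cite[\S 3.4]{MR1884523}, and that the ``$\ast$'' operations (linear duals with shifted Hodge filtration, etc.) on the individual modules are carried through $J$ in the same way — i.e. that $J(H^\ast) = \mathcal{D}(H)^{-1}$ up to the canonical identification $\mathcal{D}(H^\ast) \cong \mathcal{D}(H)^{-1}$ in $V(A)$, which for modules over the semisimple algebra $A$ is standard. Since the paper has deliberately set things up (Equation \ref{ltiops1}) to use ``exactly the same notation'' as Burns--Flach, this comparison is term-by-term and reduces to the two structural facts above; I would present it as such, flag the sign conventions explicitly, and not belabour the routine identifications.
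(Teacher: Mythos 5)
Your proposal is correct and follows essentially the same route as the paper: truncate to the stable $(0,1)$-type, pass through $\Psi$ (Theorem \ref{thm_PicardGrpdsAnd01Types}, Proposition \ref{Prop_CompatPatel}) to the Picard-groupoid picture, use that Burns--Flach's determinant functor is the universal one with respect to the resulting symmetric monoidal structure, and then match Equation \ref{ltiops1} with their Equation (29) term by term. The one place where you assert rather than argue is the word ``equivalently'' in step (ii): the addition ``$+$'' of Equation \ref{lww_AddSubtract} used to form the point $\Xi(M)$ is the ad hoc, non-associative, non-commutative bifunctor of Equation \ref{lexit1}, and its compatibility with the homotopy-coherent $\Gamma$-space/infinite-loop structure that produces $\boxtimes$ under $\Psi^{-1}$ is precisely what the paper establishes via the Segal nerve (Lemma \ref{lemma_InfLoopSpaceStructures} and Corollary \ref{cor_SegalNerveSum}); this, together with the fact that $J$ is a map of spectra and hence of $\Gamma$-spaces, is the technical crux and should be cited rather than taken for granted. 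Your treatment of negation (via $-1$ on homotopy groups) is loose but harmless, since inverses in a Picard groupoid are essentially unique once the monoidal structures are identified, which is also the level of detail the paper itself adopts.
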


We will split the proof into several parts. In general, even when coproducts
exist in a category, they are only well-defined up to unique isomorphism, so
technically the expression $P\oplus P^{\prime}$ does not define a point in the
nerve. We circumvented this problem by picking a concrete bifunctor in
Equation \ref{lexit1}, but we shall now see that this was merely an ad hoc
choice compatible with a fully homotopy coherent solution of the issue, which
we shall recall now:

\begin{definition}
[Segal \cite{MR0353298}, 2$^{\text{nd}}$ page]\label{def_SegalNerve}Suppose
$\mathsf{C}$ is a pointed category (we write $0$ for the base point object)
which admits finite coproducts. Write \textquotedblleft$\oplus$%
\textquotedblright\ for the coproduct\footnote{So, in the context of this
definition, we do \textit{not} (yet) demand that we have picked a bifunctor
$\mathsf{C}\times\mathsf{C}\rightarrow\mathsf{C}$ which exhibits these
coproducts as a monoidal structure. Thus, (at this point) for these coproducts
it suffices to be well-defined up to unique isomorphism.}. Then we write
$N_{\bullet}^{\oplus}$ to denote its categorical \emph{Segal nerve}. That is:
$N_{\bullet}^{\oplus}$ is a simplicial category and the objects in
$N_{n}^{\oplus}$ are $n$-tuples%
\[
(X_{1},\ldots,X_{n})
\]
of objects $X_{i}\in\mathsf{C}$ along with a choice of a coproduct $X_{i_{1}%
}\oplus\cdots\oplus X_{i_{r}}$, where $\{i_{1},\ldots,i_{r}\}$ runs through
all finite subsets of $\{1,\ldots,n\}$. We demand additionally that

\begin{enumerate}
\item for the empty subset the choice of the (empty)\ coproduct is the base
point object $0$;

\item for the singleton subsets $\{i\}$, we pick $X_{i}$ itself as the
(one-element) coproduct.
\end{enumerate}

The simplicial structure comes from deleting (resp. duplicating) the $i$-th
entry. A detailed definition and discussion is given in \cite[\S 1.8]%
{MR802796}.
\end{definition}

\begin{remark}
Each category $N_{n}^{\oplus}$ is equivalent to the $n$-fold product category
$\mathsf{C}\times\cdots\times\mathsf{C}$. The geometric realization
$\left\vert N_{\bullet}^{\oplus}\mathsf{C}\right\vert $ of the Segal nerve
carries a canonical structure as a $\Gamma$-space, see \cite[\S 2]{MR0353298}.
\end{remark}

\begin{lemma}
[{\cite[Observation 3.2]{MR1167575}}]\label{lemma_InfLoopSpaceStructures}%
Suppose $\mathsf{C}$ is a pseudo-additive\footnote{in the sense of
\cite[Definition 2.3]{MR1167575}.} Waldhausen category. The $K$-theory space
$K(\mathsf{C})$ carries a canonical infinite loop space structure. This
infinite loop space structure equivalently comes from

\begin{enumerate}
\item iterates of the Waldhausen $S$-construction:%
\[
w\mathsf{C}\longrightarrow\Omega\left\vert wS_{\bullet}\mathsf{C}\right\vert
\overset{\sim}{\longrightarrow}\Omega^{2}\left\vert wS_{\bullet}S_{\bullet
}\mathsf{C}\right\vert \overset{\sim}{\longrightarrow}\Omega^{3}\left\vert
wS_{\bullet}S_{\bullet}S_{\bullet}\mathsf{C}\right\vert \overset{\sim
}{\longrightarrow}\cdots\text{,}%
\]

\item the Waldhausen $S$-construction, and iterates of the Segal nerve with
respect to the composition law \textquotedblleft$\vee$\textquotedblright\ of
Remark \ref{rmk_Coproducts}:%
\[
w\mathsf{C}\longrightarrow\Omega\left\vert wS_{\bullet}\mathsf{C}\right\vert
\overset{\sim}{\longrightarrow}\Omega^{2}\left\vert wS_{\bullet}N_{\bullet
}^{\vee}\mathsf{C}\right\vert \overset{\sim}{\longrightarrow}\Omega
^{3}\left\vert wS_{\bullet}N_{\bullet}^{\vee}N_{\bullet}^{\vee}\mathsf{C}%
\right\vert \overset{\sim}{\longrightarrow}\cdots\text{,}%
\]

\item the $G$-construction, and iterates of the Segal nerve with respect to
the composition law of \textquotedblleft$\vee$\textquotedblright:%
\[
w\mathsf{C}\longrightarrow\left\vert wG_{\bullet}\mathsf{C}\right\vert
\overset{\sim}{\longrightarrow}\Omega\left\vert wG_{\bullet}N_{\bullet}^{\vee
}\mathsf{C}\right\vert \overset{\sim}{\longrightarrow}\Omega^{2}\left\vert
wG_{\bullet}N_{\bullet}^{\vee}N_{\bullet}^{\vee}\mathsf{C}\right\vert
\overset{\sim}{\longrightarrow}\cdots\text{,}%
\]

\end{enumerate}

and the stabilized terms (anywhere starting from the second term) are
equivalent to $\left\vert D_{\bullet}\right\vert $.
\end{lemma}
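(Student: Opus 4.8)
This is Observation 3.2 of \cite{MR1167575}, so the plan is to assemble the statement from three ingredients that are already at our disposal: Waldhausen's $S_{\bullet}$-construction as a delooping machine, the Additivity Theorem, and the Gillet--Grayson delooping theorem identifying $\left\vert wG_{\bullet}\mathsf{C}\right\vert$ with $\Omega\left\vert wS_{\bullet}\mathsf{C}\right\vert$. First I would fix the baseline (1): the iterated $S_{\bullet}$-construction $n\mapsto\left\vert wS_{\bullet}^{(n)}\mathsf{C}\right\vert$ is the standard model of the connective algebraic $K$-theory spectrum. For $n\geq 1$ the space $\left\vert wS_{\bullet}^{(n)}\mathsf{C}\right\vert$ is connected and $wS_{\bullet}^{(n)}\mathsf{C}$ satisfies the Segal condition up to equivalence --- this is iterated Additivity, giving $S_{q}\mathsf{C}\simeq\mathsf{C}^{\times q}$ compatibly with the simplicial structure --- so the adjoint structure maps are equivalences and the tower is an $\Omega$-spectrum from the second term on. This already supplies $K(\mathsf{C})$ with a canonical infinite loop space structure, and all terms past the first are equivalent to one another; in the notation of \cite{MR1167575} this common stabilized value is $\left\vert D_{\bullet}\right\vert$.

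Next, for (2) I would compare the higher $S_{\bullet}$-constructions with the Segal nerve $N_{\bullet}^{\vee}$ of the coproduct ``$\vee$'' (Definition \ref{def_SegalNerve}). For any (multi)simplicial Waldhausen category $\mathsf{D}$, both $wS_{\bullet}\mathsf{D}$ and $wN_{\bullet}^{\vee}\mathsf{D}$ are reduced simplicial categories whose value at $[1]$ is $w\mathsf{D}$ and which satisfy the Segal condition after realization --- for $N_{\bullet}^{\vee}$ by construction, for $S_{\bullet}$ by Additivity --- and in both cases the induced $E_{\infty}$-structure on $w\mathsf{D}$ recovers $\vee$. Hence their bar constructions agree and one gets a natural equivalence $\left\vert wN_{\bullet}^{\vee}\mathsf{D}\right\vert\overset{\sim}{\to}\left\vert wS_{\bullet}\mathsf{D}\right\vert$. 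Applying this in every simplicial direction of $\left\vert wS_{\bullet}^{(n)}\mathsf{C}\right\vert$ except one, and using naturality, connects the towers (1) and (2) by a level-by-level equivalence of towers; in particular they present the same spectrum.

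Third, for (3) I would invoke the main theorem of \cite{MR1167575} (the extension of \cite{MR909784} from exact to arbitrary Waldhausen categories): the natural map $\left\vert wG_{\bullet}\mathsf{C}\right\vert\overset{\sim}{\longrightarrow}\Omega\left\vert wS_{\bullet}\mathsf{C}\right\vert$ is an equivalence, and --- crucially --- it is natural in the (simplicial) Waldhausen category. Naturality lets us substitute $\left\vert wG_{\bullet}(-)\right\vert$ for $\Omega\left\vert wS_{\bullet}(-)\right\vert$ at every stage of tower (2); since $G_{\bullet}$ absorbs one copy of $\Omega$, one deloops by $N_{\bullet}^{\vee}$ one fewer time, and the result is exactly tower (3) together with an equivalence of towers. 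All three towers therefore stabilize to $\left\vert D_{\bullet}\right\vert$, which pins down the infinite loop space structure on $K(\mathsf{C})=\left\vert wG_{\bullet}\mathsf{C}\right\vert$ uniquely.

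The main obstacle is not any single space-level statement --- each is Additivity or the Gillet--Grayson theorem, both available to us --- but the bookkeeping needed to check that the two comparison maps ($wS_{\bullet}$ versus $wN_{\bullet}^{\vee}$, and $wG_{\bullet}$ versus $\Omega wS_{\bullet}$) are natural in the simplicial Waldhausen category, so that they assemble into equivalences of \emph{towers} and not merely of the underlying spaces. Only with that naturality in hand do the three constructions agree as spectrum structures rather than just as homotopy types; this compatibility is precisely the content of \cite[Observation 3.2]{MR1167575}, and it is the part a self-contained write-up would have to treat with care.
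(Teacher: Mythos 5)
Your overall architecture is the same as the paper's: identify (1) with (2) via a comparison between the $S$-construction and the Segal nerve, then obtain (3) by functorially substituting $\left\vert wG_{\bullet}(-)\right\vert \simeq\Omega\left\vert wS_{\bullet}(-)\right\vert$ into tower (2); that last step is exactly what the paper does with \cite[Theorem 2.6]{MR1167575}. The problem is your justification of the (1)--(2) comparison. You assert that for any (multi)simplicial Waldhausen category $\mathsf{D}$ one has a natural equivalence $\left\vert wN_{\bullet}^{\vee}\mathsf{D}\right\vert \overset{\sim}{\rightarrow}\left\vert wS_{\bullet}\mathsf{D}\right\vert$, on the grounds that $wS_{\bullet}\mathsf{D}$ \textquotedblleft satisfies the Segal condition after realization by Additivity.\textquotedblright\ This is false in general. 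Additivity does \emph{not} give $\left\vert wS_{q}\mathsf{D}\right\vert \simeq\left\vert w\mathsf{D}\right\vert ^{\times q}$; already on $\pi_{0}$ this fails whenever $\mathsf{D}$ has non-split cofibration sequences (non-isomorphic extensions with the same sub- and quotient object give distinct components of $wS_{2}\mathsf{D}$ mapping to one component of the product). Equivalently: $\Omega\left\vert wS_{\bullet}\mathsf{D}\right\vert$ is $K$-theory, in which every cofibration sequence contributes a relation, whereas $\Omega\left\vert wN_{\bullet}^{\vee}\mathsf{D}\right\vert$ is merely the group completion of the $H$-space $(\left\vert w\mathsf{D}\right\vert ,\vee)$, which only sees split sums. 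Additivity yields the Segal-type condition one level up, i.e. after a further delooping: $\left\vert wS_{\bullet}S_{q}\mathsf{C}\right\vert \simeq\left\vert wS_{\bullet}\mathsf{C}\right\vert ^{\times q}$, and correspondingly the correct comparison statement is the delooped one, $\left\vert wS_{\bullet}N_{\bullet}^{\vee}\mathsf{C}\right\vert \overset{\sim}{\rightarrow}\left\vert wS_{\bullet}S_{\bullet}\mathsf{C}\right\vert$ --- Waldhausen's \cite[Lemma 1.8.6]{MR802796}, which is precisely what the paper quotes (Equation \ref{lapix2}).

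The gap is fixable with no change to your global plan, because in your tower comparisons you only ever replace simplicial directions while keeping at least one $S_{\bullet}$- (or, after the substitution, $G_{\bullet}$-) direction on the outside; in every such instance the equivalence you actually need is an application of the delooped statement $\left\vert wS_{\bullet}N_{\bullet}^{\vee}(-)\right\vert \simeq\left\vert wS_{\bullet}S_{\bullet}(-)\right\vert$, not the level-zero one. So replace your bar-construction argument by the citation of Waldhausen's Lemma 1.8.6 (or \cite[Observation 3.2]{MR1167575} itself), and then your derivation of (3), via naturality of $\left\vert wG_{\bullet}(-)\right\vert \simeq\Omega\left\vert wS_{\bullet}(-)\right\vert$ applied to $N_{\bullet}^{\vee}\mathsf{C}$ and its iterates, coincides with the proof in the paper. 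A similar caveat applies to your opening claim \textquotedblleft$S_{q}\mathsf{C}\simeq\mathsf{C}^{\times q}$\textquotedblright: the exact functor $S_{q}\mathsf{C}\rightarrow\mathsf{C}^{\times q}$ is a $K$-equivalence by Additivity, not an equivalence of categories or of the spaces $\left\vert wS_{q}\mathsf{C}\right\vert$, although the conclusion you draw from it (the tower (1) is an $\Omega$-spectrum from the second term on) is the standard, correct statement.
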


\begin{proof}
Regarding (1) and (2), this is already mentioned in Waldhausen's classic
\cite[\S 1.3, the paragraphs after the definition]{MR802796}, and in more
precise form in \cite[Lemma 1.8.6]{MR802796}, applied to the identity functor
$\mathsf{C}\longrightarrow\mathsf{C}$. Or, as mentioned, see \cite[Observation
3.2]{MR1167575}. The cited lemma yields the equivalence%
\begin{equation}
\left\vert wS_{\bullet}N_{\bullet}^{\vee}\mathsf{C}\right\vert \overset{\sim
}{\longrightarrow}\left\vert wS_{\bullet}S_{\bullet}\mathsf{C}\right\vert
\text{.} \label{lapix2}%
\end{equation}
The claim (3) is only a mild variation: We have%
\[
\left\vert wG_{\bullet}\mathsf{C}\right\vert \overset{\sim}{\longrightarrow
}\Omega\left\vert wS_{\bullet}\mathsf{C}\right\vert
\]
by \cite[Theorem 2.6]{MR1167575} and we can functorially apply this to the
Segal nerve $N_{\bullet}^{\vee}\mathsf{C}$, getting%
\[
\left\vert wG_{\bullet}N_{\bullet}^{\vee}\mathsf{C}\right\vert \overset{\sim
}{\longrightarrow}\Omega\left\vert wS_{\bullet}N_{\bullet}^{\vee}%
\mathsf{C}\right\vert \text{.}%
\]
Now use Equation \ref{lapix2} and composing these equivalences, we obtain%
\[
\Omega\left\vert wG_{\bullet}N_{\bullet}^{\vee}\mathsf{C}\right\vert
\overset{\sim}{\longrightarrow}\Omega^{2}\left\vert wS_{\bullet}N_{\bullet
}^{\vee}\mathsf{C}\right\vert \overset{\sim}{\longrightarrow}\Omega
^{2}\left\vert wS_{\bullet}S_{\bullet}\mathsf{C}\right\vert \overset{\sim
}{\longrightarrow}\Omega\left\vert wS_{\bullet}\mathsf{C}\right\vert \text{.}%
\]
Replacing $\mathsf{C}$ inductively by $N_{\bullet}^{\vee}\mathsf{C}$ in this
entire equivalence, and using Equation \ref{lapix2} repeatedly on the right,
we obtain (3).
\end{proof}

\begin{corollary}
\label{cor_SegalNerveSum}Using the Segal nerve $N_{\bullet}^{\vee}$, the above
observation equips $K(\mathsf{C})$ with a concrete structure as a $\Gamma
$-space with \textquotedblleft$\vee$\textquotedblright\ as the underlying
composition law. The resulting infinite loop space structure is the same one
as coming from the $S$-construction in (1) of the Lemma.
\end{corollary}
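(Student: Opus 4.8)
The plan is to read both assertions of the corollary straight off Lemma \ref{lemma_InfLoopSpaceStructures}(3), unwinding what the ``$G$-construction plus iterated Segal nerve'' clause there actually encodes. First I would spell out the data: applying $wG_{\bullet}(-)$ and the nerve levelwise to the simplicial category $N_{\bullet}^{\vee}\mathsf{C}$ of Definition \ref{def_SegalNerve} produces, for each $n$, a space $\left\vert wG_{\bullet}N_{n}^{\vee}\mathsf{C}\right\vert$, and these assemble into a simplicial space $n\mapsto\left\vert wG_{\bullet}N_{n}^{\vee}\mathsf{C}\right\vert$ (equivalently a $\Gamma$-object in spaces after reindexing Segal's $N_{\bullet}^{\vee}$ over $\Gamma^{op}$). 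The two bookkeeping conventions in Definition \ref{def_SegalNerve} --- the empty subset giving the object $0$ and the singletons giving $X_{i}$ --- say exactly that the $0$-th term is the base point and that the degree-one term is $K(\mathsf{C})=\left\vert wG_{\bullet}\mathsf{C}\right\vert$ on the nose. The Segal maps $\left\vert wG_{\bullet}N_{n}^{\vee}\mathsf{C}\right\vert\to\left\vert wG_{\bullet}\mathsf{C}\right\vert^{\times n}$ are equivalences because each $N_{n}^{\vee}\mathsf{C}$ is equivalent, \emph{as a Waldhausen category} (the cofibrations and weak equivalences on $N_{\bullet}^{\vee}\mathsf{C}$ being levelwise), to $\mathsf{C}^{\times n}$, and both $G_{\bullet}$ and $wN_{\bullet}$ preserve finite products up to equivalence; hence the simplicial space is special, i.e. a $\Gamma$-space, whose underlying monoid operation --- the map induced by the active morphism $[2]\to[1]$ --- is the coproduct law ``$\vee$''. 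This proves the first sentence.

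For the second sentence I would invoke Lemma \ref{lemma_InfLoopSpaceStructures}(3) in the form proved there. The spectrum attached to the $\Gamma$-space $\left\vert wG_{\bullet}N_{\bullet}^{\vee}\mathsf{C}\right\vert$ has structure spaces obtained by iterating the Segal nerve, namely $\left\vert wG_{\bullet}\mathsf{C}\right\vert,\ \left\vert wG_{\bullet}N_{\bullet}^{\vee}\mathsf{C}\right\vert,\ \left\vert wG_{\bullet}N_{\bullet}^{\vee}N_{\bullet}^{\vee}\mathsf{C}\right\vert,\dots$ with the connecting maps exhibited in that Lemma; and the chain of equivalences produced in its proof, which respects those connecting maps, identifies this tower from the second term onward with $\left\vert wS_{\bullet}\mathsf{C}\right\vert,\ \left\vert wS_{\bullet}S_{\bullet}\mathsf{C}\right\vert,\dots$, i.e. with the delooping tower of item (1). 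Since the towers agree up to the specified equivalences, the two infinite loop space structures on $K(\mathsf{C})$ coincide. No genuinely new argument is needed here; the work was already done in \cite{MR1167575} and \cite[\S 1.8]{MR802796}, and the corollary is a reorganisation of the Lemma.

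Finally I would reconcile this with the provisional addition of \S \ref{subsect_JustifySumAndNegation}, which is the point at which that justification was deferred. Choosing a symmetric monoidal bifunctor $\oplus\colon\mathsf{C}\times\mathsf{C}\to\mathsf{C}$ representing ``$\vee$'' amounts to a functorial section $\mathsf{C}\times\mathsf{C}\to N_{2}^{\vee}\mathsf{C}$ of the equivalence $N_{2}^{\vee}\mathsf{C}\to\mathsf{C}\times\mathsf{C}$; composing $\left\vert wG_{\bullet}\mathsf{C}\right\vert\times\left\vert wG_{\bullet}\mathsf{C}\right\vert\overset{\sim}{\rightarrow}\left\vert wG_{\bullet}N_{2}^{\vee}\mathsf{C}\right\vert$ with the active-face map $\left\vert wG_{\bullet}N_{2}^{\vee}\mathsf{C}\right\vert\to\left\vert wG_{\bullet}\mathsf{C}\right\vert$ recovers, on $0$-simplices, the rule $(P,Q)+(P^{\prime},Q^{\prime})\mapsto(P\oplus P^{\prime},Q\oplus Q^{\prime})$ of Equation \ref{l_maxiu_1}. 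Any two choices of coproduct data differ by a contractible space of choices, so this map is, up to canonical homotopy, the $\Gamma$-space multiplication, and therefore induces the group law on every $\pi_{i}$ --- exactly the claim of Proposition \ref{prop_NegationOnGGModel} that was postponed. The one place a little care is needed is precisely this levelwise-equivalence-of-Waldhausen-categories point feeding the Segal condition; once it is stated, everything else is formal.
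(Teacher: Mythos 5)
Your argument is correct and is essentially the route the paper takes: the corollary is read off Lemma \ref{lemma_InfLoopSpaceStructures}(3) together with Segal's machinery and the observation (made in the remark after Definition \ref{def_SegalNerve}) that each $N_{n}^{\vee}\mathsf{C}$ is equivalent to $\mathsf{C}^{\times n}$, so the Segal condition holds and the deloopings of (1) and (3) are identified by the equivalences constructed in the Lemma's proof. Your final paragraph spelling out how a chosen bifunctor $\oplus$ gives a section of $N_{2}^{\vee}\mathsf{C}\to\mathsf{C}\times\mathsf{C}$ and hence recovers Equation \ref{l_maxiu_1} up to canonical homotopy is exactly the deferred justification the paper intends here.
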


This corollary is the homotopy correct replacement for Equation
\ref{l_maxiu_1}.

\begin{proof}
[Proof of Theorem \ref{thm_LinesAgree}]We consider the map $J:K(\mathsf{C}%
)\rightarrow\tau_{\leq1}K(\mathsf{C})\overset{\sim}{\rightarrow}%
\Psi(V(\mathsf{C}),\boxtimes)$ of Equation \ref{llh}. Since the target is only
a stable $(0,1)$-type, we can check this statement by truncating to a stable
$(0,1)$-type all along. By Theorem \ref{thm_PicardGrpdsAnd01Types} we may
equivalently perform this verification in the framework of Picard groupoids.
Now observe that%
\begin{align}
\Xi(M)  &  =H_{f}^{0}(F,M)-H_{f}^{1}(F,M)+H_{f}^{1}(F,M^{\ast}(1))^{\ast
}-H_{f}^{0}(F,M^{\ast}(1))^{\ast}\label{lef3}\\
&  -\sum_{v\in S_{\infty}}H_{v}(M)^{G_{v}}+\sum_{v\in S_{\infty}}\left(
H_{dR}(M)/F^{0}\right) \nonumber
\end{align}
of Equation \ref{ltiops1} is formed using the sum and negation map of
\S \ref{subsect_JustifySumAndNegation}. We had picked a concrete choice for
the coproduct $\left.  \oplus\right.  :\mathsf{C}\times\mathsf{C}%
\longrightarrow\mathsf{C}$ as in Equation \ref{lexit1}. Now by Corollary
\ref{cor_SegalNerveSum} the $\Gamma$-space structure of $K$-theory is
compatible with any such choice, and further with the $\Gamma$-space structure
coming from the infinite loop space structure of the $S$-construction. As
Equation \ref{llh} comes from a map of spectra, it induces (as spaces) a map
of $\Gamma$-spaces. However, by Theorem \ref{thm_PicardGrpdsAnd01Types} and
Proposition \ref{Prop_CompatPatel} the symmetric monoidal structure on the
Picard groupoids%
\[
\Psi^{-1}\tau_{\leq1}K(\mathsf{C})\overset{\sim}{\rightarrow}(V(\mathsf{C}%
),\boxtimes)
\]
stems from this $\Gamma$-space structure. Finally, the universal determinant
functor $[-]$ used by Burns--Flach in \cite[\S 2.3-2.4]{MR1884523} is taken
exactly with respect to this symmetric monoidal structure. Thus, Equation
\ref{lef3} gets mapped to \cite[Equation (29) in \S 3.4]{MR1884523}, i.e.
$\Xi(M)^{\operatorname*{BF}}$. This proves the claim.
\end{proof}

By Theorem \ref{thm_PrincipalIdeleFibration} and shifting (this corresponds to
rotating the attached distinguished triangle on the level of the homotopy
category), we have the fiber sequence of spectra%
\[
\Omega K(\mathsf{LCA}_{\mathfrak{A}})\longrightarrow K(\widehat{\mathfrak{A}%
})\times K(A)\longrightarrow K(\widehat{A})\times K(A_{\mathbb{R}})\text{.}%
\]
But this just means that%
\begin{equation}
\Omega K(\mathsf{LCA}_{\mathfrak{A}})\overset{\sim}{\longrightarrow
}\operatorname*{fib}\left(  K(\widehat{\mathfrak{A}})\times
K(A)\longrightarrow K(\widehat{A})\times K(A_{\mathbb{R}})\right)  \text{.}
\label{lxmc3}%
\end{equation}
We will shortly use this below. We can now compare the construction of our
$R\Omega$ versus the one in \cite{MR1884523}. Burns and Flach consider the
diagram of exact functors (tensoring) between exact categories%
\[%
\xymatrix{
\operatorname{PMod}(\mathfrak{A}) \ar[r] \ar[d] & \operatorname{PMod}%
(A) \ar[d]  \\
\operatorname{PMod}(\widehat{\mathfrak{A}}) \ar[r] & \operatorname
{PMod}(\widehat{\mathfrak{A}})
}%
\]
and using these exact functors, one $2$-functorially gets induced morphisms
between the attached Picard groupoids of virtual objects $V(-)$. From the
resulting diagram, they define
\begin{equation}
\mathbb{V}(\mathfrak{A}):=V(\widehat{\mathfrak{A}})\times_{V(\widehat{A})}V(A)
\label{llh2}%
\end{equation}
as a fiber product in Picard groupoids. They show \cite[Proposition
2.3]{MR1884523},%
\begin{equation}
\pi_{0}\mathbb{V}(\mathfrak{A})\cong\pi_{0}V(\mathfrak{A})=K_{0}%
(\mathfrak{A})\text{.} \label{llh3}%
\end{equation}
Further, they define $\mathbb{V}(\mathfrak{A},\mathbb{R}):=\mathbb{V}%
(\mathfrak{A})\times_{V(A_{\mathbb{R}})}0$, where we write \textquotedblleft%
$0$\textquotedblright\ for the trivial Picard groupoid (this is $\mathcal{P}%
_{0}$ loc. cit.), so there is another Cartesian diagram%
\begin{equation}%
\xymatrix{
\mathbb{V}(\mathfrak{A},\mathbb{R}) \ar[r] \ar[d] & 0 \ar[d] \\
\mathbb{V}(\mathfrak{A}) \ar[r] & V(A_{\mathbb{R}})
}
\label{lww_Z1}%
\end{equation}
of Picard groupoids. Hence, by Equation \ref{llh3} it follows that%
\begin{equation}
\pi_{0}\mathbb{V}(\mathfrak{A},\mathbb{R})\cong K_{0}(\mathfrak{A},\mathbb{R})
\label{lww_Z2}%
\end{equation}
since both are merely the groups $\pi_{0}$ of the fiber along maps induced
from the same functor, namely tensoring to $A_{\mathbb{R}}$. Now
\cite[\S 3.4]{MR1884523} define%
\begin{equation}
\Xi(M,T_{p},S)^{\operatorname*{BF}}:=([R\Gamma_{c}\left(  \mathcal{O}%
_{F,S_{p}},T_{p}\right)  ],\Xi(M)^{\operatorname*{BF}},\vartheta_{p})\in
V(\mathfrak{A}_{p})\times_{V(A_{p})}V(A) \label{lix1}%
\end{equation}
(see loc. cit. for the meaning of $S$, $S_{p}$; $T_{p}$ stems from the
projective $\mathfrak{A}$-structure picked above), where they use the notation
of their concrete model of fiber products of Picard groupoids and a $p$-local
slight variant of Equation \ref{llh2}. The map $\vartheta_{p}$ is the same as
we use in Equation \ref{lww_Z3}. They go on to prove that there is no actual
dependency on $T_{p}$ or $S$, \cite[Lemma 5]{MR1884523}. Next, they glue from
this $p$-local data a virtual object%
\[
\Xi(M,T,S)_{\mathbb{Z}}^{\operatorname*{BF}}\in\mathbb{V}(\mathfrak{A})
\]
encompassing all finite primes $p$. For this, see \cite[Lemma 6]{MR1884523}.
Finally, they use $\vartheta_{\infty}$ (same as in our Equation \ref{lww_x5})
to get a further trivialization, moving this virtual object into the fiber in
Diagram \ref{lww_Z1}.

What has happened here: We have twice constructed an object by using the
defining property of the fiber product Picard groupoid: (1) first we used
(modulo some details around \cite[\S 3.4]{MR1884523} and \cite[Lemma
6]{MR1884523}) the fiber $\mathbb{V}(\mathfrak{A})$, i.e.%
\begin{equation}
V(\widehat{\mathfrak{A}})\times V(A)\longrightarrow V(\widehat{A})
\label{lepsi1}%
\end{equation}
and then (2) secondly the fiber of Diagram \ref{lww_Z1}, i.e.%
\begin{equation}
\mathbb{V}(\mathfrak{A})\longrightarrow V(A_{\mathbb{R}})\text{.}
\label{lepsi2}%
\end{equation}
Taking the fiber twice consecutively can equivalently be described as taking
the fiber of%
\begin{equation}
\operatorname*{fib}\left(  V(\widehat{\mathfrak{A}})\times V(A)\longrightarrow
V(\widehat{A})\times V(A_{\mathbb{R}})\right)  \text{.} \label{lgg1}%
\end{equation}
We have not spelled out the maps here, but they just stem from tensoring. Now
we may truncate Equation \ref{lxmc3} to the attached stable $(0,1)$-type,
giving%
\[
\tau_{\leq1}\Omega K(\mathsf{LCA}_{\mathfrak{A}})\overset{\sim}%
{\longrightarrow}\operatorname*{fib}\left(  \tau_{\leq1}K(\widehat
{\mathfrak{A}})\times\tau_{\leq1}K(A)\longrightarrow\tau_{\leq1}K(\widehat
{A})\times\tau_{\leq1}K(A_{\mathbb{R}})\right)  \text{,}%
\]
where we now mean the fiber in $\mathsf{Sp}^{0,1}$. However, Equation
\ref{llh} is ($2$-)functorial in exact functors between exact categories, so
firstly the truncations of the $K$-theory spaces can all be identified with
the stable $(0,1)$-types of their virtual objects, and the middle arrow is
functorially induced. Finally, since $\Psi$ is an equivalence of homotopy
categories, the notions of fiber are compatible. Thus,%
\begin{align}
\Psi^{-1}\tau_{\leq1}\Omega K(\mathsf{LCA}_{\mathfrak{A}})  &  \cong%
\operatorname*{fib}\left(  \Psi^{-1}\tau_{\leq1}K(\widehat{\mathfrak{A}%
})\times\Psi^{-1}\tau_{\leq1}K(A)\longrightarrow\Psi^{-1}\tau_{\leq
1}K(\widehat{A})\times\Psi^{-1}\tau_{\leq1}K(A_{\mathbb{R}})\right)
\label{lgg2}\\
&  \cong\operatorname*{fib}\left(  V(\widehat{\mathfrak{A}})\times
V(A)\longrightarrow V(\widehat{A})\times V(A_{\mathbb{R}})\right)
\text{,}\nonumber
\end{align}
which agrees with the fiber which Burns and Flach take, see Equation
\ref{lgg1}. Here we have tacitly used that the maps in the fibration sequence
of Theorem \ref{thm_PrincipalIdeleFibration} are induced from the same
functors (tensoring).

Thus, in Equation \ref{lgg2} we have produced an isomorphism between the
object%
\[
\Psi^{-1}\tau_{\leq1}\Omega K(\mathsf{LCA}_{\mathfrak{A}})
\]
in which our construction of the Tamagawa number is formulated (modulo
truncating to the $1$-skeleton and $\Psi^{-1}$, but as discussed above
truncating does not affect $\pi_{1}$, where our $T\Omega$ lies, and $\Psi
^{-1}$ preserves $\pi_{1}$, transforming it into the notion of $\pi_{1}$ for
Picard groupoids); and the object%
\[
\operatorname*{fib}\left(  V(\widehat{\mathfrak{A}})\times V(A)\longrightarrow
V(\widehat{A})\times V(A_{\mathbb{R}})\right)
\]
in which Burns and Flach run their construction of their Tamagawa number,
which we shall call $T\Omega^{\operatorname*{BF}}$.

This isomorphism being set up, we need to compare the actual
constructions:\ The object $\Xi(M,T_{p},S)^{\operatorname*{BF}}$ of Equation
\ref{lix1} stems from the input%
\[
\lbrack R\Gamma_{c}\left(  \mathcal{O}_{F,S_{p}},T_{p}\right)  ]\text{, }%
\Xi(M)^{\operatorname*{BF}}\text{, }\vartheta_{p}%
\]
and we had used the same object $R\Gamma_{c}\left(  \mathcal{O}_{F,S_{p}%
},T_{p}\right)  $ for our construction, the same map $\vartheta_{p}$, and
$\Xi(M)^{\operatorname*{BF}}$ was already shown to be the image of our
$\Xi(M)$ in Theorem \ref{thm_LinesAgree}. Similarly for $\vartheta_{\infty}$
in the fiber of $\mathbb{V}(\mathfrak{A})\rightarrow V(A_{\mathbb{R}}%
)$.\ Finally, take $\pi_{0}$ of Equation \ref{lgg2}. We get%
\[
\pi_{0}\Psi^{-1}\tau_{\leq1}\Omega K(\mathsf{LCA}_{\mathfrak{A}})=\pi
_{0}\Omega K(\mathsf{LCA}_{\mathfrak{A}})=\pi_{1}K(\mathsf{LCA}_{\mathfrak{A}%
})=K_{1}(\mathsf{LCA}_{\mathfrak{A}})\text{,}%
\]
while%
\[
\pi_{0}\operatorname*{fib}\left(  V(\widehat{\mathfrak{A}})\times
V(A)\longrightarrow V(\widehat{A})\times V(A_{\mathbb{R}})\right)  =\pi
_{0}\mathbb{V}(\mathfrak{A},\mathbb{R})=K_{0}(\mathfrak{A},\mathbb{R})
\]
by\ Equation \ref{lww_Z2}. This gives an identification of the groups in
question, coming from the identification of the separate fibers of Equations
\ref{lepsi1} and \ref{lepsi2} with the composite fiber in Equation \ref{lgg1}.
The two elements $\Xi(M,T_{p},S)^{\operatorname*{BF}}$ in (essentially)
$V(\widehat{\mathfrak{A}})\times_{V(\widehat{A})}V(A)$ of \cite[\S 3.4, page
526]{MR1884523} (plus the independence lemma proven loc. cit.) and
$(\Xi(M)_{\mathbb{Z}},\vartheta_{\infty})^{\operatorname*{BF}}$ in
$\mathbb{V}(\mathfrak{A},\mathbb{R})$, given in terms of the explicit
structure of relative Picard groupoids, then topologically can be unravelled
to give paths after the respective base change of the relative\ Picard
groupoid. They correspond to the path we define in Equations \ref{lww_Z3} (and
see Elaboration \ref{elab_ThetaPConstructedAsInBF} why it is clear that they
match) and the path of Equation \ref{lww_x5} respectively.

This construction gives a more concrete formulation of Theorem
\ref{thm2_Intro} and proves the equivalence.%

\appendix

\section{Complements}

\begin{example}
[Arakelov interpretation]\label{example_Arakelov}If the semisimple algebra $A$
is merely a number field, i.e. $A:=F$ and $\mathfrak{A}:=\mathcal{O}_{F}$ its
ring of integers, then one can interpret the id\`{e}le group of Equation
\ref{l_Rado_3} as an extension of the Arakelov--Picard group, i.e. a group
classifying metrized line bundles. Write $s$ for the number of real places of
$F$, and $r$ for the number of complex places. Consider the following
commutative diagram with exact rows and columns:%
\begin{equation}%
\xymatrix{
0 \ar[r] & \mu_{F} \ar@{^{(}->}[d] \ar[r] & {{\textstyle\prod_{\mathfrak
{p}\text{ fin.}}}\mathcal{O}_{\mathfrak{p}}^{\times}\oplus(S^{1})^{r}%
\oplus\{\pm1\}^{s}} \ar[r] \ar@{^{(}->}[d] & T \ar[r] \ar@{^{(}->}[d] & 0 \\
0 \ar[r] & F^{\times} \ar@{->>}[d] \ar[r] & {{\textstyle\prod_{\mathfrak
{p}\text{ fin.}}^{\prime}}F_{\mathfrak{p}}^{\times}\oplus{\textstyle
\bigoplus_{\sigma}}\mathbb{R}_{\sigma}^{\times}} \ar[r] \ar@{->>}[d] & C_{F}
\ar@{->>}[d] \ar[r] & 0 \\
0 \ar[r] & F^{\times}/\mu_{F} \ar[r] & {{\textstyle\bigoplus_{\mathfrak
{p}\text{ fin.}}}\mathbb{Z}\oplus{\textstyle\bigoplus_{\sigma}}\mathbb{R}}
\ar[r] & \widehat{\operatorname*{Pic}}_{F} \ar[r] & 0
}
\label{l_Fig_E1}%
\end{equation}
We write $\mu_{F}$ for the roots of unity in $F$, $\mathbb{R}_{\sigma}$ to
denote the closure of $F$ in the image of the infinite place $\sigma$, i.e.
this can be either $\mathbb{R}$ or $\mathbb{C}$. We write $C_{F}$ for the
id\`{e}le class group and $\widehat{\operatorname*{Pic}}_{F}$ for the
Arakelov--Picard group, in the sense of \cite{MR1847381}. We explain how to
construct Figure \ref{l_Fig_E1}: Take the two bottom rows as the input for the
snake lemma to get the top row. As all the downward arrows of the bottom rows
are surjective, the exactness of the top row follows. The bottom two rows stem
from the map (a) $F^{\times}$ being sent along the embeddings along all the
places in the middle row, and (b) $F^{\times}$ being sent to its valuation at
the finite places and $x\mapsto\log\left\vert \sigma(x)\right\vert $ for each
infinite place $\sigma:F\hookrightarrow\mathbb{R}_{\sigma}$. We wrote $T$
merely to denote the cokernel in the top row. The middle downward surjection
sends each element in $F_{\mathfrak{p}}^{\times}$ to its $\mathfrak{p}$-adic
valuation for finite places, and $x\mapsto\log\left\vert x\right\vert $ for
infinite places.\newline Now quotient out the image of $U(\mathfrak{A})=_{def}%
{\textstyle\prod_{\mathfrak{p}\text{ fin.}}}
\mathcal{O}_{\mathfrak{p}}^{\times}$ in $T$, transforming the right downward
column into%
\begin{equation}
\frac{(S^{1})^{r}\times\{\pm1\}^{s}}{\mu_{F}}\hookrightarrow\frac{JA}%
{J^{1}(A)\cdot A^{\times}\cdot U(\mathfrak{A})}\twoheadrightarrow
\widehat{\operatorname*{Pic}}_{F}\text{.} \label{l_h_Arak}%
\end{equation}
We summarize this as follows.
\end{example}

\begin{proposition}
\label{prop_XArak1}If $F$ is a number field, pick $A:=F$ and $\mathfrak{A}%
:=\mathcal{O}_{F}$. Then there is a canonical extension of abelian groups,%
\[
\frac{(S^{1})^{r}\times\{\pm1\}^{s}}{\mu_{F}}\hookrightarrow K_{0}%
(\mathfrak{A},\mathbb{R})\twoheadrightarrow\widehat{\operatorname*{Pic}}%
_{F}\text{;}%
\]
and of course we could also write $K_{1}(\mathsf{LCA}_{\mathfrak{A}})$ for the
middle group.
\end{proposition}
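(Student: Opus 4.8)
The plan is to read the statement off directly from the explicit commutative diagram Figure~\ref{l_Fig_E1} constructed in Example~\ref{example_Arakelov}. First I would verify the exactness claims there. The middle row $F^{\times}\hookrightarrow \bigl(\textstyle\prod'_{\mathfrak p\ \mathrm{fin.}}F_{\mathfrak p}^{\times}\oplus\bigoplus_{\sigma}\mathbb R_{\sigma}^{\times}\bigr)\twoheadrightarrow C_{F}$ is the tautological presentation of the id\`ele class group, and the bottom row $F^{\times}/\mu_{F}\hookrightarrow\bigl(\textstyle\bigoplus_{\mathfrak p\ \mathrm{fin.}}\mathbb Z\oplus\bigoplus_{\sigma}\mathbb R\bigr)\twoheadrightarrow\widehat{\operatorname{Pic}}_{F}$ is the standard ``logarithmic'' presentation of the Arakelov--Picard group in the sense of the reference quoted there; both rows are short exact. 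The three vertical maps between them (quotient by $\mu_{F}$; valuation at finite places together with $\log|{\cdot}|$ at archimedean places; and the induced map on $C_{F}$) are surjective, so the Snake Lemma produces the top row as an exact sequence of kernels and simultaneously identifies $T:=\ker(C_{F}\to\widehat{\operatorname{Pic}}_{F})$ with the quotient $\bigl(\textstyle\prod_{\mathfrak p\ \mathrm{fin.}}\mathcal O_{\mathfrak p}^{\times}\oplus(S^{1})^{r}\oplus\{\pm1\}^{s}\bigr)/\mu_{F}$, with $\mu_{F}$ sitting diagonally.

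Next I would pass to the right-hand column $0\to T\to C_{F}\to\widehat{\operatorname{Pic}}_{F}\to 0$ and quotient out the image of $U(\mathfrak A)=\textstyle\prod_{\mathfrak p\ \mathrm{fin.}}\mathcal O_{\mathfrak p}^{\times}$ (Equation~\ref{lcixiDex1}). The subgroup $U(\mathfrak A)$ lies inside $T\subseteq C_{F}$, and its image in $\widehat{\operatorname{Pic}}_{F}$ vanishes because finite-place units have trivial valuation and are trivial at the archimedean places; hence the sequence descends to the short exact sequence $0\to T/\operatorname{im}U(\mathfrak A)\to C_{F}/\operatorname{im}U(\mathfrak A)\to\widehat{\operatorname{Pic}}_{F}\to 0$, which is precisely Equation~\ref{l_h_Arak}. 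Here $C_{F}/\operatorname{im}U(\mathfrak A)=J(F)/(F^{\times}\cdot U(\mathcal O_{F}))$, and $T/\operatorname{im}U(\mathfrak A)\cong\bigl((S^{1})^{r}\times\{\pm1\}^{s}\bigr)/\mu_{F}$, using that $\mu_{F}$ embeds into the archimedean factor $(S^{1})^{r}\times\{\pm1\}^{s}$.

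Finally I would invoke Theorem~\ref{thm_GlobalLocal_Swan}(1) (equivalently Theorem~\ref{thm_GlobalLocal_Nenashev}(1) for the $K_{1}(\mathsf{LCA}_{\mathfrak A})$-formulation): since $A=F$ is commutative the reduced norm is the identity, so $J^{1}(A)=1$ and the denominator $J^{1}(A)\cdot A^{\times}\cdot U(\mathfrak A)$ is just $F^{\times}\cdot U(\mathcal O_{F})$. Thus the middle term of Equation~\ref{l_h_Arak}, namely $J(A)/(J^{1}(A)\cdot A^{\times}\cdot U(\mathfrak A))=J(F)/(F^{\times}\cdot U(\mathcal O_{F}))$, is canonically isomorphic to $K_{0}(\mathfrak A,\mathbb R)$ and hence to $K_{1}(\mathsf{LCA}_{\mathfrak A})$. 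Substituting this identification into the short exact sequence above yields the asserted canonical extension.

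The only step that needs a little care is the identification $T/\operatorname{im}U(\mathfrak A)\cong\bigl((S^{1})^{r}\times\{\pm1\}^{s}\bigr)/\mu_{F}$: one must check that $\mu_{F}$ genuinely injects into $(S^{1})^{r}\times\{\pm1\}^{s}$. This follows from the injectivity of any archimedean embedding of $F$ restricted to roots of unity --- if $F$ is totally real then $\mu_{F}=\{\pm1\}$ maps isomorphically onto a $\{\pm1\}$-coordinate, and otherwise a complex place does the job (and $F$ always has at least one infinite place) --- together with keeping track of which arrows in Figure~\ref{l_Fig_E1} are surjective in the Snake Lemma bookkeeping. Everything else is routine diagram-chasing, so I do not anticipate a serious obstacle.
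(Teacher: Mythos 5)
Your proposal is correct and follows essentially the same route as the paper: the paper's argument is exactly the content of Example \ref{example_Arakelov} (build Figure \ref{l_Fig_E1}, obtain the top row by the Snake Lemma using the surjectivity of the vertical maps, then quotient the right-hand column by the image of $U(\mathfrak{A})$ to get Equation \ref{l_h_Arak}), after which the identification of the middle group with $K_{0}(\mathfrak{A},\mathbb{R})\cong K_{1}(\mathsf{LCA}_{\mathfrak{A}})$ via Theorem \ref{thm_GlobalLocal_Swan}/\ref{thm_GlobalLocal_Nenashev} with $J^{1}(F)$ trivial is left implicit, and you make it explicit. Your extra checks (injectivity of $\mu_{F}$ into the archimedean factor, exactness after dividing by $U(\mathfrak{A})\subseteq T$) are exactly the points the paper relies on.
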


Given an Arakelov divisor $\sum x_{\mathfrak{p}}+\sum x_{\sigma}$, i.e. an
element of $%
{\textstyle\bigoplus_{\mathfrak{p}\text{ fin.}}}
\mathbb{Z}\oplus%
{\textstyle\bigoplus_{\sigma}}
\mathbb{R}$, representing a class in $\widehat{\operatorname*{Pic}}_{F}$, one
can attach to this an Arakelov line bundle, by equipping a genuine line bundle
$L$ within the isomorphism class of the image under $\widehat
{\operatorname*{Pic}}_{F}\twoheadrightarrow\operatorname*{Pic}(\mathcal{O}%
_{F})$ with the metric such that on $L\otimes_{\mathbb{Z}}\mathbb{R}$ we have%
\[%
\begin{tabular}
[c]{ll}%
$\left\Vert 1\right\Vert _{\sigma}^{2}=e^{-2x_{\sigma}}$ & $\text{for }%
\sigma\text{ real}$\\
$\left\Vert 1\right\Vert _{\sigma}^{2}=2e^{-2x_{\sigma}}$ & $\text{for }%
\sigma\text{ complex}$%
\end{tabular}
\ \
\]
in terms of the norm of the image of $1\in F^{\times}$ under the embedding
$\sigma$. Relating this to our constructions, this means that $(z_{\sigma
})_{\sigma\in S_{\infty}}\in\prod\mathbb{R}_{\sigma}^{\times}$ goes to%
\begin{equation}
\left\Vert 1\right\Vert _{\sigma}^{2}=c_{\sigma}e^{-2\log\left\vert \sigma
z_{\sigma}\right\vert }=c_{\sigma}\left\vert \sigma z_{\sigma}\right\vert
^{-2}\qquad\text{with}\qquad c_{\sigma}\in\{1,2\}\text{.} \label{l_h_Arak2}%
\end{equation}
The group on the left in Equation \ref{l_h_Arak} thus corresponds precisely to
the kernel of the absolute values occuring in Equation \ref{l_h_Arak2}. Thus,
if one insisted on giving the middle group of Equation \ref{l_h_Arak} a
geometric interpretation, it would be (angularly) decorated metrized line
bundles. This could be extended to the non-commutative setting, where now
real, complex and quaternion embeddings as in the Artin--Wedderburn
decomposition of $A\otimes_{\mathbb{Q}}\mathbb{R}$ would play a r\^{o}le.

If $A$ is a finite-dimensional semisimple $\mathbb{Q}$-algebra and
$\mathfrak{A}\subset A$ an arbitrary order, Proposition \ref{prop_XArak1}
should have analogues in a suitably formulated theory of $\mathfrak{A}%
$-equivariant Arakelov modules. The papers \cite{MR1914000}, \cite[\S 4]%
{MR2192383} give possible answers to this.

\bibliographystyle{amsalpha}
\bibliography{ollinewbib}

\end{document}